\documentclass[12pt]{article}
\usepackage{a4wide}
\usepackage{amsmath, amssymb, amsthm}
\usepackage{parskip}
\usepackage{graphicx,float} % Required for inserting images
\usepackage{rotating} % for rotatebox
\usepackage{array}
\usepackage{multirow}
\newcommand{\rowlabel}[1]{%
    \rotatebox{90}{\parbox{2.2cm}{\centering \footnotesize #1}}%
}

\usepackage{bm}
\newtheorem{definition}{Definition}
\newtheorem{lemma}{Lemma}
\newtheorem{theorem}{Theorem}

\usepackage{hyperref}
\usepackage{biblatex} %Imports biblatex package
\usepackage{algorithm}
\numberwithin{equation}{section}
\usepackage{algpseudocode}
\usepackage{color}
\begin{document}

\title{Joint Near-Field Holotomography Reconstruction with a Phase-Guided Bregman TV Regularization}
\date{}

\author{Jin Liu\thanks{{Helmholtz Imaging, Deutsches Elektronen-Synchroton DESY, Notkestraße 85, 22607 Hamburg, Germany.} \texttt{jin.liu1@desy.de}}
\and
 Johannes Hagemann\thanks{{Centre for X-Ray and Nano Science CXNS, Deutsches Elektronen-Synchrotron DESY, Notkestraße 85, 22607 Hamburg, Germany.}}
\and
Martin Burger$^\ast$\thanks{Department of Mathematics, University of Hamburg, Bundesstraße 55, 20146 Hamburg, Germany.}
}

\maketitle     

\textbf{Abstract:} Near-field holotomography combines coherent diffraction imaging with tomographic acquisition to recover the three-dimensional complex refractive index of a specimen. Since the measured diffraction intensities are generated by a nonlinear object transmission and wave propagation process, the resulting inverse problem is intrinsically nonlinear and ill-posed. We study a direct variational reconstruction framework based on a fully nonlinear wave propagation model that avoids both intermediate phase retrieval and linearization under the weak-object approximation. We analyze the forward operator in appropriate Banach spaces, establish its Fr\'echet differentiability, and derive explicit gradient expressions for variational reconstruction.

To improve quantitative reconstruction of weak absorption features, we further develop a phase-guided Bregman TV regularization framework that exploits structural correlations between phase and absorption components. This enables multi-material reconstruction without imposing a globally fixed ratio between the two components. We perform numerical studies on synthetic phantoms and experimental data. The results demonstrate stable three-dimensional reconstructions and improved recovery of the absorption contrast compared to state-of-the-art methods.

\section{Introduction}

\subsection{Holotomography}
Modern synchrotron radiation sources provide highly brilliant and spatially coherent X-ray beams, enabling high-resolution phase-contrast imaging of materials and biological specimens \cite{snigirev1995possibilities}.
As X-rays propagate through matter, they undergo both attenuation and phase shifts determined by the refractive properties of the specimen \cite{ als2011elements, paganin2006coherent}. 
Conventional absorption imaging exploits only the attenuation signal, whereas phase-contrast imaging additionally utilizes the phase shifts, which are often substantially stronger for weakly absorbing specimens and therefore provide richer structural information \cite{Mayo:03}.

To convert phase shifts into measurable intensity patterns, several phase-contrast imaging techniques have been developed, including crystal interferometry, analyzer-based diffractometry, grating interferometry, and propagation-based imaging \cite{cloetens1996phase, davis1995phase, takeda1995phase, weitkamp2005x}. In this work, we focus on near-field propagation-based imaging, where Fresnel diffraction converts phase information into detectable interference fringes during free-space propagation without requiring interferometric optics \cite{ediss27478, snigirev1995possibilities, williams2006fresnel}.

While propagation-based imaging provides access to phase information with high sensitivity, a single near-field diffraction image contains only projected information accumulated along the beam direction. As a result, it cannot determine the three-dimensional internal structure of the specimen. Recovering volumetric information therefore requires tomographic acquisition from multiple viewing angles \cite{kak2001principles}. This leads naturally to near-field holotomography \cite{cloetens1999holotomography, kim2024holotomography, KODAMA199636}, which extends propagation-based phase imaging to the quantitative three-dimensional reconstruction imaging from near-field diffraction measurements.

A typical near-field holotomography experiment employs a spatially coherent X-ray beam, a tomographic rotation stage, and a downstream detector separated from the specimen by a finite propagation distance. During acquisition, the sample is rotated over multiple projection angles while near-field diffraction patterns are recorded at the detector. In some experimental settings, additional measurements acquired at multiple propagation distances provide complementary information and further improve reconstruction quality \cite{cloetens1999holotomography}.

From these measured diffraction patterns, the goal of holotomography is to reconstruct the three-dimensional internal structure of the specimen. Unlike conventional absorption tomography, where the measured data are commonly modeled as linear Radon projections of the attenuation coefficient, holotomographic reconstruction is formulated as a nonlinear inverse problem coupling wave propagation, phase retrieval, and tomography. This nonlinear structure gives rise to fundamental challenges in uniqueness, stability, and reconstruction efficiency \cite{bronnikov1999reconstruction, doi:10.1137/22M1474382, fannjiang20233d}. These challenges have motivated the development of reconstruction algorithms for holographic and holotomographic imaging.

\subsection{Reconstruction Algorithms in Holotomography}

Reconstruction algorithms for near-field holotomography can be viewed from two complementary perspectives: how the imaging physics is modeled and how the inverse problem is formulated. The latter naturally determines the physical quantity chosen for reconstruction.

With respect to the forward model, existing approaches range from linearized approximations to fully nonlinear formulations. Linearized methods, such as the transport-of-intensity equation (TIE) \cite{gureyev1996phase, paganin2006coherent, krenkel2013transport} and contrast transfer function (CTF) methods \cite{cloetens2002quantitative, villanueva2017contrast}, derive computationally efficient inversion formulas under weak-object assumptions. In contrast, fully nonlinear projection algorithms, such as Gerchberg--Saxton (GS) \cite{gerchberg1972practical} and alternating projections (AP) \cite{hagemann2018phase}, retain the complete wave-propagation model by iteratively enforcing consistency between propagated wavefields and measured diffraction intensities \cite{fienup1982phase, marchesini2007invited, shechtman2015phase}. 

A second distinction concerns the formulation of the inverse problem. Most existing reconstruction methods adopt a sequential reconstruction strategy, in which phase retrieval is first performed independently for each diffraction pattern and the recovered projections are subsequently reconstructed tomographically using analytical or algebraic reconstruction methods \cite{gureyev2006phase, andersen1984simultaneous, ruhlandt2014three}. Within this framework, conventional holographic phase-retrieval methods typically reconstruct the complex transmission function or wavefield immediately behind the sample \cite{bauschke2002phase, fienup1982phase, huhn2022fast}. 
However, such formulations often suffer from phase-wrapping ambiguities, particularly
for thick or strongly refracting specimens, where the recovered phase is only defined modulo $2\pi$ \cite{guizar2011phase}. To alleviate this limitation, several refractive phase retrieval frameworks have been proposed that reconstruct the projected refractive index directly instead of the complex transmission function \cite{chowdhury2019high, ruhlandt2014three, kahnt2019coupled}. Since the projected refractive index is linearly related to the underlying material properties and is not restricted to a wrapped phase interval, these formulations avoid explicit phase unwrapping and provide improved quantitative stability. Nevertheless, these methods remain projection-based and therefore still treat phase retrieval and tomography as two separate inverse problems. Although computationally convenient, such sequential reconstruction strategies do not fully exploit the global consistency shared across projection angles and may propagate phase retrieval errors into the final three-dimensional reconstruction.

Several works therefore moved beyond projection-wise reconstruction and incorporated diffraction modeling directly into volumetric reconstruction frame works \cite{stockmar2015x, ruhlandt2016three, thompson2023three}. 
Unlike sequential approaches, direct formulations reconstruct the three-dimensional refractive-index distribution \cite{born2013principles} directly from the measured diffraction data, allowing all projection angles to contribute simultaneously to a single volumetric reconstruction.
In particular, Ruhlandt \cite{ruhlandt2016three} proposed a three-dimensional near-field
tomography framework that incorporates Fresnel diffraction directly into the tomographic reconstruction process. 
However, the formulation relies on linearization under the optically weak-object approximation and therefore remains restricted to weakly scattering samples.

Motivated by these developments, we formulate near-field holotomography as a direct nonlinear variational inverse problem posed in the three-dimensional refractive-index domain. Unlike conventional sequential pipelines that decouple phase retrieval and tomography, the proposed framework reconstructs the volumetric refractive-index distribution directly from measured diffraction intensities through a unified nonlinear forward operator combining tomographic projection, exponential transmission, wave propagation, and intensity formation. By avoiding intermediate projection-domain reconstruction and weak-object linearization, the formulation enables globally consistent optimization across all projection angles while remaining applicable in strongly refractive regimes.

Moreover, appropriate physical assumptions are more naturally imposed in the three-dimensional object domain, enabling more effective regularization strategies as we investigate in this paper. In particular, the phase shift and absorption components of the refractive index are closely related within each material and typically share the same structural features. This relationship provides useful prior information for stabilizing their joint reconstruction. For multi-material specimens, however, the relative phase and absorption contrasts can vary between different materials, making a globally fixed relationship between the two components restrictive \cite{momose2005recent, paganin2002simultaneous, beltran20102d}. 

To stabilize the resulting ill-posed inverse problem, we first develop a Bregman TV regularization framework \cite{osher2005iterative,rasch2018joint} operating directly on the two refractive-index components. Since the absorption component is generally much weaker and more sensitive to measurement noise than the phase shift component in propagation-based imaging, we further propose a phase-guided Bregman TV regularization strategy, in which structural information recovered from the phase shift component is used to guide the absorption reconstruction. In this way, the proposed coupling exploits the shared material structure without enforcing a globally fixed ratio between the two components, making it suitable for multi-material near-field holotomography. 

\subsection{Contributions and Organization of the Paper}
The main contributions of this work are threefold. First, we formulate near-field holotomography as a direct nonlinear inverse problem for the three-dimensional complex refractive index and establish its Fréchet differentiability and local convergence properties. Second, we introduce a phase-guided Bregman TV regularization that exploits structural correlations between the phase and absorption components, enabling reconstruction of heterogeneous multi-material specimens. Third, we validate the proposed framework on synthetic single- and multi-distance data and experimental measurements, demonstrating improved recovery of weak absorption features.

The remainder of this paper is organized as follows. 
Section 2 establishes the forward model of near-field holotomography. Section 3 analyzes the continuity and differentiability of the forward model in appropriate Banach spaces.
Section 4 formulates the variational inverse problem, derives the analytical gradient expressions and provides a convergence analysis based on the nonlinear Landweber iteration.
Section 5 introduces the numerical reconstruction framework, incorporating the phase-guided Bregman TV regularization, and presents both simulated and experimental validation results.
Finally, Section 6 summarizes the main conclusions and discusses potential extensions to limited-angle reconstruction and uncertainty quantification.

\section{Forward Model} \label{sec: Forward Model}

The near-field holotomographic imaging process can be described as a sequence of four physical operations:
(i) projection of the relative refractive index along the X-ray beam,
(ii) nonlinear transmission through the specimen,
(iii) Fresnel diffraction during free-space propagation, and
(iv) intensity detection at the sensor plane.

Let $
n = 1 - \delta + i \beta
$
be the complex refractive index of the material in 3D,
where $ \delta $ denotes the refractive phase decrement and $ \beta $ the absorption index.  
It is often convenient to define the \emph{relative refractive index} relative to the vacuum as
$$
f = n - 1 = -\delta + i\beta,
$$
so that $ f $ directly represents the spatial distribution of optical properties of the sample to be reconstructed.

In the following, we introduce the different steps of the image formation process as separate operators and subsequently formulate their composition mapping the object to the detected intensity.

\subsection{Projection}
Let the bounded domain $\Omega\subset\mathbb{R}^3$ with coordinates $(x,y,z)$ be the support of the object, where $z$ denotes the direction of beam propagation. 
During tomographic acquisition, the specimen is rotated clockwise (in a right-handed coordinate system) about the $y$-axis by an angle $\theta\in[0,\pi)$. The rotated relative refractive index becomes
$$
f_\theta(x,y,z)
= f(x\cos\theta - z\sin\theta,\, y,\, x\sin\theta + z\cos\theta).
$$
Under the parallel-beam projection approximation, the projection operator $\tilde O$ is defined by the accumulated contrast along the beam path
\begin{equation}\label{eq:projection}
\widetilde{O}(f)(x,y,\theta)
= \int_{\mathbb{R}} f_\theta(x,y,z)\,dz, \quad (x,y)\in \Omega',
\end{equation}
where $\Omega' \subset \mathbb{R}^2$ is the detector-plane orthogonal to the beam direction.
Equation~\eqref{eq:projection} represents the complex line integral through the sample $f$. 
Collecting these projections for all rotation angles $\theta$ yields the tomographic sinogram.

The effect of the object on an incident X-ray beam is described by its \emph{complex transmission function}
$$
O(f) = e^{ i k \, \widetilde{O}(f) },
$$
with wave number $k$. 
The exponential form arises naturally from the solution of the Helmholtz equation under the \emph{paraxial approximation} \cite{goodman2005introduction, grella1982fresnel}.

In near-field holography, one may reconstruct either the transmission
$O(f)$ or the refractive projection $\widetilde O(f)$  \cite{Wittwer:22}.
Reconstructing $\widetilde O(f)$ avoids the phase wrapping problem \cite{chowdhury2019high}, since the projection is linearly related to the underlying relative refractive index,
which in turn naturally motivates direct optimization over the volumetric relative refractive index $f$.

\subsection{Illumination}
To model the illumination realistically, one must account for the fact that the incident X-ray beam is generally not an ideal plane wave of uniform amplitude and flat phase. 
Instead, its intensity and phase vary laterally across the beam cross-section due to finite source size, imperfect beamline optics, and coherence effects. 
These variations are represented by the \emph{probe function}  
$
P: \mathbb{R}^2\to \mathbb{C},
$
a complex-valued function of the transverse coordinates $\mathbf{x}:=(x,y) \in \mathbb{R}^2$ (in the 2D sense for a given projection). 
It encodes both the amplitude envelope (brightness profile) and the phase distortions of the incoming beam at the object plane. 
The \emph{exit wavefield} immediately behind the sample is therefore obtained by multiplying the transmission function by this probe profile, i.e.,
$$
\Psi(f) = P\, O(f) \;=\; P \, e^{\, i k \widetilde{O}(f)}.
$$
Thus, the probe–object interaction becomes a nonlinear, pointwise transformation acting on the projected refractive index $\widetilde{O}(f)$.

\subsection{Fresnel Propagation}
The free-space propagation of an exit wavefield $ \Psi(\mathbf{x}) $ over a distance $ d $ is given by the Fresnel diffraction integral, which corresponds to a convolution with the Fresnel kernel $h_d$ \cite{goodman1969introduction}:
$$
\mathcal D_{\mathrm{Fr}}(\Psi)(\mathbf{x}) 
= (h_d * \Psi)(\mathbf{x})
= \int_{\mathbb{R}^2} h_d(\mathbf{x}-\mathbf{x}')\, \Psi(\mathbf{x}')\, d\mathbf{x}'.
$$
Let $\mathcal F$ be the 2D Fourier transform with inverse $\mathcal F^{-1}$.
By the convolution theorem, 
\begin{equation} \label{eq:Fresnel_kernel}
\mathcal D_{\mathrm{Fr}}(\Psi) 
= \mathcal F^{-1}\!\left[\,H_d(\boldsymbol{\xi}) \cdot \mathcal F(\Psi)\,\right],
\end{equation}
where the Fresnel kernel in Fourier space is given by
$$
H_d(\boldsymbol{\xi}) 
= \exp\!\left(-\,i \pi \frac{|\boldsymbol{\xi}|^2}{2\,\mathrm{Fr}}\right),
$$
where $\boldsymbol{\xi} \in \mathbb{R}^2$ is the frequency coordinate and
$
\mathrm{Fr}
=
\frac{\Delta x^2}{\lambda d}
$
is the dimensionless Fresnel number, with $\Delta x$ the effective pixel size, $\lambda$ the X-ray wavelength, and $d$ the sample-to-detector propagation distance. The Fresnel number characterizes the diffraction regime during free-space propagation, with smaller values corresponding to stronger Fresnel diffraction.

Gathering all steps together, the forward operator $F$ of near-field holographic tomography reads
\begin{equation}\label{eq:forward_operator}
f \mapsto\;  I_m 
     = \left|\, \mathcal D_{\mathrm{Fr}} (\Psi(f))                 \right|^2
     = \left|\, \mathcal{F}^{-1}\!\left[ H_d \cdot
       \mathcal{F}\!\left( P e^{ik\tilde{{O}}(f)} \right)
       \right] \right|^2 =: F(f).
\end{equation}
This sequence of mappings describes the complete imaging process (for optically thin objects justifying separate projection and propagation), starting from the three-dimensional refractive index distribution, proceeding through projection, probe modulation, Fresnel propagation, and finally leading to the intensity recorded at the detector plane.

\subsection{Noise and Likelihood Modeling}
Having the forward operator $F$ that maps the complex relative refractive index
$f$ to the detector intensity, we now consider the associated inverse problem: recovering $f$ directly from experimental measurements $I_m$. 
While image formation in near-field tomography is strongly influenced by the Fresnel propagation geometry, measurement noise is often
approximated by Poisson counting statistics in photon-limited settings \cite{homann2015phase}. Each detector pixel records a discrete number of incident photons or electrons during a finite exposure time, and the recorded counts follow a Poisson distribution
$
I_m\sim \mathrm{Poisson}(I_e),
$
where $I_e$ denotes the model-estimated intensity.
The corresponding negative log-likelihood functional reads
$$
  J_P(f) =\int_0^\pi\!\int_{\Omega'} \!\!\left[\, I_e(f)(x,y,\theta)
      - I_m\,\log I_e(f)(x,y,\theta) \right] d\mathbf{x} \, d\theta ,  
$$
up to constants independent of $f$. 

This Poisson log-likelihood accurately models low-dose or photon-limited regimes where the variance of the noise depends on the signal itself. For high photon counts, the Poisson distribution approaches a Gaussian with variance $\mathrm{Var}(I_m)=I_e$. Expanding $J_P$ to second order around $I_e = I_m$ yields
$$
  J_P(f) \approx
\frac{1}{2}\int_{0}^\pi\!\int_{\Omega'}
\frac{(I_e(f) - I_m)^2}{I_m}\,d\mathbf{x} \, d\theta.
$$
We work with the corresponding amplitude representation
$$
J(f) = \frac{1}{2}\int_{0}^\pi\!\int_{\Omega'}
\big(\sqrt{I_e(f)}-\sqrt{I_m}\big)^2 d\mathbf{x} \, d\theta.  
$$
Based on these arguments we will consider a variational model that can be viewed as the Gaussian approximation of the Poisson likelihood, valid for sufficiently high photon counts, while still grounded in the more general statistical model above \cite{thibault2012maximum}.

Define $y := \sqrt{I_m}$ and the amplitude forward operator $$ \qquad G(f) := \sqrt{F(f)}=\Big|\mathcal \mathcal D_{\mathrm{Fr}}\big(P e^{ik \widetilde O(f)}\big)\Big|. $$ 
% which is non-differentiable at points where $\mathcal D_{\mathrm{Fr}}(P e^{ik\widetilde O(f)}) = 0$. 
Then the inverse problem can be written compactly as the nonlinear least squares problem
\begin{equation}\label{eq: full nonlinear}
     \min_{f} J(f) = \tfrac{1}{2}\|G(f)-y\|^2.   
\end{equation}

\section{Analysis of the Forward Operator}
In this section, we investigate the properties of the forward operator. The goal is to prove that $F$ (and thus $G$) is well-defined, continuous, and Fr\'echet differentiable in appropriate Banach spaces.

We begin with the notation and assumptions.
Let $\Omega \subset \mathbb{R}^3$ be a bounded domain for the object, 
$\Omega' \subset \mathbb{R}^2$ be the bounded measurement window (the illuminated field of view) on the detector side,
and $\Omega'_\pi=\Omega'\times [0,\pi)$. 
We assume that the complex relative refractive index satisfies
$$
f = -\delta + i\beta \in L^2(\Omega;\mathbb{C}),
$$
and the probe function $P\in L^\infty(\mathbb R^2;\mathbb C)$ has a uniform positive lower bound in modulus.
The wavenumber $k>0$ is a fixed constant.
Physically admissible values are restricted to the cone
$$
\mathcal K = \{\, f \in L^2(\Omega;\mathbb{C}) : \Re f \le 0,\ \Im f \ge 0 \text{ a.e. in } \Omega \,\},
$$
where $\Re f $ and $\Im f$ denote the real and imaginary parts of $f$ \cite{paganin2006coherent, hagemann2017probe}.
For simplicity of notation, whenever there is no risk of confusion, we write $L^p(\Omega)$, $1\leq p\leq \infty$, for the complex-valued Lebesgue space $L^p(\Omega;\mathbb{C})$. 

The forward operator $F$ defined in \eqref{eq:forward_operator} consists of four parts, namely,
$$
F = \mathcal I \circ\mathcal D_{\mathrm{Fr}} \circ \mathcal N_g \circ \tilde { O},
$$
where $\tilde { O}$ is the line-integral projection, $\mathcal N_g$ is a \emph{Nemytskii operator} (also known as a superposition operator \cite{appell1990nonlinear})  
$$
\mathcal{N}_g(u)(\mathbf{x}) = P(\mathbf{x})\, e^{i k u(\mathbf{x})},
$$
induced by the Carath\'{e}odory kernel $g(\mathbf{x},\mathbf{z}) = P(\mathbf{x})\, e^{ i k \mathbf{z}}$,
$\mathcal D_{\mathrm{Fr}}$ is the Fresnel propagation that acts as a linear unitary operator, and $\mathcal I$ is the intensity formation operator that maps a complex field to its pointwise modulus-square.

In what follows, we examine each stage individually.
Let us begin with the boundedness of the projection operator, whose proof is straightforward.

\begin{lemma}[Boundedness of projection \cite{natterer2001mathematics}]
\label{lem:projection}
Let $L<\infty$ be the maximal path length through $\Omega$ along the beam direction.  
Then the parallel-beam projection defined in \eqref{eq:projection}
$$
\tilde{ O} : L^2(\Omega;\mathbb C) \longrightarrow L^2(\Omega'_\pi;\mathbb C)
$$
is a bounded linear operator satisfying
$$
\|\tilde{ O}(f)\|_{L^2(\Omega'_\pi)} \le \sqrt{L \pi}\, \|f\|_{L^2(\Omega)}.
$$
Moreover, the sign of the real and imaginary part are preserved, i.e. for $f \in \mathcal K$, we have $\Re(\widetilde{O}(f)) \leq 0 $ and $\Im(\widetilde{O}(f)) \geq 0 $.
\end{lemma}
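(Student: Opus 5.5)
Linearity of $\tilde O$ is immediate from linearity of the integral and of the rotation $f\mapsto f_\theta$, so the work lies in the norm bound and the sign statement. Extend $f$ by zero outside $\Omega$, so $f\in L^2(\mathbb R^3)$ with the same norm. For fixed $\theta$, the rotation $R_\theta$ is orthogonal, so $f\mapsto f_\theta$ is an isometry on $L^2(\mathbb R^3)$ and $f_\theta$ is supported in $R_\theta^{-1}\Omega$. For fixed $(x,y)$, the line $\{(x,y,z):z\in\mathbb R\}$ meets $R_\theta^{-1}\Omega$ in a set $S_{x,y,\theta}$ of one-dimensional measure at most $L$, where $L$ is understood as the maximal chord length of $\Omega$ over all beam directions (e.g.\ $L\le\operatorname{diam}\Omega$).

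The key step is Cauchy--Schwarz on each line:
\[
|\tilde O(f)(x,y,\theta)|^2=\Big|\int_{S_{x,y,\theta}} f_\theta\,dz\Big|^2\le L\int_{\mathbb R}|f_\theta(x,y,z)|^2\,dz .
\]
Integrating over $(x,y)\in\Omega'\subset\mathbb R^2$ and applying Fubini--Tonelli gives
\[
\int_{\Omega'}|\tilde O(f)(\cdot,\theta)|^2\,d\mathbf x\le L\,\|f_\theta\|_{L^2(\mathbb R^3)}^2=L\|f\|_{L^2(\Omega)}^2 .
\]
Integrating in $\theta$ over $[0,\pi)$ then yields $\|\tilde O(f)\|^2_{L^2(\Omega'_\pi)}\le L\pi\|f\|^2_{L^2(\Omega)}$. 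Tonelli also shows $\tilde O(f)$ is measurable and a.e.\ finite, which settles well-definedness for $f\in L^2$ rather than merely for continuous $f$: by Cauchy--Schwarz, $f_\theta(x,y,\cdot)\in L^1$ for a.e.\ $(x,y)$.

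For the sign statement, note that real and imaginary parts commute with the (real) line integral and with rotation:
\[
\Re\tilde O(f)=\tilde O(\Re f),\qquad \Im\tilde O(f)=\tilde O(\Im f).
\]
If $\Re f\le0$ a.e., then $\Re f_\theta\le 0$ a.e.\ in $\mathbb R^3$, since rotation is measure preserving. Hence for a.e.\ $(x,y,\theta)$ the integrand is nonpositive a.e.\ on the line, and the integral is $\le0$. The argument for $\Im$ is identical with reversed sign.

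The only delicate point is the measure-theoretic one: passing from ``a.e.\ in $\mathbb R^3$'' to ``a.e.\ on almost every line'', and getting joint measurability in $(x,y,\theta)$. I would handle this by applying Fubini to the measurable map $(x,y,z,\theta)\mapsto f(R_\theta(x,y,z))$ on $\mathbb R^3\times[0,\pi)$, whose measurability follows from composing a measurable function with a continuous map that is a measure-preserving diffeomorphism in the spatial variables for each fixed $\theta$.
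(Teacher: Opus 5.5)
Your proof is correct and is the standard argument the paper has in mind. The paper gives no proof of its own (it calls the proof straightforward and cites Natterer), and the intended bound comes exactly from Cauchy--Schwarz on each line, using that the support meets every line in a set of length at most $L$, followed by Fubini in $(x,y)$, rotation invariance of the $L^2$ norm, and integration over $\theta\in[0,\pi)$. Your remarks on well-definedness for $f\in L^2$, on joint measurability via the measure-preserving map $(\mathbf r,\theta)\mapsto(R_\theta\mathbf r,\theta)$, and on the sign statement through $\Re\tilde O(f)=\tilde O(\Re f)$ and $\Im\tilde O(f)=\tilde O(\Im f)$ correctly supply details the paper leaves implicit.
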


We now proceed to the analysis of the Nemyskii operator  $\mathcal N_g$, which needs a specialization of the general theory. The proof is given in the appendix for completeness.  In order to verify the Fr\'echet differentiability in a suitable setting, we define an extension of the map from $\mathcal{K}$ to $L^2(\Omega_\pi')$, since the set of functions with nonnegative imaginary part do not include open sets. For this sake we introduce a monotone continuously differentiable function $S \in C^1(\mathbb{R})$ such that $S(t)=t$ for $t \geq 0$ and $S(t) \geq - \frac{1}k$ for $t < 0$. Then, by slight abuse of notation we define 
$$ \hat{e}^{ikz} := e^{ik(\Re(z) + i S(\Im(z))} $$
and it is easy to see that $\hat{e}^{ikz} = e^{ikz}$  for $\Im(z) \geq 0$  as well as 
$$ 0 \leq \hat{e}^{ikz} \leq e \qquad \forall z \in \mathbb{C}.$$

\begin{lemma}[Nemytskii operator]
\label{lem:nemytskii}
Let $1\le p \le r\le\infty$. 
For a fixed constant $k>0$ and $P\in L^\infty(\Omega'_\pi;\mathbb C)$, define
$$
g:\Omega'_\pi\times\mathbb C\to\mathbb C,\qquad g(x,z)=P(x) \hat{e}^{ikz},
$$
and the associated operator
$$
\mathcal N_g: \mathcal U \to L^p(\Omega'_\pi;\mathbb C),\qquad
\mathcal N_g(u)(x)=g(x,u(x))=P(x)\hat{e}^{ik u(x)},
$$
with the admissible set
$$
\mathcal U:=\{u\in L^r(\Omega'_\pi;\mathbb C):\Im u\ge 0\ \text{a.e. in }\Omega'_\pi\}.
$$
Then the following statements hold.
\begin{enumerate}
\item (Well-definedness). $\mathcal N_g$ is a Nemytskii operator and 
$$\|\mathcal N_g(u)\|_{L^p(\Omega'_\pi)}\le\|P\|_{L^p(\Omega'_\pi)}\le\|P\|_{L^\infty(\Omega'_\pi)}|\Omega'_\pi|^{1/p}.
$$
\item (Lipschitz continuity). $\mathcal N_g$ is Lipschitz continuous on $\mathcal U$. For any $u,v\in\mathcal U$,
$$
\|\mathcal N_g(u)-\mathcal N_g(v)\|_{L^p(\Omega'_\pi)} \le k\,\|P\|_{L^\infty(\Omega'_\pi)}\,\|u-v\|_{L^p(\Omega'_\pi)}.
$$
\item (Fr\'echet differentiability). If $p < r$, then the extension of $\mathcal N_g$ to $L^p(\Omega;\mathbb{C})$ is Fr\'echet differentiable 
with
$$
{\mathcal N}_g'(u)[h] = ik\,P\,{\hat e}^{ik u}\,(\Re(h)+ i S'(\Im(z))\Im(h)),
$$
for any  $h\in L^r(\Omega'_\pi;\mathbb C)$. For $u \in \mathcal U$ it holds that
${\mathcal N}_g'(u) = ik\,P\,{\hat e}^{ik u}$.
\end{enumerate}
\end{lemma}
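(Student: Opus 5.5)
The plan is to prove the three statements in order, working pointwise with the scalar map $z\mapsto \hat e^{ikz}$ and then integrating.

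\emph{Well-definedness.} First, $g$ is a Carath\'eodory function: it is measurable in $x$ because $P$ is, and continuous in $z$ because $S\in C^1$. Hence $\mathcal N_g(u)$ is measurable for every measurable $u$. For $u\in\mathcal U$ we have $\Im u\ge 0$ a.e., so $S(\Im u)=\Im u$ and
$$|\hat e^{iku}|=e^{-k\Im u}\le 1.$$
This gives $|\mathcal N_g(u)|\le|P|$ pointwise. Taking $L^p$ norms and then using Hölder's inequality on the bounded set $\Omega'_\pi$ yields both inequalities in item 1.

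\emph{Lipschitz continuity.} For $u,v\in\mathcal U$ we use the mean value inequality for the entire function $e^{ikz}$ along the segment $[v(x),u(x)]$. The upper half-plane is convex, so the segment stays in it. There $|\tfrac{d}{dz}e^{ikz}|=k e^{-k\Im z}\le k$, and therefore
$$|e^{iku}-e^{ikv}|\le k|u-v| \quad\text{pointwise}.$$
Multiplying by $|P|\le\|P\|_\infty$ and taking the $L^p$ norm gives the stated bound. This step needs no restriction beyond $p\le\infty$.

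\emph{Fréchet differentiability.} We view $\mathbb C\cong\mathbb R^2$ and set $\phi(a,b)=e^{ik(a+iS(b))}$. Then $\phi$ is $C^1$ and bounded (by $e$), and its derivative
$$D\phi(a,b)[h_1,h_2]=ik\,\phi(a,b)\,(h_1+iS'(b)h_2)$$
is bounded, since $S'$ is bounded if we choose $S$ with $S'$ bounded (e.g.\ $S'\in[0,1]$). The candidate derivative is then the multiplication operator stated in the lemma. It is bounded from $L^r$ to $L^p$ by Hölder, because $P\,\hat e^{iku}$ and $S'$ are in $L^\infty$. For the remainder we write, pointwise,
$$R(x)=P\int_0^1\big(D\phi(u+th)-D\phi(u)\big)[h]\,dt,$$
so $|R|\le \|P\|_\infty\,\omega(x,h)\,|h|$, where $\omega(x,h)=\sup_t\|D\phi(u+th)-D\phi(u)\|$ is bounded and tends to $0$ wherever $h(x)\to0$. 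Applying Hölder with $\frac1p=\frac1r+\frac1q$ gives
$$\|R\|_{L^p}\le\|P\|_\infty\|\omega\|_{L^q}\|h\|_{L^r},$$
and it remains to show $\|\omega(\cdot,h)\|_{L^q}\to0$ as $\|h\|_{L^r}\to0$. This is the standard Nemytskii argument and is exactly where $p<r$ is needed, since it forces $q<\infty$. The argument runs by contradiction: along a sequence $h_n\to0$ in $L^r$ we pass to an a.e.-convergent subsequence. Then $\omega\to0$ a.e. and is uniformly bounded, and $\Omega'_\pi$ is bounded, so dominated convergence gives $\|\omega\|_{L^q}\to0$.

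I expect this remainder estimate, i.e.\ the uniformity over $h$, to be the main obstacle. For $r=\infty$ the subsequence argument is unnecessary: uniform continuity of $D\phi$ on the relevant bounded sets can be used directly.

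Finally, for $u\in\mathcal U$ and directions $h$ that keep $u+h$ admissible, $S'=1$ on $[0,\infty)$. The derivative then collapses to $ikP e^{iku}(\Re h+i\Im h)=ikPe^{iku}h$, which is the stated complex-linear formula.
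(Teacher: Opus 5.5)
Your proof is correct. Parts 1 and 2 match the paper: both use the pointwise bound $|\hat e^{iku}|\le 1$ on $\mathcal U$, and the integral (mean-value) representation along the segment $[v(x),u(x)]$, which stays in the closed upper half-plane.

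For part 3 you take a genuinely different route. The paper works only with the genuine exponential at $u\in\mathcal U$ and with perturbations satisfying $u+h\in\mathcal U$. It uses the second-order Taylor formula to get $|R|\le c|h|^2$ on $\{|h|<1\}$ and $|R|\le c|h|$ on $\{|h|\ge 1\}$; the latter bound comes from $|\mathcal N_g(u+h)|\le |P|$, i.e.\ from admissibility. It then compares powers of $|h|$ on the two sets and obtains the explicit rate $\|R\|_{L^p}\le c\|h\|_{L^r}^{\min(2,r/p)}$. You instead use the first-order integral remainder of the $C^1$ map $\phi(a,b)=e^{ik(a+iS(b))}$, H\"older with $\frac1p=\frac1r+\frac1q$, and the classical subsequence/dominated-convergence argument for Nemytskii operators.

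The paper's version buys a quantitative remainder rate. However, it needs second-order smoothness of the scalar map, which holds for $e^{ikz}$ but not for $\hat e^{ikz}$ when $S$ is merely $C^1$. As written it also only covers admissible directions. So it does not literally prove differentiability of the extension on all of $L^r(\Omega'_\pi)$, nor does it produce the $S'(\Im u)$ factor in the stated formula.

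Your version gives no rate but needs only $C^1$ with bounded derivative, so it proves exactly what the statement claims for the extended operator. Your extra requirement that $S'$ be bounded is a legitimate choice within the paper's definition of $S$, which leaves $S'$ uncontrolled on $(-\infty,0)$. It is also what makes the multiplier $S'(\Im u)$ bounded for arbitrary $u\in L^r$.

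One small point: at $u\in\mathcal U$ you need not restrict to directions keeping $u+h$ admissible. Since $S\in C^1$ and $S(t)=t$ for $t\ge 0$ force $S'(0)=1$, we have $S'(\Im u)=1$ a.e. Hence $\mathcal N_g'(u)h=ikPe^{iku}h$ for every $h\in L^r$.
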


The next step is the Fresnel propagator, which is indeed a unitary operator in $L^2$.

\begin{lemma}[Fresnel propagator]
\label{lem:fresnel}
Define
$$
\mathcal D_{\mathrm{Fr}} u \;=\; \mathcal F^{-1}\!\big[H_d\cdot\mathcal F \,u\big],\qquad
H_d(\xi)=\exp\!\Big(-i\pi \frac{|\xi|^2}{2\,\mathrm{Fr}}\Big).
$$
Then for any dimension $m\ge 1$,
\begin{enumerate}
\item $\mathcal D_{\mathrm{Fr}}$ is unitary on $L^2(\mathbb{R}^m)$, i.e., $\|\mathcal D_{\mathrm{Fr}}u\|_{L^2(\mathbb{R}^m)}=\|u\|_{L^2(\mathbb{R}^m)}$ for any $u\in L^2(\mathbb{R}^m)$.
\item $\mathcal D_{\mathrm{Fr}}$ is a bounded linear operator
$$
 \mathcal D_{\mathrm{Fr}}:L^p(\mathbb{R}^m)\to L^{p'}(\mathbb{R}^m),\qquad \text{for any }1\le p\le 2,\quad p'=\frac{p}{p-1}.
$$
\end{enumerate}
\end{lemma}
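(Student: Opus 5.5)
The plan is to work entirely on the Fourier side. There $\mathcal D_{\mathrm{Fr}}$ is the Fourier multiplier with symbol $H_d$, and $|H_d(\xi)|=1$ for every $\xi\in\mathbb{R}^m$. (The statement is written with $|\xi|^2$ for general $m$, so the argument is the same in every dimension.) Two facts about the symbol will be used: it is measurable and unimodular, and $\overline{H_d}=H_d^{-1}=\exp(i\pi|\xi|^2/(2\mathrm{Fr}))$.

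\textbf{Part 1.} I will invoke Plancherel's theorem: $\mathcal F$ extends to a unitary operator on $L^2(\mathbb{R}^m)$ with the normalization $\mathcal F u(\xi)=\int u(x)e^{-2\pi i x\cdot\xi}\,dx$, which is the normalization suggested by the factor $\pi$ in $H_d$. Multiplication by $H_d$ preserves the $L^2$ norm, since $\|H_d\hat u\|_{L^2}=\|\hat u\|_{L^2}$ pointwise in modulus. Combining these gives
$$
\|\mathcal D_{\mathrm{Fr}}u\|_{L^2}=\|H_d\,\mathcal F u\|_{L^2}=\|\mathcal F u\|_{L^2}=\|u\|_{L^2}.
$$
For surjectivity, and hence unitarity rather than just isometry, I will write down the inverse $\mathcal F^{-1}[\overline{H_d}\,\mathcal F\,\cdot\,]$ explicitly. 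This inverse is also the adjoint, which identifies the backpropagation operator needed later for the gradients.

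\textbf{Part 2.} I will interpolate between $p=1$ and $p=2$.
\begin{itemize}
\item At $p=2$, Part 1 gives the bound with constant $1$.
\item At $p=1$ I will use the kernel representation. The convolution kernel is $h_d=\mathcal F^{-1}H_d$, a chirp of the form $h_d(x)=c_{m}\,\mathrm{Fr}^{m/2}\exp(2\pi i\,\mathrm{Fr}|x|^2)$ with $|c_m|=1$. This follows from the Gaussian Fourier transform formula continued analytically to imaginary variance. Since $|h_d|$ is constant, $\|h_d\|_{L^\infty}=\mathrm{Fr}^{m/2}$, and Young's inequality gives $\|h_d*u\|_{L^\infty}\le \mathrm{Fr}^{m/2}\|u\|_{L^1}$. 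This covers $p=1$, $p'=\infty$.
\end{itemize}
Riesz--Thorin interpolation for the linear operator $\mathcal D_{\mathrm{Fr}}$, applied on the dense subspace $L^1\cap L^2$, then yields
$$
\|\mathcal D_{\mathrm{Fr}}u\|_{L^{p'}}\le \mathrm{Fr}^{m(1/p-1/2)}\|u\|_{L^p},\qquad 1\le p\le2.
$$
The estimate extends to all of $L^p$ by density. This is the dispersive $L^p\to L^{p'}$ estimate familiar from the Schrödinger propagator.

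The main obstacle is making the kernel computation at $p=1$ rigorous. The symbol $H_d$ is not integrable, so $\mathcal F^{-1}H_d$ must be understood as a tempered distribution. I would handle this by computing $\mathcal F^{-1}[H_d e^{-\varepsilon|\xi|^2}]$, which is a genuine Gaussian with complex width, and letting $\varepsilon\to0$. For $u\in L^1\cap L^2$ the convolution identity then passes to the limit by dominated convergence. Alternatively, and more simply, for Schwartz functions $u$ one can verify directly that $\mathcal D_{\mathrm{Fr}}u=h_d*u$ and conclude by density.
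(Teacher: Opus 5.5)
Your proposal is correct and follows the paper's route: unitarity from Plancherel together with $|H_d|=1$, and the $L^p\to L^{p'}$ bound from the convolution kernel $h_d=\mathcal F^{-1}H_d\in L^\infty$. You make explicit the Riesz--Thorin interpolation with the $L^2$ bound, which the paper's ``it follows'' leaves implicit (a bounded kernel alone only gives the $p=1$ endpoint). One small slip: with your normalization, $|h_d|=(2\mathrm{Fr})^{m/2}$ rather than $\mathrm{Fr}^{m/2}$, which only changes the constant.
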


\begin{proof}   
1. Since the Fourier transform $\mathcal F$ is unitary on $L^2(\mathbb{R}^m)$ and $|H_d(\xi)|=1$ for any $\xi\in \mathbb{R}^m$,
we have, for any $u\in L^2(\mathbb{R}^m)$, 
{\footnotesize
$$
\|\mathcal D_{\mathrm{Fr}} u\|_{L^2(\mathbb{R}^m)}
=\left\|\mathcal F^{-1}\!\big[H_d\cdot\mathcal F \,u\big]\right\|_{L^2(\mathbb{R}^m)}
=\left \|H_d\cdot\mathcal F \,u\right\|_{L^2(\mathbb{R}^m)}
=\left\|\mathcal F \,u\right\|_{L^2(\mathbb{R}^m)}
=\|u\|_{L^2(\mathbb{R}^m)}.
$$
}
2.
By the Fourier convolution theorem \cite{Grafakos2014},
$$
\mathcal D_{\mathrm{Fr}}(u) = \mathcal{F}^{-1}\!\left[H_d \cdot \mathcal{F}(u)\right]
            = h_d * u,
$$
where $h_d = \mathcal{F}^{-1}H_d$ is the Fresnel kernel in real space. Since $H_d(\xi)=\exp(-i\pi|\xi|^2/(2 \, \mathrm{Fr}))$ is a pure phase factor,
it is easy to verify that  
$h_d\in L^\infty(\mathbb{R}^m)$. It follows the conclusion.
\end{proof}
Finally, we study the intensity mapping.
\begin{lemma}[Intensity mapping]
\label{lem:intensity}
The intensity map $\mathcal I :\psi\mapsto |\psi|^2$ satisfies:
\begin{enumerate}
  \item If $\psi\in L^{2p}(\mathbb{R}^m)$, then $\mathcal I(\psi)\in L^p(\mathbb{R}^m)$ and
    $\|\mathcal I(\psi)\|_{L^p(\mathbb{R}^m)} = \|\psi\|_{L^{2p}(\mathbb{R}^m)}^2$.
  \item $\mathcal I: L^{2p}(\mathbb{R}^m)\to L^p(\mathbb{R}^m)$ is continuously Fr\'echet differentiable with derivative 
    $$
      \mathcal I'(\psi)[h] = 2\,\Re\big( \overline\psi\,h \big),
    $$
for any $\psi$, and $h$ is a small perturbation. 
\end{enumerate}
\end{lemma}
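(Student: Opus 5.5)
Both parts follow from pointwise algebraic identities for the map $\psi\mapsto|\psi|^2$, combined with H\"older's inequality; no results beyond basic $L^p$ theory are needed. For part 1, I would simply note that $|\mathcal I(\psi)|^p=|\psi|^{2p}$ pointwise. Integrating gives $\|\mathcal I(\psi)\|_{L^p}^p=\|\psi\|_{L^{2p}}^{2p}$, and taking $p$-th roots yields the identity. For $p=\infty$, the statement is the identity $\operatorname{ess\,sup}|\psi|^2=(\operatorname{ess\,sup}|\psi|)^2$. This also shows that $\mathcal I$ maps $L^{2p}(\mathbb R^m)$ into $L^p(\mathbb R^m)$.

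For part 2, I would first expand the square pointwise: for $\psi,h\in L^{2p}$,
$$
|\psi+h|^2-|\psi|^2 = 2\Re(\overline\psi h)+|h|^2 .
$$
The candidate derivative $A_\psi h:=2\Re(\overline\psi h)$ is real-linear in $h$, which is the appropriate notion since $\mathcal I$ is not complex-differentiable. It is bounded $L^{2p}\to L^p$, because H\"older's inequality with $\tfrac1p=\tfrac1{2p}+\tfrac1{2p}$ gives
$$
\|\overline\psi h\|_{L^p}\le\|\psi\|_{L^{2p}}\|h\|_{L^{2p}} .
$$
The remainder is $|h|^2=\mathcal I(h)$, so part 1 gives $\|\mathcal I(h)\|_{L^p}=\|h\|_{L^{2p}}^2=o(\|h\|_{L^{2p}})$. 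This establishes Fr\'echet differentiability with $\mathcal I'(\psi)[h]=2\Re(\overline\psi h)$.

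For continuity of the derivative, the same H\"older estimate applied to the difference gives
$$
\|(\mathcal I'(\psi_1)-\mathcal I'(\psi_2))h\|_{L^p}\le 2\|\psi_1-\psi_2\|_{L^{2p}}\|h\|_{L^{2p}} .
$$
Hence $\psi\mapsto\mathcal I'(\psi)$ is Lipschitz with constant $2$ into $\mathcal L(L^{2p},L^p)$.

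The only point needing care is the range of $p$. For $1\le p\le\infty$ all estimates above hold verbatim, with the $L^\infty$ case using essential suprema throughout. I would state explicitly that differentiability is meant over $\mathbb R$, viewing the complex spaces as real Banach spaces. No step presents a real obstacle.
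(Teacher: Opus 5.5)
Your proof is correct and complete. The paper states this lemma without proof, so there is no argument of its own to compare against. Your route is the standard verification that fills the gap:
\begin{itemize}
\item the exact pointwise identity $|\psi+h|^2-|\psi|^2=2\Re(\overline\psi h)+|h|^2$;
\item H\"older's inequality with $\frac1p=\frac1{2p}+\frac1{2p}$, which makes the derivative bounded;
\item part~1 applied to the remainder $\mathcal I(h)$, giving $o(\|h\|_{L^{2p}})$;
\item the Lipschitz bound $\|\mathcal I'(\psi_1)-\mathcal I'(\psi_2)\|\le 2\|\psi_1-\psi_2\|_{L^{2p}}$, which gives continuity of $\mathcal I'$.
\end{itemize}
Two of your remarks sharpen the paper's informal phrasing. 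The derivative is only $\mathbb R$-linear, so the complex $L^p$ spaces must be treated as real Banach spaces. The remainder is exact, so the requirement that $h$ be a small perturbation is unnecessary. Your argument also carries over verbatim to any measure space, for instance to $\Omega'_\pi$ with $p=2$, which is the case used in the forward-operator theorem.
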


Combining the above results, we arrive at the main result of this section. 

\begin{theorem}
\label{thm:forward}
The forward operator
$$
F : \mathcal K \to L^2(\Omega'_\pi;\mathbb{R}), \qquad F(f) = \Big|\mathcal D_{\mathrm{Fr}}\big(P e^{ik \widetilde O(f)}\big)\Big|^2,
$$
is well-defined, continuous, and its extension to $L^2(\Omega;\mathbb{C})$ is Fr\'echet differentiable with derivative  
$$
F'(f)[h] =
2\,\Re\!\left(
\overline{\mathcal D_{\mathrm{Fr}}(P e^{ik\widetilde O(f)})}\;
\mathcal D_{\mathrm{Fr}}\big( i k P e^{ik\widetilde O(f)} \cdot \widetilde O (h)\big)
\right),
$$
for any $f\in \mathcal K$.%L^2(\Omega; \mathbb C)$.
\end{theorem}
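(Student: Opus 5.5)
We write $F=\mathcal I\circ\mathcal D_{\mathrm{Fr}}\circ\mathcal N_g\circ\widetilde O$ and obtain well-definedness, continuity and differentiability by composing Lemmas~\ref{lem:projection}--\ref{lem:intensity}; the derivative then follows from the chain rule. The only real issue is matching exponents so that each map lands in a space where the next one is defined, since $F$ must take values in $L^2$, which requires $\mathcal I$ to start from $L^4$.

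\emph{Well-definedness and continuity.} By Lemma~\ref{lem:projection}, $\widetilde O:L^2(\Omega)\to L^2(\Omega'_\pi)$ is bounded and maps $\mathcal K$ into $\mathcal U$ (with $r=2$), because $\Im\widetilde O(f)\ge 0$. By Lemma~\ref{lem:nemytskii}, $\mathcal N_g(\widetilde O f)$ is bounded by $|P|$ pointwise, since $|e^{ikz}|=e^{-k\Im z}\le 1$ on $\mathcal U$. It therefore lies in $L^p(\Omega'_\pi)$ for every $p\in[1,\infty]$, and it is Lipschitz from $L^2$ into $L^2$. We regard the field, extended by zero outside the bounded window, as an element of $L^{4/3}\cap L^2$ for each fixed $\theta$. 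Lemma~\ref{lem:fresnel}(2) with $p=4/3$ gives $\mathcal D_{\mathrm{Fr}}:L^{4/3}\to L^4$ boundedly, and Lemma~\ref{lem:intensity}(1) then gives $F(f)\in L^2$. Continuity follows by composing continuous maps. To pass from $L^2$ to $L^{4/3}$ on the bounded domain we use Hölder, together with the pointwise Lipschitz bound $|e^{iku}-e^{ikv}|\le k|u-v|$ on $\mathcal U$. If the operator norm in $\theta$ is needed, we integrate the fixed-$\theta$ estimates over $[0,\pi)$, using that all bounds are uniform in $\theta$.

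\emph{Fréchet differentiability.} Here we work with the extension $\hat e$ on all of $L^2(\Omega)$. Lemma~\ref{lem:nemytskii}(3) requires $p<r$, so we take $r=2$ and $p=4/3$. Then $\mathcal N_g:L^2(\Omega'_\pi)\to L^{4/3}(\Omega'_\pi)$ is Fréchet differentiable, and at points of $\mathcal U$ its derivative is $h\mapsto ikPe^{iku}h$. Composing with the bounded linear maps $\widetilde O$ and $\mathcal D_{\mathrm{Fr}}:L^{4/3}\to L^4$, and then with $\mathcal I:L^4\to L^2$, which is Fréchet differentiable by Lemma~\ref{lem:intensity}(2) with $p=2$, the chain rule gives
\[
F'(f)[h]=2\Re\Big(\overline{\mathcal D_{\mathrm{Fr}}(Pe^{ik\widetilde O f})}\,\mathcal D_{\mathrm{Fr}}\big(ikPe^{ik\widetilde O f}\widetilde O h\big)\Big)
\]
for $f\in\mathcal K$. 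This uses that $S'(t)=1$ for $t\ge0$, so the correction term in Lemma~\ref{lem:nemytskii}(3) vanishes when $\Im\widetilde O f\ge 0$.

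\emph{Main obstacle.} The delicate step is the Nemytskii differentiability, which only holds with a loss of integrability ($p<r$). This is why we must route through $L^{4/3}$ and use the $L^{4/3}\to L^4$ Hausdorff--Young-type bound for $\mathcal D_{\mathrm{Fr}}$ rather than unitarity on $L^2$. We must also check that the fields, which live on $\Omega'\times[0,\pi)$, can be treated slice-wise in $\theta$ with $\mathcal D_{\mathrm{Fr}}$ acting on $\mathbb R^2$. Concretely, we need the remainder estimate $\|r(h)\|_{L^{4/3}}=o(\|h\|_{L^2})$ for the Nemytskii map to integrate over $\theta$. I would verify this directly from the Taylor bound $|e^{ik(u+h)}-e^{iku}-ike^{iku}h|\le Ck^2|h|^2$, using that $\hat e$ is bounded with bounded derivatives, combined with Hölder's inequality on the bounded set $\Omega'_\pi$.
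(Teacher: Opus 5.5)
Your proposal is correct and is essentially the paper's proof made explicit. It uses the same factorization $F=\mathcal I\circ\mathcal D_{\mathrm{Fr}}\circ\mathcal N_g\circ\widetilde O$, the same exponent chain $L^{4/3}\to L^4\to L^2$ via Lemmas~\ref{lem:fresnel} and~\ref{lem:intensity}, and the chain rule. Your extra care fills in what the paper's short proof leaves implicit: taking $r=2$, $p=4/3$ in Lemma~\ref{lem:nemytskii}, using $S'=1$ on $[0,\infty)$, and arguing slice-wise in $\theta$ with the uniform bound $\|\widetilde O_\theta h\|_{L^2(\Omega')}\le\sqrt{L}\,\|h\|_{L^2(\Omega)}$. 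That slice-wise step is genuinely needed, because the $\theta$-wise propagator is not bounded from $L^{4/3}(\Omega'_\pi)$ to $L^4(\Omega'_\pi)$.

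One correction to your remainder sketch. The quadratic Taylor bound alone only gives $\|r\|_{L^{4/3}}\le C\|h\|_{L^{8/3}}^2$, and this is not controlled by $\|h\|_{L^2}$. You must pair it with the linear bound $|r|\le C|h|$, which follows from boundedness of $\hat e$ and its first derivative. Together they give $|r|\le C|h|^{3/2}$ and hence $\|r\|_{L^{4/3}}\le C\|h\|_{L^2}^{3/2}$. This is what the paper's appendix does by splitting on the set $\{|h|\ge 1\}$.
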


\begin{proof}
\emph{Well-definedness.}  
Lemma~\ref{lem:projection} shows the projection $u=\tilde {O}(f)\in L^2(\Omega'_\pi)$.  
Lemma~\ref{lem:nemytskii} ensures $\mathcal N_g(u) \in  L^{p}(\Omega'_\pi)$ for any $1\le p\le\infty$.
There exists an extension of $\mathcal N_g(u)$ (still denoted by itself) such that 
$\mathcal N_g(u) \in  L^{p}(\mathbb{R}^2\times [0, \pi))$.
Taking $p = \frac{4}{3}$ in Lemma~\ref{lem:fresnel} gives that 
$\mathcal D_{\mathrm{Fr}}(\mathcal N_g(u))\in L^4(\mathbb{R}^2\times [0, \pi))$.
We then restrict back to $\Omega'$ to find $\mathcal D_{\mathrm{Fr}}(\mathcal N_g(u))\in L^4(\Omega'_\pi)$.
Applying Lemma~\ref{lem:intensity}, we obtain that $F(f)\in L^2(\Omega'_\pi)$.

\emph{Continuity.}  
All four components ($\tilde {O}$, $\mathcal N_g$, $\mathcal D_{\mathrm{Fr}}$, $\mathcal I$) are continuous between their respective domains; therefore their composition $F$ is continuous.  

\emph{Differentiability.}  
Each component is Fr\'echet differentiable (Lemmas~\ref{lem:projection}–\ref{lem:intensity}), so the chain rule applies. 
The resulting derivative is precisely the stated formula. 
\end{proof}

For the purposes of the subsequent analysis, we derive an explicit formula for the Fr\'echet derivative at interior points. The directional derivative applies for boundary points.
From a numerical perspective, the derivative formula given in Theorem \ref{thm:forward} can still be employed in practice, with boundary points handled by projected gradient methods.

\section{Iterative Regularization}\label{sec: regularization}

In this section, we consider the (iterative) regularization of inverse problems in holotomography. For this sake, we start from the least square problem established in \eqref{eq: full nonlinear}
where the relative refractive index $f = -\delta + i\beta$.
In order to minimize $J$ numerically, we naturally use iterative schemes based on gradient descent.
Denote by $G'(f)$ the Fr\'echet derivative of $G$ at $f$
and by $G'(f)^*$ its adjoint. We obtain that
$$
J'(f) = G'(f)^*(G(f)-y).
$$
We shall derive an explicit form for $J'(f)$ by the adjoint method in Appendix \ref{Sec: Gradient Derivation}.
The gradient descent iteration with step size $\omega>0$ reads
\begin{equation}\label{eq:gradient_descent}
f^{k+1} = f^k - \omega\, J'(f^k).
\end{equation}
This formulation coincides with the general framework of the nonlinear Landweber iteration for operator equations \cite{scherzer1995convergence}. In particular, the update is not only a gradient descent step but also an iterative regularization method in the sense of \cite{kaltenbacher2008iterative} for ill-posed inverse problems. In order to incorporate the constraints on $f$ as well as additional regularization we will rather consider mirror-type descent of the form 
\begin{equation}\label{eq:mirror_descent}
p^{k+1} = p^k - \omega\, J'(f^k), \qquad f^{k+1} \in \mathcal H(p^{k+1})
\end{equation}
for some (monotone) map $\mathcal H$. This can include e.g. the projection to the set $\mathcal K$ as well as iterative regularizations, where $\mathcal H$ is the subdifferential of the convex conjugate of a regularization functional \cite{benning2018modern}.
%(REF BENNING-BURGER)

In propagation-based holotomography, the absorptive component $\beta$ is generally more challenging to reconstruct than the refractive component $\delta$. For most materials in the X-ray regime, the refractive component gives rise to substantially stronger image contrast than the absorptive component, i.e., $\delta \gg \beta$, resulting in a more stable reconstruction of $\delta$. To improve the reconstruction of the weaker absorptive component, we propose a phase-guided Bregman iteration framework, in which the Bregman regularization for $\beta$ incorporates structural information recovered from $\delta$.
The motivation for introducing such a coupling is supported by the Paganin phase-retrieval framework \cite{beltran2018phase}, where a material prior based on a constant ratio $\delta/\beta$ links phase shift and absorption through a common physical structure. This suggests that the two quantities are not independent, but instead exhibit correlated structural interfaces.
In material mixtures, we expect several subdomains where $\beta$ and $\delta$ are proportional with some fixed (material-dependent) constant. 
Consequently, structural information recovered from the more stable phase reconstruction can serve as a useful prior for stabilizing the weaker and more unstable absorption reconstruction. The key idea is that 
$$ \nabla \delta \cdot \nabla \beta = | \nabla \delta | ~ |\nabla \beta| $$
almost everywhere in $\Omega$. Several structural regularizations approximating such a property have been proposed in the past \cite{kaipio1999inverse, ehrhardt2013vector, ehrhardt2015joint, knoll2016joint}, a particularly appealing one being the contrast-invariant Bregman distance  \cite{osher2005iterative} for total variation (Bregman TV)
$$
D_{TV}^p(\delta, \beta) = TV(\delta) - \langle p, \delta \rangle, \quad p \in \partial TV(\beta). 
$$
This expression is equivalent to the standard Bregman TV distance up to an additive constant independent of $\delta$. The TV functional \cite{rudin1992nonlinear} is given by 
$$
TV(u)
=
\sup
\left\{
\int_{\Omega}
u\,\operatorname{div}g\,dx
:
g \in C_c^1(\Omega,\mathbb{R}^n),
\ |g| \le 1
\right\}.
$$
In the case of $u \in W^{1,1}(\Omega)$ we simply have $TV(u) = \int_\Omega |\nabla u(x)|~dx$. Moreover, at points where $\nabla \beta \neq 0$, $p=-\operatorname{div}( \frac{\nabla \beta}{|\nabla \beta|})$. 

Thus, minimizing the Bregman distance means to locally match $|\nabla \delta|$ and $\frac{\nabla \beta \cdot \nabla \delta}{|\nabla \beta|}$,
thereby encouraging common edge locations while remaining insensitive to differences in contrast magnitude.
This property makes the Bregman distance particularly suitable for
transferring structural information from the more stable phase
reconstruction to the absorption reconstruction. 

In the following we will discuss an iterative regularization scheme to incorporate the phase-guiding idea for Bregman TV regularization. Subsequently, we discuss the nonlinearity of the forward operator in holotomography, which is motivated by the convergence analysis of the iterative scheme \eqref{eq:gradient_descent}.

\subsection{Phase-Guided Bregman Framework}

Regularization methods are commonly employed in  inverse problems to suppress noise and stabilize the reconstruction process. In our setting, we choose total variation regularization, respectively iterative regularizations based on Bregman distances \cite{osher2005iterative, benning2018modern}. The standard variational regularization approach in this setting corresponds to the minimization of 
\begin{equation}
  J_{\text{reg}}(\delta, \beta) = J(\delta, \beta) + \lambda_\delta \, \text{TV}(\delta) + \lambda_\beta \, \text{TV}(\beta),  
  \label{eq:full pb}
\end{equation}
with $ \lambda_\delta, \lambda_\beta > 0 $ are parameters controlling the strength of regularization for $ \delta $ and $ \beta $.
In order to perform structural regularization for the absorption based on the phase we want to include the Bregman distance between  $\delta$ and $\beta$. A simple idea would be to augment \eqref{eq:full pb} by such an additional regularization term. However, this results in a nonconvex problem being difficult to minimize with respect to $\beta$ (due to the complexity of the Bregman distance with respect to the second variable). Instead we apply a joint Bregman iteration including this distance as well, as studied in \cite{rasch2018joint}.

In order to efficiently deal with the nonlinearity and complexity of the forward operator, we make a further slight modification to the joint Bregman iteration, which resembles the Bregman-Landweber \cite{bachmayr2009iterative} or linearized Bregman method \cite{cai2009linearized}. More precisely, we employ operator-splitting methods \cite{glowinski2017some} and use a gradient descent for the first half step, leading to 
$$
\delta^{k + \frac{1}{2}} = \delta^k - \alpha_\delta \frac{\partial J}{\partial \delta} \bigg|_{(\delta^k,\beta^k)}, \quad 
´\beta^{k + \frac{1}{2}} = \beta^k - \alpha_\beta \frac{\partial J}{\partial \beta} \bigg|_{(\delta^k,\beta^k)}.
$$
In the second half step we regularize $\delta^{k + \frac{1}{2}}$ with the Bregman distance
\begin{equation}\label{eq: phase update}
\delta^{k+1} = \arg \min_{\delta} \left( \frac{1}{2} \|\delta - \delta^{k + \frac{1}{2}}\|_2^2 + \alpha_\delta \lambda_\delta D_{TV}^{p_\delta^k}(\delta, \delta^k) \right),
\end{equation}
where $ p_\delta^k \in \partial TV(\delta^k) $ is the subgradient at  the previous iterate.

The Bregman subgradient update $p_\delta^{k+1} $ is derived from the optimality condition of the above Bregman-regularized minimization problem. Namely, 
$$
(\delta^{k+1} - \delta^{k+\frac{1}{2}}) + \alpha_\delta \lambda_\delta (\partial TV(\delta^{k+1}) - p_\delta^k) = 0,
$$
which is equivalent to
$$
p_\delta^{k+1} = p_\delta^k- \frac{1}{\alpha_\delta \lambda_\delta} (\delta^{k+1} - \delta^{k+\frac{1}{2}}).
$$
Substituting it back to \eqref{eq: phase update}, we arrive at 
\begin{equation}\label{eq: delta reg}
   \delta^{k+1} = \arg \min_{\delta} \left( \frac{1}{2} \|\delta - \delta^{k + \frac{1}{2}} - \alpha_\delta \lambda_\delta p_\delta^k\|_2^2 + \alpha_\delta \lambda_\delta \text{TV}(\delta) \right). 
\end{equation}
The term $ \delta^{k + \frac{1}{2}} + \alpha_\delta \lambda_\delta p_\delta^k $ acts as a modified input, meaning the problem can be seen as a denoising step applied to a shifted iterate. We then use efficient TV denoising solvers, e.g., Chambolle’s projection method, split-Bregman, Primal-Dual methods \cite{chambolle2004algorithm, goldstein2009split, chambolle2011first} for \eqref{eq: delta reg}.

For the absorption $\beta$, we use a modified update with a joint Bregman distance to the last iterates of $\delta$ and $\beta$, i.e. 
$$  \widetilde{D}_{TV}(\beta, \beta^k, \delta^{k+1}) = \gamma D_{TV}^{p_\beta^k}(\beta, \beta^k) + (1-\gamma) D_{TV}^{p_\delta^{k+1}}(\beta, \delta^{k+1}),$$
where $\gamma \in [0,1]$ balances the two terms. Now the update rule for $\beta^{k+1}$ based on the joint Bregman distance for TV is
\begin{equation}
\beta^{k+1} = \arg\min_{\beta} \left( \frac{1}{2} \|\beta - \beta^{k + \frac{1}{2}}\|_2^2 + \alpha_\beta \lambda_\beta \widetilde{D}_{TV}(\beta, \beta^k, \delta^{k+1}) \right). 
\label{eq: Breg beta}
\end{equation}
Noticing that
$$
\begin{aligned}
\widetilde{D}_{TV}(\beta, \beta^k, \delta^{k+1}) &= \gamma \left[ \text{TV}(\beta)  - \langle p_\beta^k, \beta \rangle \right] + (1 - \gamma) \left[ \text{TV}(\beta) - \langle p_\delta^{k+1}, \beta \rangle \right] \\
&= \text{TV}(\beta) - \gamma \langle p_\beta^k, \beta  \rangle - (1 - \gamma) \langle p_\delta^{k+1}, \beta \rangle,
\end{aligned}
$$
we arrive at the equivalent form
\begin{equation}\label{eq: beta}
  \beta^{k+1}= \arg \min_{\beta} \left( \frac{1}{2} \|\beta - \beta^{k + \frac{1}{2}} - \alpha_\beta \lambda_\beta \left( \gamma  p_\beta^k + (1-\gamma) p_\delta^{k+1} \right) \|_2^2 + \alpha_\beta \lambda_\beta\text{TV}(\beta) \right),  
\end{equation}
$$
p_\beta^{k+1} = \gamma \,p_\beta^k + (1-\gamma) \, p_\delta^{k+1} - \frac{1}{\alpha_\beta \lambda_\beta} (\beta^{k+1} - \beta^{k+\frac{1}{2}}).
$$
The resulting coupled regularization framework improves the robustness of the absorption reconstruction while preserving consistent structural information between the phase and absorption components.

\subsection{Convergence Analysis}
The convergence and regularization behavior of iterative regularization methods has been studied extensively in the past, starting from the nonlinear Landweber iteration
\eqref{eq:gradient_descent} in \cite{scherzer1995convergence}. Extensions to Bregman-type methods (cf. \cite{bachmayr2009iterative}) closely follow those arguments and are based on similar assumptions. The key properties of the forward operator needed for convergence are the local Fr\'echet differentiability (with local boundedness or Lipschitz continuity of the derivative) and nonlinearity conditions such as the tangential cone condition (TCC)
 $$
   \|F(x)-F(\tilde x)-F'(x)(x-\tilde x)\| \leq \eta \, \|F(x)-F(\tilde x)\| ,
   $$
for some constant $0 \leq \eta < \tfrac{1}{2}$ and $x,\tilde x$ in suitable neighborhood of the solution $x^*$. 

While it is notoriously difficult to understand the convergence properties for the full phase problem, we can at least investigate the implications of the additional exponential nonlinearity in the full holotomography problem (compared to the simple Fresnel propagators in standard holography). Thus, we verify in the following that the operator mapping to full Fresnel data,
$$
\widetilde G(f) = D_{\mathrm{Fr}} \big( P e^{i k \widetilde{O}(f)} \big) 
$$
with relative refractive index $f = -\delta + i\beta$ satisfies the assumptions for convergence of gradient based-iterative regularization methods.

We have established in Lemma \ref{lem:projection}--\ref{lem:fresnel} that the forward mappings, including projection $\widetilde{O}$, multiplication by the probe $P$, the exponential modulation $u\mapsto e^{ik u}$, and the Fresnel propagator $\mathcal D_{\mathrm{Fr}}$, are well defined and Fr\'echet differentiable on any physically admissible neighborhood. Under physically reasonable assumptions, e.g.,
bounded probe, finite propagation distance, and weak contrast,
the forward operator $\widetilde G$ is Fr\'echet differentiable and admits
a uniform local bound on its derivative,
$$
\sup_{f\in\mathcal M}\|\widetilde G'(f)\| \le L.
$$
Here, $\mathcal M \subset \mathcal K$ denotes a bounded neighborhood
of the true solution.
We therefore concentrate on the remaining and the most delicate hypothesis of the TCC.  
It is convenient to verify it by means of the stronger \emph{range-invariance condition} (RIC) \cite{kaltenbacher2008iterative,deuflhard1998convergence}, which implies further structure of the derivatives.

\begin{definition}(Range-invariance condition (RIC))
Let $\mathcal M$ be a neighborhood of the true solution in $X$. We say that a nonlinear operator $F$ satisfies a range invariance condition on $\mathcal M$ if for every $f,\tilde f\in\mathcal M$ there exists a bounded linear operator $R(f,\tilde f):Y\to Y$ with
$$
 F'(f) = R(f,\tilde f)\,F'(\tilde f),   
$$
and the family $R(f,\tilde f)$ satisfies
$$
 \sup_{f,\tilde f\in\mathcal M}\|R(f,\tilde f)-I\|_{L(Y,Y)} \le \eta <1.   
$$
\end{definition}
The implication from RIC to TCC is standard; see, e.g. \cite{deuflhard1998convergence}, for completeness we give the detailed result in the Appendix C. 

We now show that $\widetilde G$ admits the RIC structure with a small $\eta$ under physically natural assumptions. Consequently, TCC follows.

Let $\Omega \subset \mathbb{R}^3$ and $\Omega' \subset \mathbb{R}^2$ be bounded domains. Define $\widetilde G: L^2(\Omega;\mathbb C) \to  L^2(\Omega';\mathbb C)$
$$
\widetilde G(f)=\mathcal D_{\mathrm{Fr}}\circ \mathcal N_{g}\circ\widetilde{O},
\qquad \mathcal N_{g}(u)(x)=P(x)e^{ik u(x)}.
$$
By Lemma~\ref{lem:nemytskii}, the Nemytskii map $\mathcal N_g$ is Fr\'echet differentiable.  Applying the chain rule yields
\begin{equation}\label{eq:Gprime}
\widetilde G'(f)[h] \;=\;\mathcal D_{\mathrm{Fr}}\!\big( s_f\;\widetilde{O}(h)\big),
\qquad
s_f(x):=ik\,P(x)\,e^{ik\,\widetilde{O}(f)(x)}.
\end{equation}
For each $f$, define $M_{s_f}: L^2(\Omega';\mathbb C) \to L^2(\Omega';\mathbb C)$ 
to be the pointwise multiplication operator $v\mapsto s_f v$.
Notice that $P$ is uniformly bounded with a positive lower bound and $|e^{ik\widetilde{O}(f)}|\le 1$ when $\Im \widetilde{O}(f)\ge0$.
% We see that $M_{s_f}$ is invertible with bounded inverse.

\begin{theorem}
For a fixed $\tilde f\in \mathcal M$, define the bounded linear operator on $L^2(\Omega';\mathbb C)$
\begin{equation*}\label{eq:Rdef}
R(f,\tilde f) \;=\;\mathcal D_{\mathrm{Fr}}\circ M_{s_f}\circ M_{s_{\tilde f}}^{-1}\circ\mathcal D_{\mathrm{Fr}}^{-1}.
\end{equation*}
Then for any $f \in L^2(\Omega;\mathbb C)$
\begin{equation}
\label{RIC_1}
\widetilde G'(f) = R(f,\tilde f)\, \widetilde G'(\tilde f),   
\end{equation}
and the family $R(f,\tilde f)$ satisfies
\begin{equation}
\label{RIC_2}
 \sup_{f,\tilde f\in\mathcal M}\|R(f,\tilde f)-I\|_{L^2(\Omega')} \le \eta <1.   
\end{equation}
\end{theorem}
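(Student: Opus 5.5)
The plan is to verify \eqref{RIC_1} by a direct algebraic identity, and then to reduce \eqref{RIC_2} to a pointwise estimate on the multiplier $s_f/s_{\tilde f}$ using unitarity of $\mathcal D_{\mathrm{Fr}}$.

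\textbf{Step 1: the identity \eqref{RIC_1}.} First I would check that $M_{s_{\tilde f}}$ is invertible with bounded inverse on $L^2(\Omega')$. We have $|s_{\tilde f}| = k|P|\,e^{-k\Im\widetilde O(\tilde f)}$, and $P$ has a uniform positive lower bound. So the only extra requirement is that $\Im \widetilde O(\tilde f)$ be essentially bounded above on $\mathcal M$. I would build this into the description of $\mathcal M$ as a bounded neighborhood; the paper's phrase ``weak contrast'' suggests taking $\|\widetilde O(\tilde f)\|_{L^\infty}\le C$ uniformly. Then $M_{s_{\tilde f}}^{-1} = M_{1/s_{\tilde f}}$. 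By Lemma~\ref{lem:fresnel}, $\mathcal D_{\mathrm{Fr}}$ is unitary, so $\mathcal D_{\mathrm{Fr}}^{-1}=\mathcal D_{\mathrm{Fr}}^*$ is bounded; hence $R(f,\tilde f)$ is bounded. Inserting \eqref{eq:Gprime} gives
\begin{equation*}
R(f,\tilde f)\,\widetilde G'(\tilde f)[h] = \mathcal D_{\mathrm{Fr}}\big(s_f s_{\tilde f}^{-1} s_{\tilde f}\,\widetilde O(h)\big)=\widetilde G'(f)[h].
\end{equation*}
This holds for every $h$ and every $f$ for which $s_f$ is defined.

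\textbf{Step 2: reduction of \eqref{RIC_2}.} Since $\mathcal D_{\mathrm{Fr}}$ is unitary,
\begin{equation*}
R(f,\tilde f)-I=\mathcal D_{\mathrm{Fr}}\big(M_{s_f/s_{\tilde f}}-I\big)\mathcal D_{\mathrm{Fr}}^{-1},
\end{equation*}
so $\|R(f,\tilde f)-I\| = \|M_{s_f/s_{\tilde f}-1}\| = \|s_f/s_{\tilde f}-1\|_{L^\infty(\Omega')}$. The factors $P$ and $ik$ cancel in the quotient, which leaves
\begin{equation*}
\frac{s_f}{s_{\tilde f}}-1 = e^{ik\widetilde O(f-\tilde f)}-1 .
\end{equation*}
Using $|e^{iw}-1|\le |w|e^{|\Im w|}$, I obtain
\begin{equation*}
\|R-I\|\le k\,\|\widetilde O(f-\tilde f)\|_{L^\infty}\,e^{k\|\widetilde O(f-\tilde f)\|_{L^\infty}}.
\end{equation*}

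\textbf{Step 3: choice of $\mathcal M$ (the main obstacle).} The difficulty is that Lemma~\ref{lem:projection} controls $\widetilde O$ only from $L^2$ to $L^2$, while Step 2 needs an $L^\infty$ bound on projections; small $L^2$ neighborhoods do not make $\|R-I\|$ small. I would resolve this by defining $\mathcal M$ in a stronger topology, namely as a small $L^\infty(\Omega)$-ball around $f^*$ intersected with $\mathcal K$: $\mathcal M = \{f\in\mathcal K:\|f-f^*\|_{L^\infty}\le \rho\}$. Then $\|\widetilde O(f-\tilde f)\|_{L^\infty}\le 2L\rho$, where $L$ is the maximal path length, because line integrals over paths of length at most $L$ are bounded by $L\|\cdot\|_\infty$. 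This gives $\eta = 2kL\rho\, e^{2kL\rho}$, which is $<1$ once $2kL\rho$ is small, for instance $2kL\rho\le 1/3$. Physically this is a weak phase-difference condition within the neighborhood. The same $L^\infty$ bound also supplies the uniform lower bound on $|s_{\tilde f}|$ needed in Step 1.
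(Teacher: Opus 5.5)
Your proof is correct. It follows the paper's skeleton: read \eqref{RIC_1} off \eqref{eq:Gprime}, then conjugate $R(f,\tilde f)-I$ by the unitary $\mathcal D_{\mathrm{Fr}}$ down to multiplication by $e^{ik\widetilde O(f-\tilde f)}-1$. You depart from the paper at the decisive estimate, and there your version is the one that works.

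The paper bounds the $L^2(\Omega')$ norm of this symbol, using $|e^{ix}-1|\le|x|$ and Lemma~\ref{lem:projection}, and takes $\mathcal M$ to be an $L^2$-ball. This has two problems:
\begin{itemize}
\item It treats $\|M_m\|_{L^2\to L^2}$ as $\|m\|_{L^2}$, whereas it equals $\|m\|_{L^\infty}$.
\item It applies $|e^{ix}-1|\le|x|$ to a complex argument whose imaginary part, a difference of absorption projections, has no sign. Your bound $|e^{iw}-1|\le|w|e^{|\Im w|}$ handles this case.
\end{itemize}

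Your insistence on $L^\infty$ control cannot be avoided. Add to $\tilde f\in\mathcal K$ a nonpositive real needle $-c\chi_A$, where $A$ is a cylinder of length $\ell$ and radius $r$ along the beam and $kc\ell=\pi$. Then $f\in\mathcal K$ and $\|f-\tilde f\|_{L^2}=O(r)$. Yet the symbol comes arbitrarily close to $-2$ on sets of positive measure in $\Omega'_\pi$ (angles near $0$). So $\|R(f,\tilde f)-I\|$ stays near $2$ in every $L^2$-neighborhood, and \eqref{RIC_2} cannot hold there.

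Your route has a price. An $L^\infty$-ball is not a neighborhood in $L^2(\Omega)$, which is where the Landweber/TCC theory is posed. To use the resulting cone condition, the iterates must also be kept inside that ball, for example through a box constraint built into the map $\mathcal H$ in \eqref{eq:mirror_descent}.

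You should also state $f^*\in L^\infty(\Omega)$ explicitly, since your uniform lower bound on $|s_{\tilde f}|$ in Step~1 depends on it. Even without it, $R(f,\tilde f)=\mathcal D_{\mathrm{Fr}}M_{s_f/s_{\tilde f}}\mathcal D_{\mathrm{Fr}}^{-1}$ is bounded as soon as $f-\tilde f\in L^\infty$.

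One caveat you share with the paper: $\mathcal D_{\mathrm{Fr}}$ is unitary on $L^2(\mathbb R^2)$ for each angle, not on $L^2(\Omega')$. So $R$ is best regarded as an operator on $L^2(\mathbb R^2\times[0,\pi))$. Since $s_f=ikP$ off the projected support and $|P|$ is bounded above and below, your argument carries over unchanged.
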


\begin{proof}
From equation (\ref{eq:Gprime}) we have
$$
\begin{aligned}
R(f,\tilde f)\,\widetilde G'(\tilde f)[h]
=\mathcal D_{\mathrm{Fr}}\big(s_f\,\widetilde{O}(h)\big)
= \widetilde G'(f)[h],
\end{aligned}
$$
for any $h\in L^2(\Omega;\mathbb C)$. Thus, equation(\ref{RIC_1}) holds.

By Lemma \ref{lem:fresnel}, $\mathcal {D}_{Fr}$ is bounded, invertible, and unitary. Then we have
$$R(f,\tilde f) - I =\mathcal D_{\mathrm{Fr}}  (M_{s_f} M^{-1}_{s_{\tilde f}} - I) \,\mathcal D_{\mathrm{Fr}}^{-1} .$$
As $M_{s_f}, M^{-1}_{s_{\tilde f}}$ is itself a multiplication operator
$$
M_{s_f} M^{-1}_{s_{\tilde f}} = M_{s_f s^{-1}_{\tilde f}}.
$$
Consequently,
$$
M_{s_f s^{-1}_{\tilde f}} - I = M_{s_f s^{-1}_{\tilde f}} - M_1 = M_{\frac{s_f}{s_{\tilde f}}-1},
\quad
\frac{s_f}{s_{\tilde f}}-1 = e^{ik\,(\widetilde{O}(f)- \widetilde{O}(\tilde f))} -1.$$
Using the fact $|e^{ix}-1|\leq |x|$ and Lemma \ref{lem:projection}, we have 
$$
\|e^{ik\,(\widetilde{O}(f)- \widetilde{O}(\tilde f))} -1\|_{L^2(\Omega')}
\leq k\|\widetilde{O}(f)- \widetilde{O}(\tilde f)\|_{L^2(\Omega')} 
\leq C \| f - \tilde f \|_{L^2(\Omega)}.
$$
Then
\begin{align*}
\|R(f,\tilde f) - I\|_{L^2(\Omega')}
\leq C\|\mathcal D_{\mathrm{Fr}}\| \|\mathcal D_{\mathrm{Fr}}^{-1}\| \| f - \tilde f \|_{L^2(\Omega)}  
\leq  C \| f - \tilde f \|_{L^2(\Omega)}.
\end{align*}
Let $\| f - \tilde f \|_{L^2(\Omega)} \leq \epsilon$.
We get
$$
\|R(f,\tilde f) - I\|_{L^2(\Omega')} \leq C\epsilon.
$$
Consequently,
$$
\sup_{f,\tilde f\in\mathcal M}\|R(f,\tilde f)-I\|_{L^2(\Omega')}  \;\le\; \eta<1,
$$
for some $\eta$ depending only on constant $C$.  
This gives the desired estimate (\ref{RIC_2}).
\end{proof}

\section{Numerical Experiments}  

In this section, we evaluate the proposed reconstruction framework in both projection-domain and direct volumetric reconstruction settings, allowing comparison between the conventional two-step reconstruction pipeline and the proposed direct nonlinear reconstruction framework for holotomography.

The performance of different reconstruction strategies is evaluated on both simulated phantom data and real experimental measurements. In particular, we investigate the effects of phase-guided Bregman-TV regularization, including the influence of the guiding parameter, robustness under different $\delta/\beta$ ratios, and reconstruction performance under both single-distance and multi-distance acquisition settings.

\subsection{Reference Solution: Two-step Reconstruction}

As a reference for our results we use the commonly applied two-step procedure of holographic reconstruction in the projection-domain (for each angle separately), followed by a tomographic reconstruction (with filtered backprojection). 
In the projection-domain formulation, the projected refractive quantity $\widetilde{O}_\theta$ is treated as the unknown to be reconstructed from the measured near-field intensity data
$$
I_{m,\theta} = \left| D_{\mathrm{Fr}} \left( P e^{i k \widetilde{O}_\theta} \right) \right|^2 = y_\theta^2.
$$
For each projection angle $\theta$, this leads to the minimization problem
\begin{equation}\label{eq:5.1}
\min_{\widetilde{O}_\theta} \;
\frac{1}{2}
\| G(\widetilde{O}_\theta) - y_\theta \|^2,
\end{equation}
where
$
G(\widetilde{O}_\theta)
=
\left|
\mathcal D_{\mathrm{Fr}}
\left(
P e^{ik\,\widetilde O_\theta}
\right)
\right|,
$
denotes the predicted amplitude measurement.

Within this formulation, the real and imaginary parts of $\widetilde{O}_\theta$ correspond to the projected phase and absorption components, respectively. 
An advantage of the proposed phase-guided Bregman TV framework is that it can be naturally applied to \eqref{eq:5.1} through a coupled regularization of $\Re(\widetilde{O}_\theta)$ and $\Im(\widetilde{O}_\theta)$, where structural information from $\Re(\widetilde{O}_\theta)$ is incorporated to guide the reconstruction of $\Im(\widetilde{O}_\theta)$.

\subsection{Numerical Settings}
All numerical experiments rely on a consistent discretization of the forward model introduced in Section \ref{sec: Forward Model}. The Fresnel operator \eqref{eq:Fresnel_kernel} is realized numerically as
$$
\mathcal D_{\mathrm{Fr}}^{\mathrm{disc}}(u)
   = \mathrm{FFT}^{-1}\!\left(H_d^{\mathrm{disc}} \odot \mathrm{FFT}(u)\right),
$$
where $\odot$ denotes element-wise multiplication and the superscript denotes the discrete approximation of the continuous Fresnel kernel.  
The quadratic phase factor in $H_d(\xi)=\exp\!\Big(-i\pi \frac{|\xi|^2}{2\,\mathrm{Fr}}\Big)$ is a chirp function whose oscillation speed increases as the Fresnel number $\mathrm{Fr}$ decreases.  
To represent this kernel faithfully on a discrete grid, the number of pixels $N$ in each transverse dimension must satisfy the Fresnel sampling condition
$
N \;\ge\; \frac{1}{\mathrm{Fr}},
$
which determines the minimum computational window required for accurately approximating the continuous propagator \cite{voelz2009digital}.  
If this condition is violated, the chirp kernel becomes undersampled and the propagated field suffers from aliasing.  
Since the FFT imposes periodic boundary conditions, such undersampling produces wrap-around artifacts where diffracted features re-enter the image from the opposite side.  
To avoid these nonphysical effects, the exit wave is embedded into a larger computational window using the padding strategy proposed by \cite{dora2024artifact}, consisting of mirror padding, constant extension, and smooth windowing before Fresnel propagation. This ensures that the Fresnel kernel is adequately sampled while the periodic replicas introduced by the FFT remain outside the region of interest.

For each projection angle~$\theta \in \{\theta_1,\dots,\theta_{N_\theta}\}$, the parallel-beam projection operator $\widetilde{O}_{\theta}$ is implemented numerically by integrating the rotated volume along the beam direction. In practice, this step is realized using a discrete Radon transform, yielding a two-dimensional complex projection $\widetilde{O}_{\theta}$, from which the transmission function is evaluated pointwise as
$$
O_{\theta} = \exp(i k\, \widetilde{O}_{\theta}).
$$
Then a probe multiplies this field.  
In all numerical experiments, we use a spatially constant probe~$P$ so that illumination structure does not obscure the comparison between reconstruction strategies.  
In physical measurements, the probe is typically a non-uniform complex field with a Gaussian-like amplitude envelope and a nontrivial phase profile.  
Probe reconstruction techniques, such as \cite{hagemann2017probe, nikitin2024x}, can estimate the illumination jointly with the object. The proposed algorithm is compatible with such extensions as the probe enters the model only through multiplicative factors.
The propagated field becomes
$$
v_{\theta}
    = \mathcal D_{\mathrm{Fr}}^{\mathrm{disc}}\!\bigl(P \odot O_{\theta}\bigr),
\qquad
G_{\theta}(f)=|v_{\theta}|.
$$   
The proximal minimization problems \eqref{eq: delta reg} \eqref{eq: beta} arising from Bregman TV formulations are solved using a standard primal--dual algorithm  \cite{chambolle2011first}.

\subsection{Simulation Studies on a synthetic 3D Phantom}
\subsubsection{Single-distance Reconstruction}

We first evaluate the proposed reconstruction framework on synthetic near-field data generated from a multi-sphere phantom of size $128\times 128\times 128$. The phantom is used to define the complex relative refractive index
$$
f(\mathbf r)
=
-\delta(\mathbf r)
+
i\beta(\mathbf r)
=
(-\delta_0+i\beta_0)\times\,\mathrm{phantom3D}(\mathbf r),
\quad 
\text{with}
\quad
\delta_0 = 10^{-6},
\qquad
\beta_0 = 10^{-8}.
$$
This choice corresponds to a weakly absorbing specimen, in which the phase contrast is approximately two orders of magnitude stronger than the absorption contrast.

The X-ray wavelength is set to
$\lambda = 10^{-10}\,\mathrm{m}$, with a propagation distance of $d = 0.01\,\mathrm{m}$ and a detector
pixel size of $\Delta x = 2.0\times10^{-7}\,\mathrm{m}$, corresponding to a Fresnel number
$\mathrm{Fr} = (\Delta x)^2/(\lambda d)\approx 0.04$, placing the setup in the near-field regime. A total of $90$ projection angles are uniformly sampled over $[0,\pi)$. We assume a constant complex probe and use noise-free intensity measurements.

Figure~\ref{fig:phantom_data} provides a visual overview of the synthetic multi-sphere phantom, representative refractive index components, and corresponding near-field measurements, illustrating both the geometric structure of the simulated object and the associated diffraction characteristics.

\begin{figure}[H]
\centering
\begin{minipage}[t]{0.25\linewidth}
\centering
\includegraphics[width=1\textwidth]{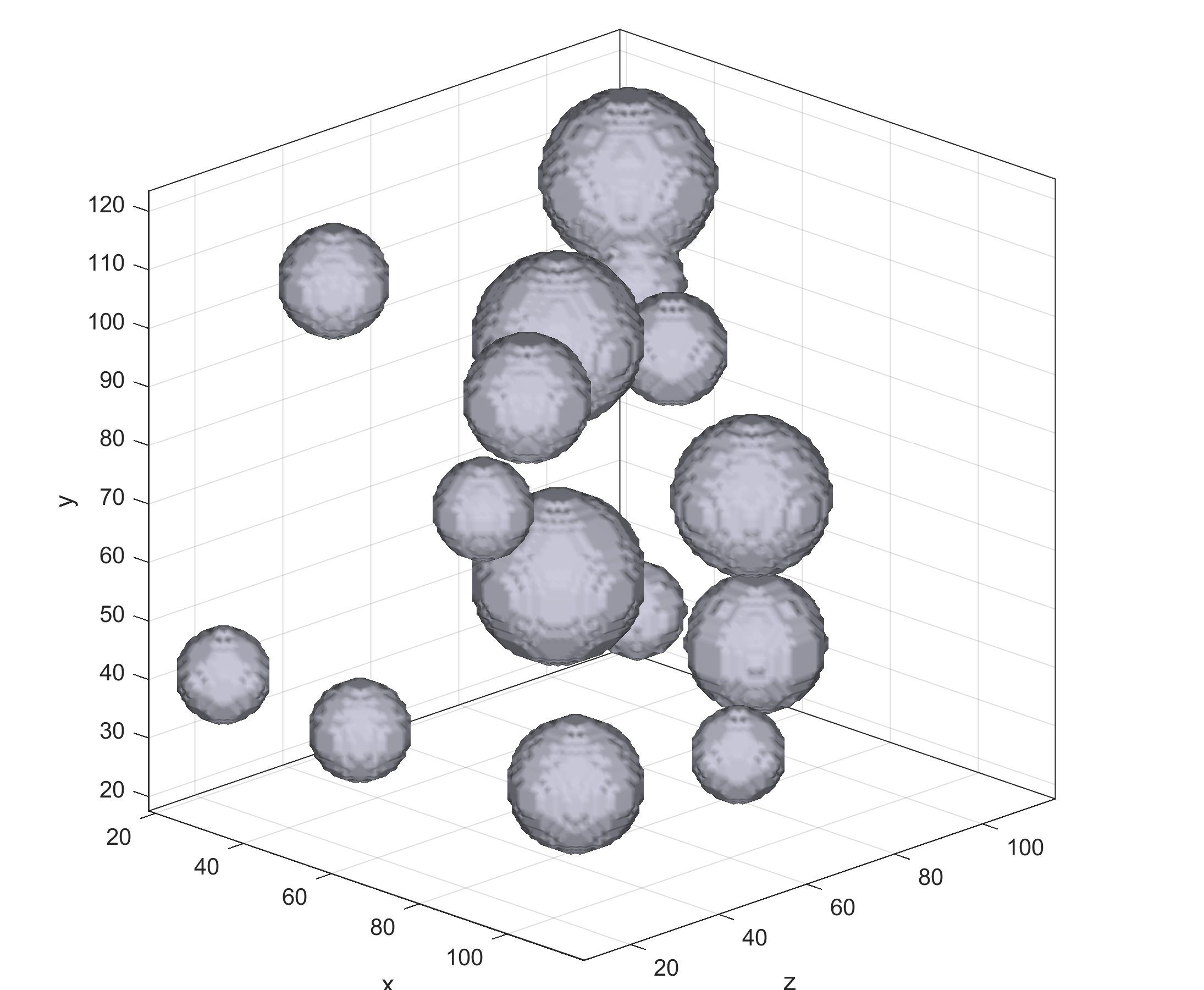}\\
\centerline{\footnotesize\text{Phantom3D}}
\includegraphics[width=1\textwidth]{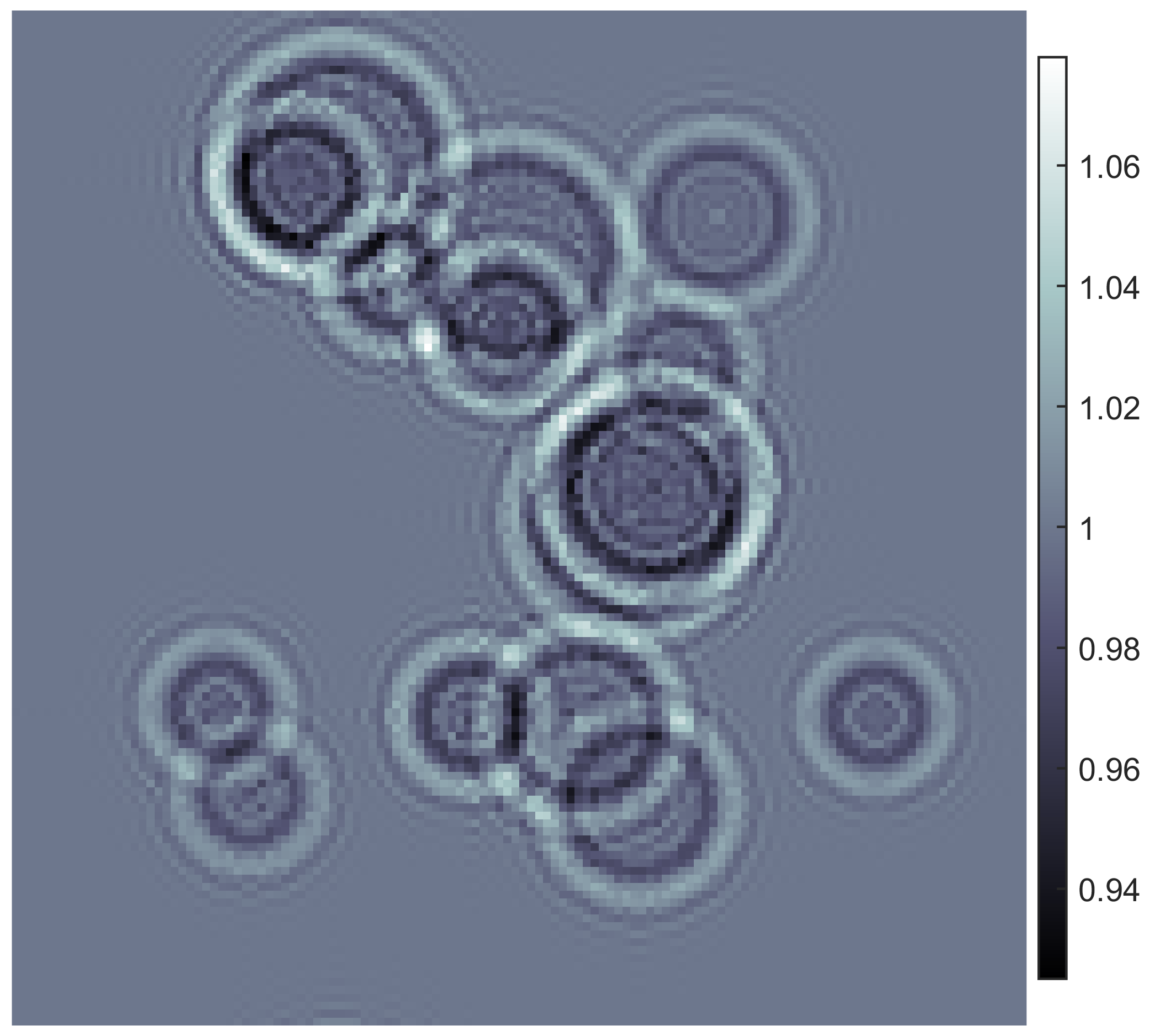}\\
\centerline{\footnotesize\text{$I_m$}}
\end{minipage}    
\begin{minipage}[t]{0.25\linewidth}
\centering
\includegraphics[width=1\textwidth]{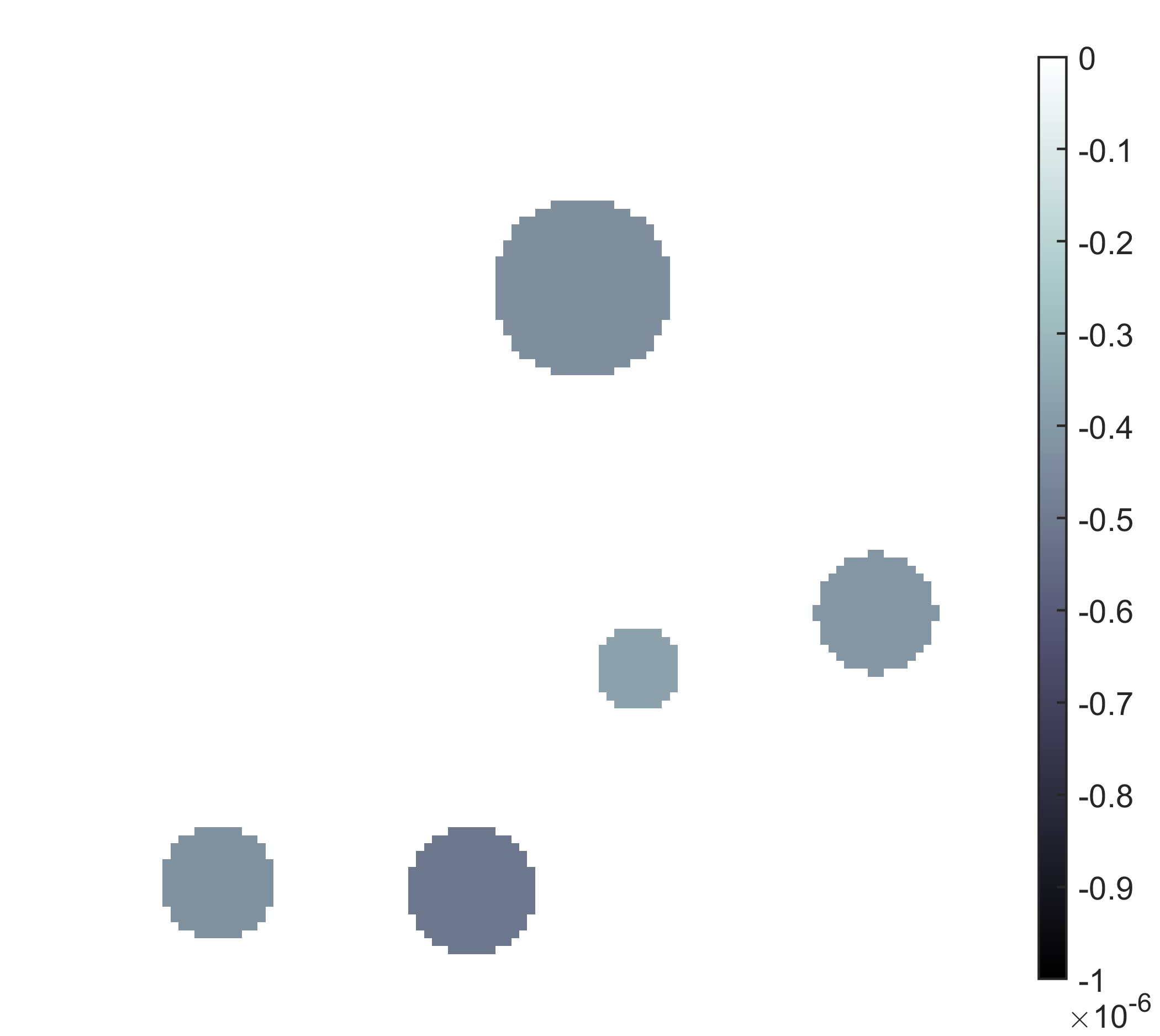}\\
\centerline{\footnotesize\text{$-\delta_{true}$}}
\includegraphics[width=1\textwidth]{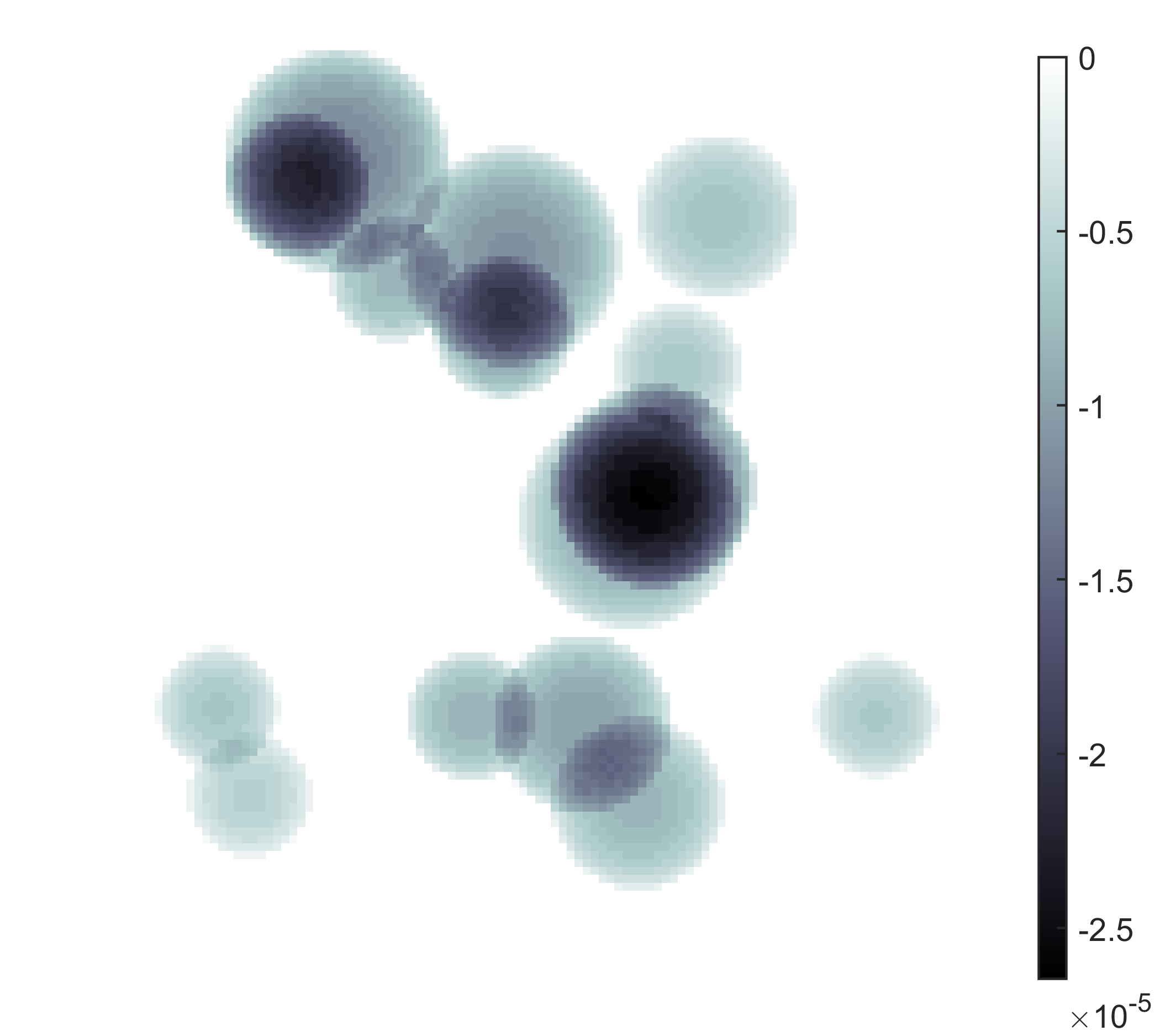}\\
\centerline{\footnotesize\text{$\Re(\widetilde O_{true})$}}
\end{minipage}
\begin{minipage}[t]{0.25\linewidth}
\centering
\includegraphics[width=1\textwidth]{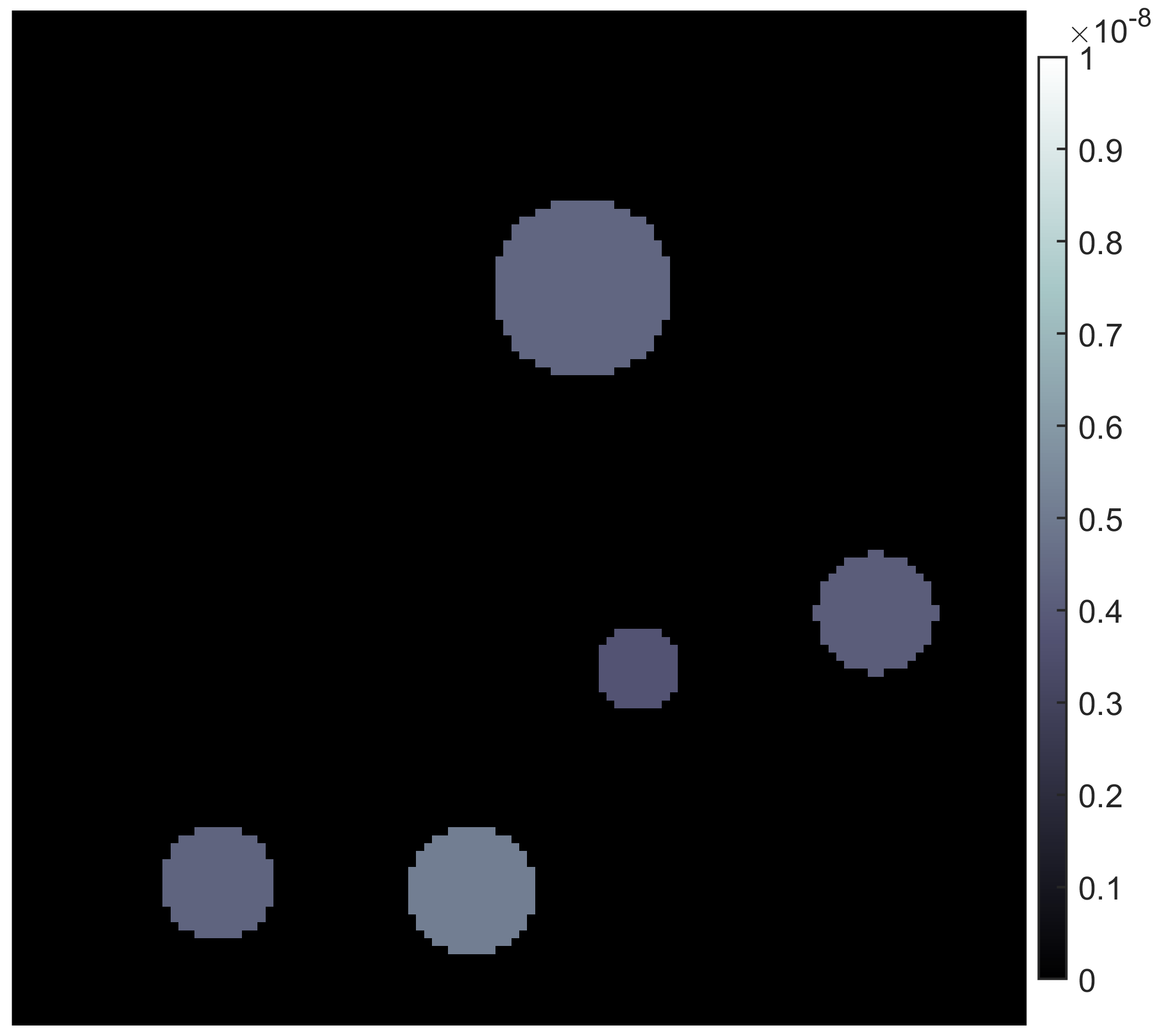}\\
\centerline{\footnotesize\text{$\beta_{true}$}}
\includegraphics[width=1\textwidth]{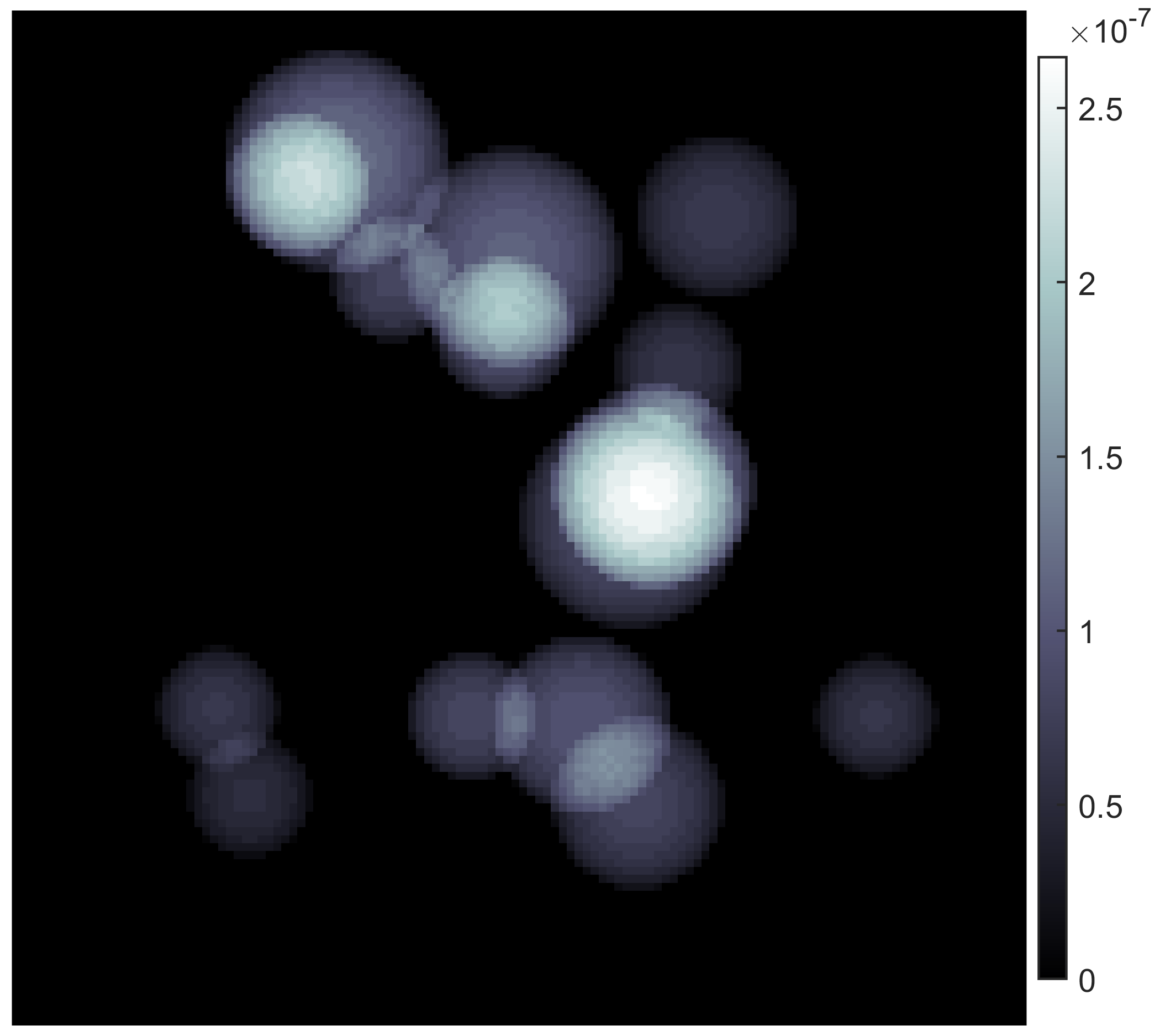}\\
\centerline{\footnotesize\text{$\Im(\widetilde O_{true})$}}
\end{minipage}
\caption{Simulated multi-sphere phantom and corresponding ground truth data used in the reconstruction experiment. Top left: 3D rendering of the phantom. Top middle: ground truth phase shift component on the slice $z=38$. Top right: ground truth absorption component on the slice $z=38$. Bottom left: measured near-field intensity $I_m$ at a representative projection angle $\theta = 0$. Bottom middle: projected phase shift $ \Re (\widetilde{O})$ at $\theta = 0$. Bottom right: projected absorption $ \Im (\widetilde{O})$ at $\theta = 0$.}
\label{fig:phantom_data}
\end{figure}
We first consider the projection-domain reconstruction with \eqref{eq:5.1}. Figure~\ref{fig:projection_recon} presents the reconstructed complex projected object $\widetilde{O}_\theta$ obtained using different reconstruction strategies. With gradient descent (GD) alone, the projected phase component $\Re(\widetilde{O}_\theta)$ is reconstructed reasonably well, with the main structures clearly recovered. In contrast, the projected absorption component $\Im(\widetilde{O}_\theta)$ exhibits significant degradation and loss of structural fidelity. This discrepancy indicates that the reconstruction of the absorption component is substantially more challenging and less stable under the basic gradient descent scheme.
\begin{figure}[t]
    \centering
    \setlength{\tabcolsep}{1pt}

    \begin{tabular}{c c c c}
        & \footnotesize GD 
        & \footnotesize Bregman TV 
        & \footnotesize Phase-guided \\

        \raisebox{16mm}{ \footnotesize $\Re(\widetilde O_\theta)$} &
        \includegraphics[width=0.25\textwidth]{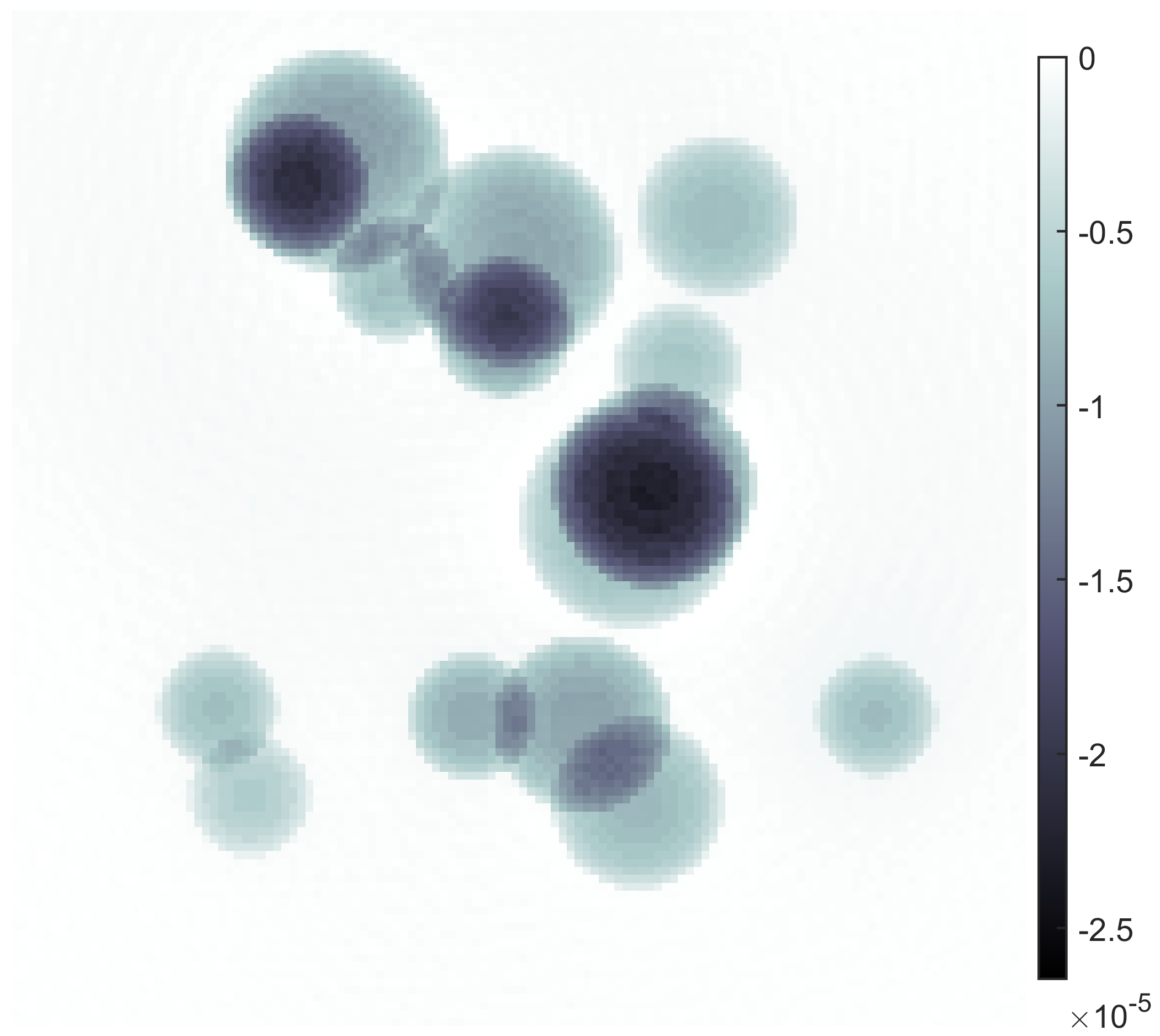} &
        \includegraphics[width=0.25\textwidth]{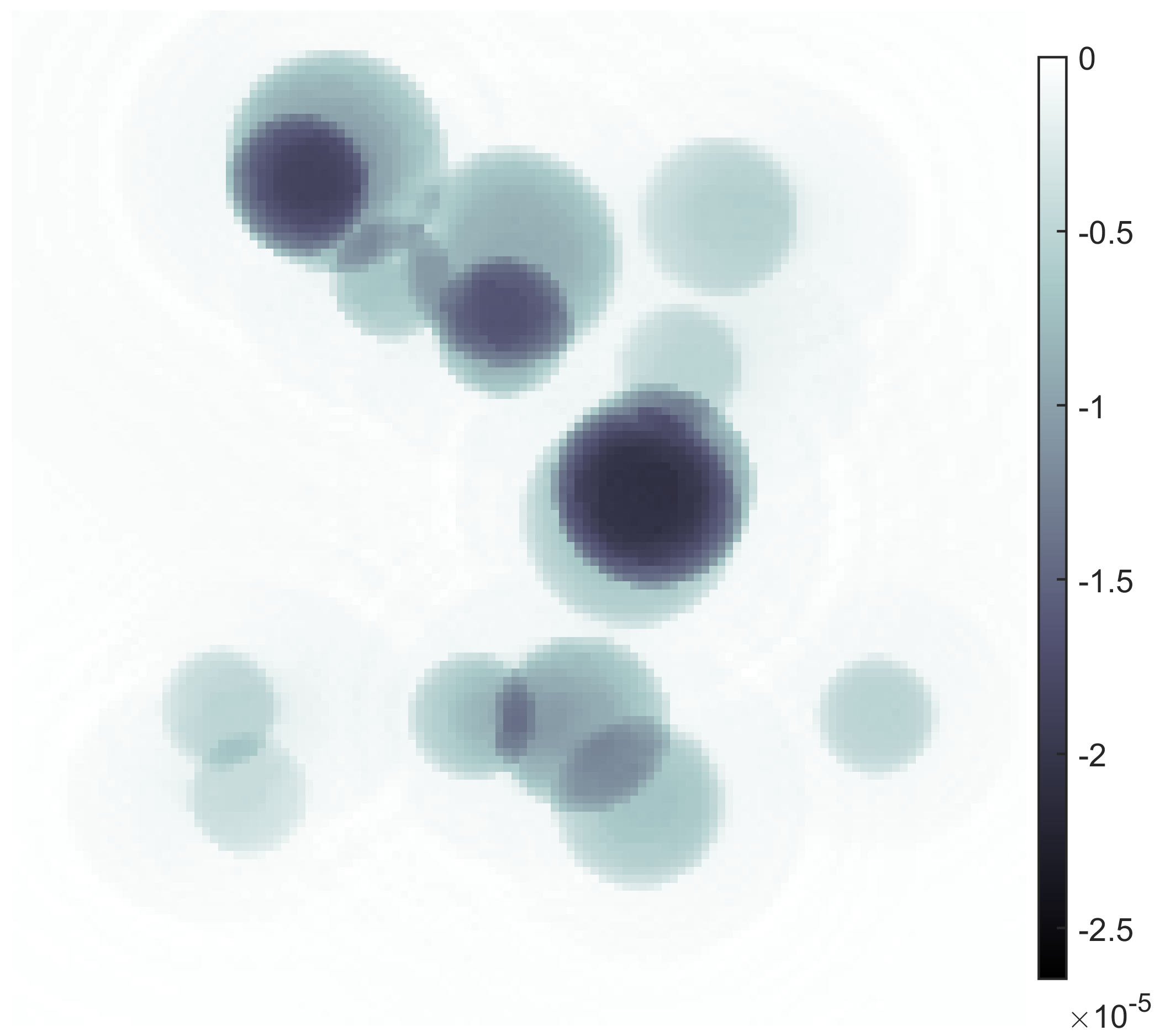} &
        \includegraphics[width=0.25\textwidth]{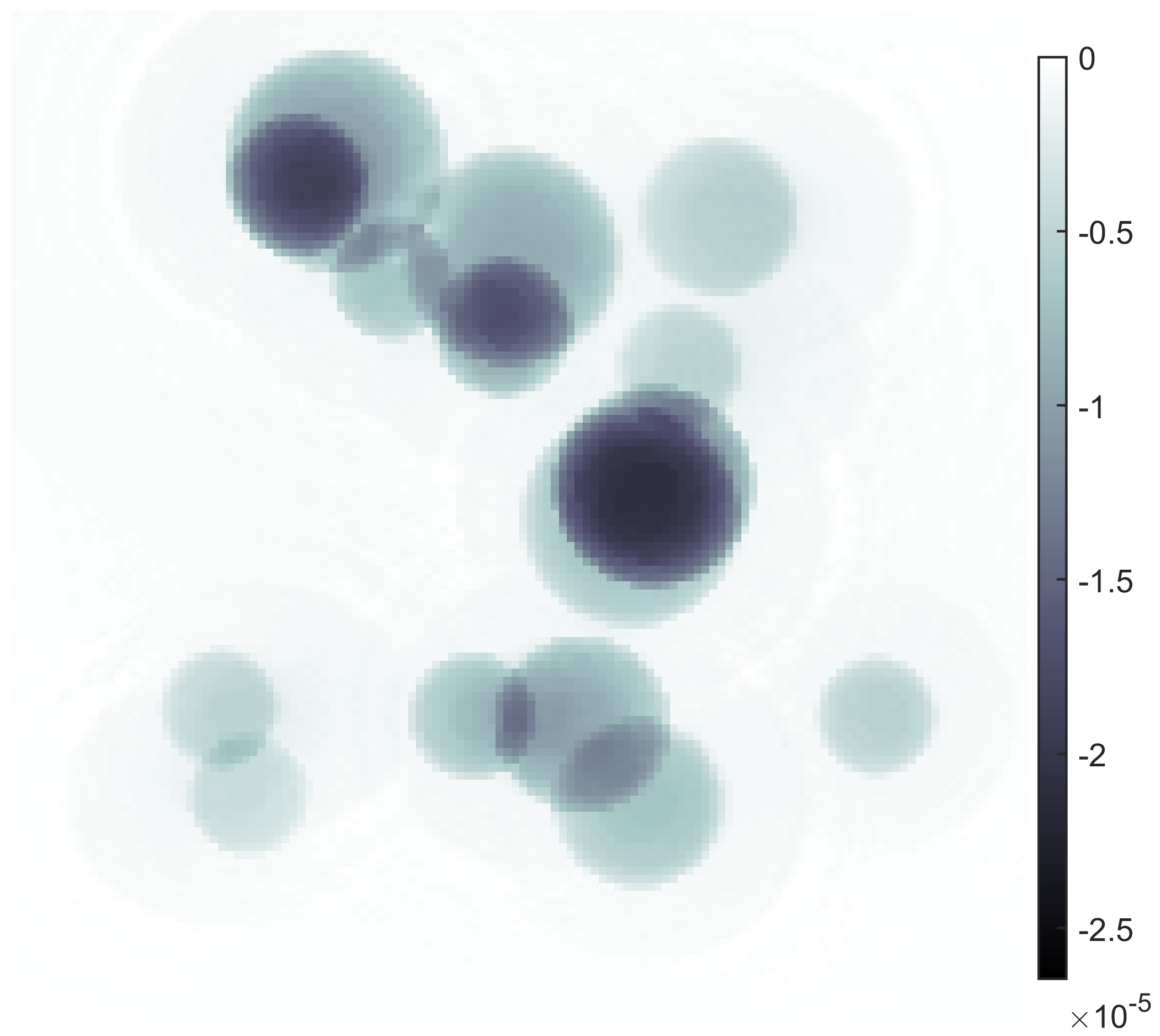} \\

        \raisebox{16mm}{ \footnotesize $\Im(\widetilde{O}_\theta)$} &
        \includegraphics[width=0.25\textwidth]{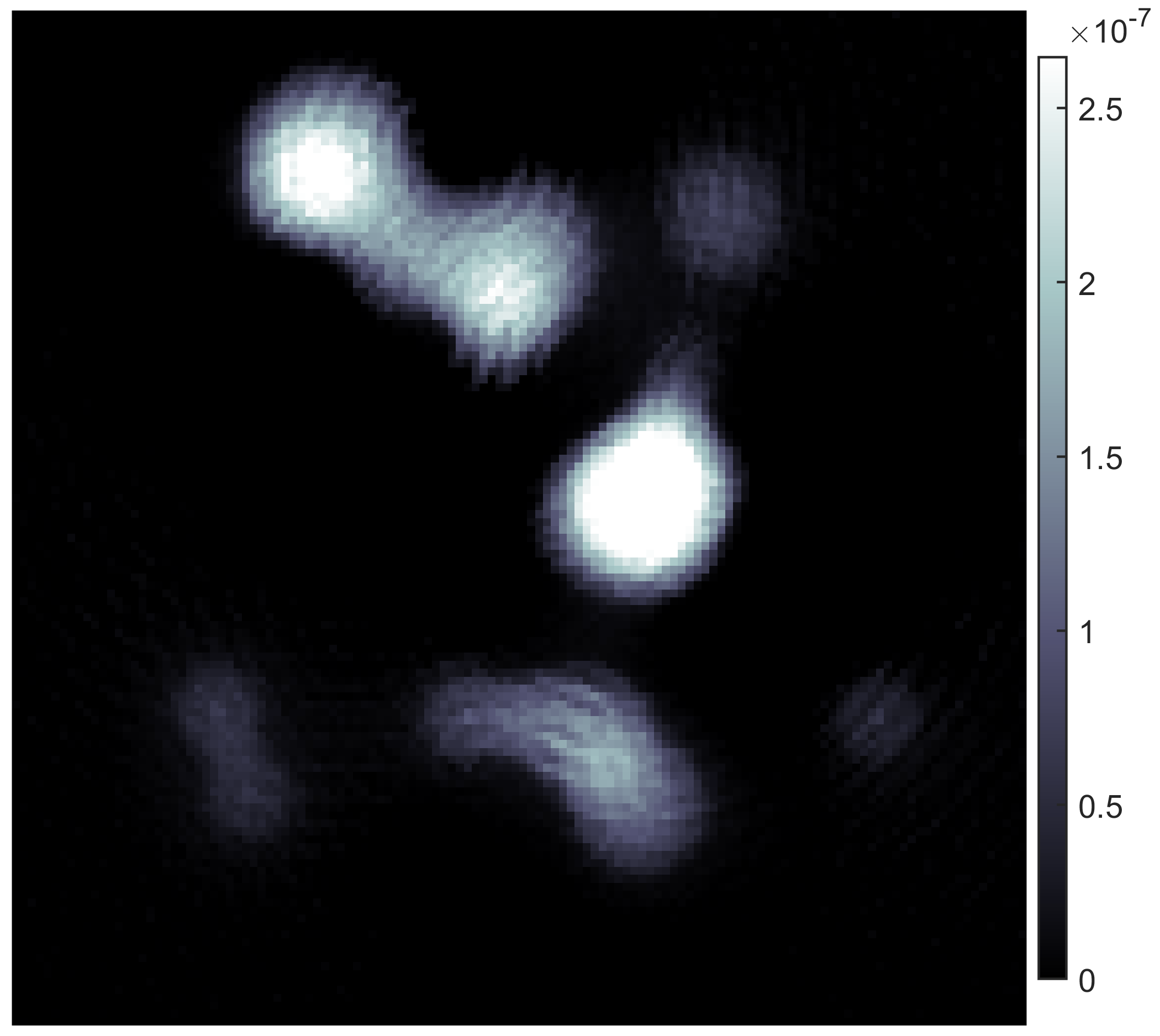} &
        \includegraphics[width=0.25\textwidth]{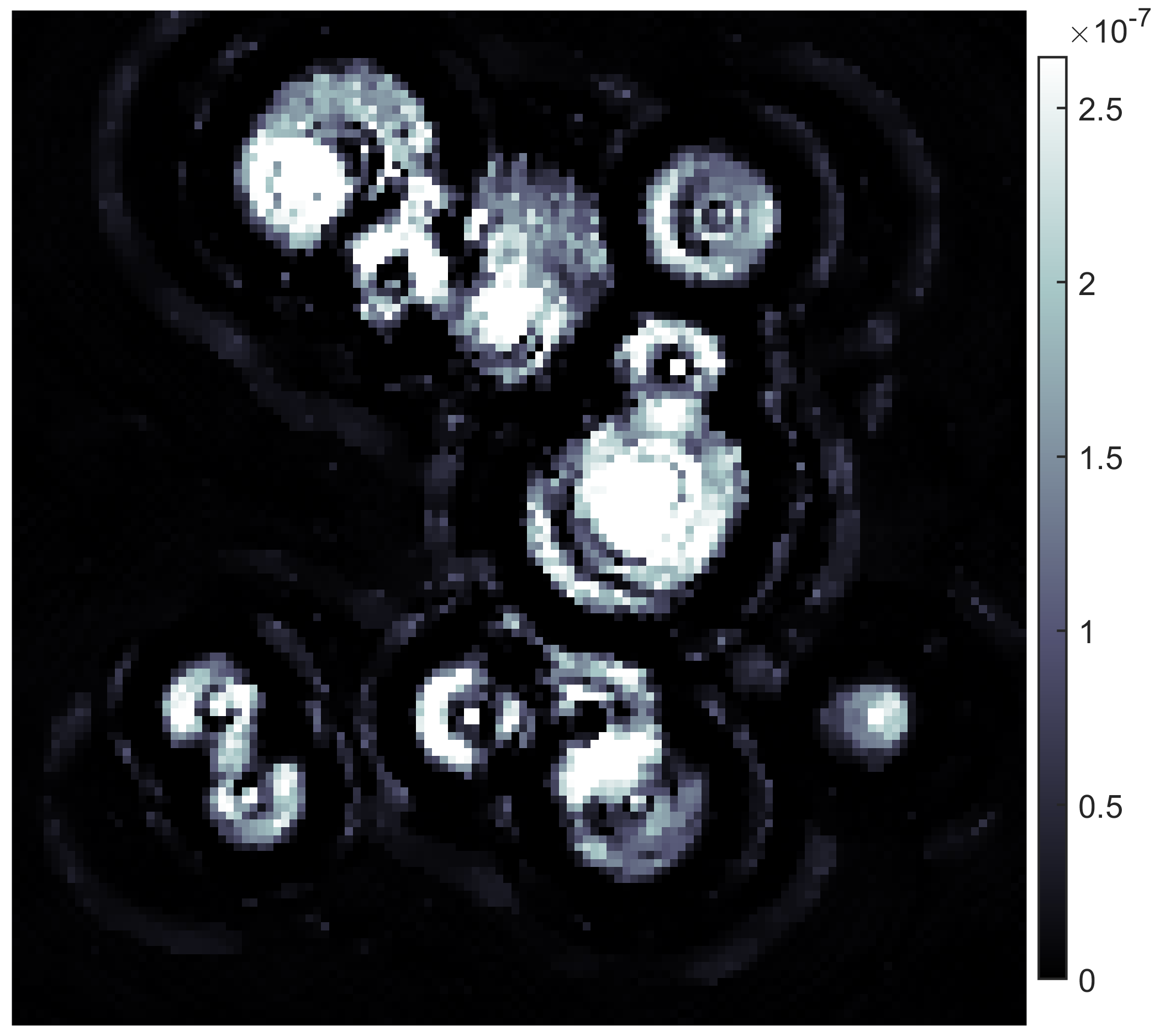} &
        \includegraphics[width=0.25\textwidth]{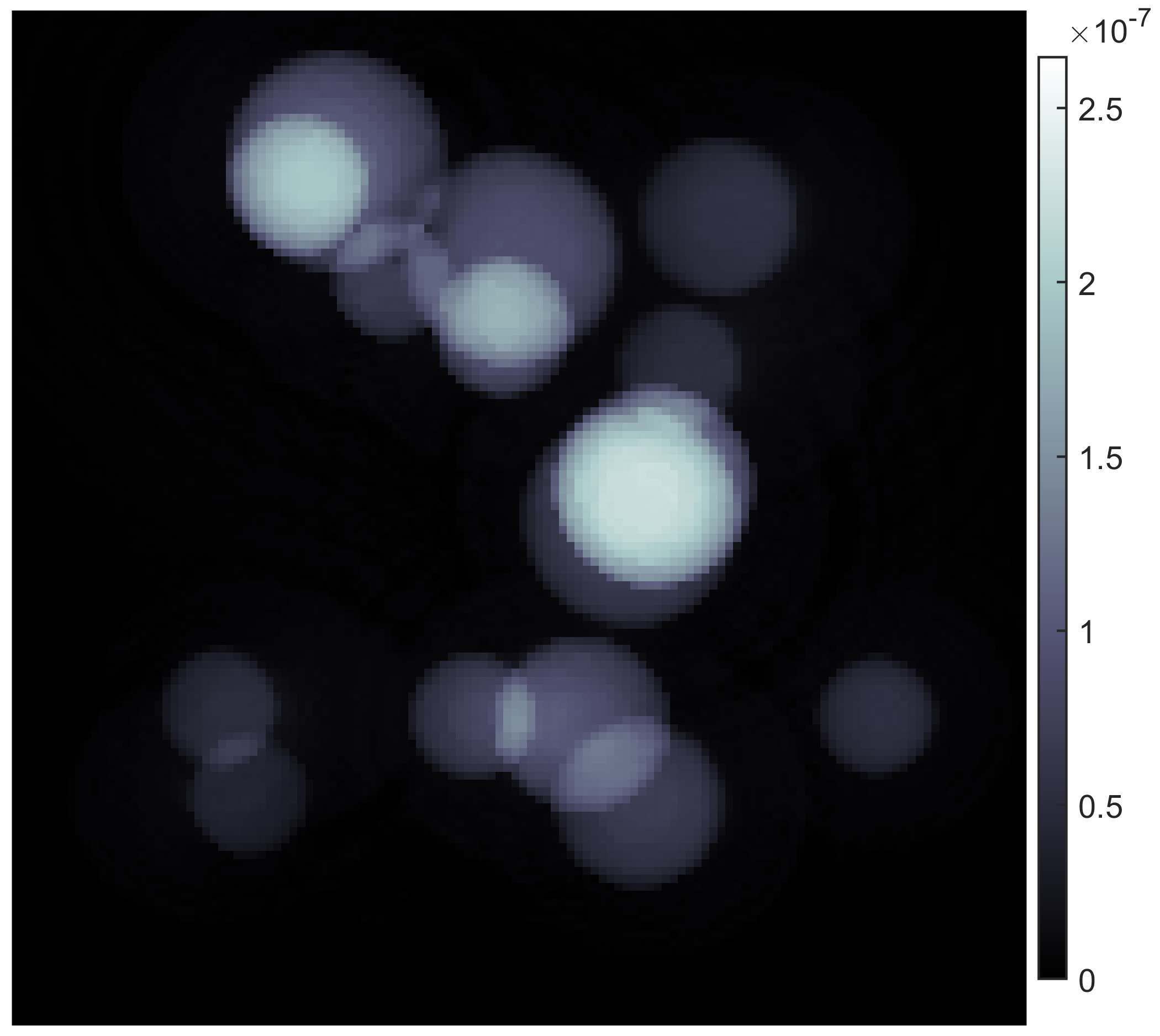} \\
    \end{tabular}

    \caption{Projection-domain reconstruction of the complex projected object $\widetilde{O}_\theta$ for a representative projection angle $\theta = 0$. 
    The first row shows the projected phase decrement $\widetilde{O}_\theta$, and the second row shows the projected absorption index $\widetilde{O}_\theta$. 
    Columns correspond to different reconstruction methods.}
    \label{fig:projection_recon}
\end{figure}

Introducing Bregman TV regularization improves the overall reconstruction stability. While the projected phase component changes only slightly, the projected absorption component shows visibly improved structural coherence.

Further improvement is obtained with phase-guided regularization ($\gamma = 0.1$). Compared with the Bregman TV reconstruction, the absorption component $\Im(\widetilde{O}_\theta)$ exhibits improved structural recovery and reduced artifacts. This indicates that structural information from the projected phase can provide effective guidance for the reconstruction of the absorption component. 

For comparison with direct 3D reconstruction, the reconstructed projections $\widetilde{O}_\theta$ obtained from the phase-guided projection-domain reconstruction are further processed using filtered backprojection (FBP) to recover the volumetric refractive index components $\delta$ and $\beta$. The corresponding FBP reconstructions and error maps are shown in the first column of Figure~\ref{fig:direct3d_recon}. The reconstructed $\delta$ accurately captures the overall structure of the object, with the main features and spatial distribution correctly recovered. The corresponding error map indicates that the reconstruction errors are primarily localized near object boundaries. Similar reconstruction characteristics are observed for $\beta$.
\begin{figure}[H]
    \centering
    \setlength{\tabcolsep}{1pt}

    \begin{tabular}{c c c c c}
        & \footnotesize Two-step FBP
        & \footnotesize GD 
        & \footnotesize Bregman TV 
        & \footnotesize Phase-guided \\
        
        \rowlabel{$-\delta_{reco}$} &
        \includegraphics[width=0.23\textwidth]{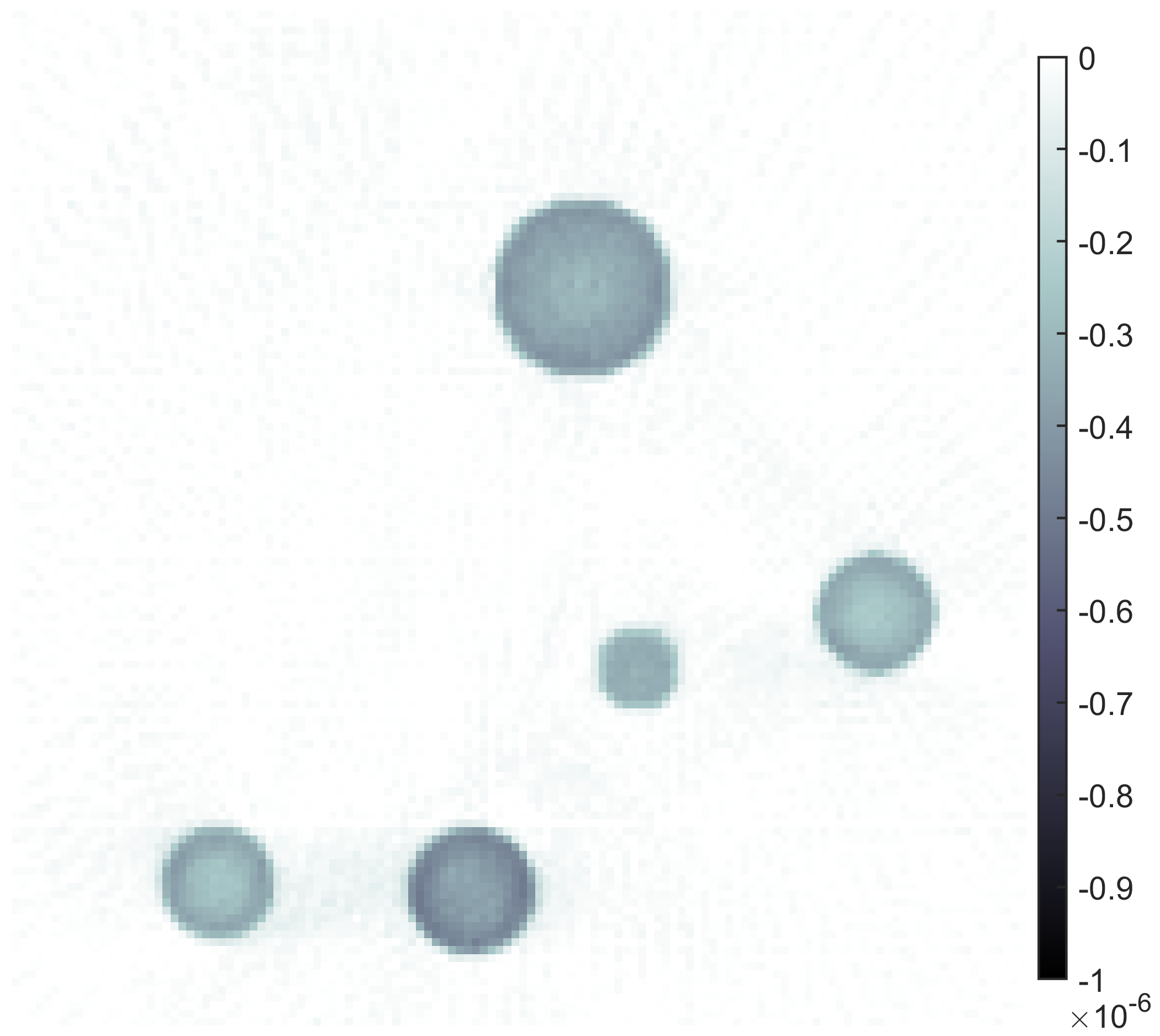} &
        \includegraphics[width=0.23\textwidth]{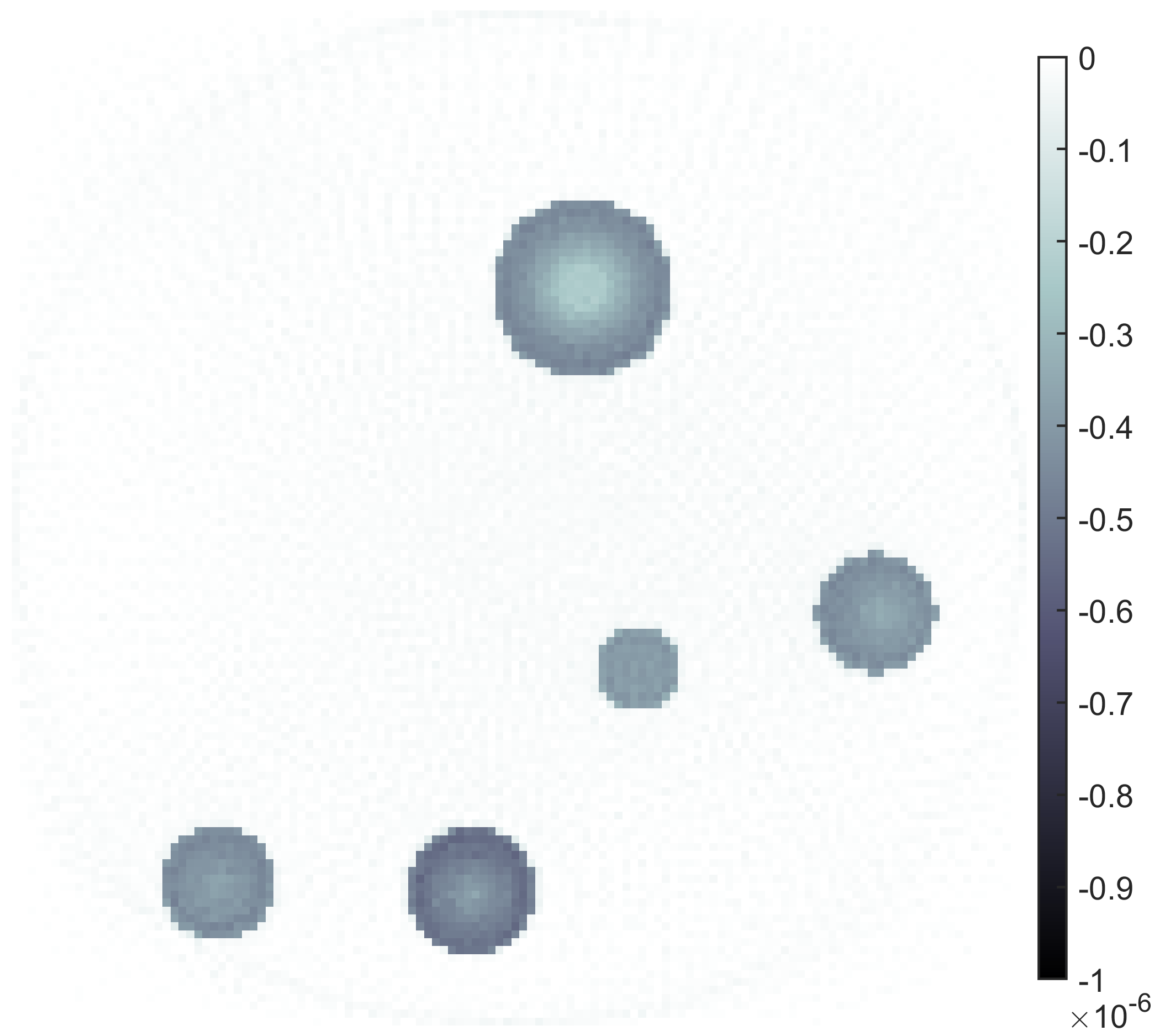} &
        \includegraphics[width=0.23\textwidth]{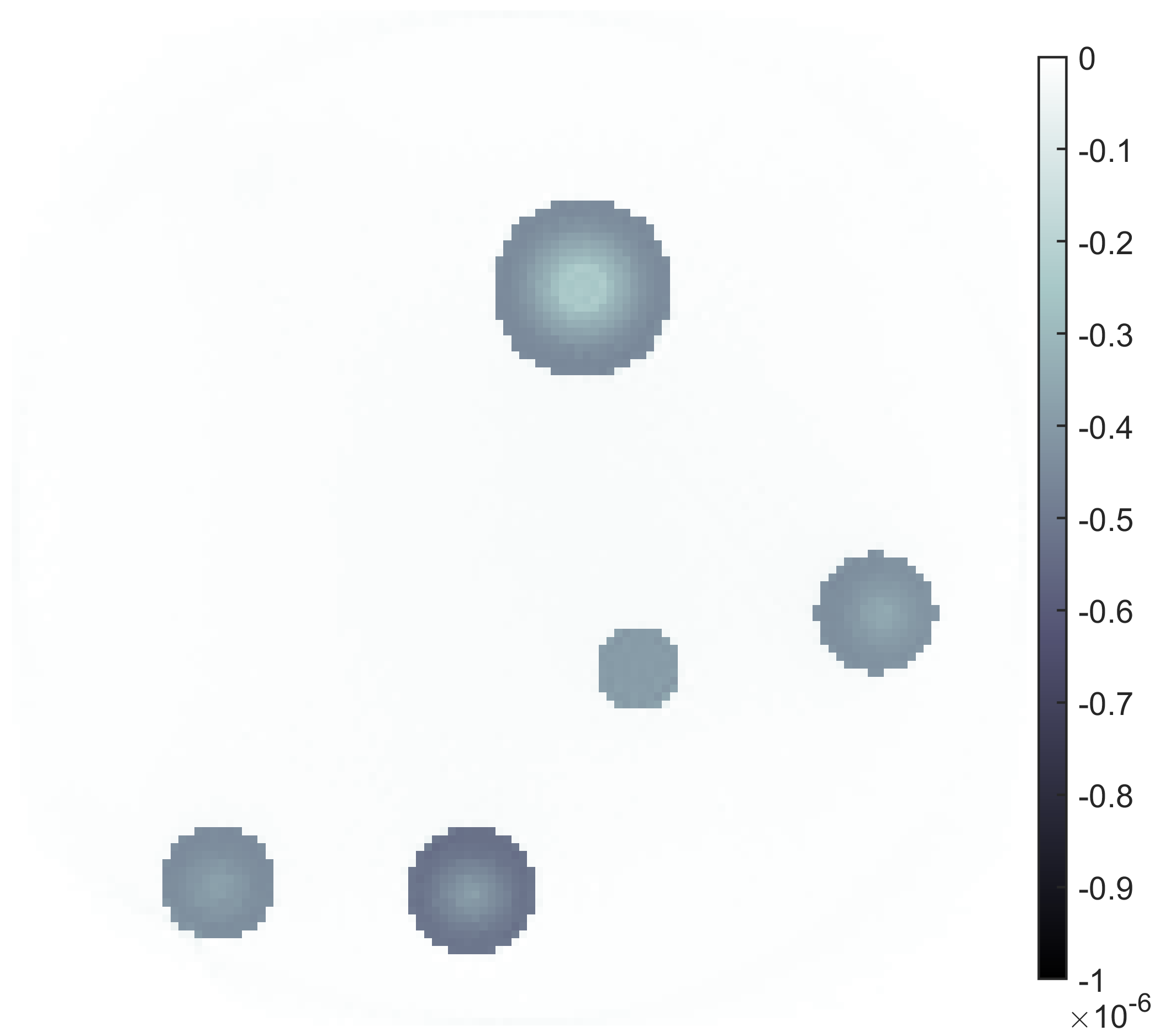} &
        \includegraphics[width=0.23\textwidth]{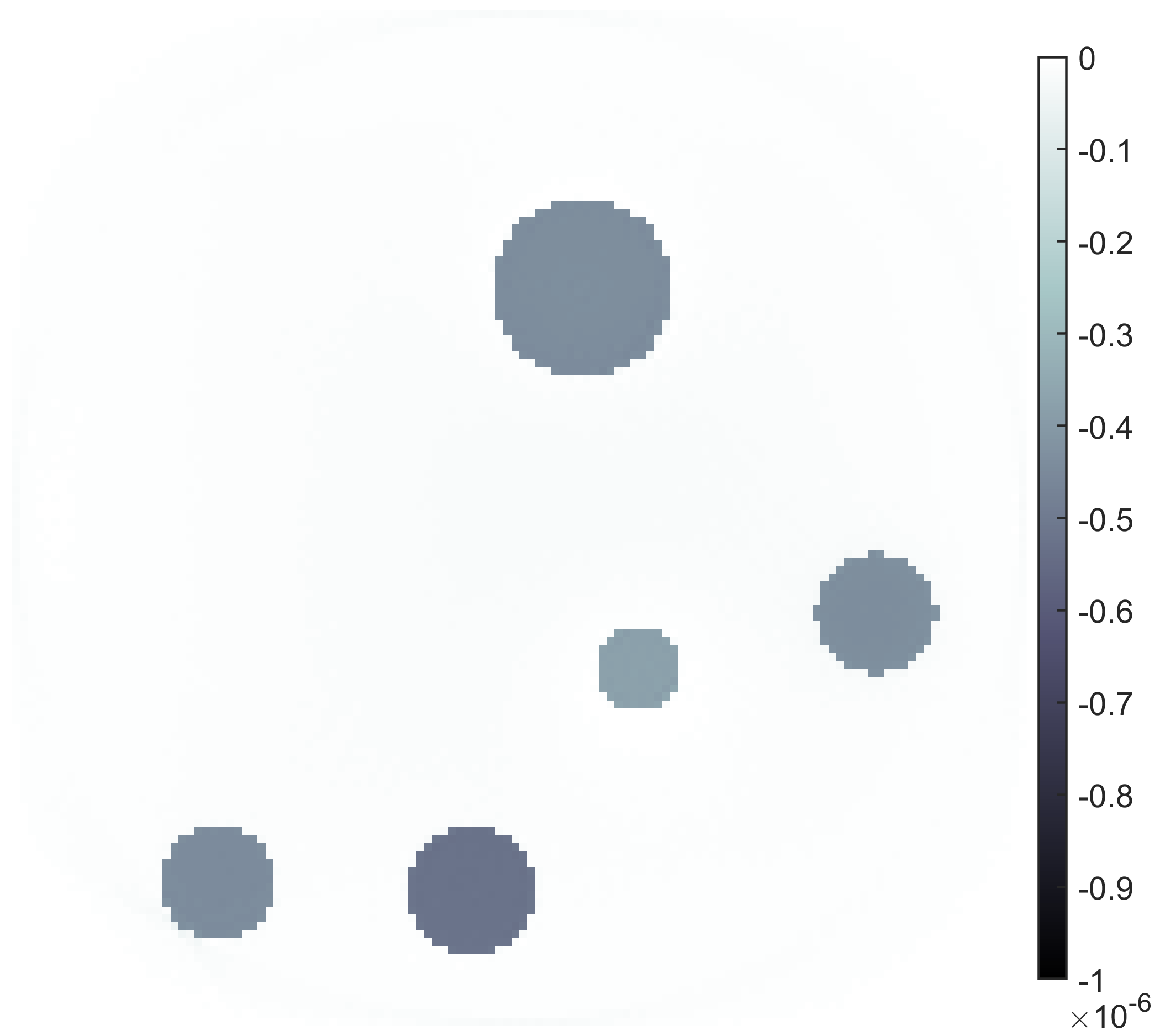} \\

        \rowlabel{diff $\delta$} &
         \includegraphics[width=0.23\textwidth]{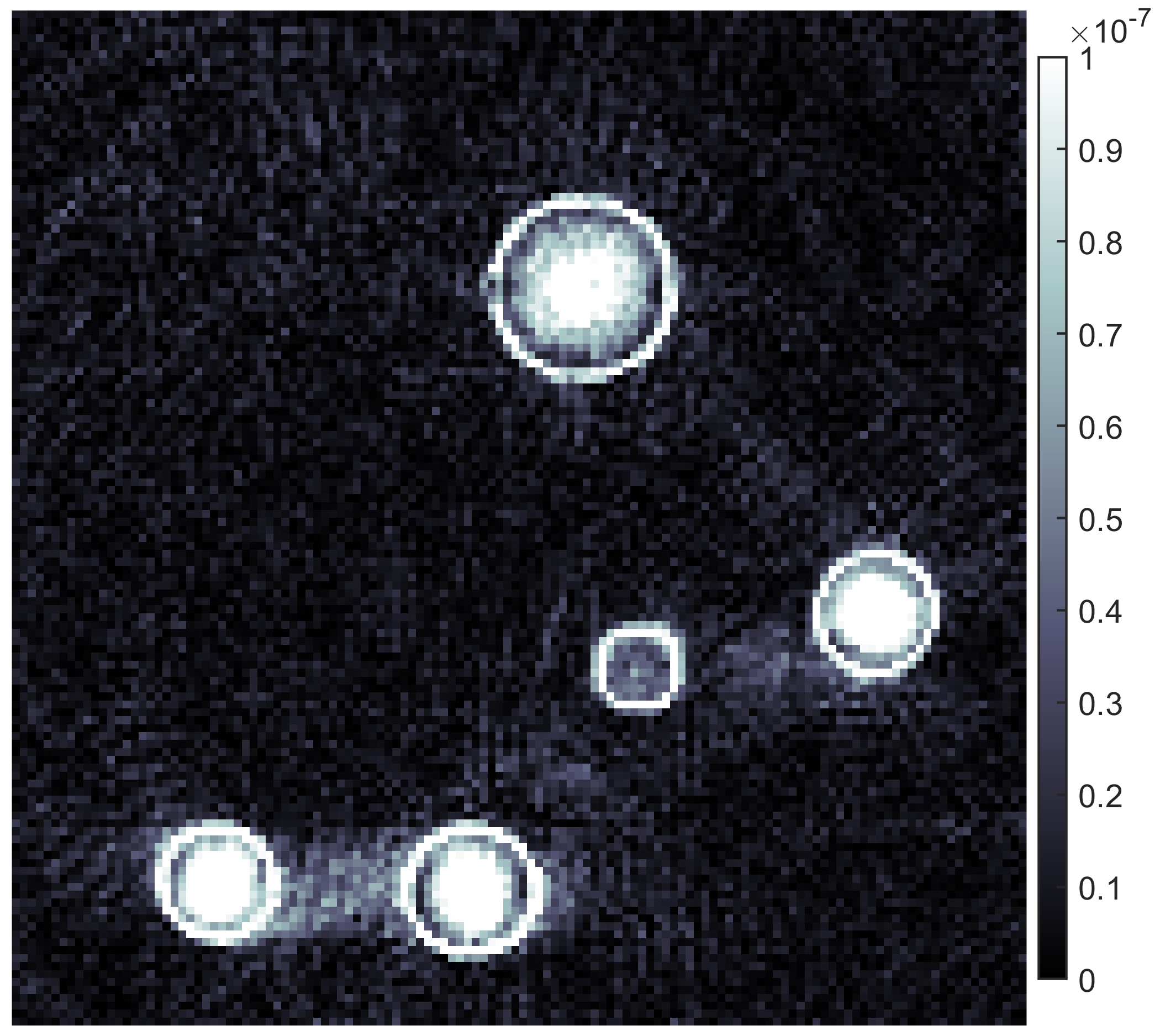} &
        \includegraphics[width=0.23\textwidth]{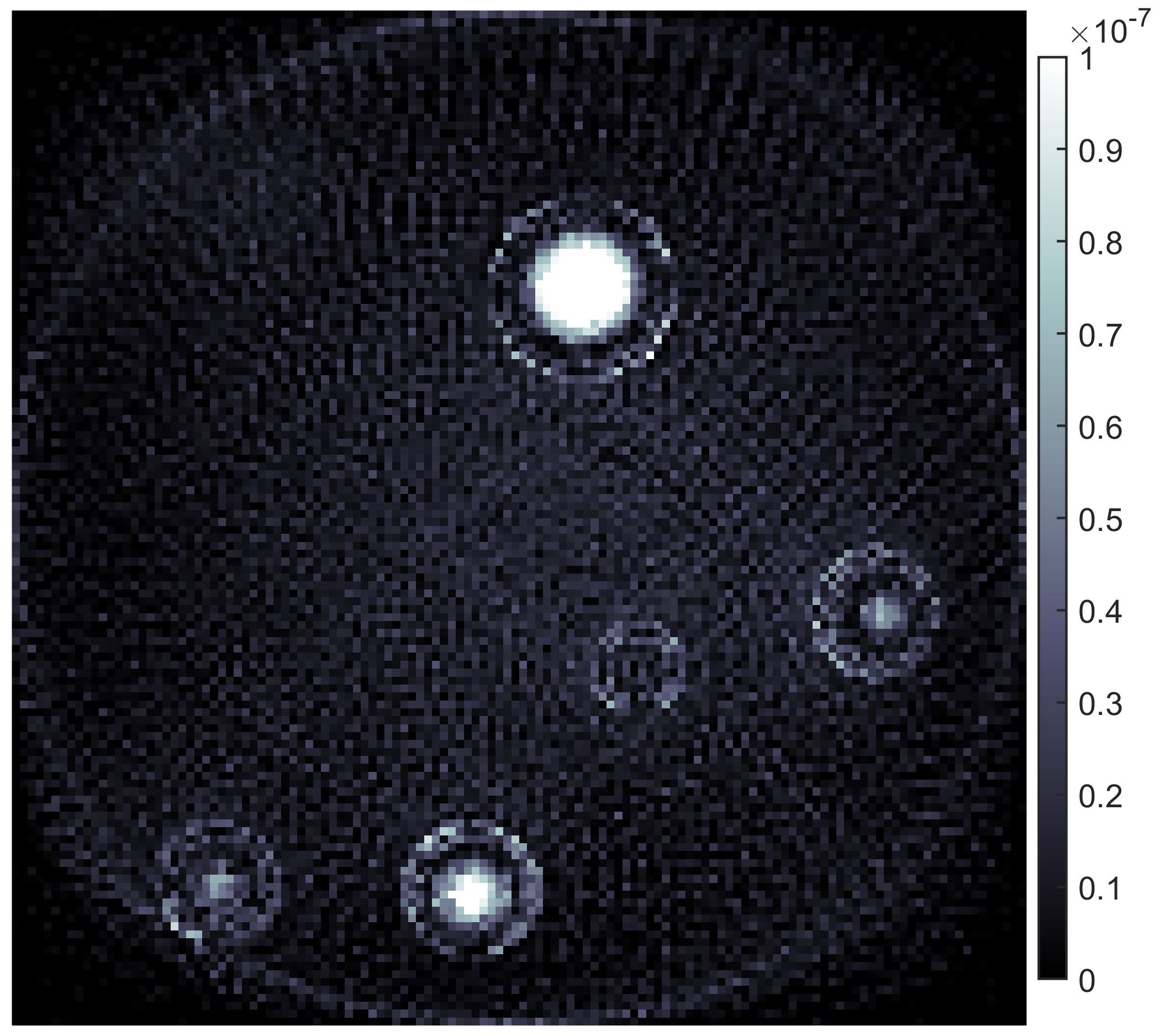} &
        \includegraphics[width=0.23\textwidth]{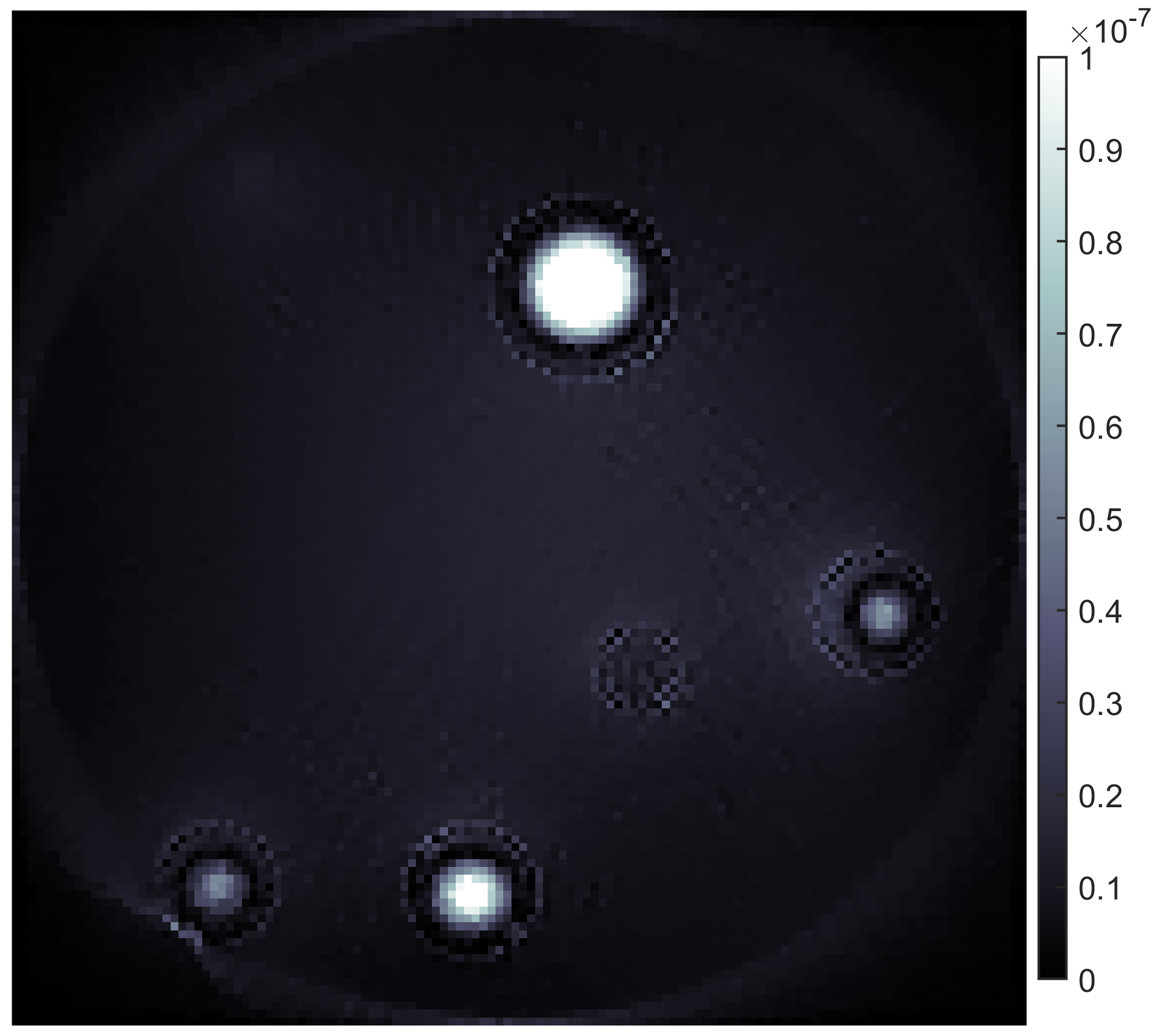} &
        \includegraphics[width=0.23\textwidth]{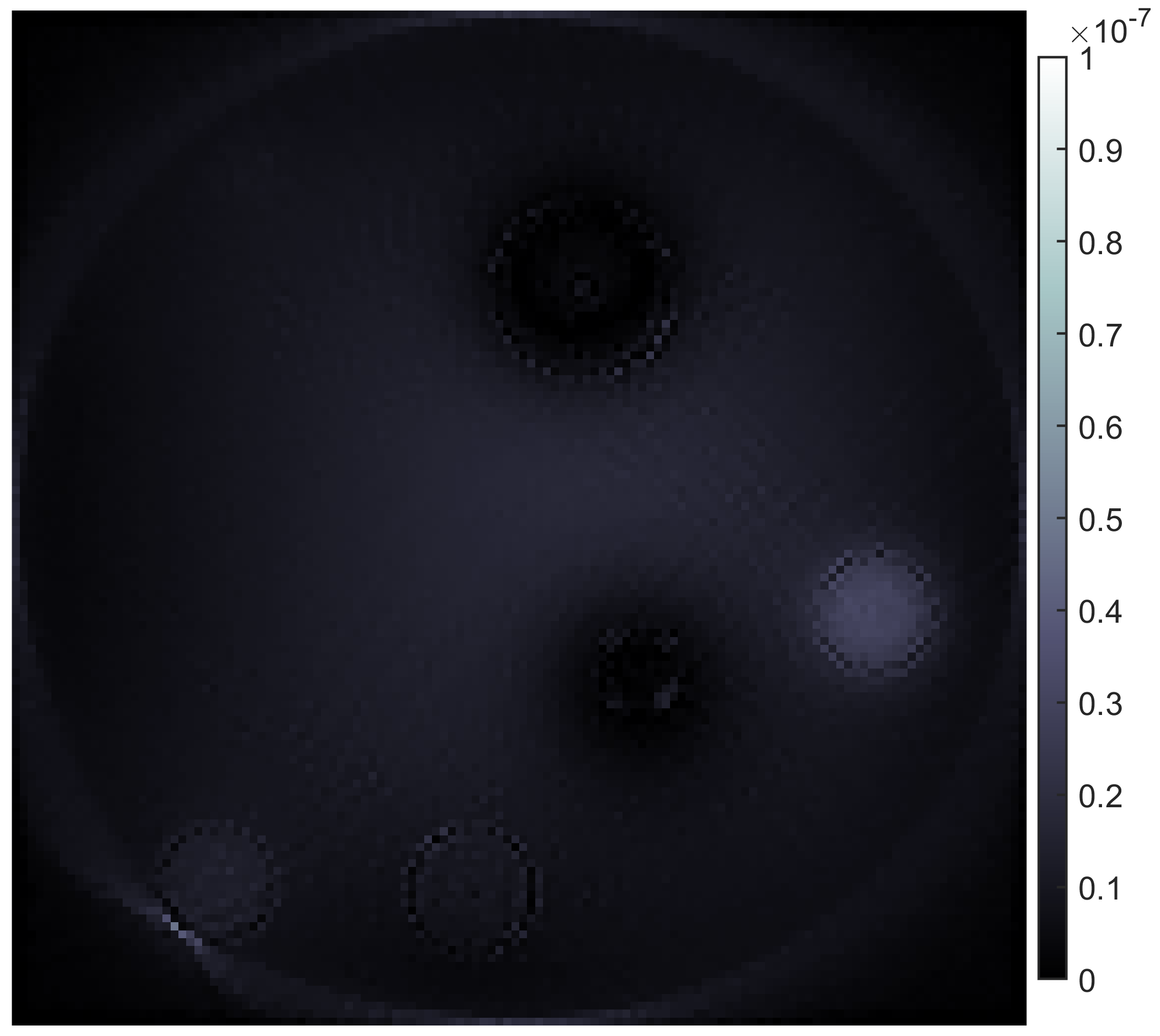} \\

        \rowlabel{$\beta_{reco}$} &
        \includegraphics[width=0.23\textwidth]{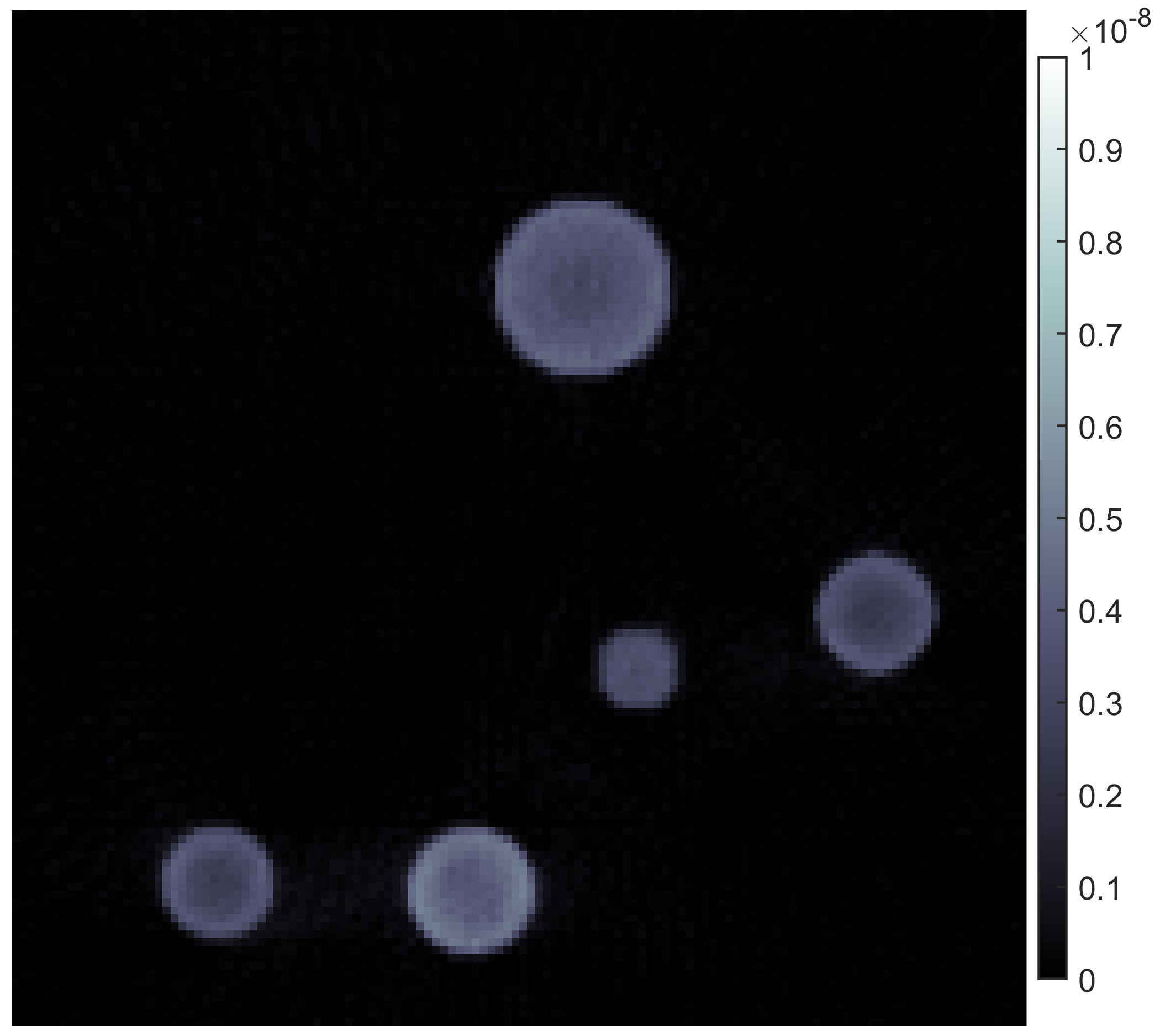} &
        \includegraphics[width=0.23\textwidth]{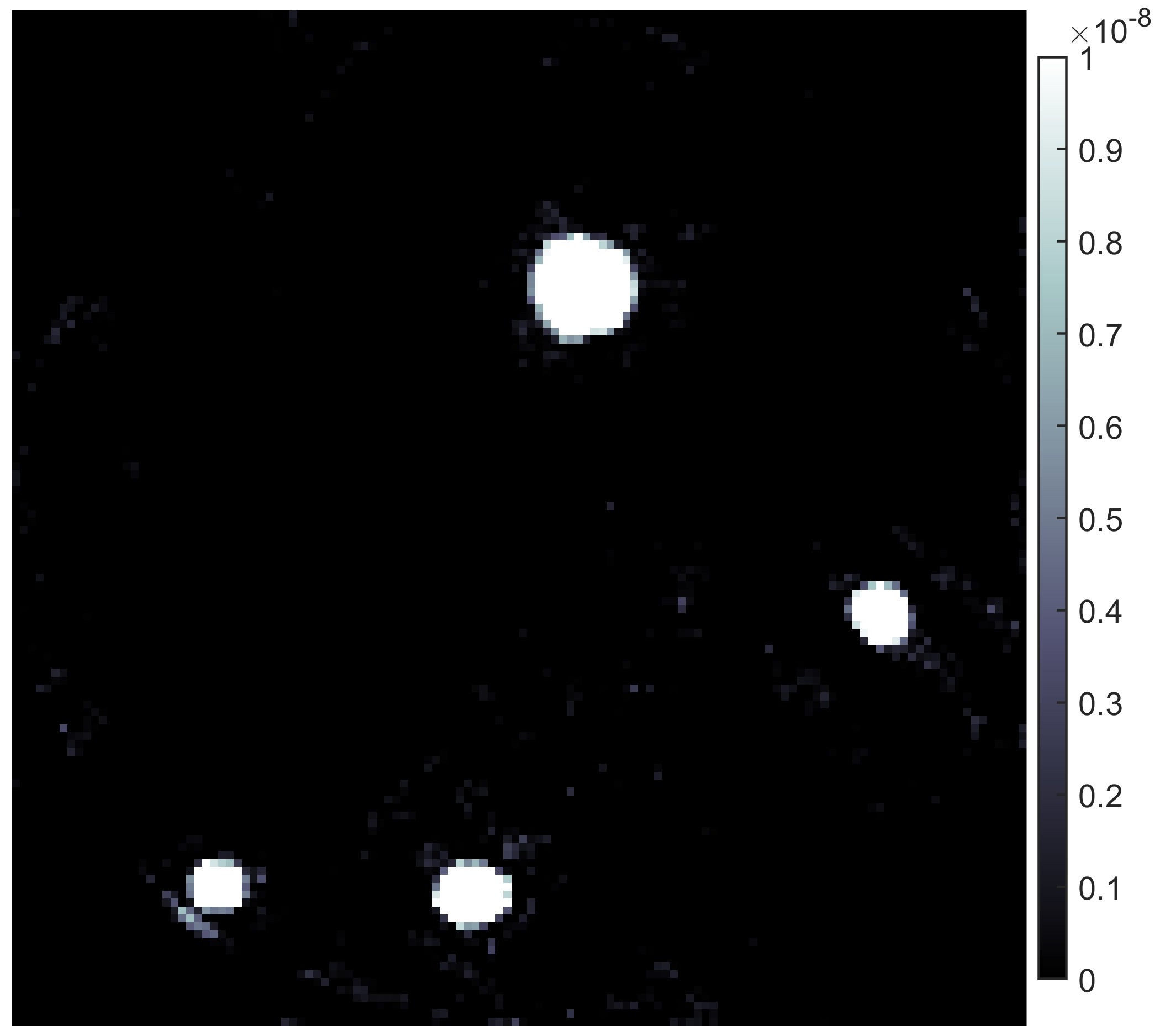} &
        \includegraphics[width=0.23\textwidth]{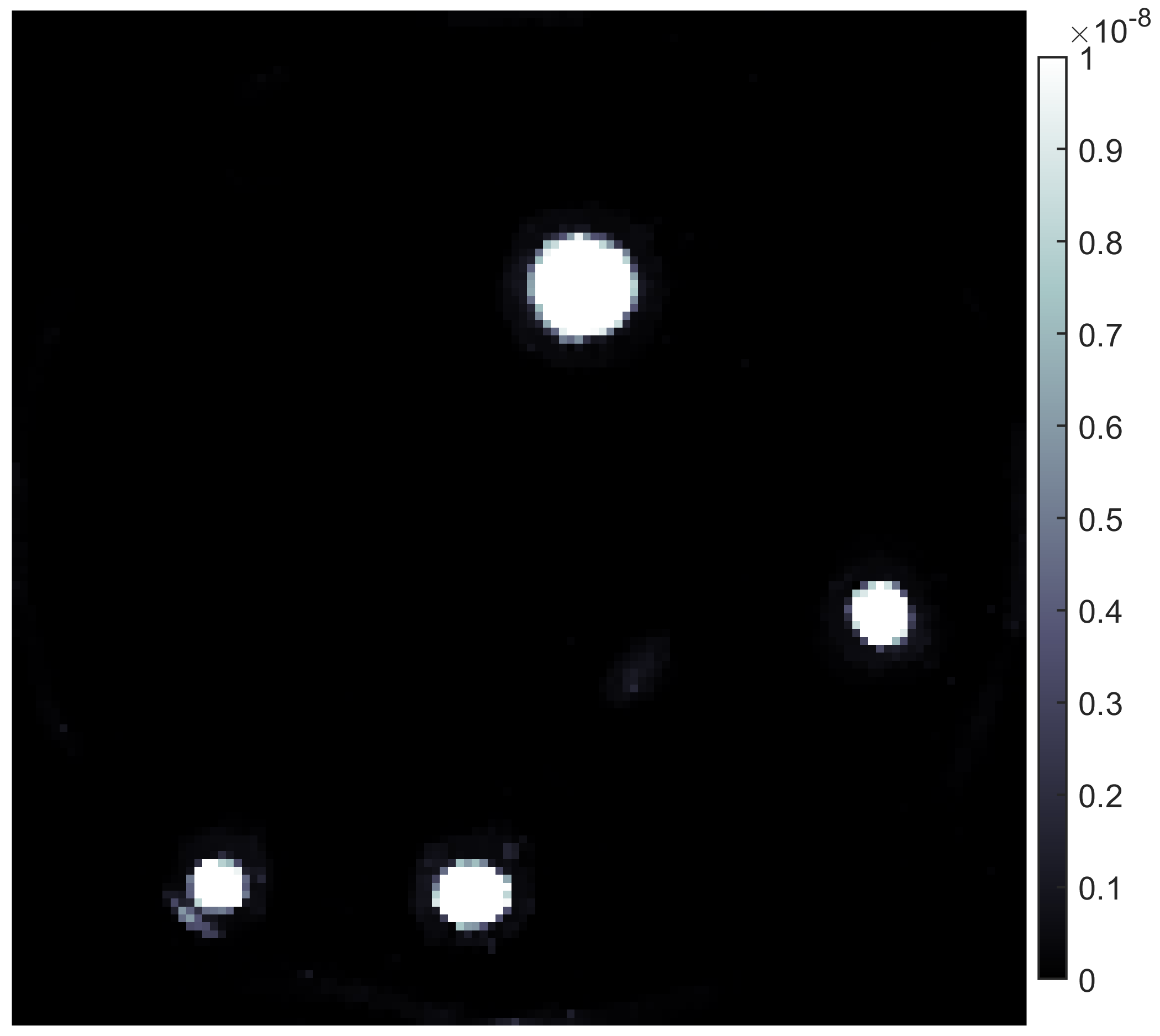} &
        \includegraphics[width=0.23\textwidth]{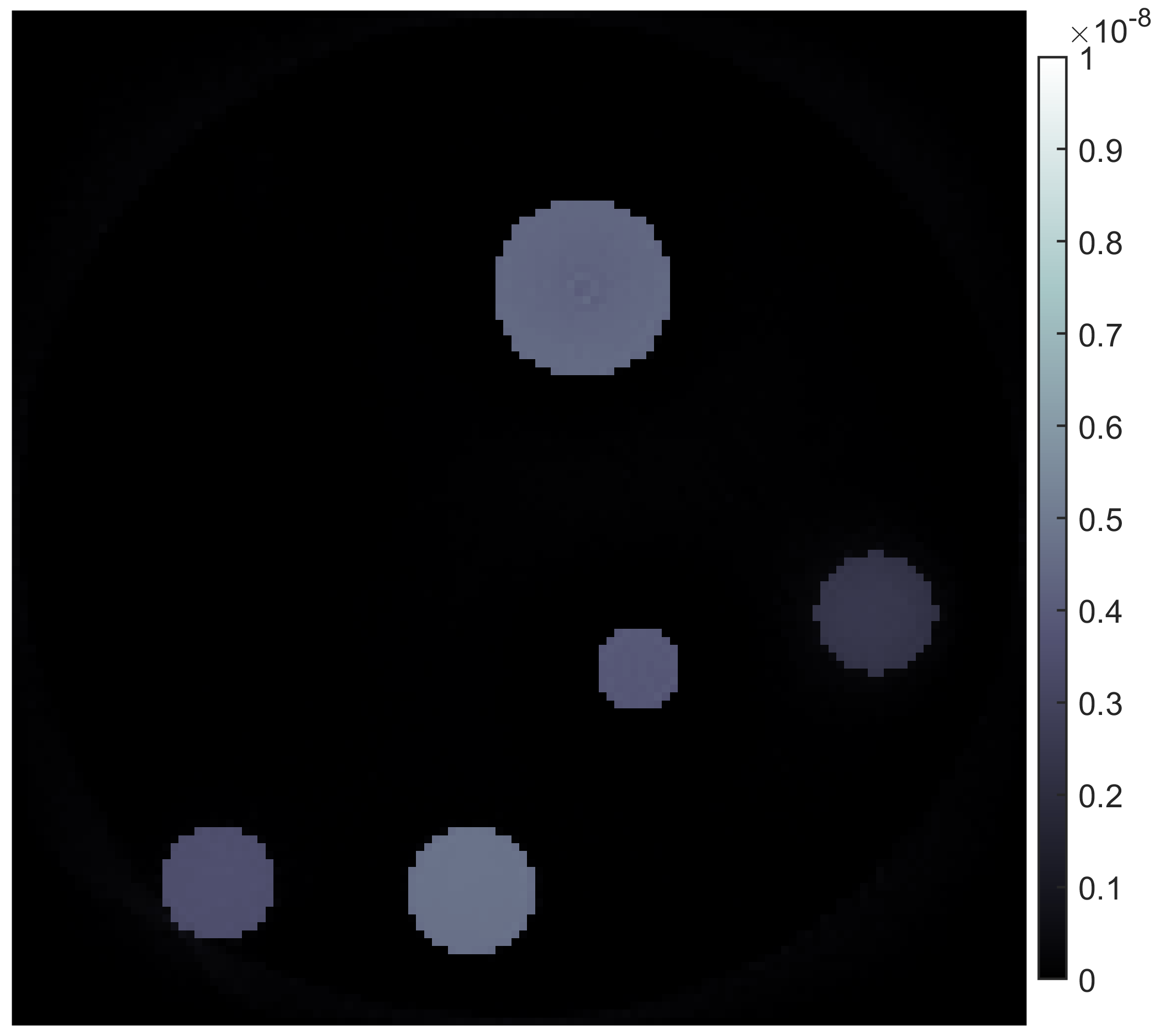} \\

        \rowlabel{diff $\beta$} &
        \includegraphics[width=0.23\textwidth]{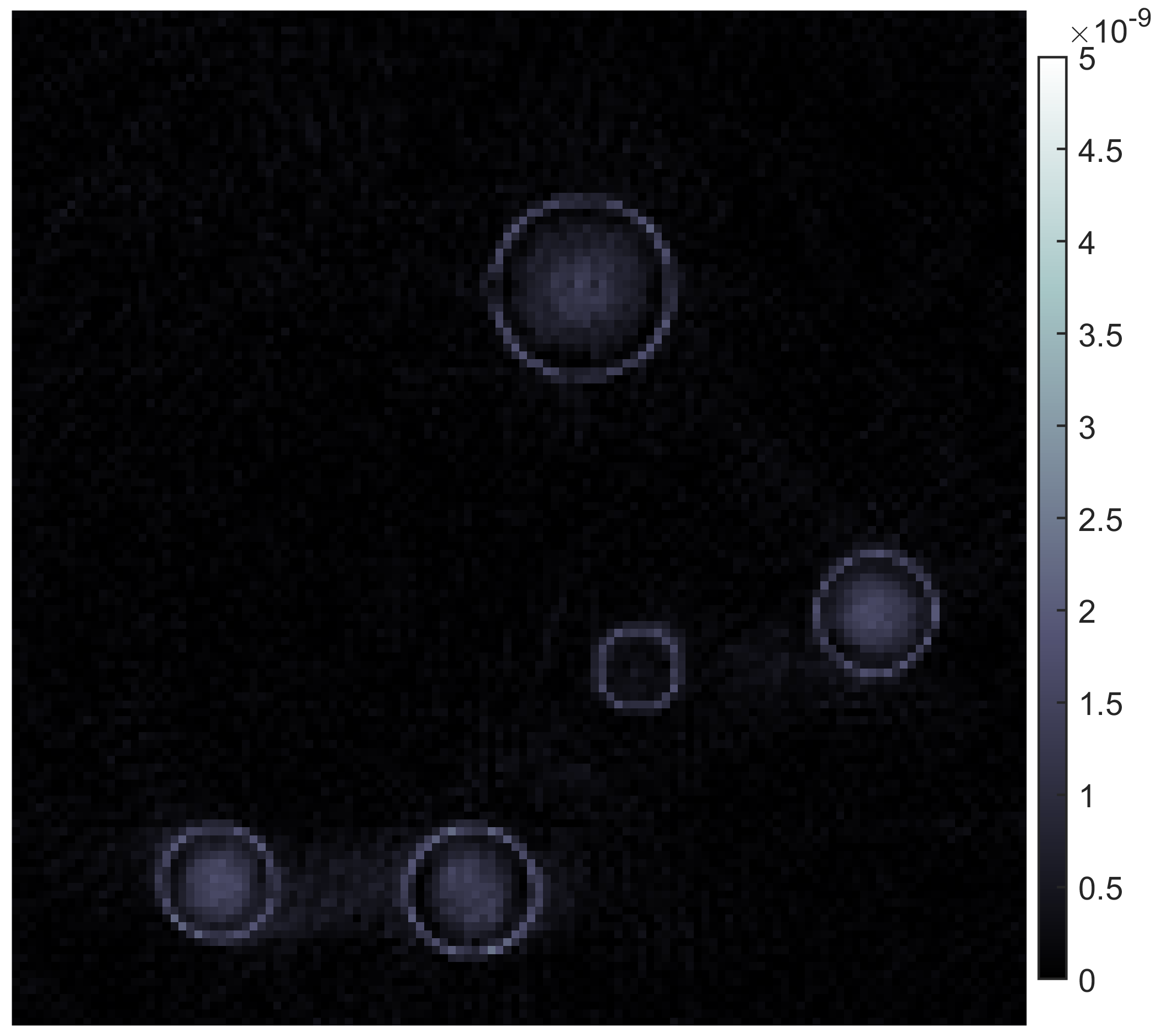} &
        \includegraphics[width=0.23\textwidth]{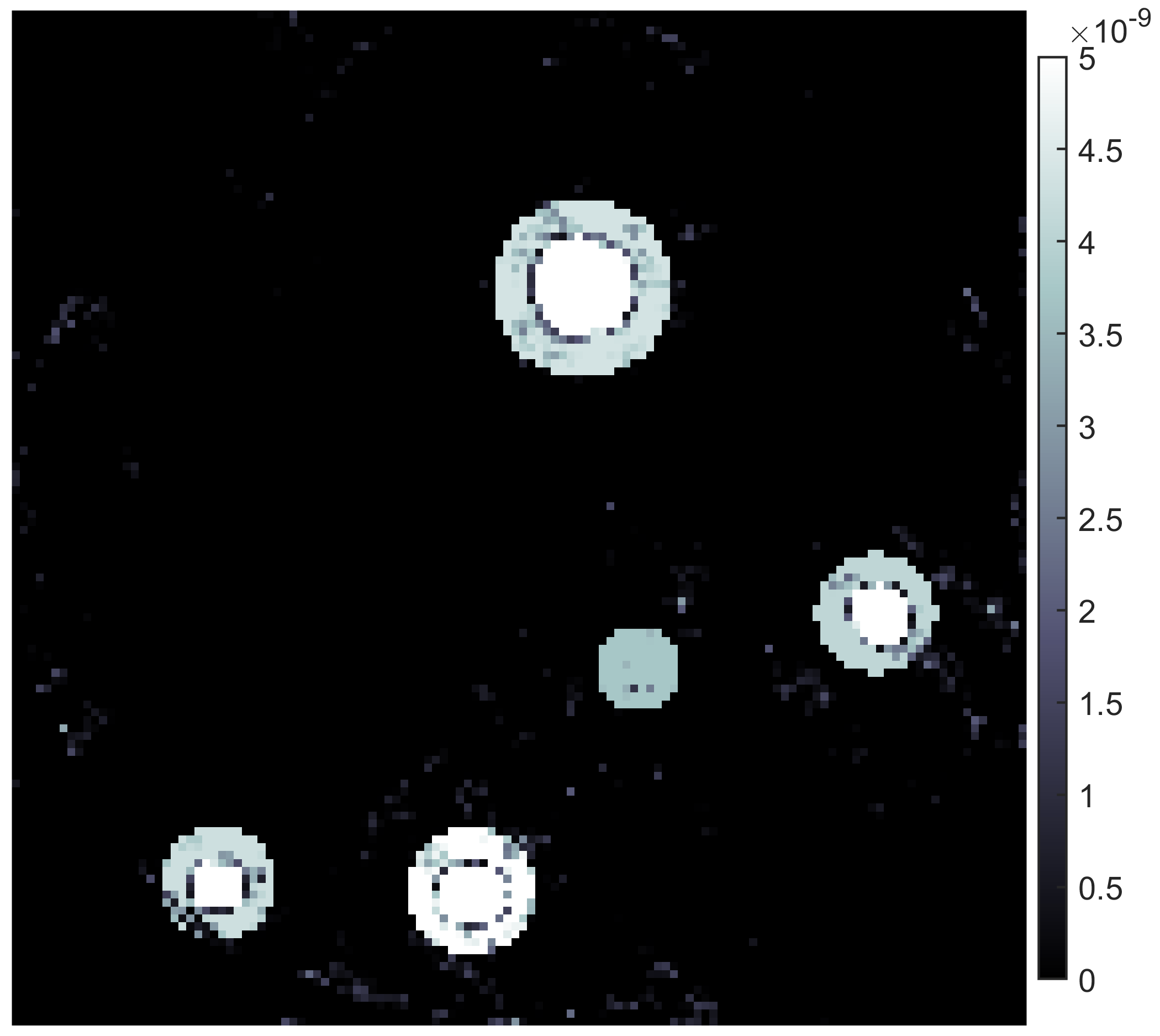} &
        \includegraphics[width=0.23\textwidth]{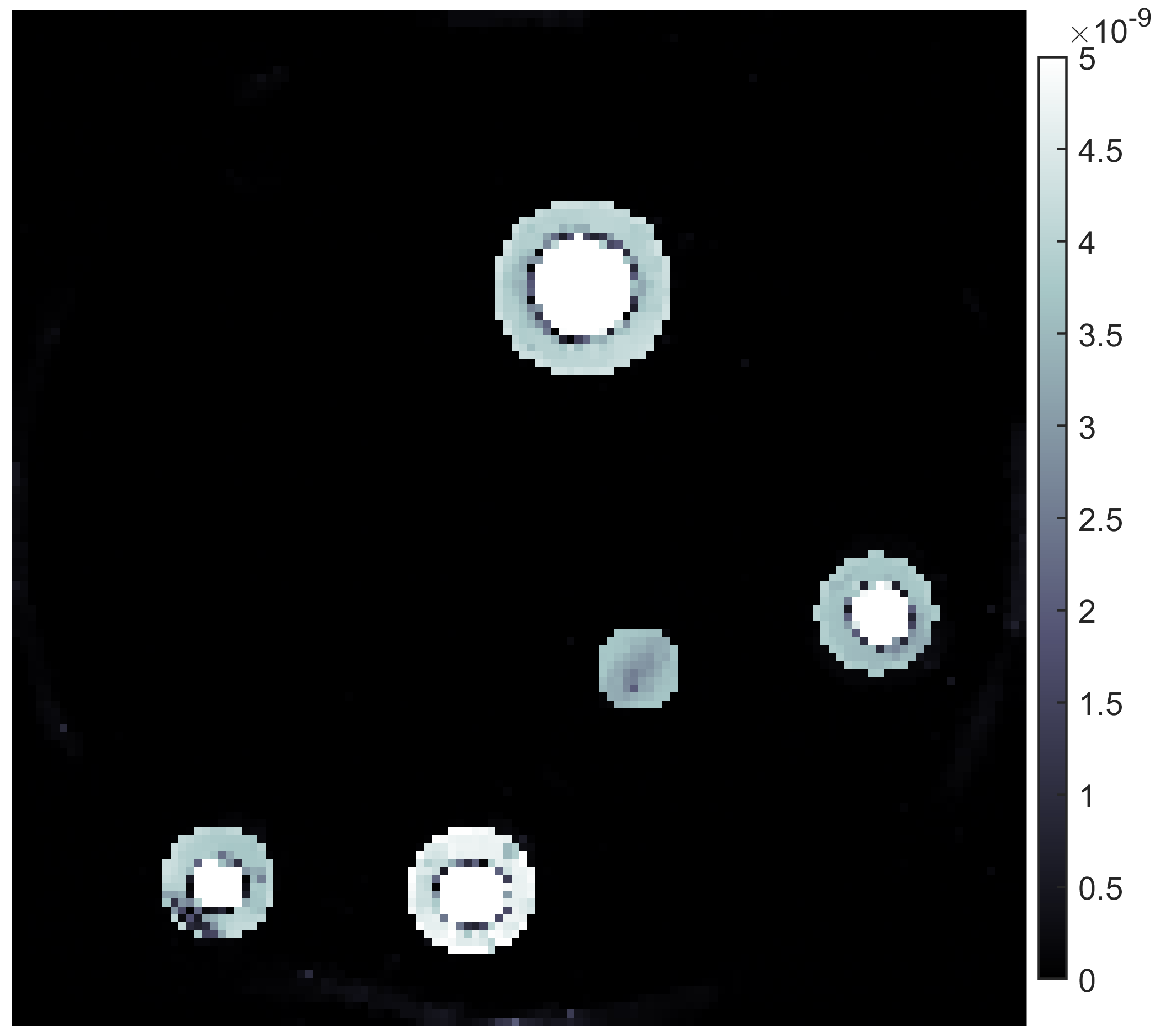} &
        \includegraphics[width=0.23\textwidth]{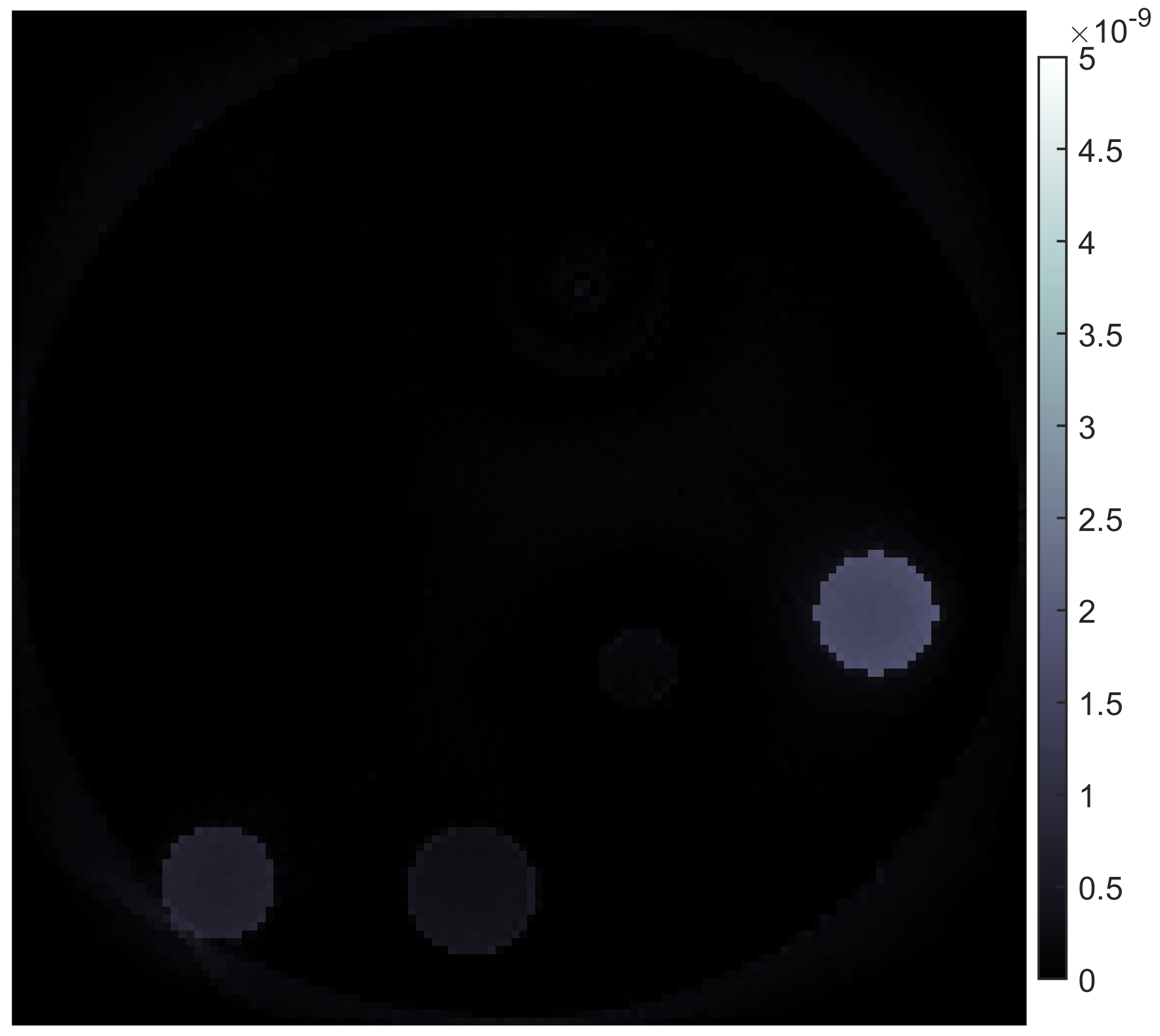} \\
    \end{tabular}
    \caption{3D reconstruction of the relative refractive index (at slice $z=38$). 
    Rows display the reconstructed phase decrement $-\delta$, absorption component $\beta$, and the corresponding reconstruction error maps. 
    Columns correspond to different reconstruction methods.
    Each row uses a common color scale across all reconstruction methods for direct visual comparison.}
\label{fig:direct3d_recon}
\end{figure}
In contrast to the two-step reconstruction pipeline, we now consider direct 3D reconstruction of the refractive index by solving the nonlinear inverse problem \eqref{eq: full nonlinear} with different regularization strategies. Representative reconstruction results are shown in the rest columns of Figure~\ref{fig:direct3d_recon}.

Using GD alone, the reconstruction of $\delta$ already captures the overall structure of the object. However, weak reconstruction artifacts are clearly visible, particularly inside the spherical structures, where the interior appears under-reconstructed with reduced contrast and noticeable difference from the ground truth. This behavior arises from the weak sensitivity of low spatial frequencies in the forward model, leading to slow convergence and insufficient recovery of interior values \cite{dora2024artifact}, as also reflected in the error maps. In contrast, the reconstruction of $\beta$ is significantly more degraded with limited structural fidelity, indicating the challenges of the absorption recovery in the absence of regularization.
\begin{figure}[t]
    \centering  
    \begin{minipage}[t]{0.25\linewidth}
    \centering
    \includegraphics[width=1\textwidth]{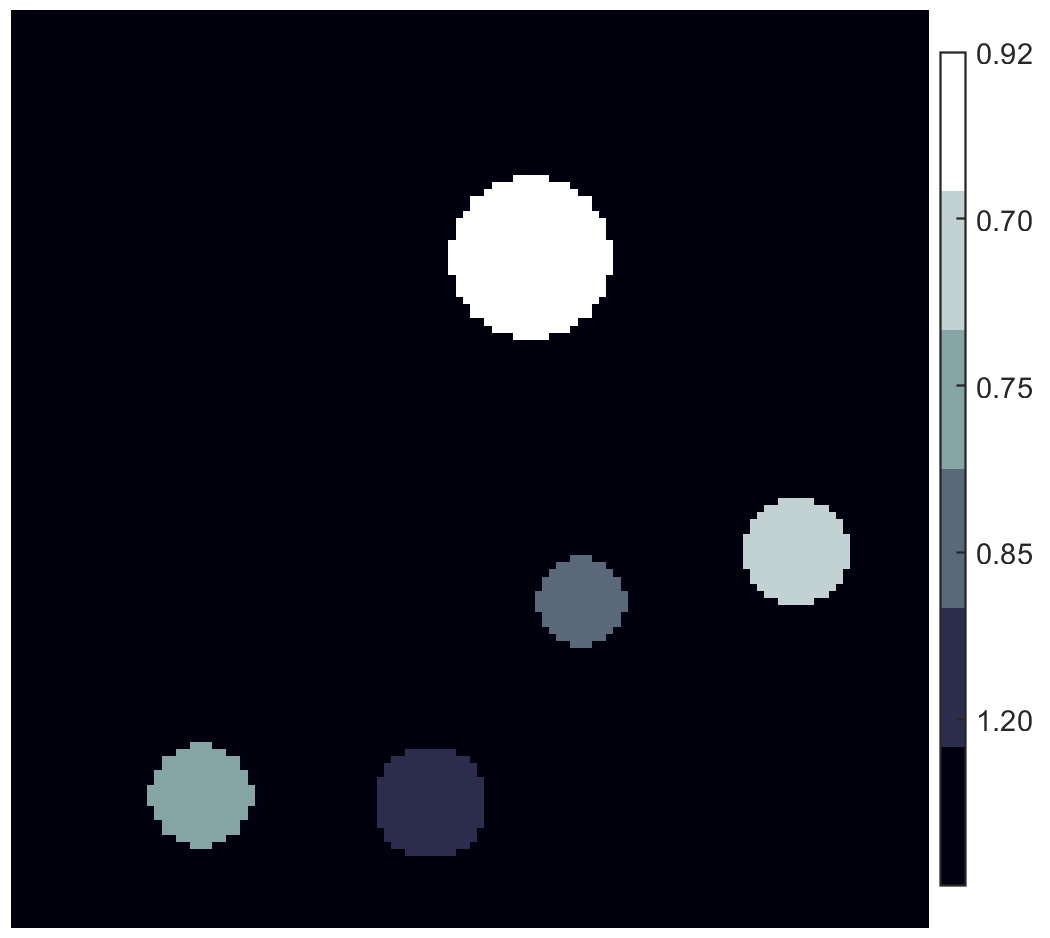}\\    \centerline{\footnotesize\text{$s_j$(Slice)}}
    \end{minipage}   
    \begin{minipage}[t]{0.25\linewidth}
    \centering
    \includegraphics[width=1\textwidth]{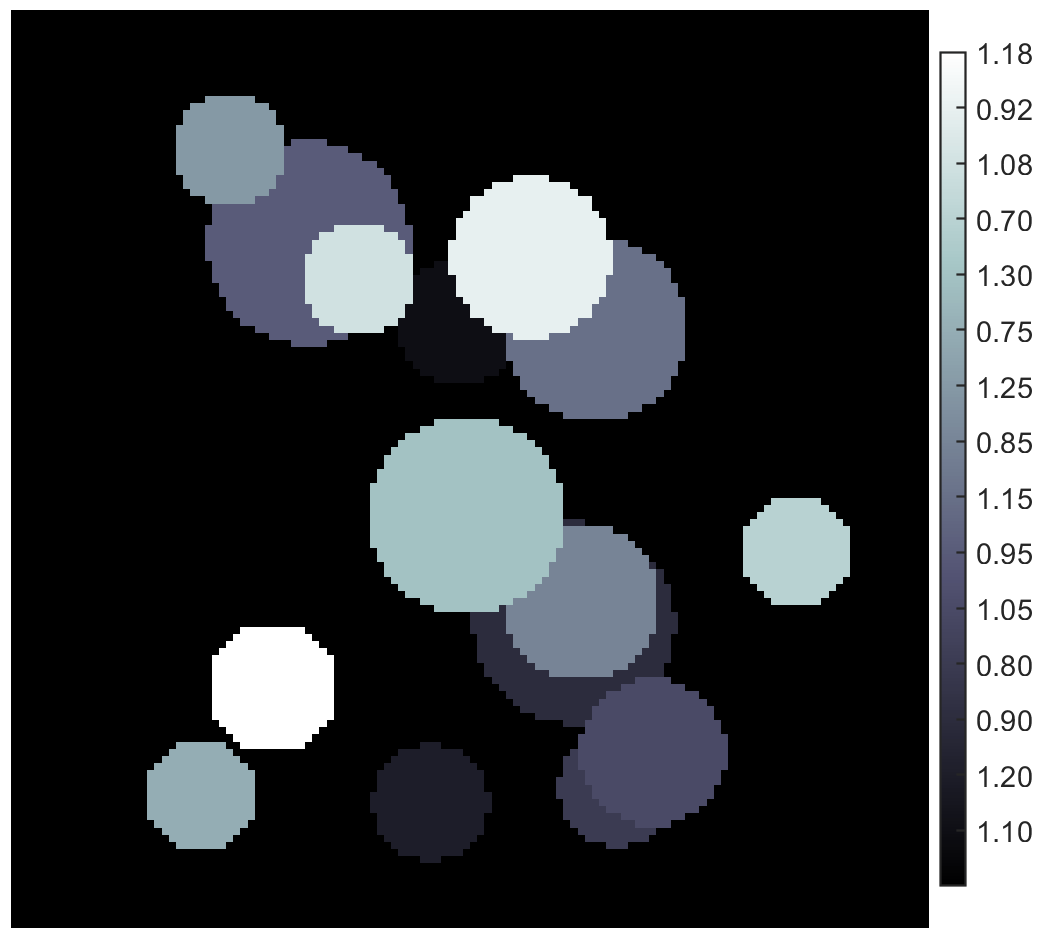}\\ \centerline{\footnotesize\text{$s_j$(Projection)}}
    \end{minipage}    
    \caption{Material-assignment maps for the phantom with material-dependent
    $\delta/\beta$ ratios. Left: slice-wise material assignment at $z=38$.
    Right: corresponding projection-view material assignment at $\theta=0$.}
    \label{fig:material_maps}
\end{figure}

\begin{figure}[H]
    \centering
    \setlength{\tabcolsep}{1pt}

    \begin{tabular}{c c c c}
        & \footnotesize Ground Truth 
        & \footnotesize Phase-guided 
        & \footnotesize Diff \\

        \raisebox{16mm}{ \footnotesize $-\delta$} &
         \includegraphics[width=0.25\textwidth]{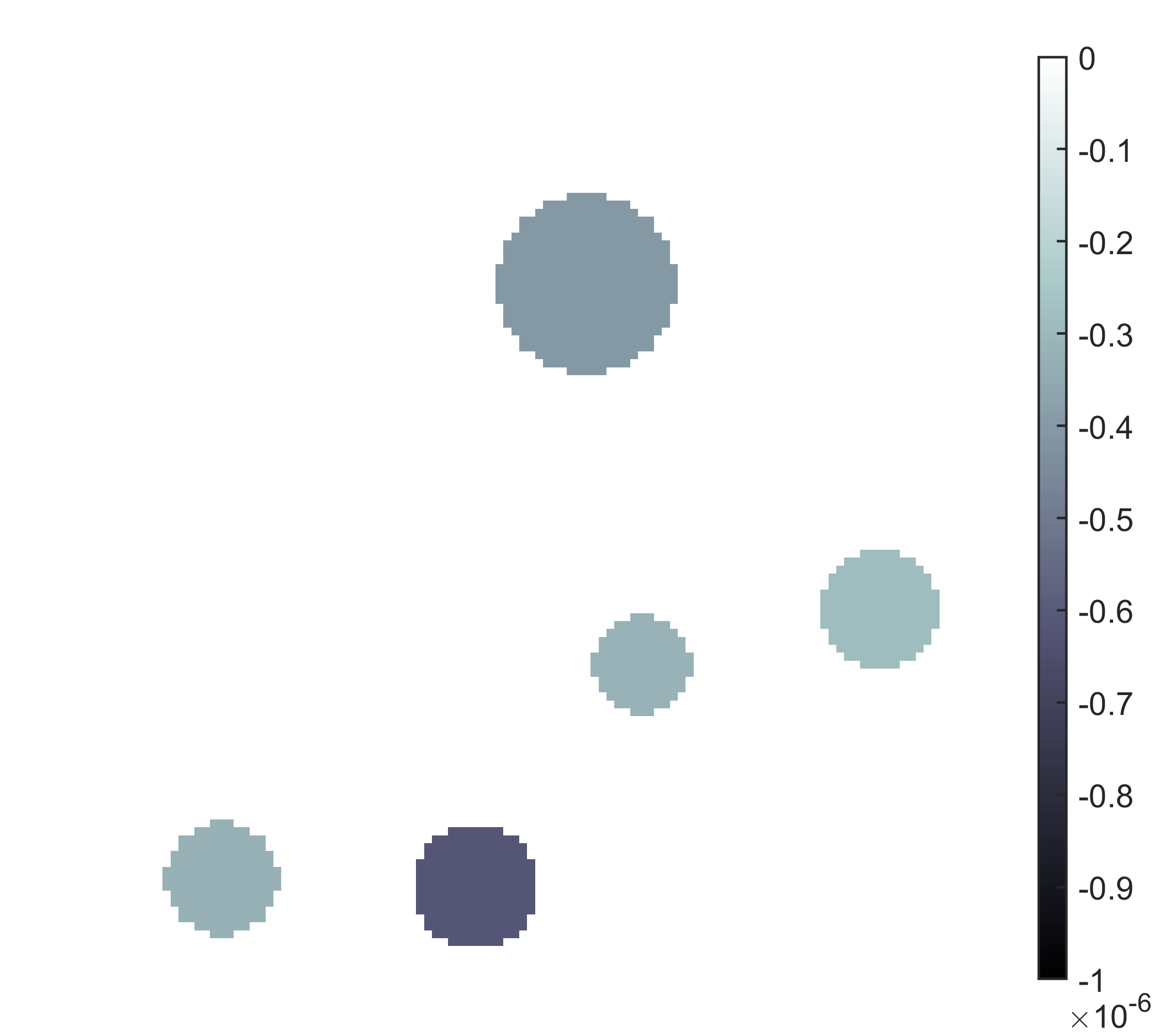} &
        \includegraphics[width=0.25\textwidth]{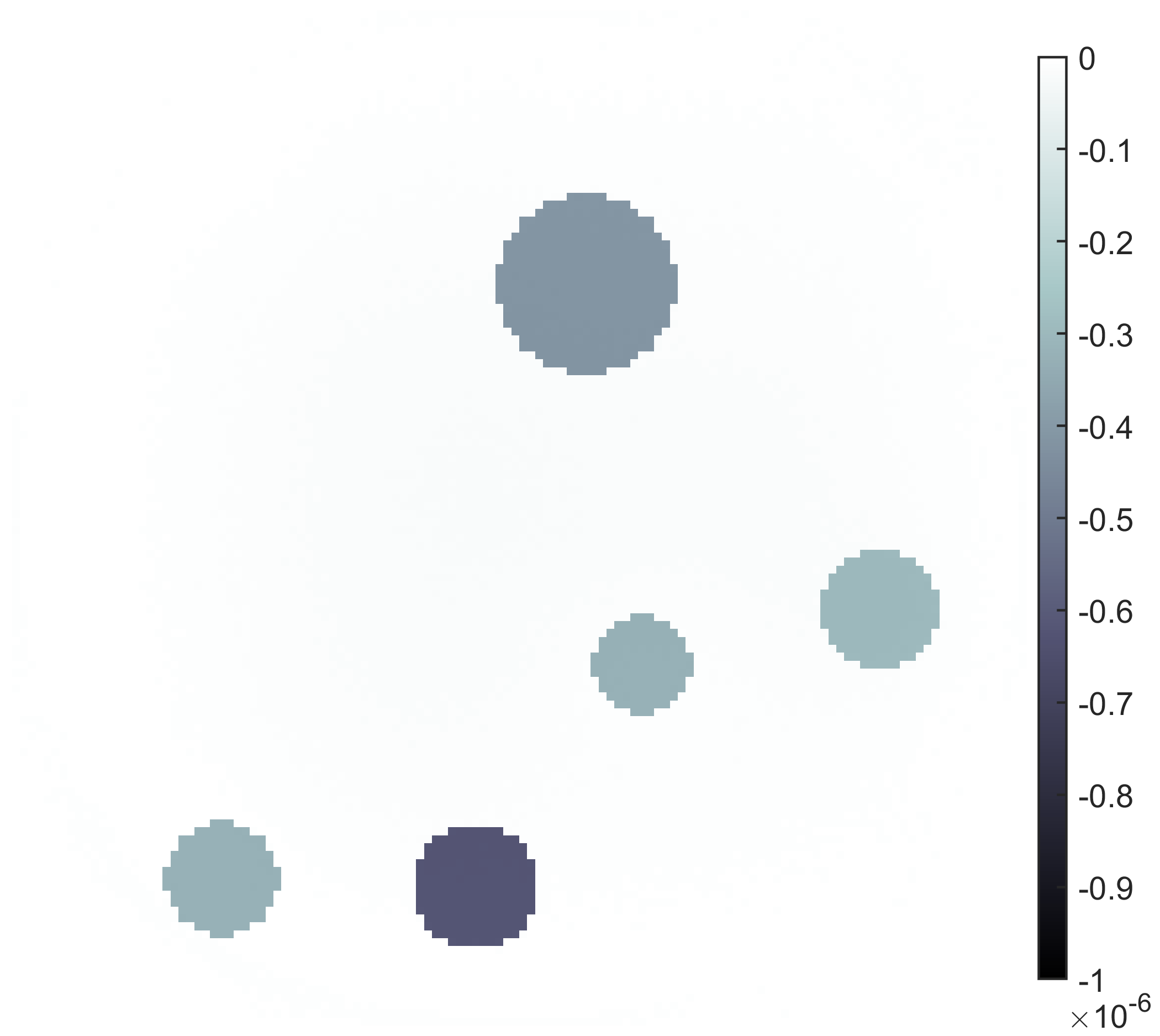} &
        \includegraphics[width=0.25\textwidth]{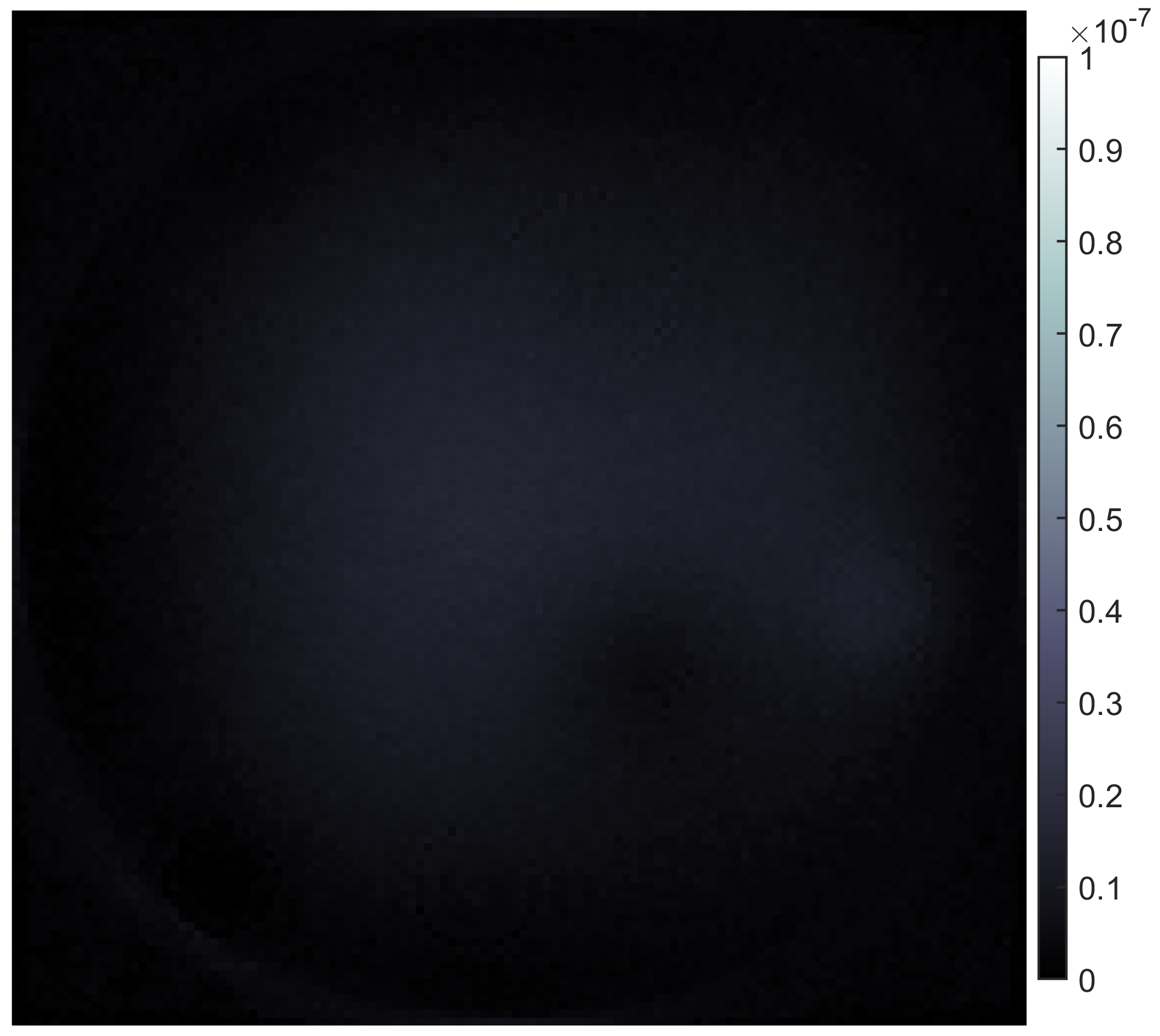} \\

        \raisebox{16mm}{ \footnotesize $\beta$} &
        \includegraphics[width=0.25\textwidth]{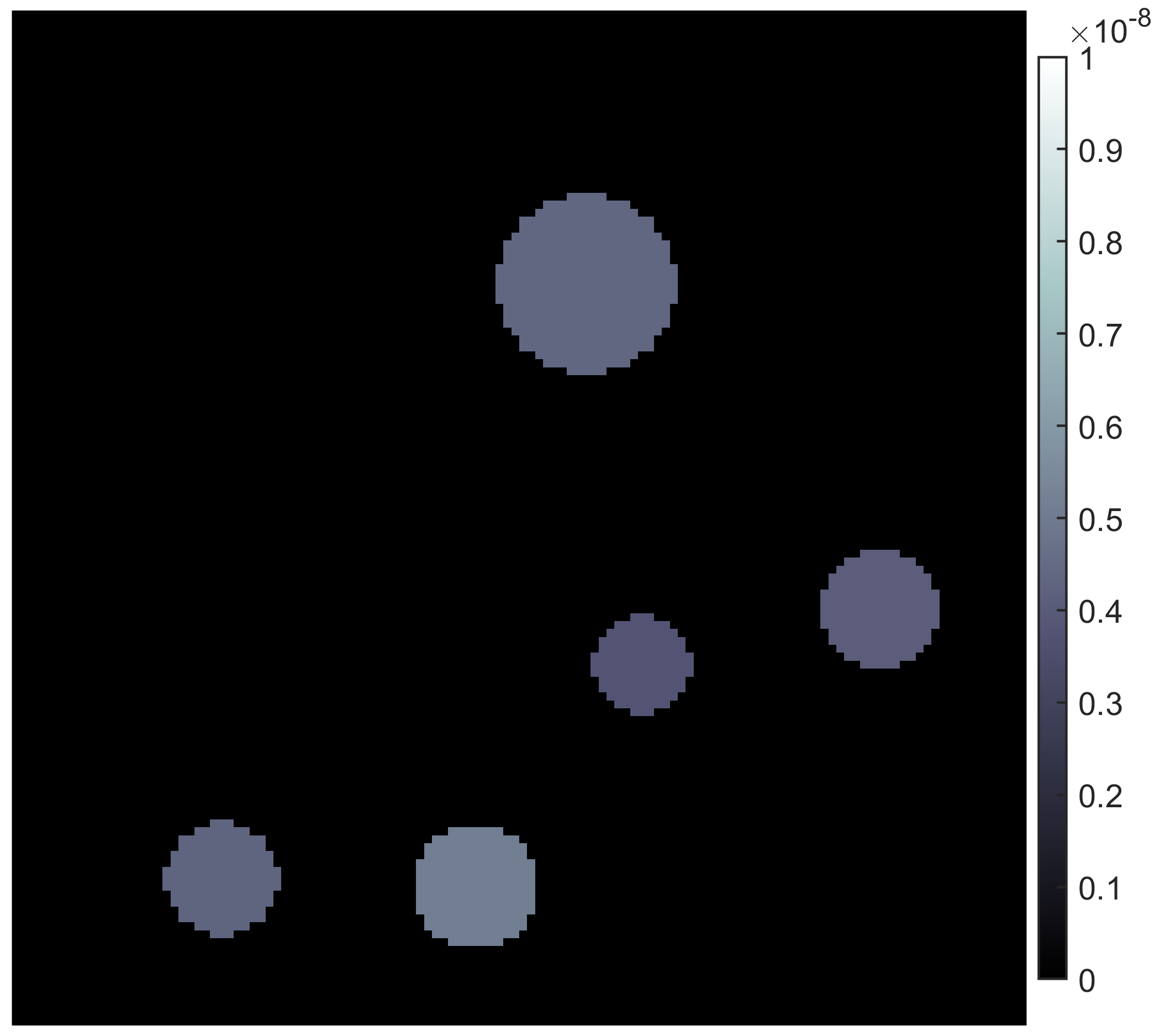} &
        \includegraphics[width=0.25\textwidth]{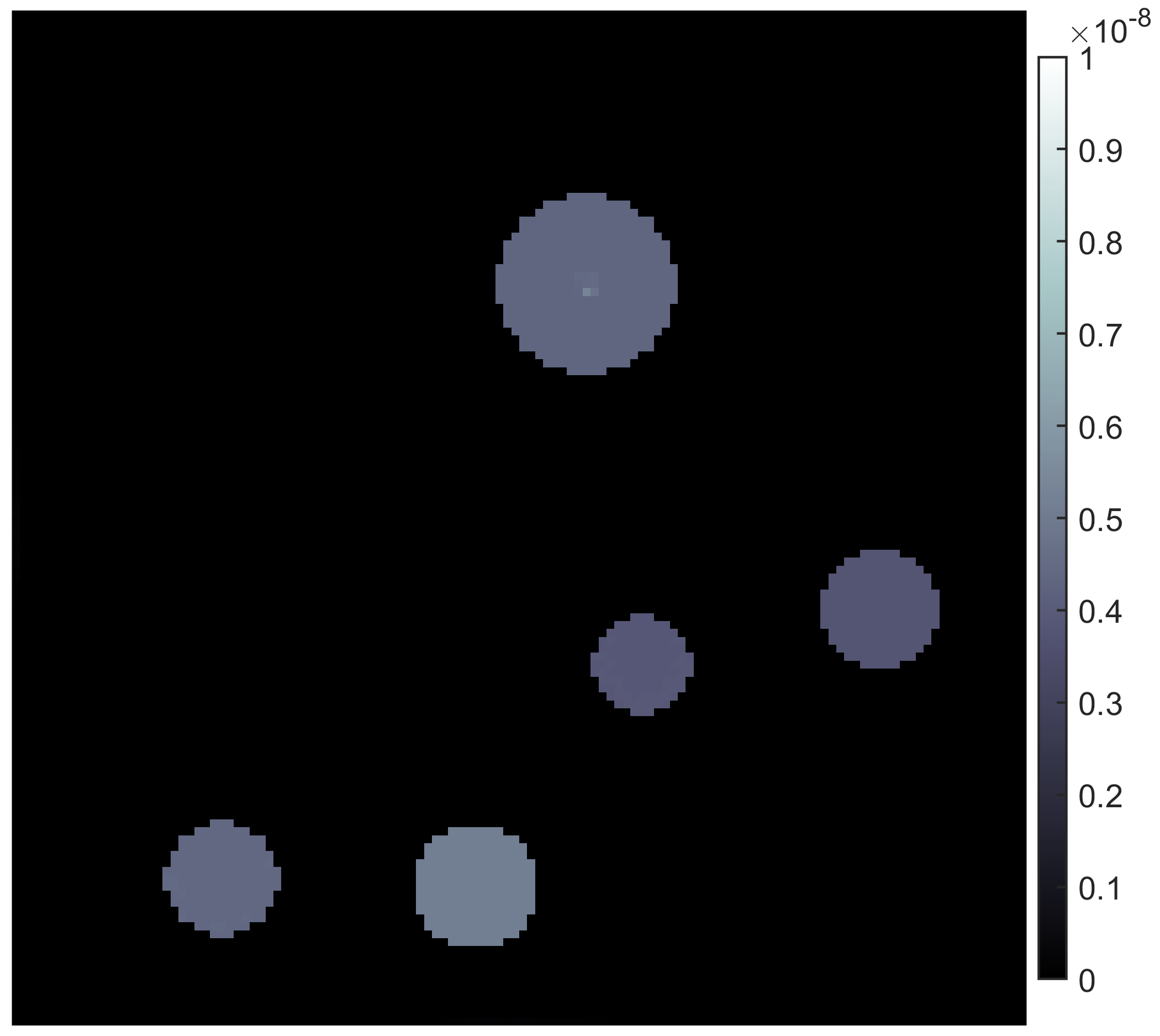} &
        \includegraphics[width=0.25\textwidth]{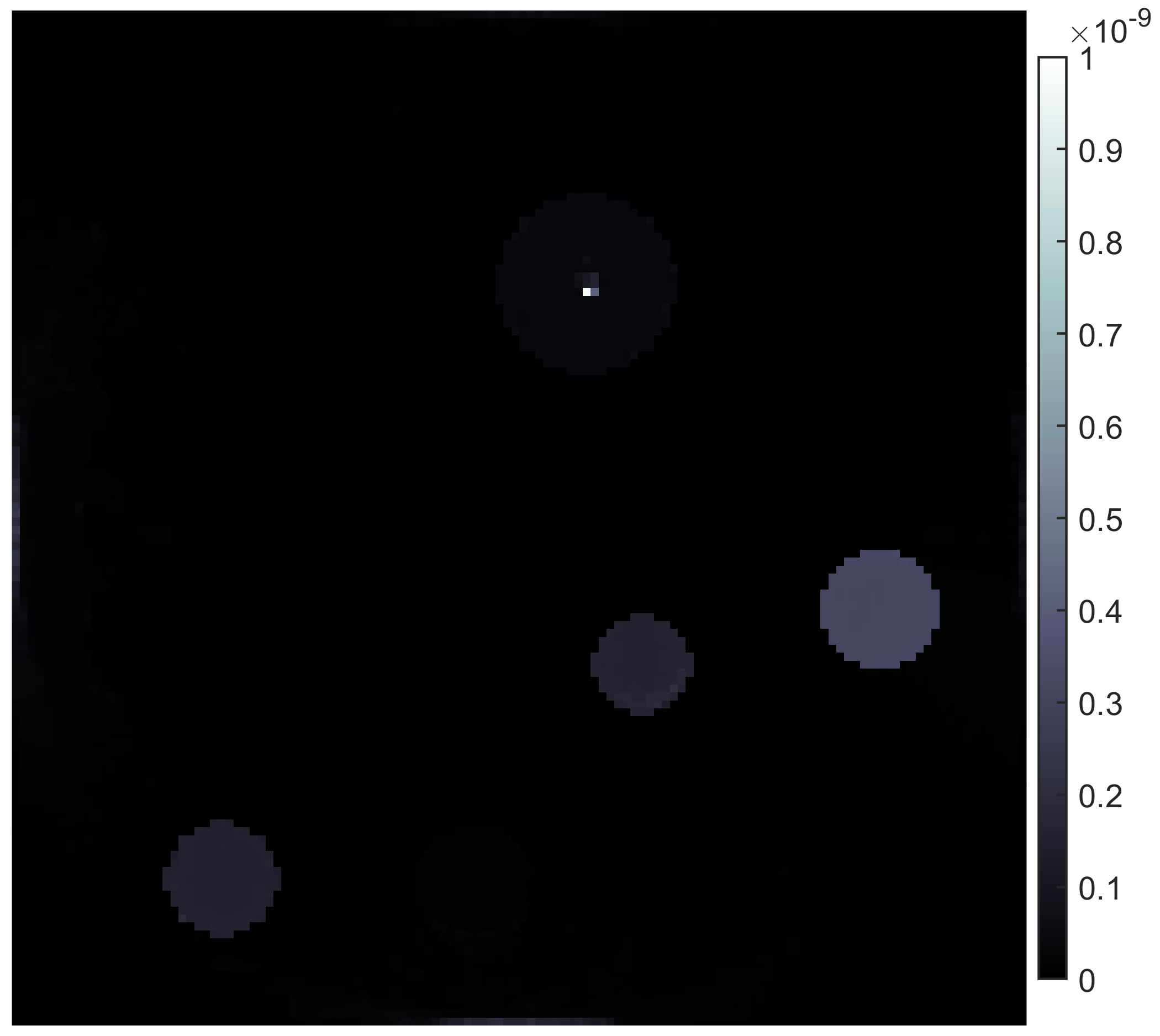} \\
    \end{tabular}

    \caption{Reconstruction results for the phantom with material-dependent
    $\delta/\beta$ ratios at slice $z=38$. Rows correspond to the phase decrement and absorption component, respectively.
    Columns show the ground truth, reconstruction, and corresponding
    absolute error.}
   \label{fig:materials_reconstruction}
\end{figure}
By incorporating TV regularization with Bregman iteration, the reconstruction becomes more stable. Noise and artifacts are effectively suppressed in both $\delta$ and $\beta$, leading to clearer overall structures. However, improvement in $\beta$ remains limited, as underlying structural deficiencies persist.

To evaluate the sensitivity to material-dependent phase--absorption ratios, each sphere was assigned a distinct $\delta/\beta$ ratio while preserving the same spatial morphology. The baseline ratio was set to $\delta_0/\beta_0 = 100$, and the ratio of the $j$-th sphere was defined as
$
\frac{\delta_j}{\beta_j}
=
s_j \frac{\delta_0}{\beta_0},
$
where $s_j$ varied between $0.70$ and $1.30$. 
Figure~\ref{fig:material_maps} illustrates the resulting material assignments for a representative slice and the corresponding projection view, showing the spatial variation of $s_j$ among the different spheres. 
The corresponding reconstruction results are presented in Figure~\ref{fig:materials_reconstruction}. The results show that the method performs reliably when the $\delta/\beta$ ratio varies spatially across the object.

Further improvements are obtained with phase-guided regularization ($\gamma = 0.1$). Both $\delta$ and $\beta$ exhibit more accurate structural recovery, with improved contrast consistency and visibly reduced reconstruction errors. In particular, the interior regions of the reconstructed objects appear substantially more homogeneous, while many of the residual artifacts observed in the GD and Bregman TV reconstructions are suppressed. This improvement can be explained by the coupling introduced between $\delta$ and $\beta$ through phase-guided regularization. By leveraging structural information from the phase component, the reconstruction of $\beta$ is better constrained. At the same time, this coupling improves the conditioning of the overall inverse problem, particularly for low-frequency components that are weakly constrained by the data term alone. As a result, both $\delta$ and $\beta$ benefit from improved low-frequency recovery, leading to more accurate and physically consistent reconstructions.

The influence of the guiding parameter $\gamma$ on the reconstruction is further investigated in Figure~\ref{fig:gamma_analysis}.
When $\gamma=1$, the phase-guided regularization is absent, and the reconstruction reduces to the unguided case. Consequently, the reconstruction quality deteriorates, with pronounced errors and weak recovery of homogeneous regions. 
As $\gamma$ decreases (e.g., $\gamma=0.7$ and $\gamma=0.4$), the strength of the guidance increases, leading to a gradual improvement in reconstruction quality. This is reflected by decreasing errors, particularly in the interior regions and low-frequency components. 
For smaller values of $\gamma$ (e.g., $\gamma=0.1$), where the phase-guided regularization is strong, the reconstruction errors of $\delta$ are further reduced. However, for $\beta$, overly strong guidance introduces slight over-regularization effects. These results suggest that an intermediate value of $\gamma$ provides a good balance between effective guidance and avoiding over-regularization.

We also investigate its robustness with respect to different $\delta$/$\beta$ ratios, as shown in Figure~\ref{fig:ratio_analysis}. 
As the relative magnitude of $\beta$ decreases, the absorption signal becomes weaker, making its reconstruction increasingly challenging. This leads to more pronounced artifacts and reduced contrast in $\beta$, while $\delta$ remains comparatively stable due to its stronger influence on the measured intensity.

Despite these variations, the proposed phase-guided reconstruction remains robust across different $\delta$/$\beta$ ratios. The structural features are consistently recovered, and the reconstruction errors remain well controlled, demonstrating that the method effectively compensates for the weak sensitivity of $\beta$ by leveraging the structural information from $\delta$.

\begin{figure}[H]
    \centering
    \setlength{\tabcolsep}{1pt}

    \begin{tabular}{c c c c c}
        & \footnotesize $\gamma = 1$ 
        & \footnotesize $\gamma = 0.7$ 
        & \footnotesize $\gamma = 0.4$ 
        & \footnotesize $\gamma = 0.1$  \\

        \rowlabel{$-\delta_{reco}$} &
        \includegraphics[width=0.23\textwidth]{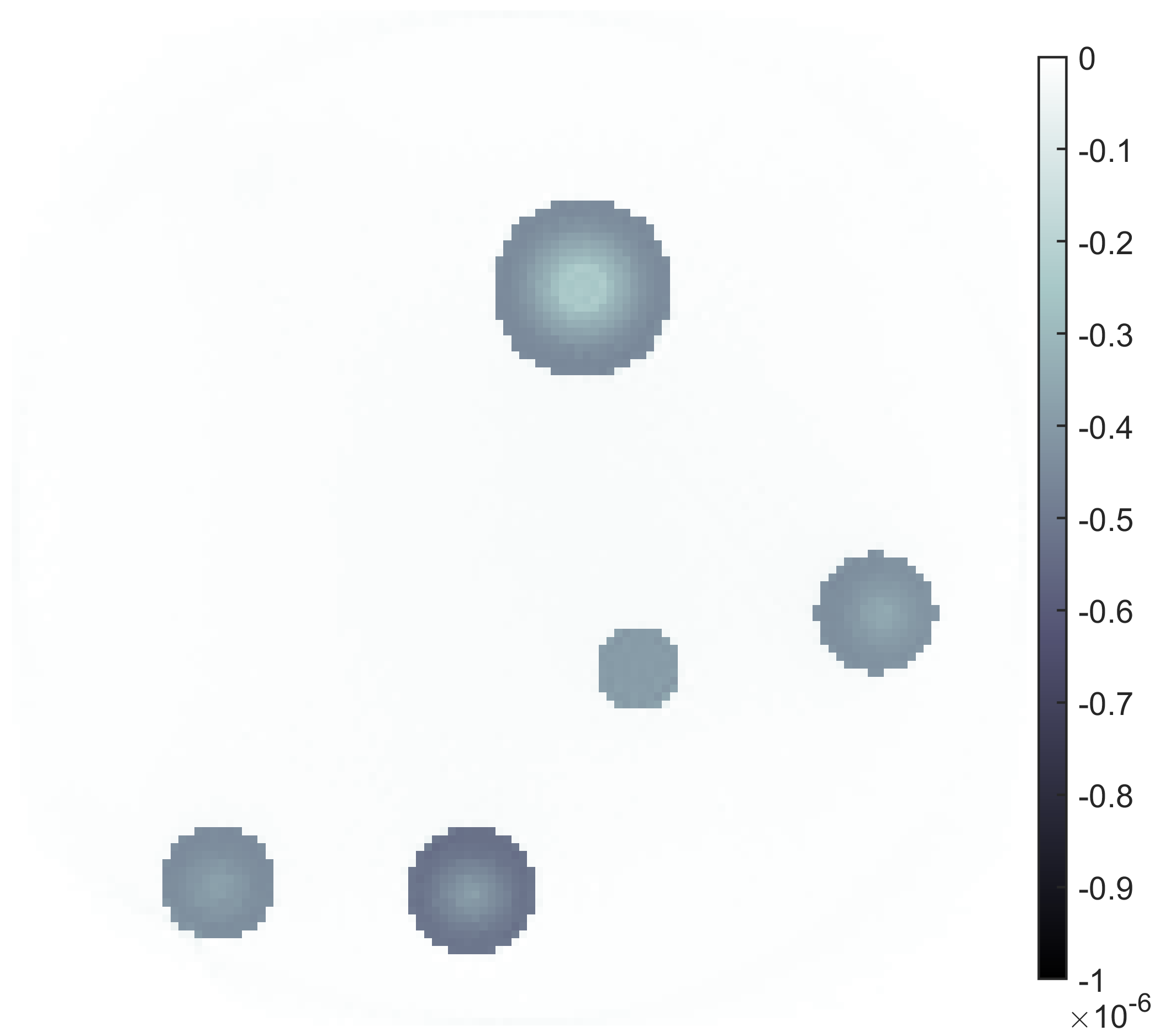} &
        \includegraphics[width=0.23\textwidth]{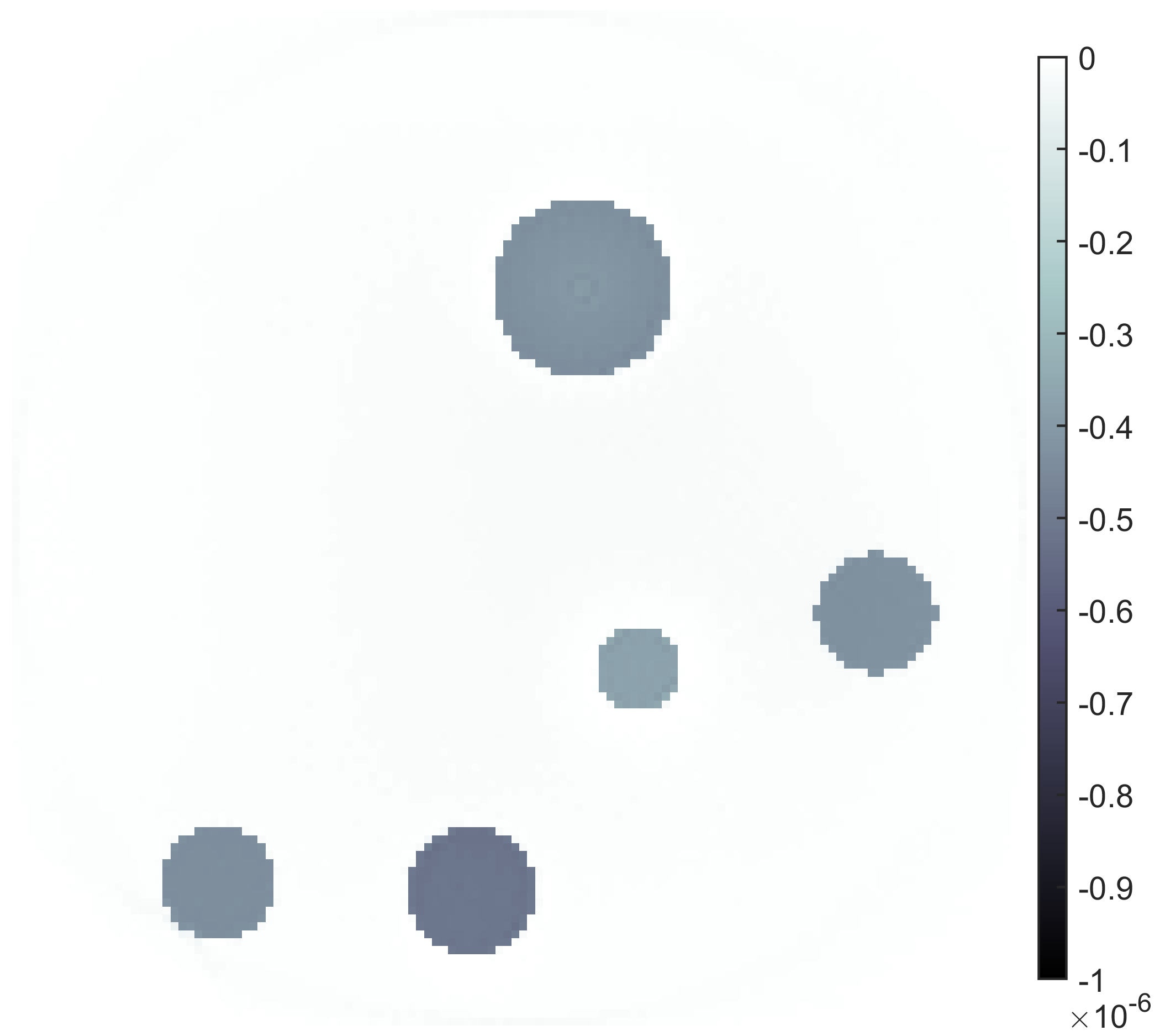} &
        \includegraphics[width=0.23\textwidth]{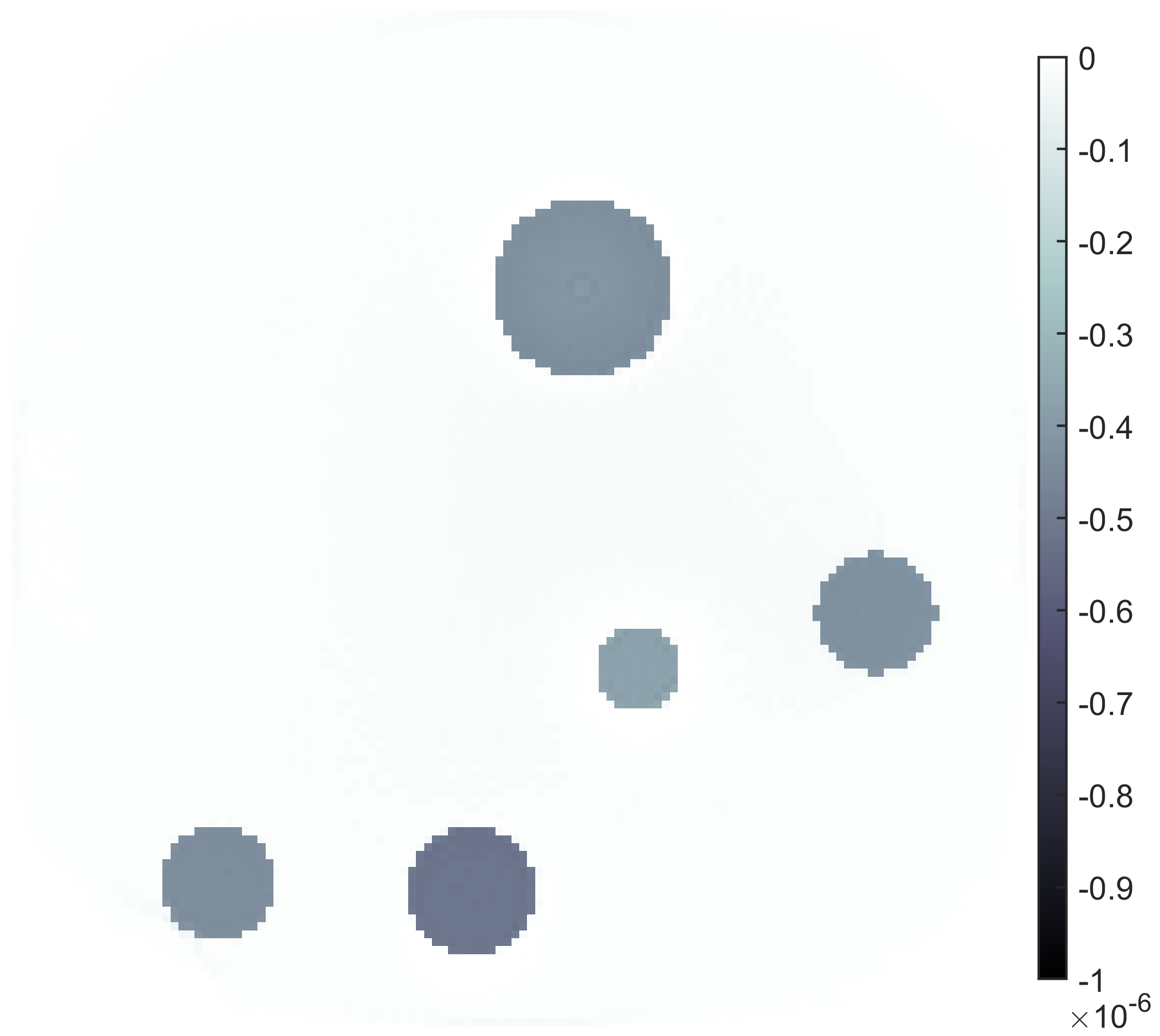} &
        \includegraphics[width=0.23\textwidth]{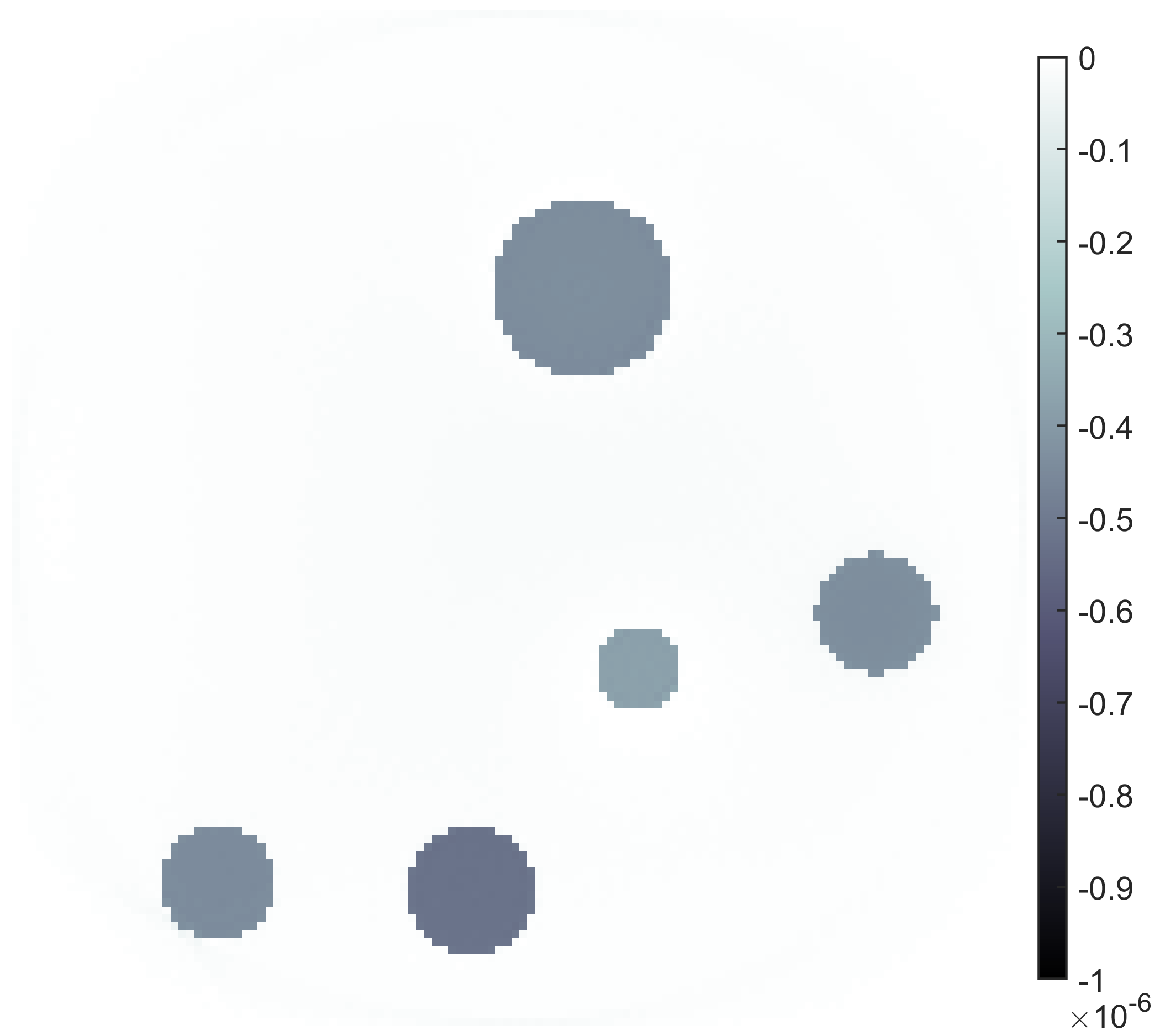} \\
        
        \rowlabel{diff $\delta$} &
        \includegraphics[width=0.23\textwidth]{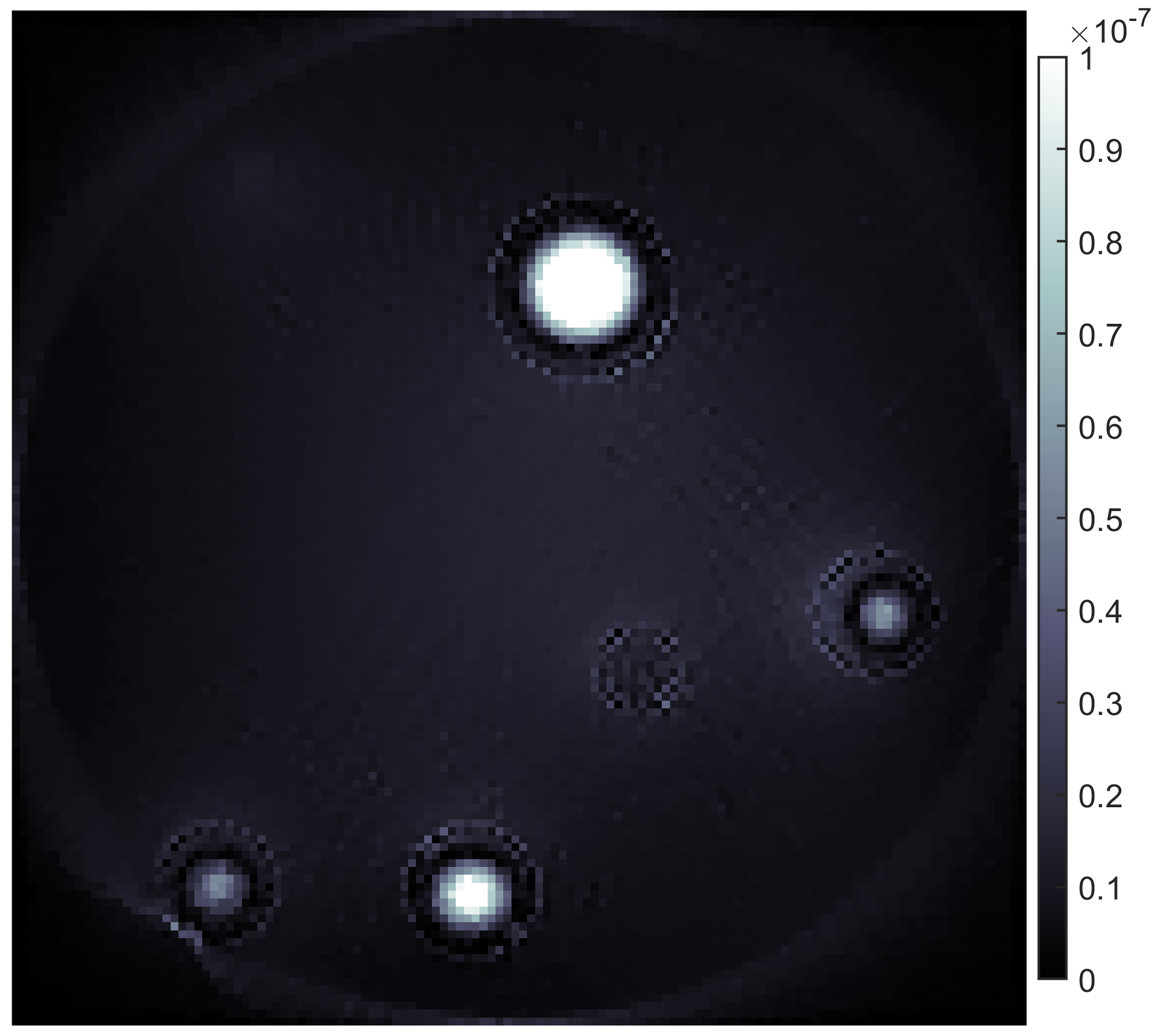} &
        \includegraphics[width=0.23\textwidth]{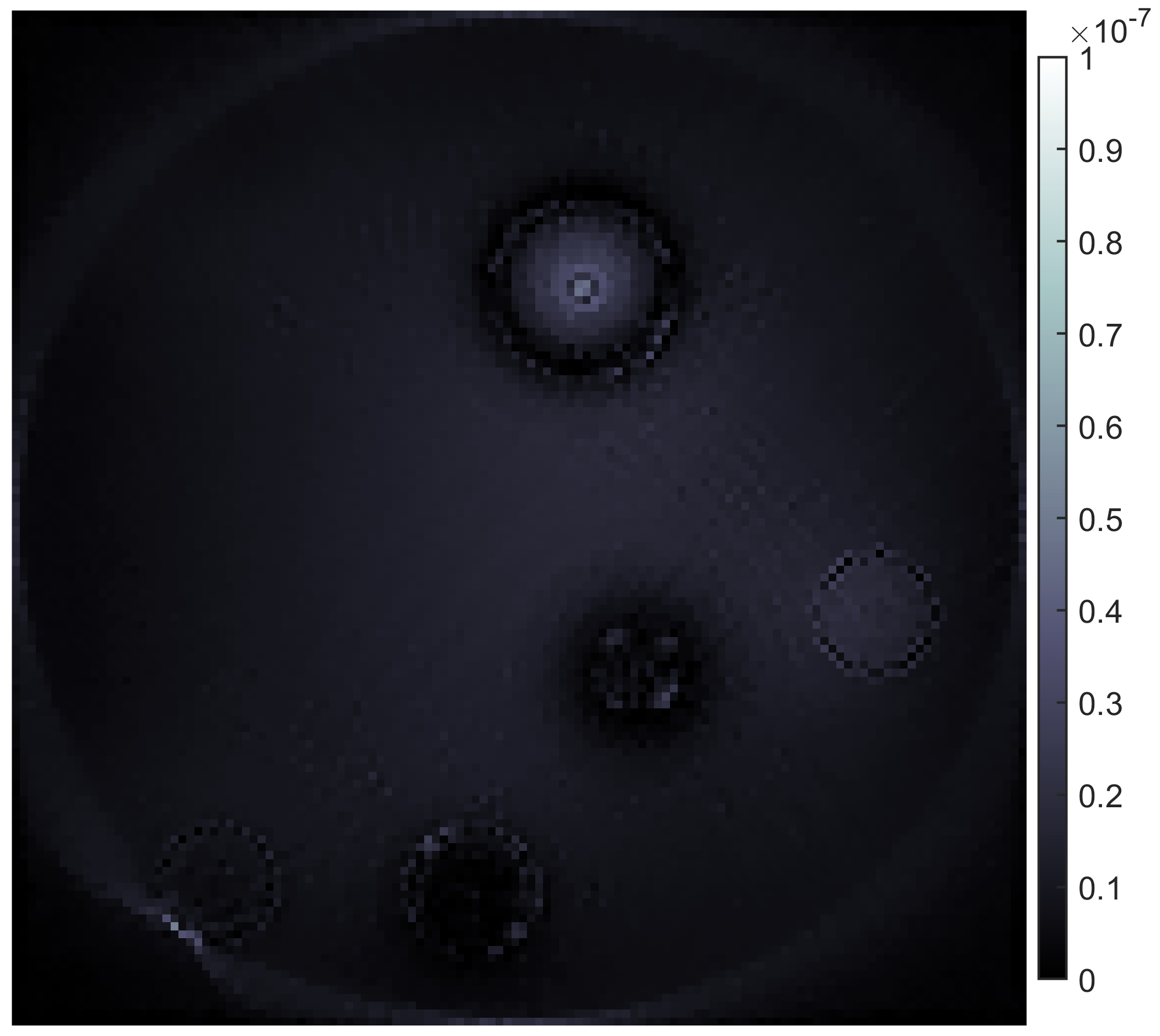} &
        \includegraphics[width=0.23\textwidth]{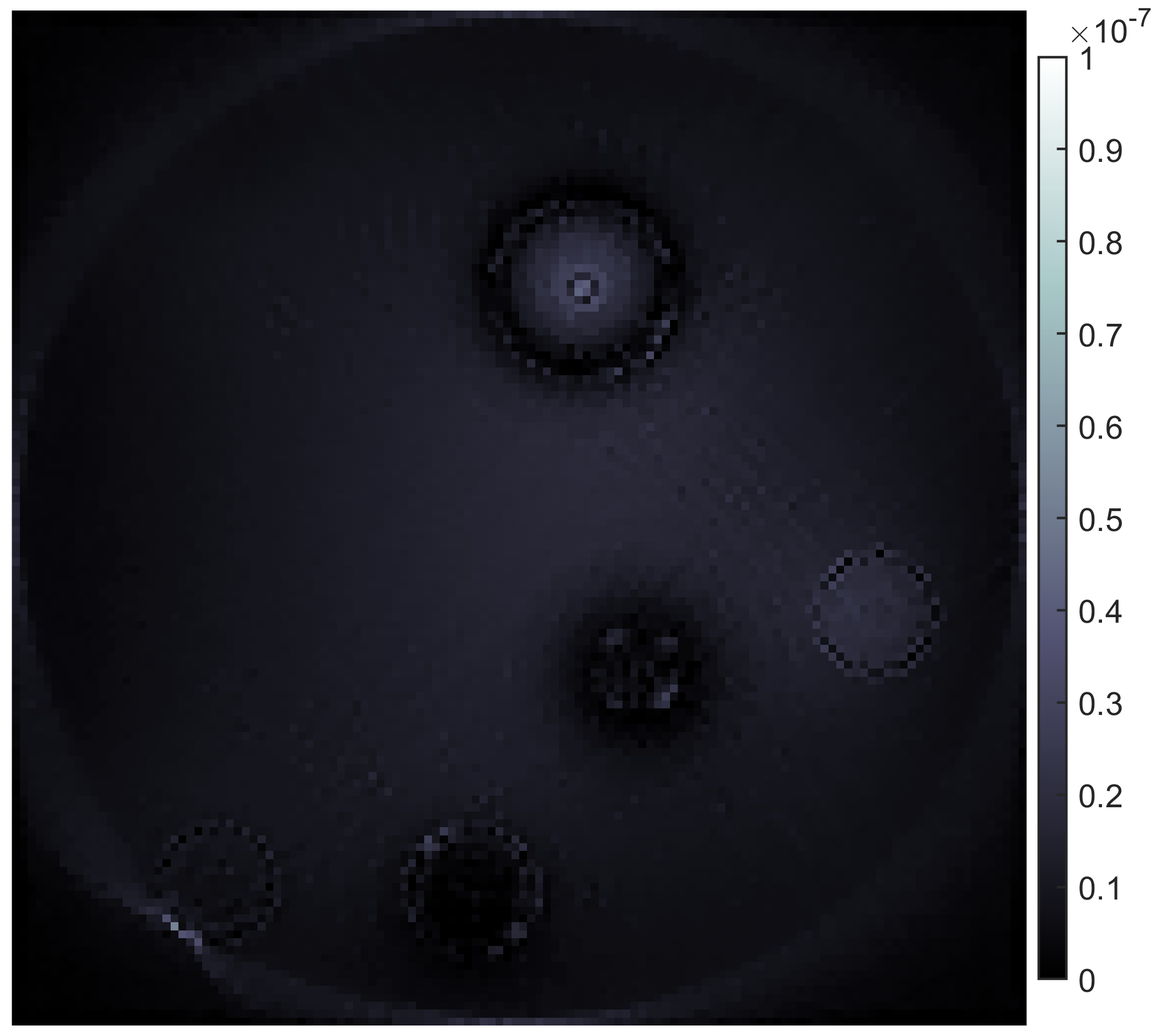} &
        \includegraphics[width=0.23\textwidth]{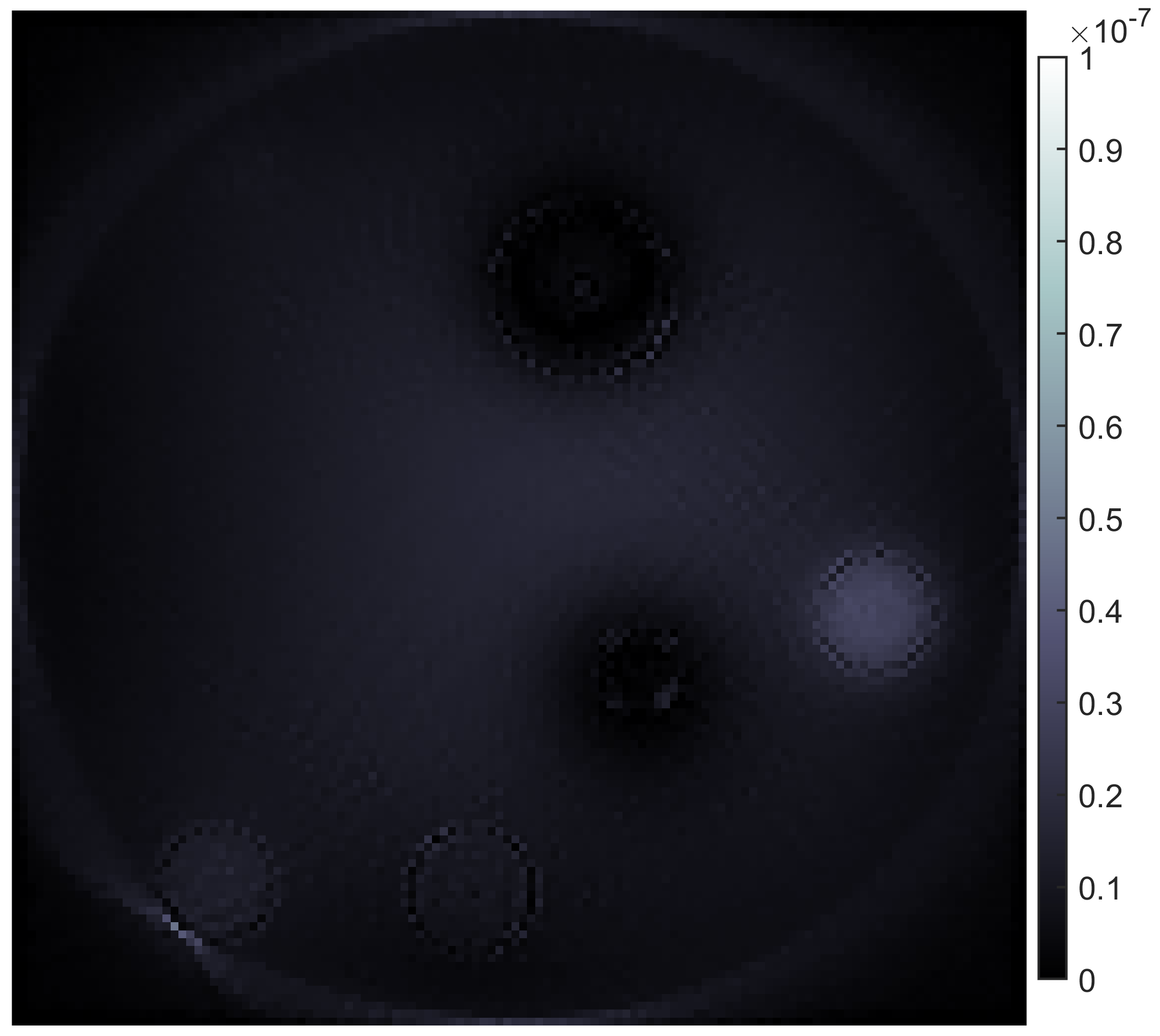} \\
        
       \rowlabel{$\beta_{reco}$} &
        \includegraphics[width=0.23\textwidth]{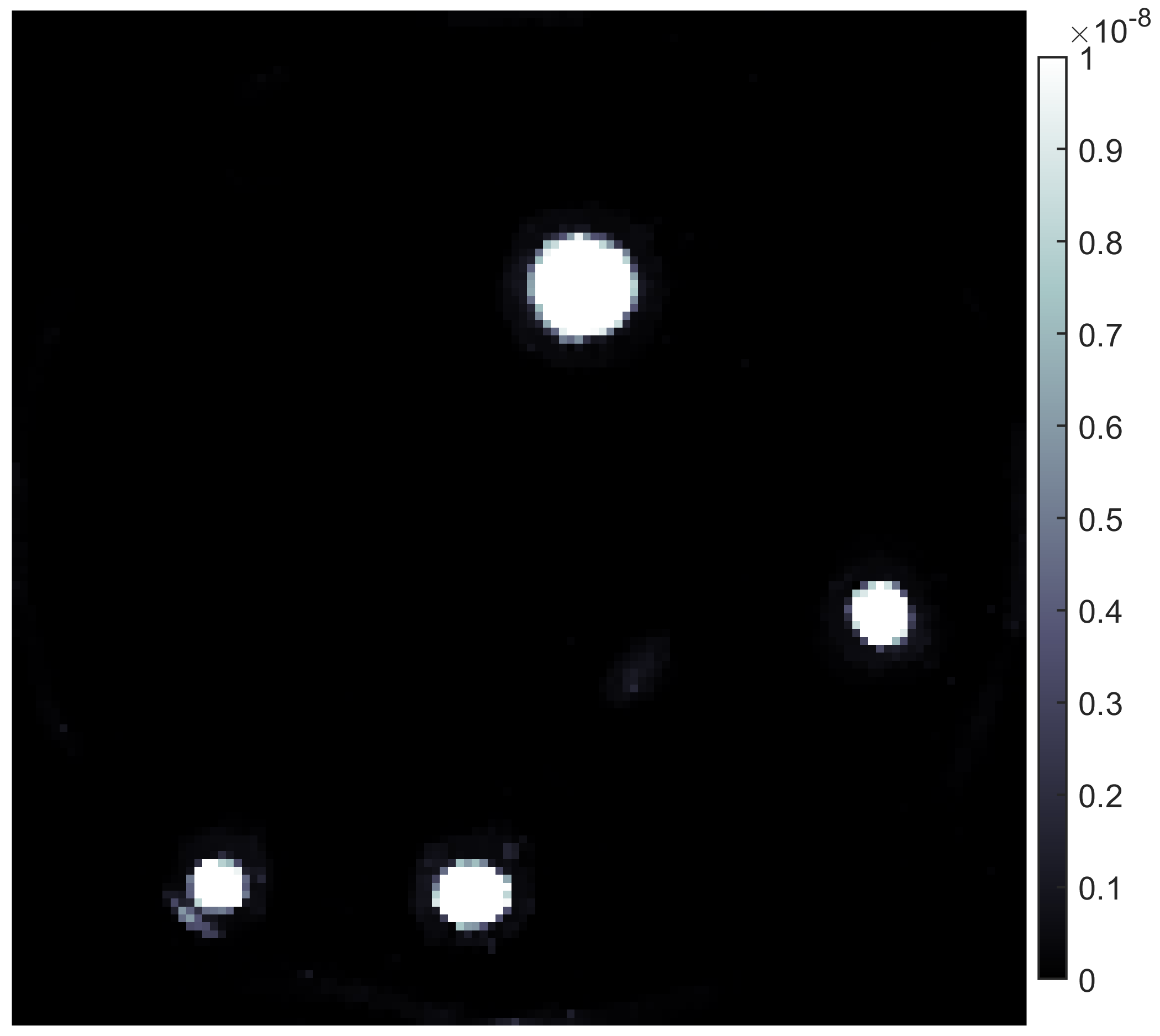} &
        \includegraphics[width=0.23\textwidth]{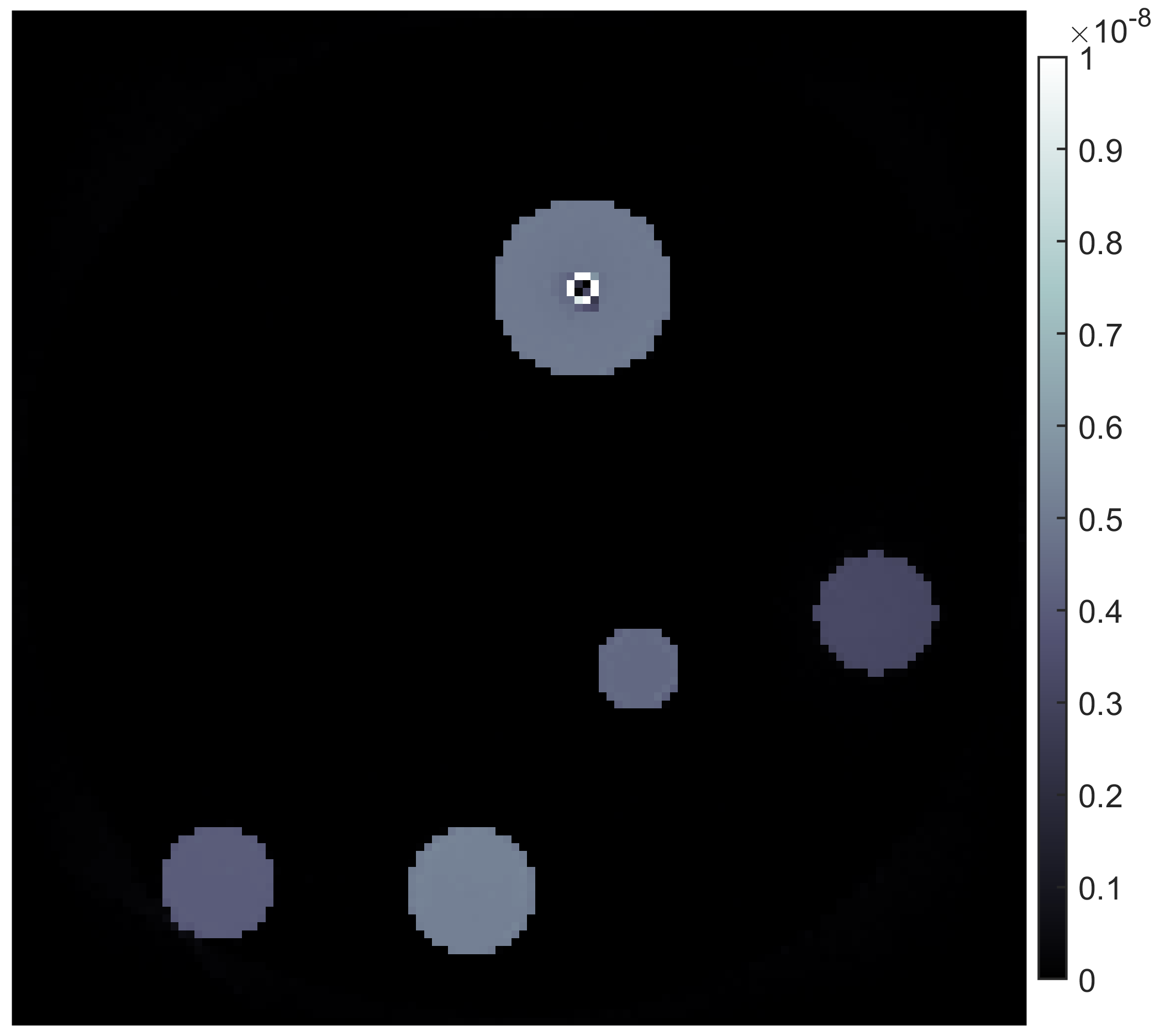} &
        \includegraphics[width=0.23\textwidth]{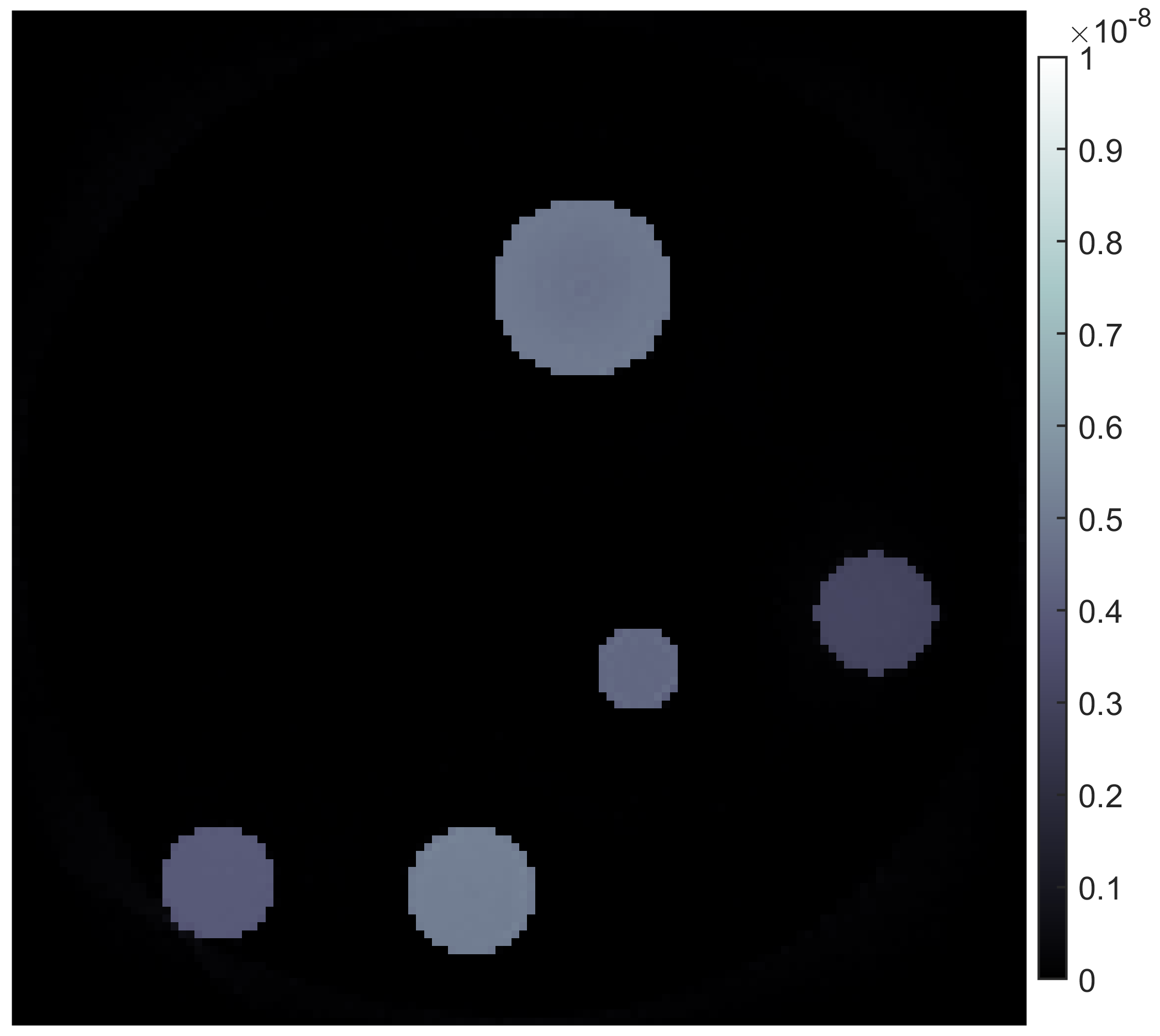} &
        \includegraphics[width=0.23\textwidth]{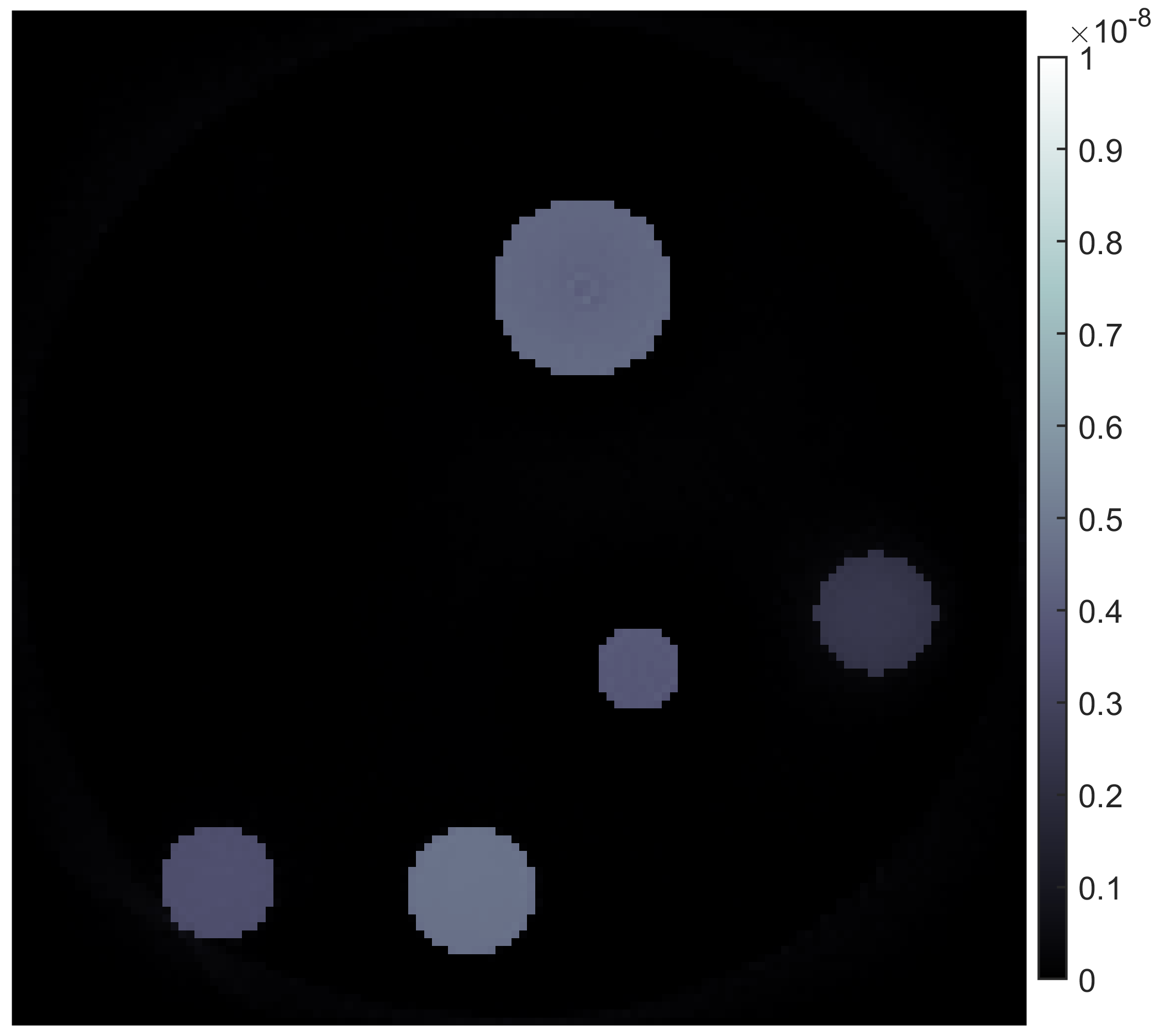} \\
        
        \rowlabel{diff $\beta$} &
        \includegraphics[width=0.23\textwidth]{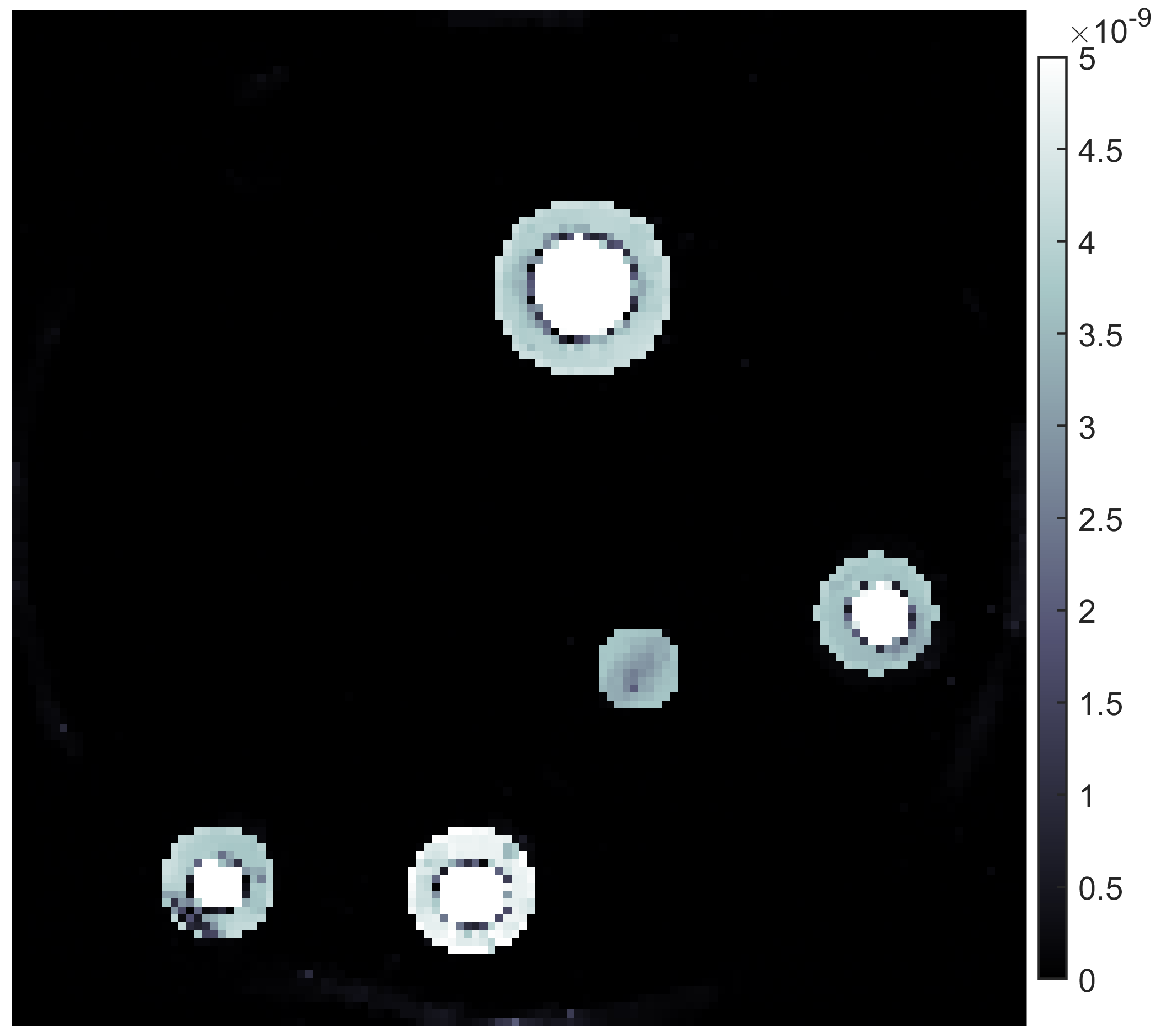} &
        \includegraphics[width=0.23\textwidth]{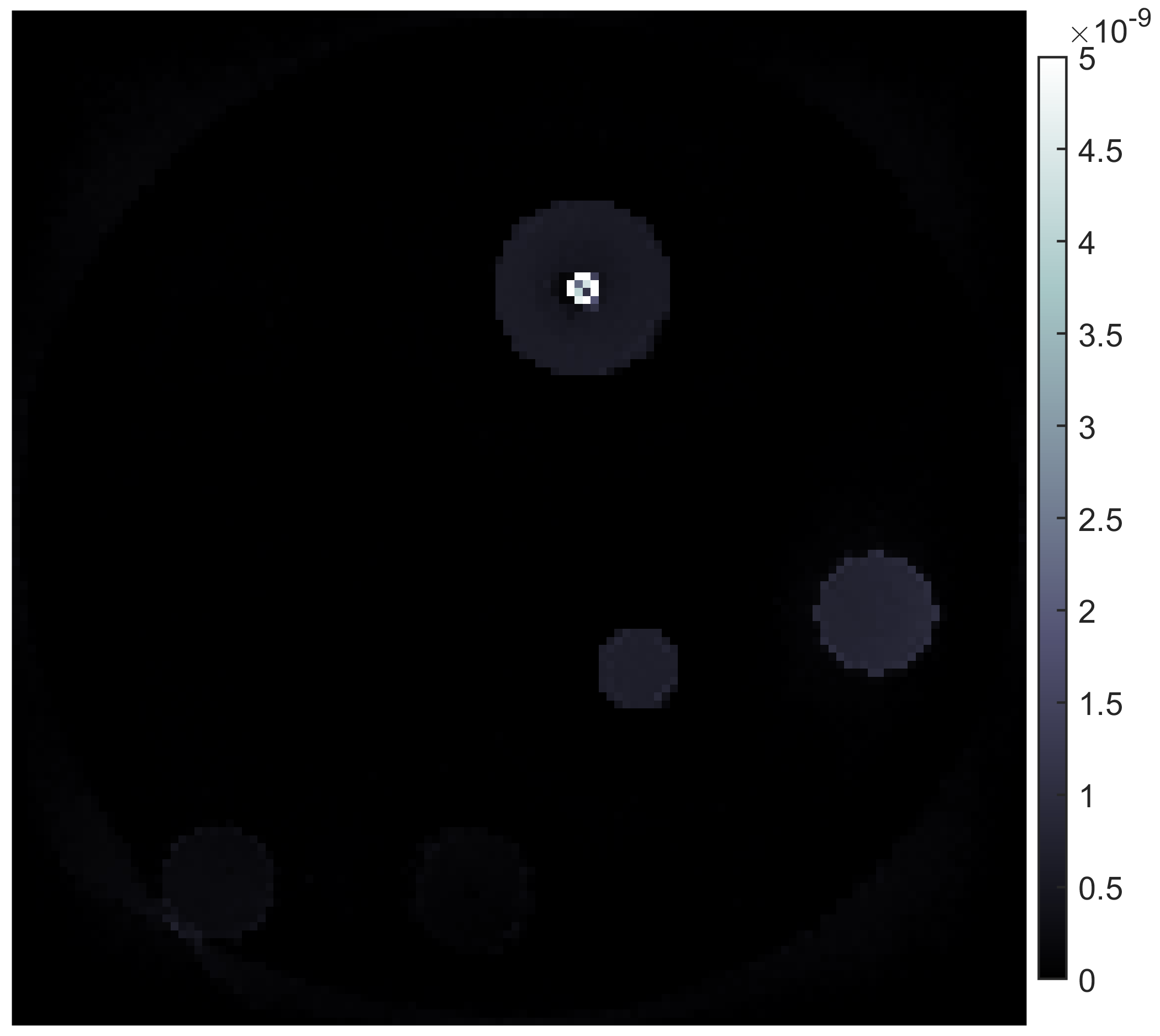} &
        \includegraphics[width=0.23\textwidth]{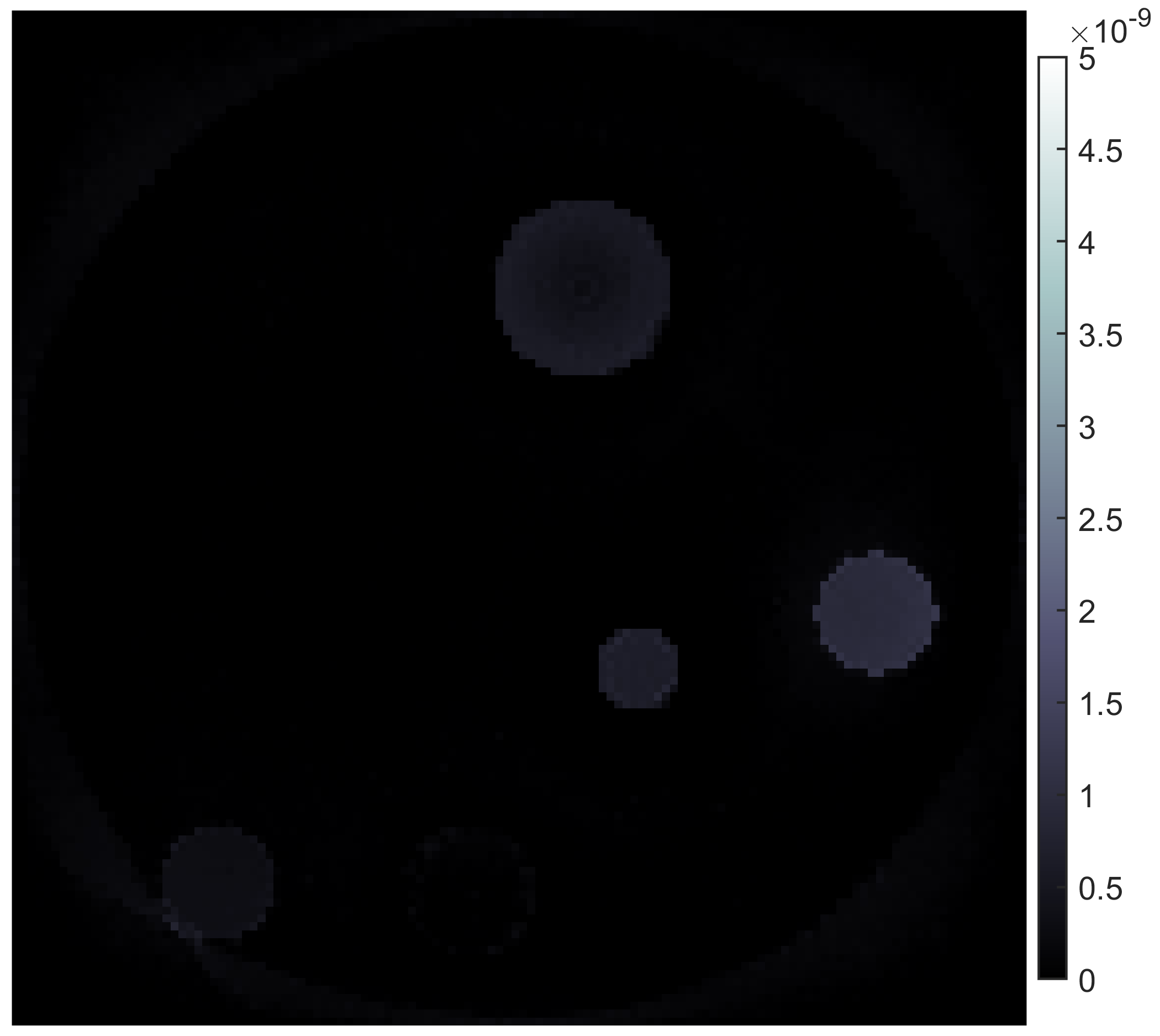} &
        \includegraphics[width=0.23\textwidth]{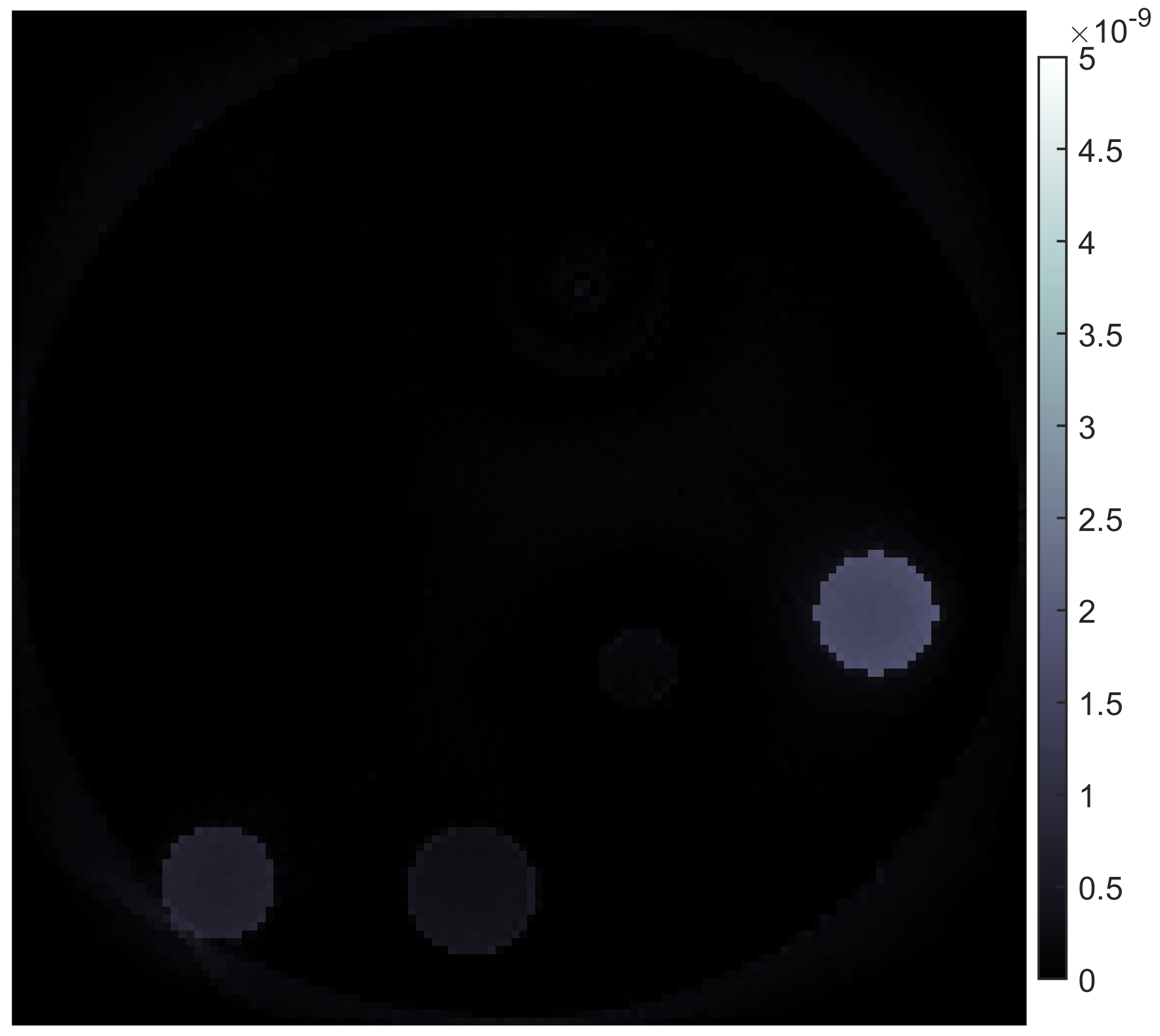} \\
    \end{tabular}
  \caption{3D reconstruction of the relative refractive index (at slice $z=38$) with the phase-guided Bregman TV regularization.
  Rows display the reconstructed phase decrement $-\delta$, absorption component $\beta$, and the corresponding reconstruction error maps. 
  Columns correspond to different values of $\gamma$.
  }
  \label{fig:gamma_analysis}
\end{figure}
\begin{figure}[H]
    \centering
    \setlength{\tabcolsep}{1pt}
     \resizebox{0.95\textwidth}{!}{%
    \begin{tabular}{c c c c c}
        & \footnotesize GD 
        & \footnotesize Phase-guided
        & \footnotesize GD diff
        & \footnotesize Guided diff \\

        \rowlabel{$\delta/\beta=10^{2}$} &
        \includegraphics[width=0.23\textwidth]{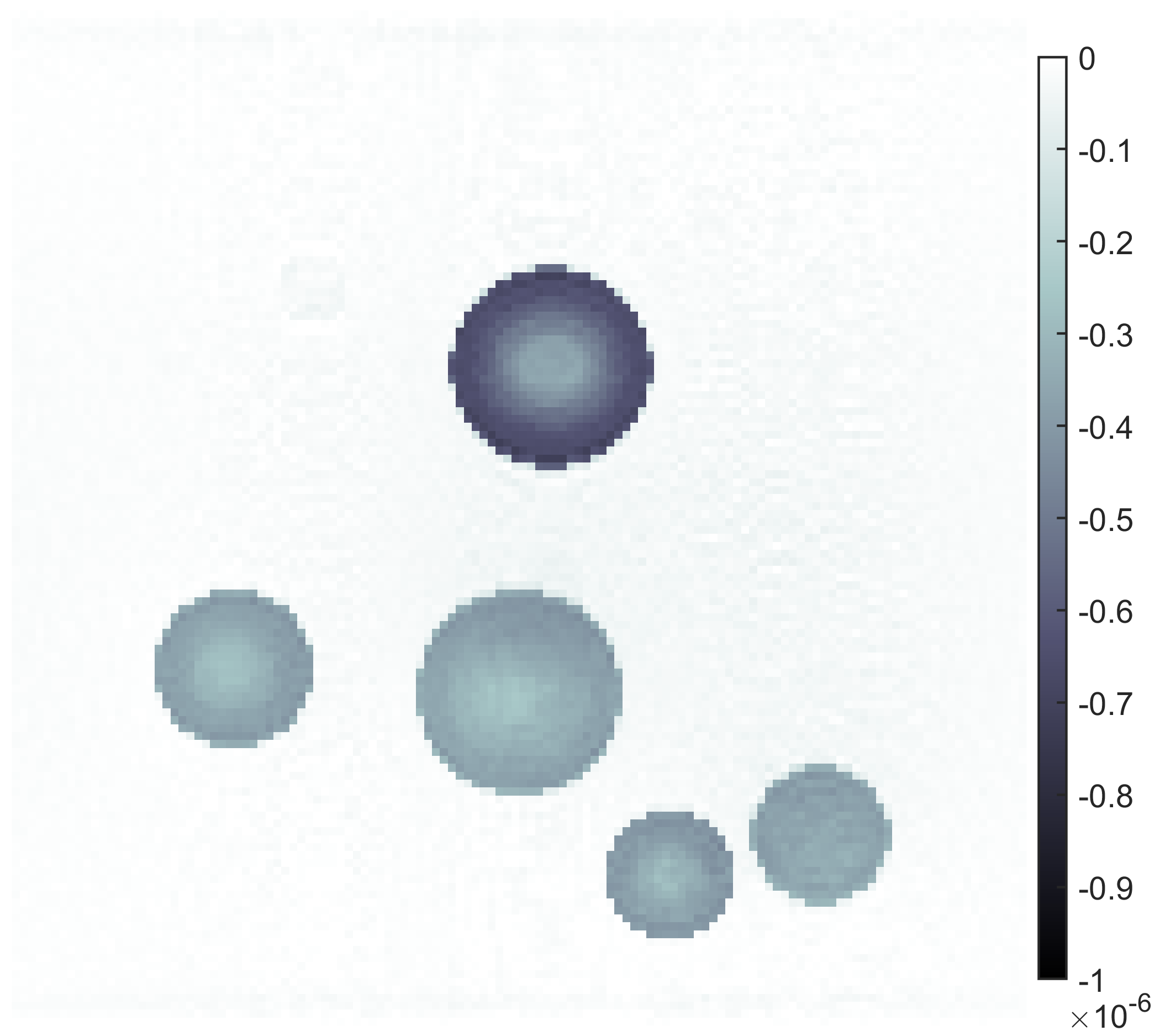} &
        \includegraphics[width=0.23\textwidth]{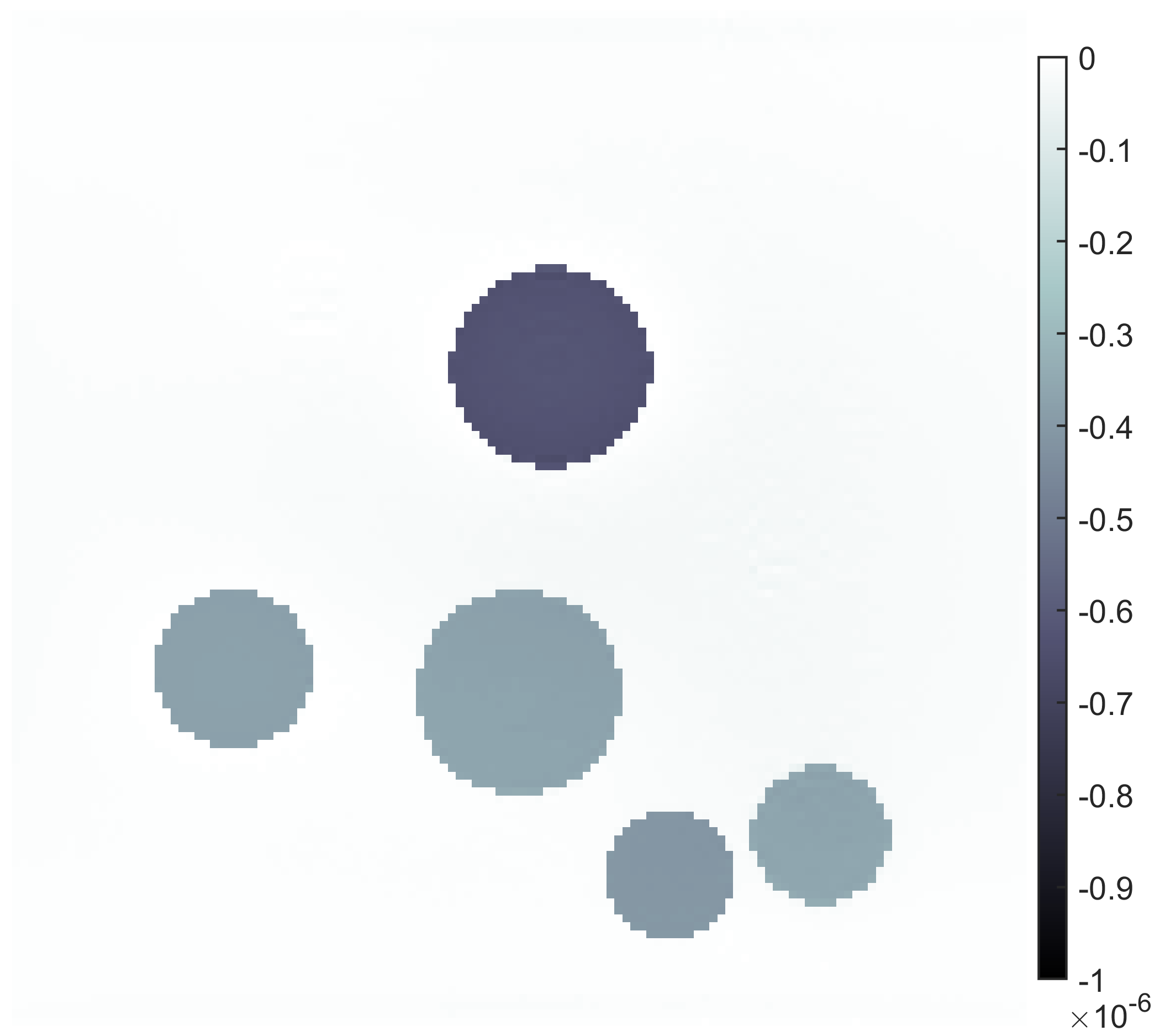} &
        \includegraphics[width=0.23\textwidth]{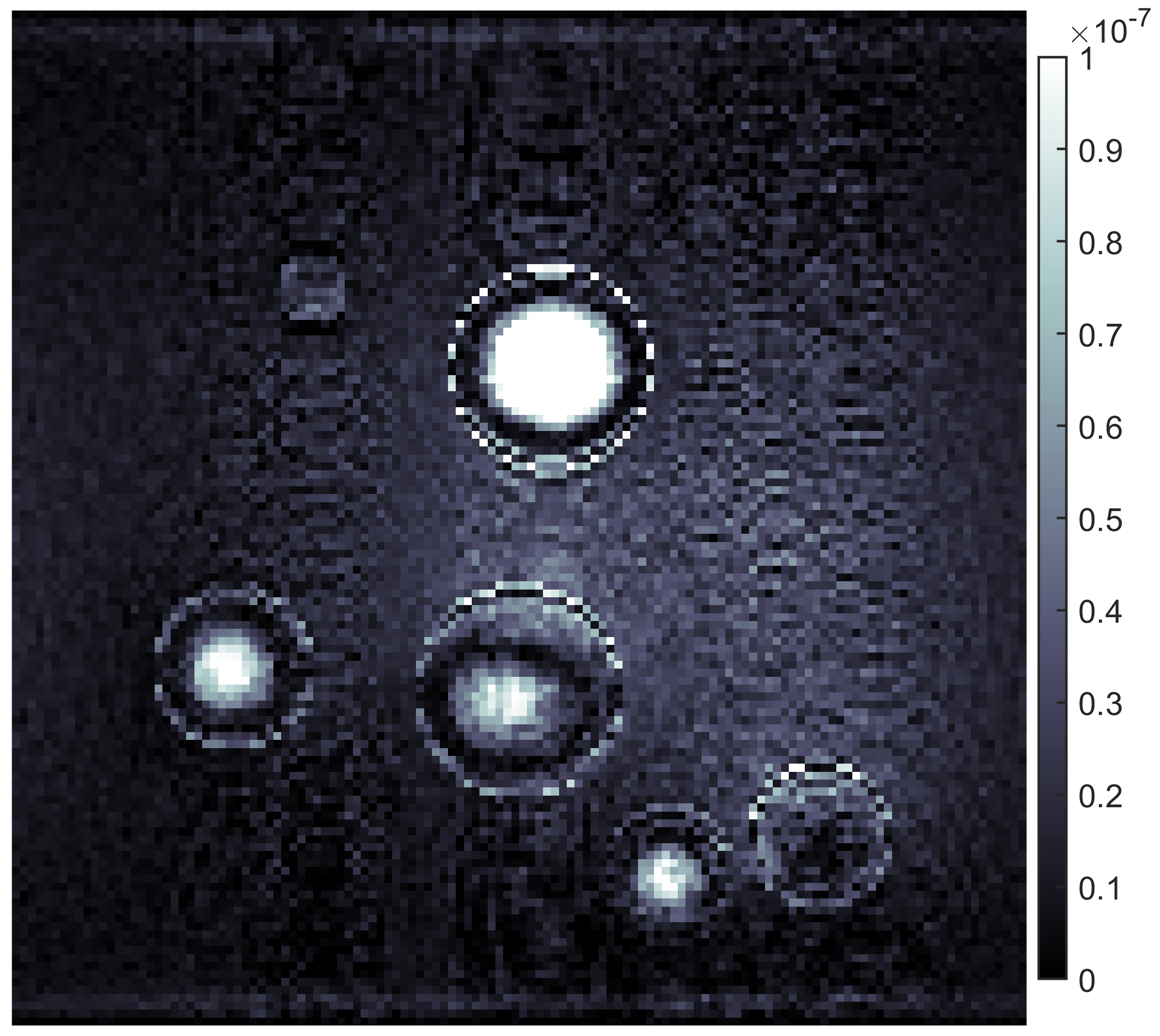} &
        \includegraphics[width=0.23\textwidth]{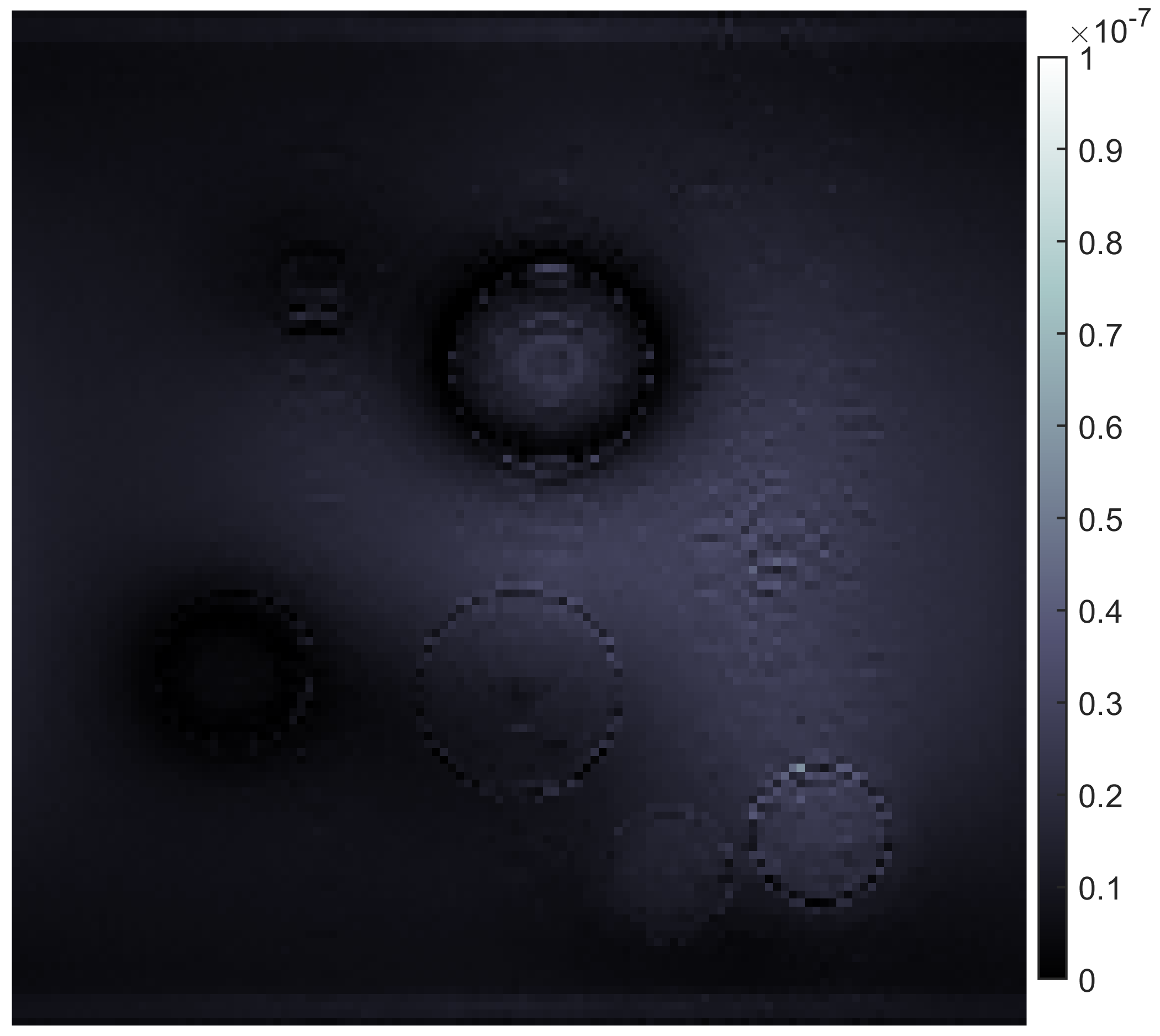} \\
        
        \rowlabel{$\delta/\beta=10^{3}$} &
        \includegraphics[width=0.23\textwidth]{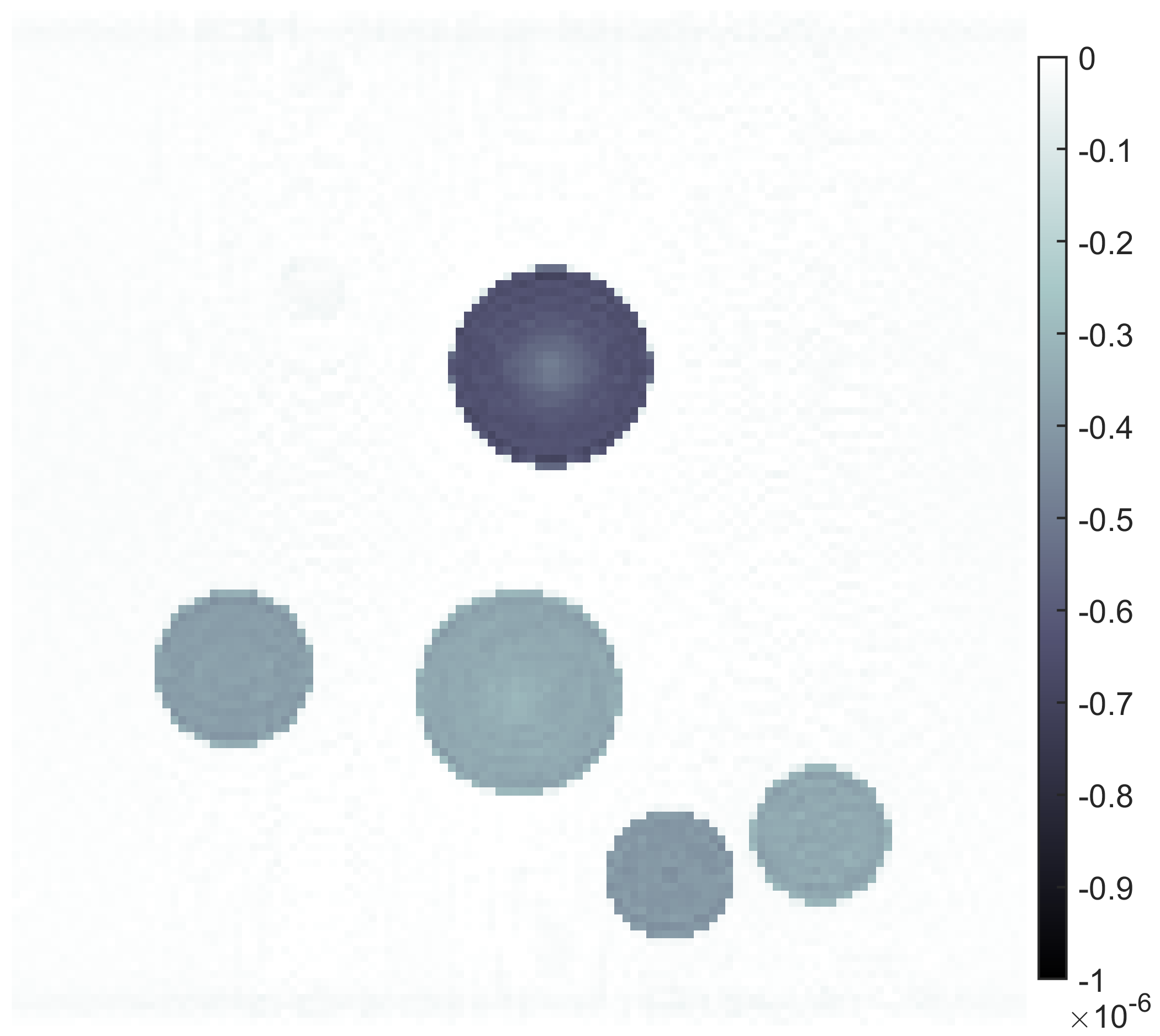} &
        \includegraphics[width=0.23\textwidth]{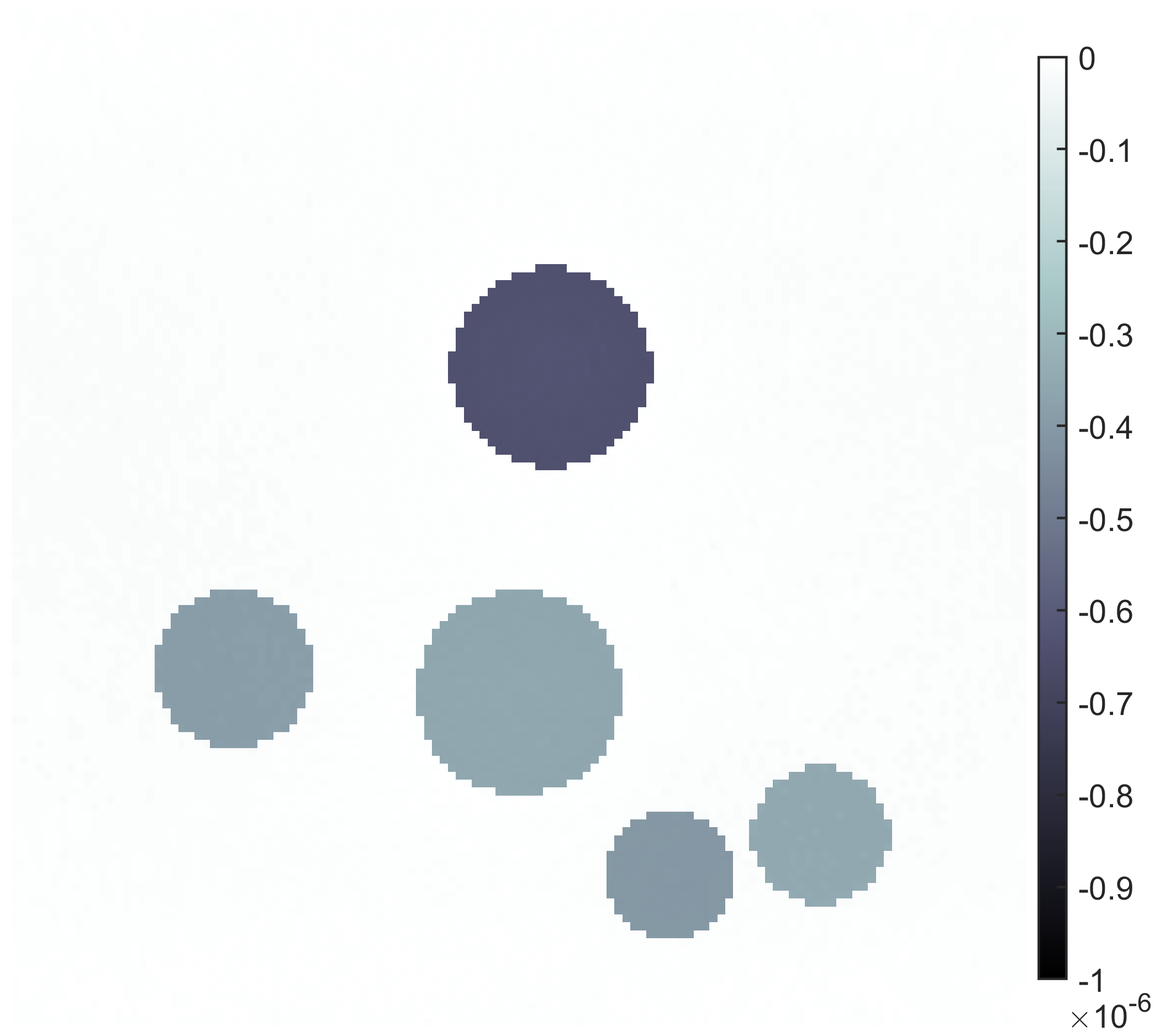} &
        \includegraphics[width=0.23\textwidth]{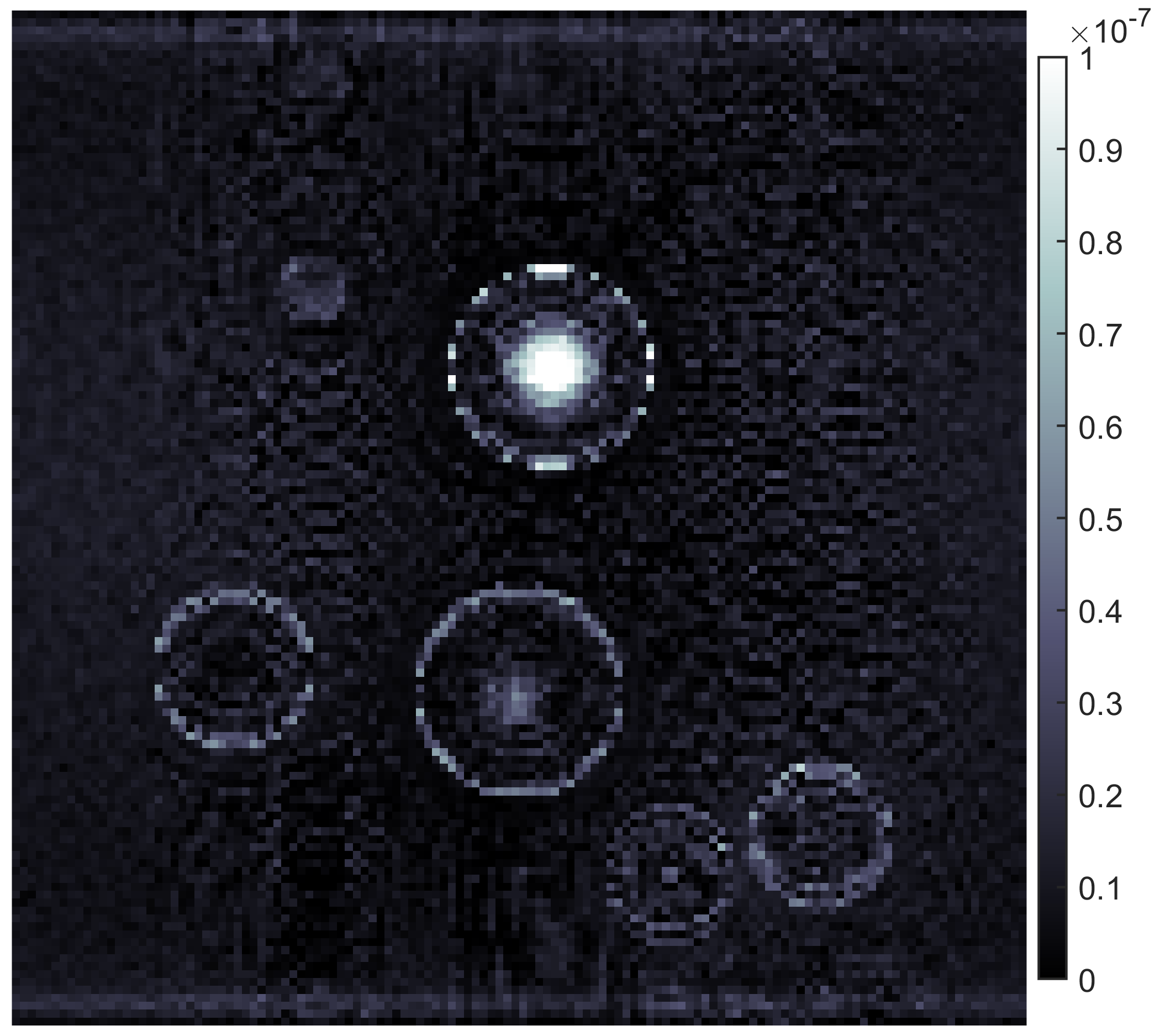} &
        \includegraphics[width=0.23\textwidth]{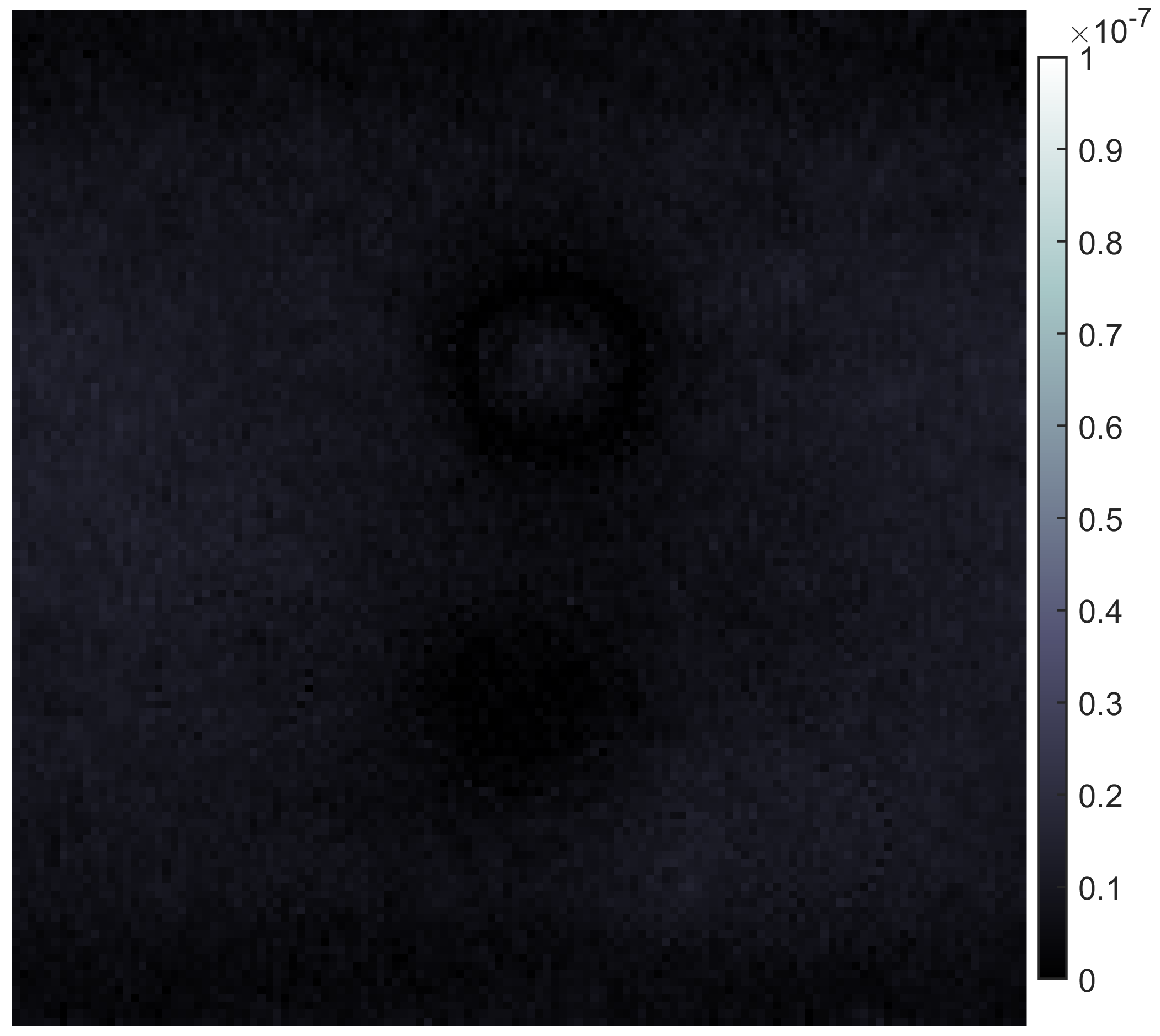} \\

       \rowlabel{$\delta/\beta=10^{4}$} &
       \includegraphics[width=0.23\textwidth]{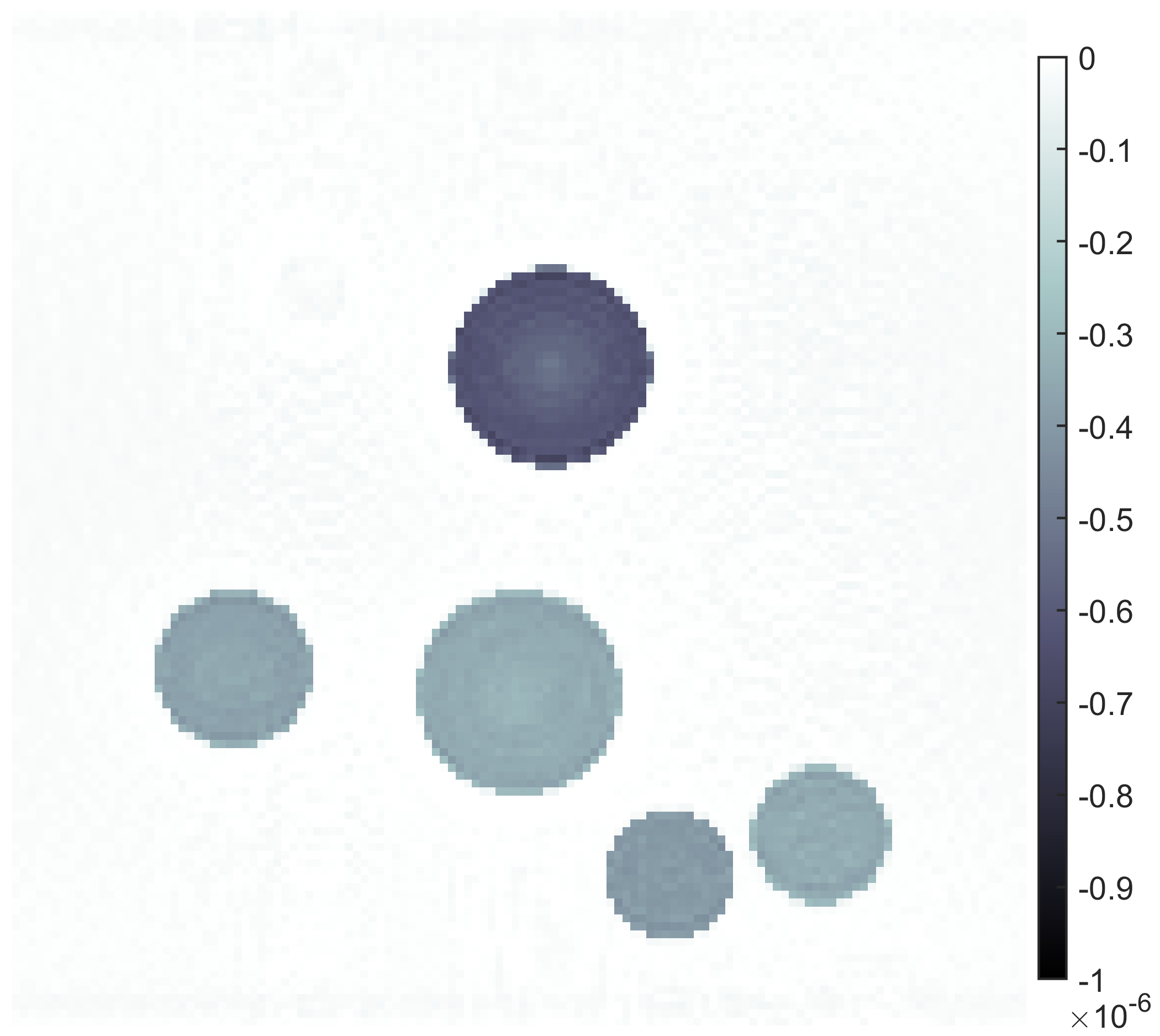} &
        \includegraphics[width=0.23\textwidth]{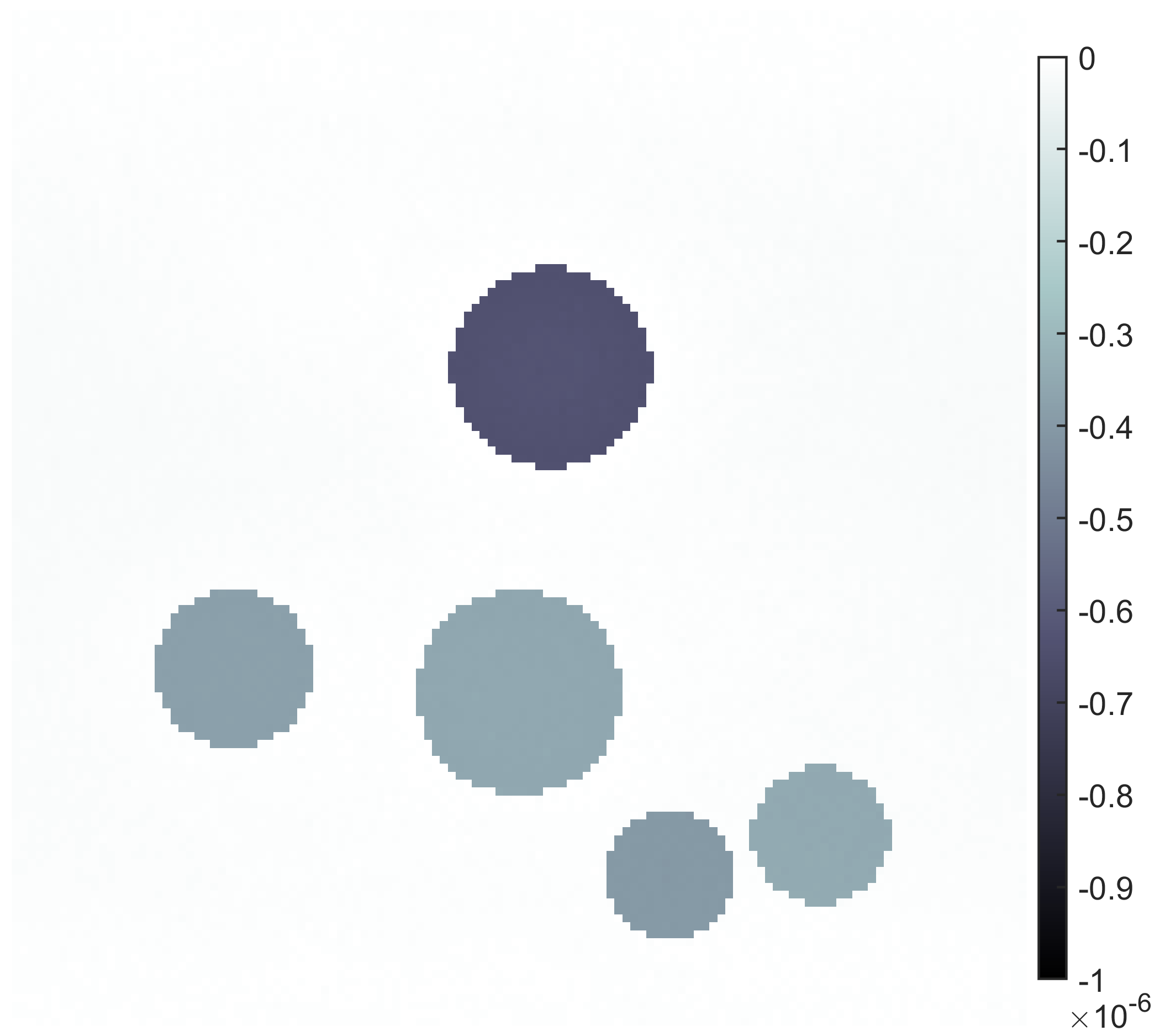} &
        \includegraphics[width=0.23\textwidth]{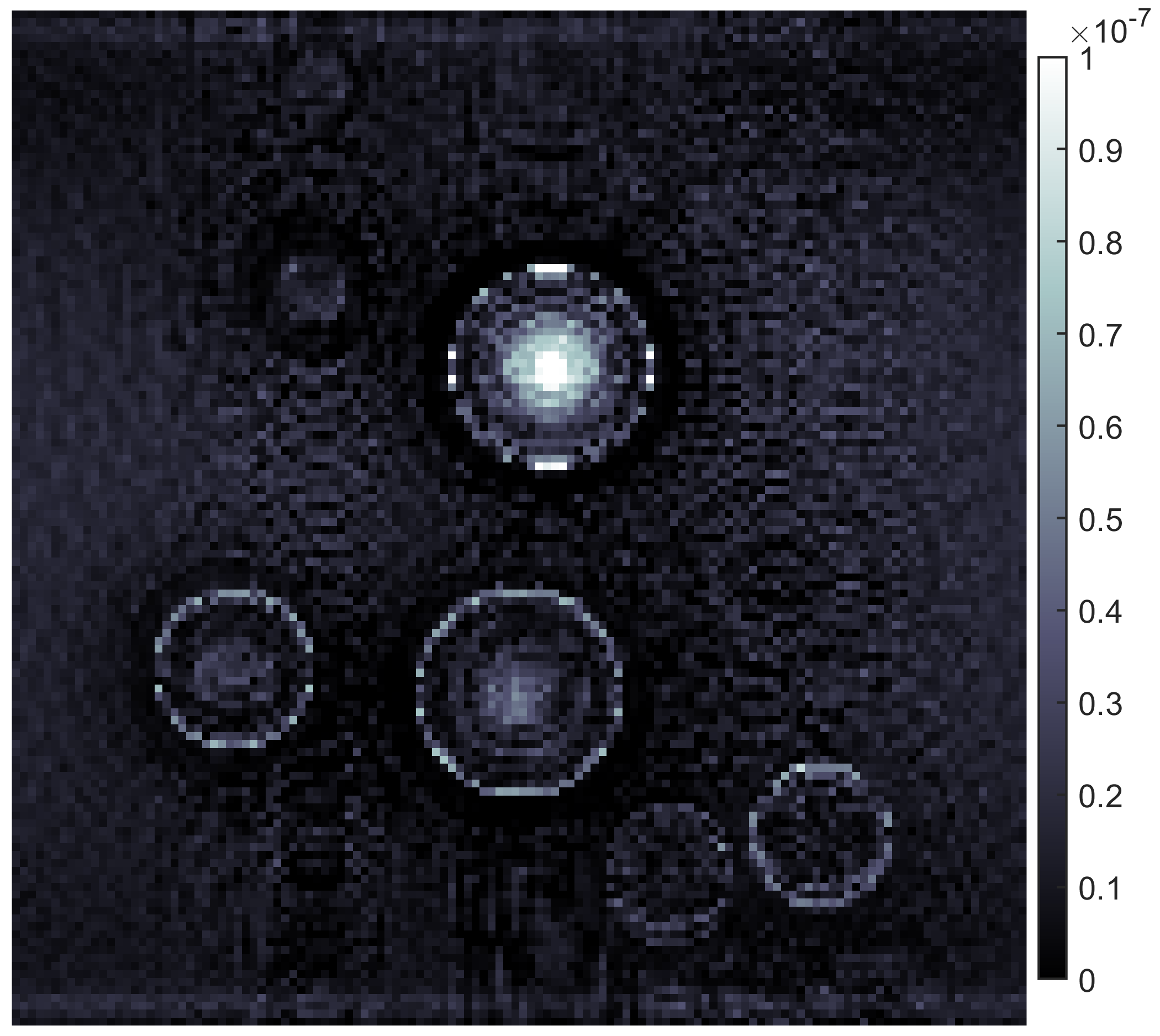} &
        \includegraphics[width=0.23\textwidth]{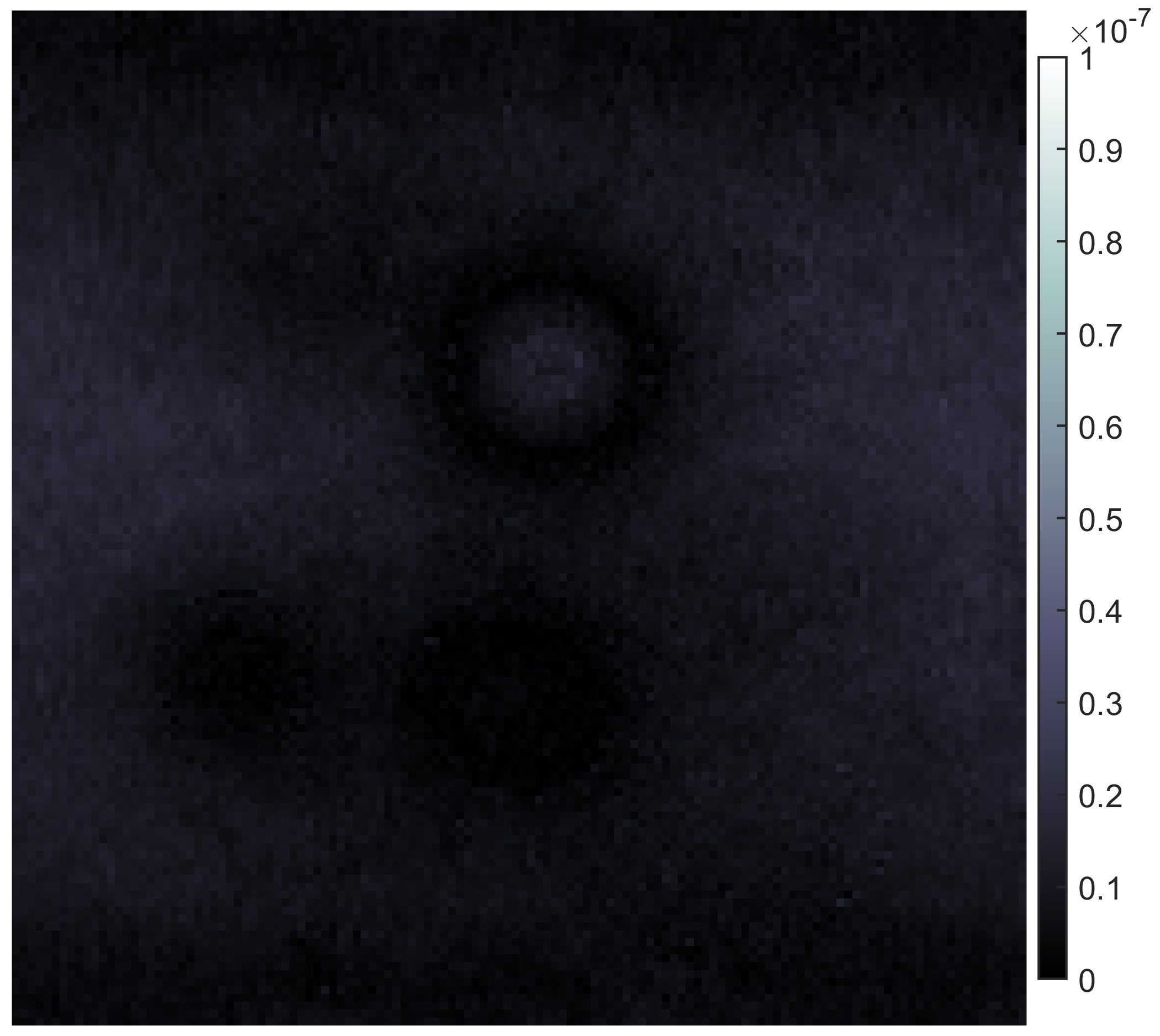} \\

        \rowlabel{$\delta/\beta=10^{2}$} &
        \includegraphics[width=0.23\textwidth]{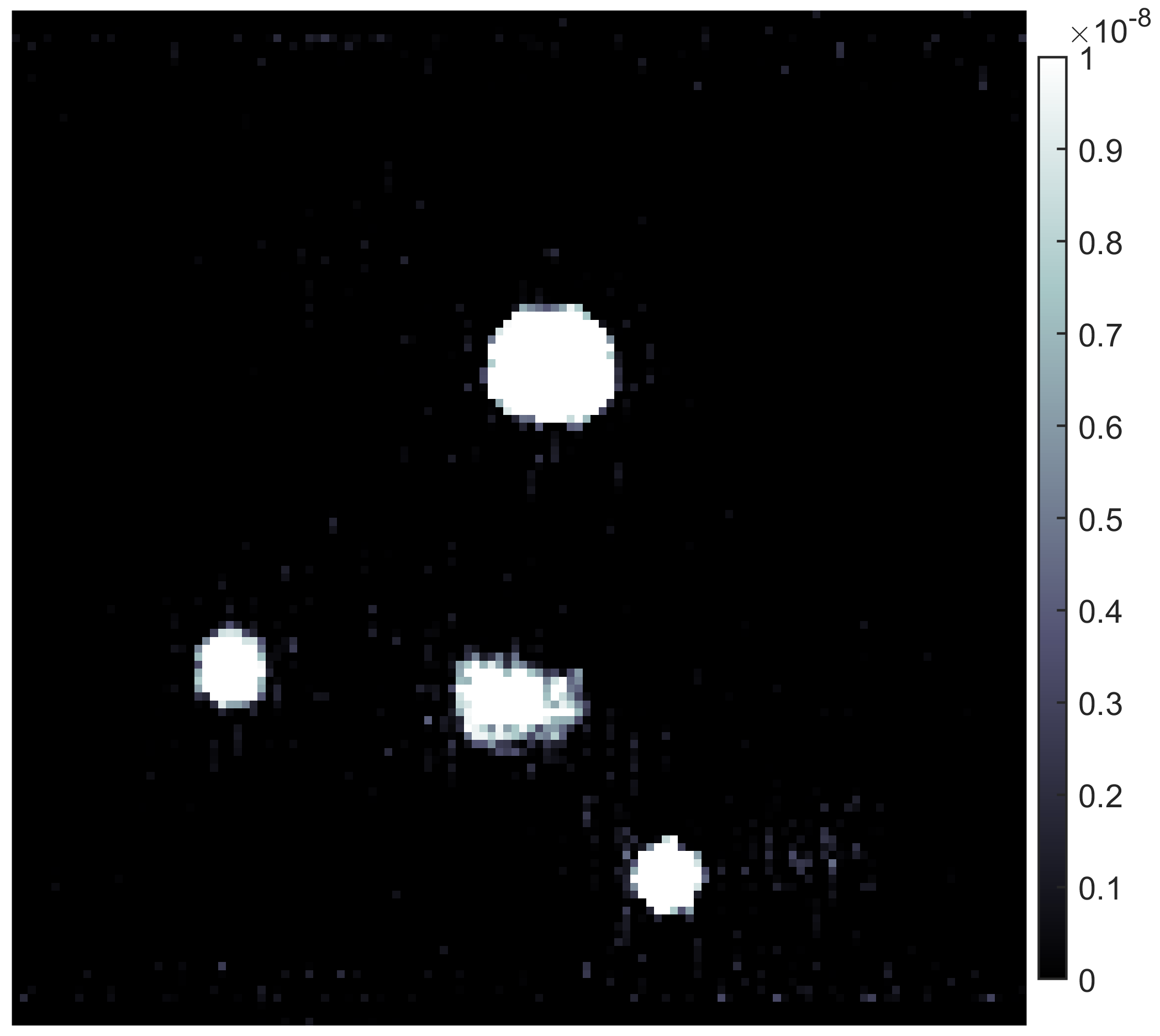} &
        \includegraphics[width=0.23\textwidth]{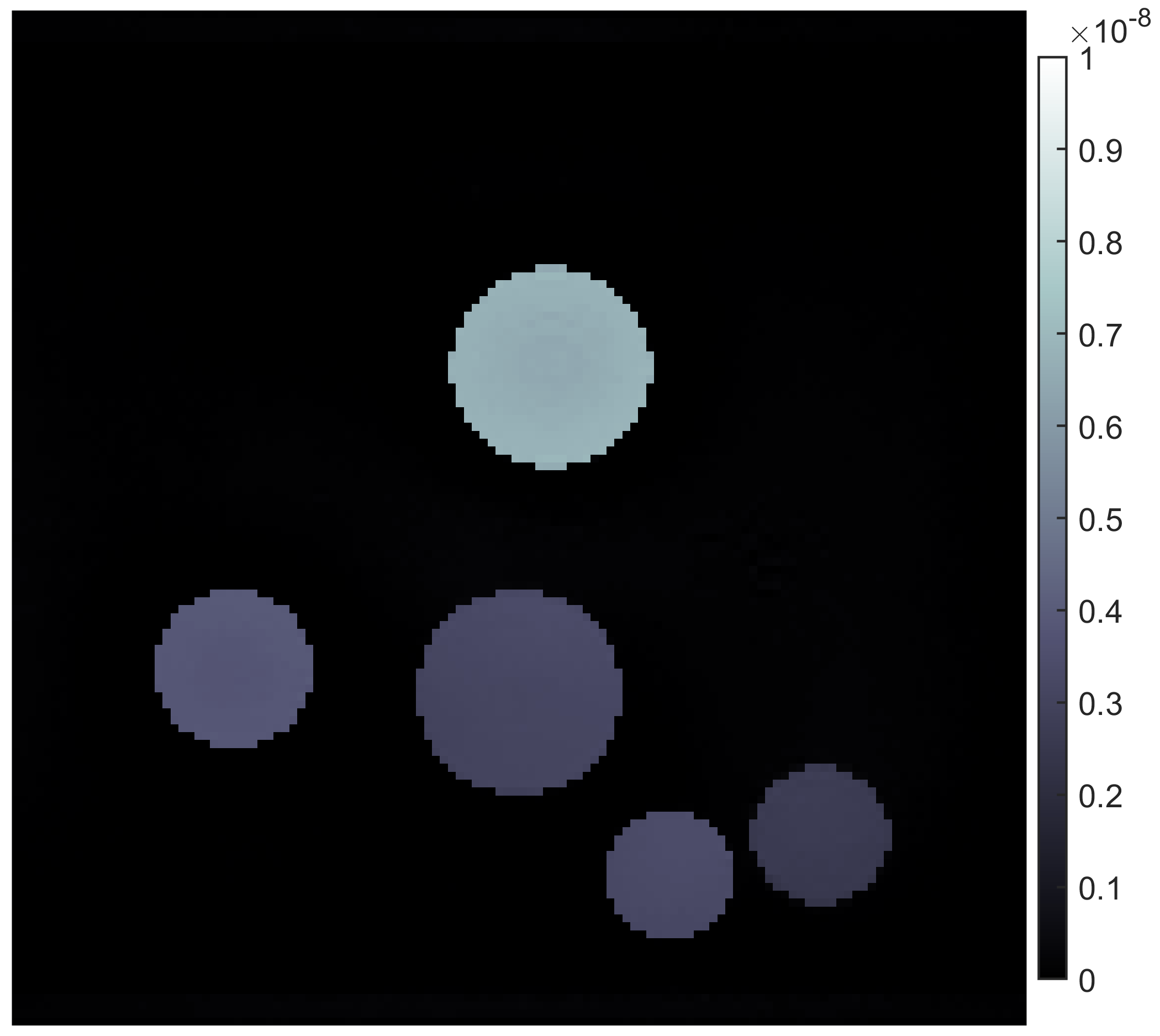} &
        \includegraphics[width=0.23\textwidth]{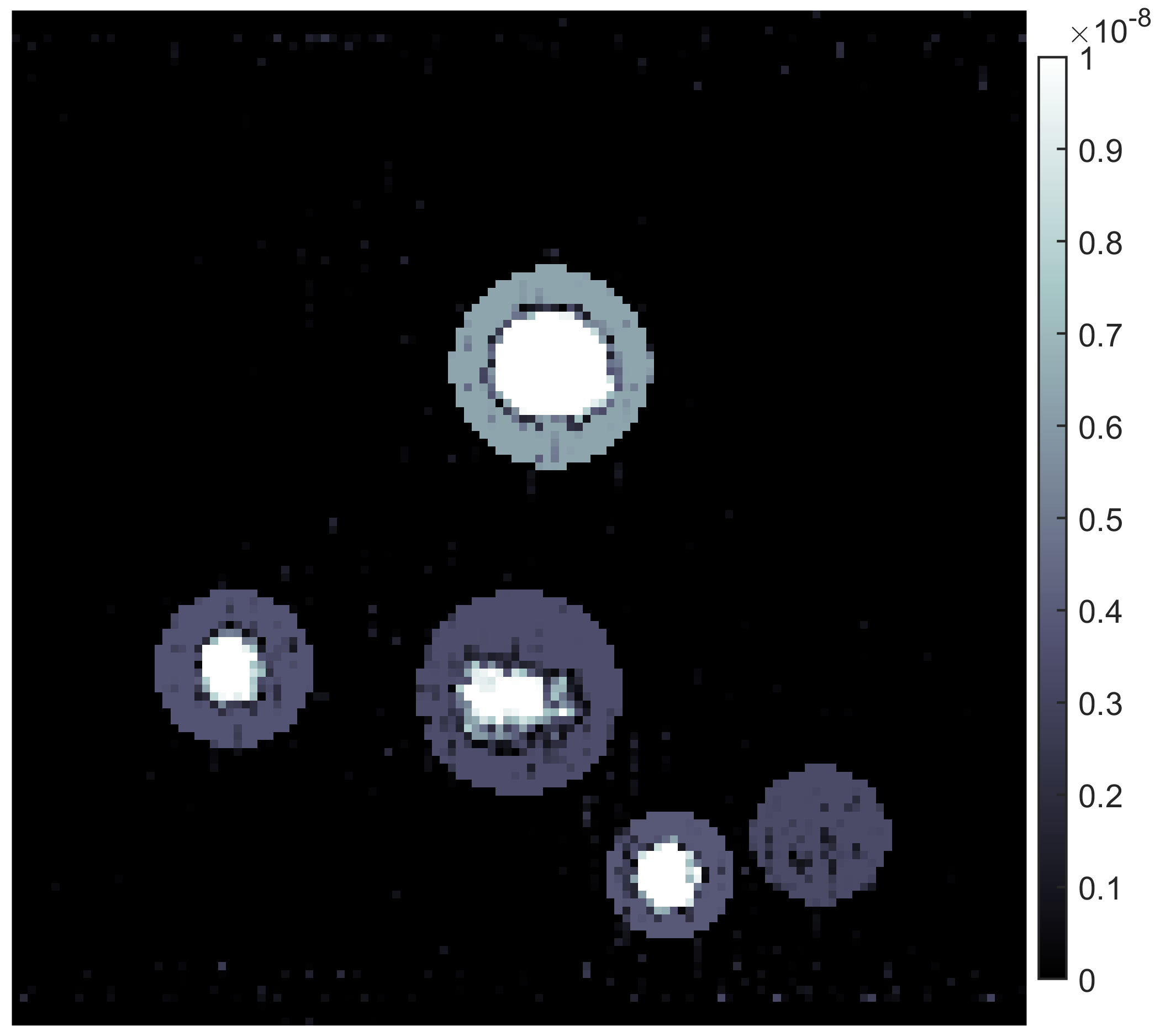} &
        \includegraphics[width=0.23\textwidth]{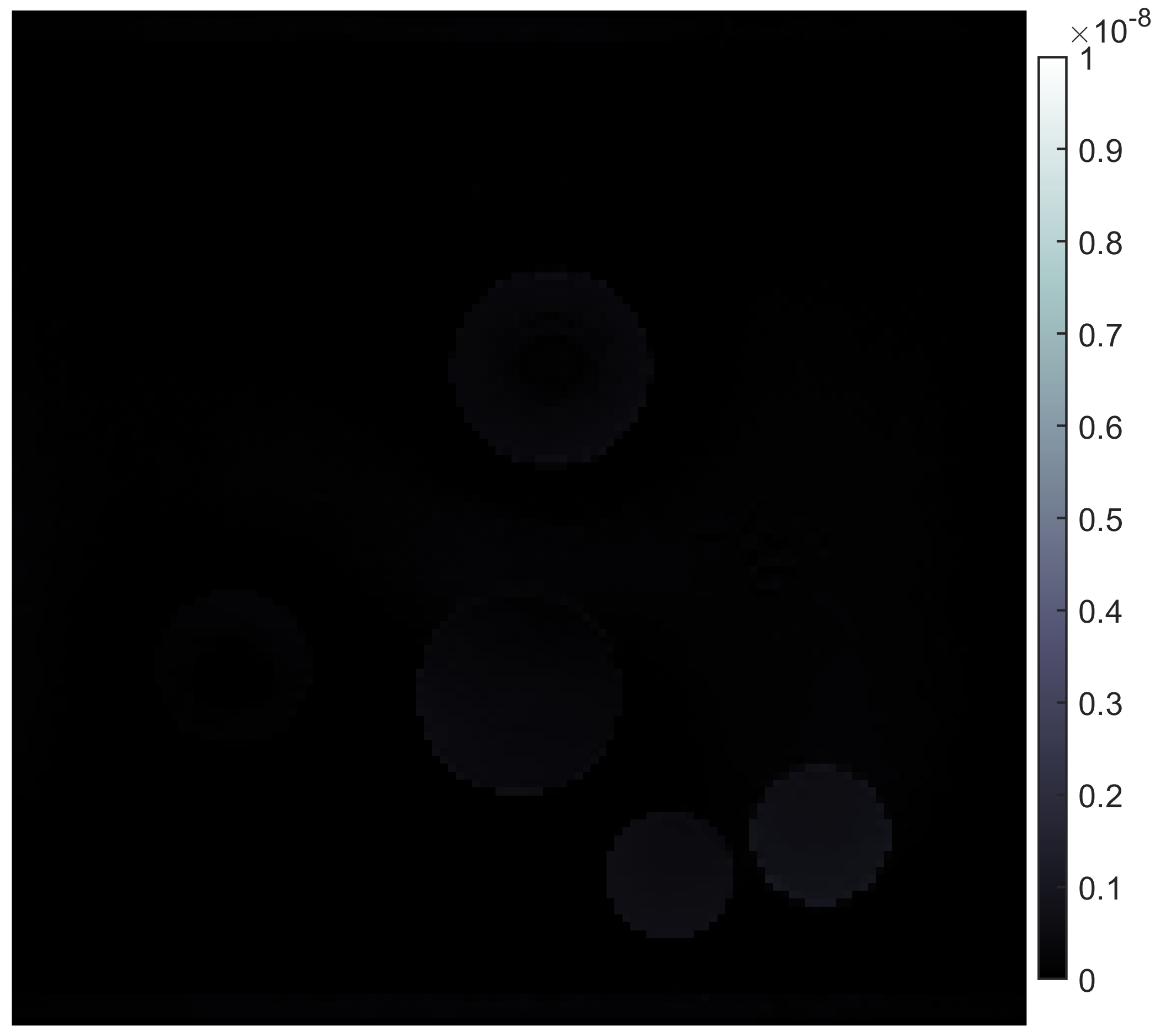} \\
        
        \rowlabel{$\delta/\beta=10^{3}$} &
        \includegraphics[width=0.23\textwidth]{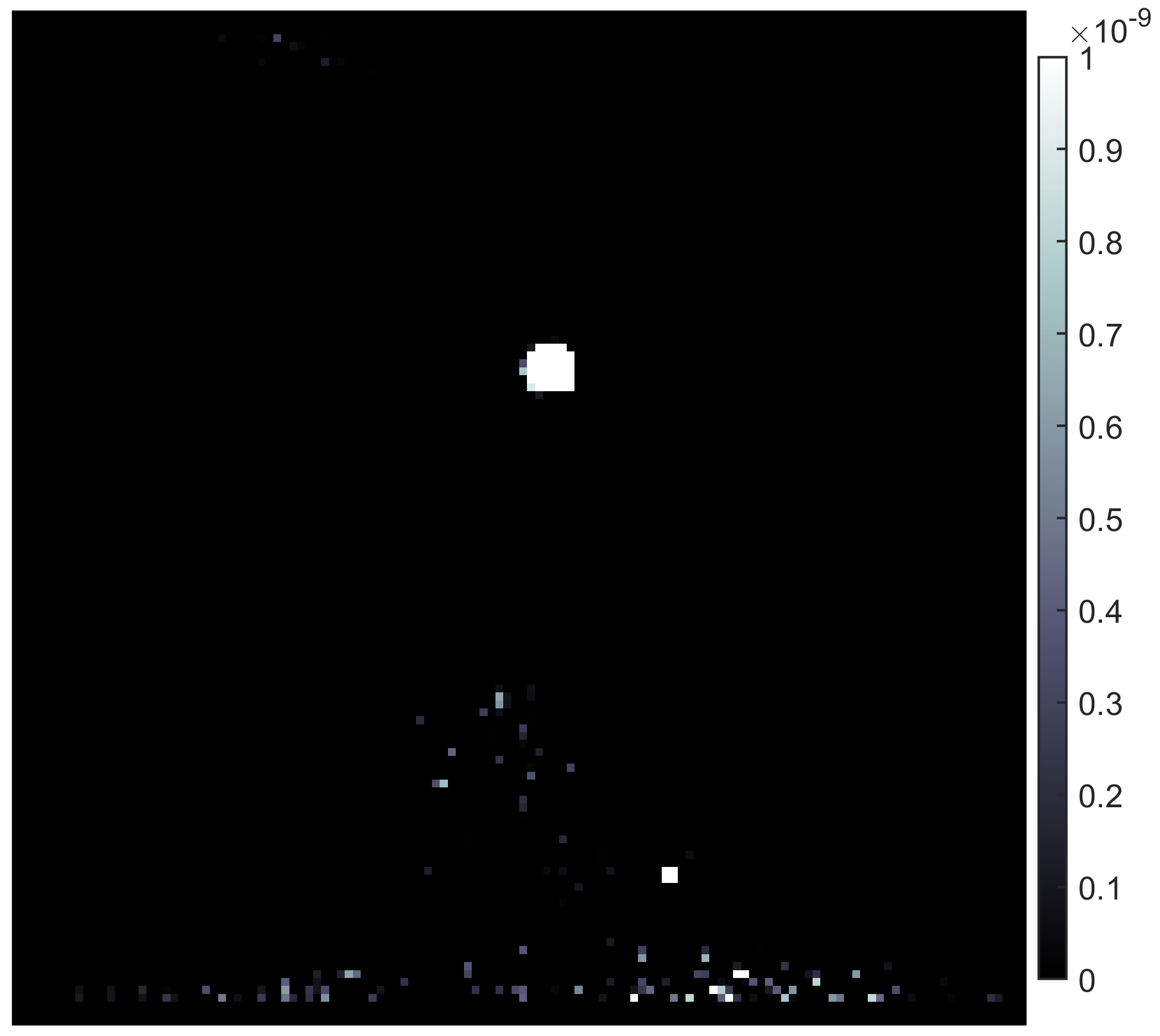} &
        \includegraphics[width=0.23\textwidth]{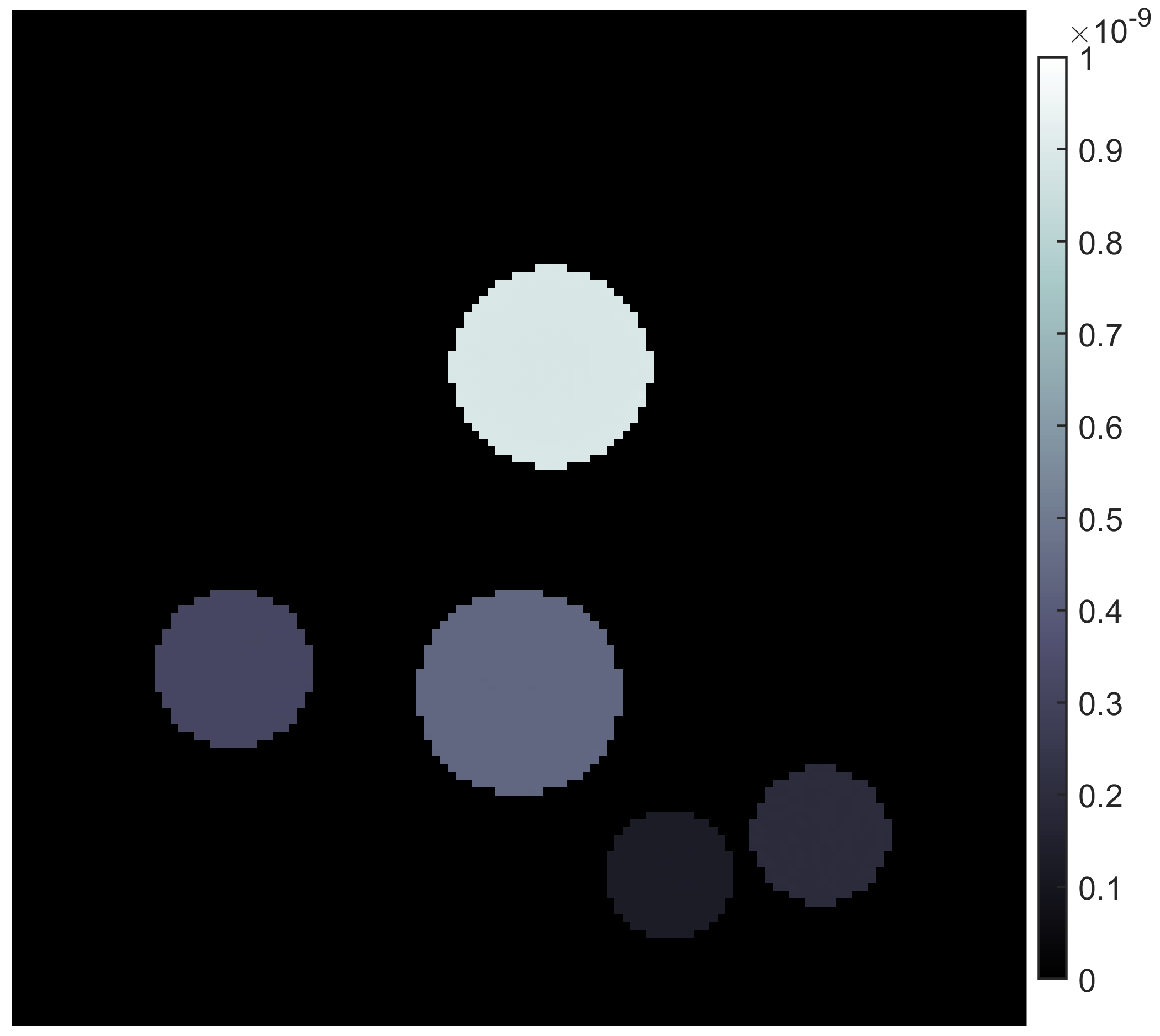} &
        \includegraphics[width=0.23\textwidth]{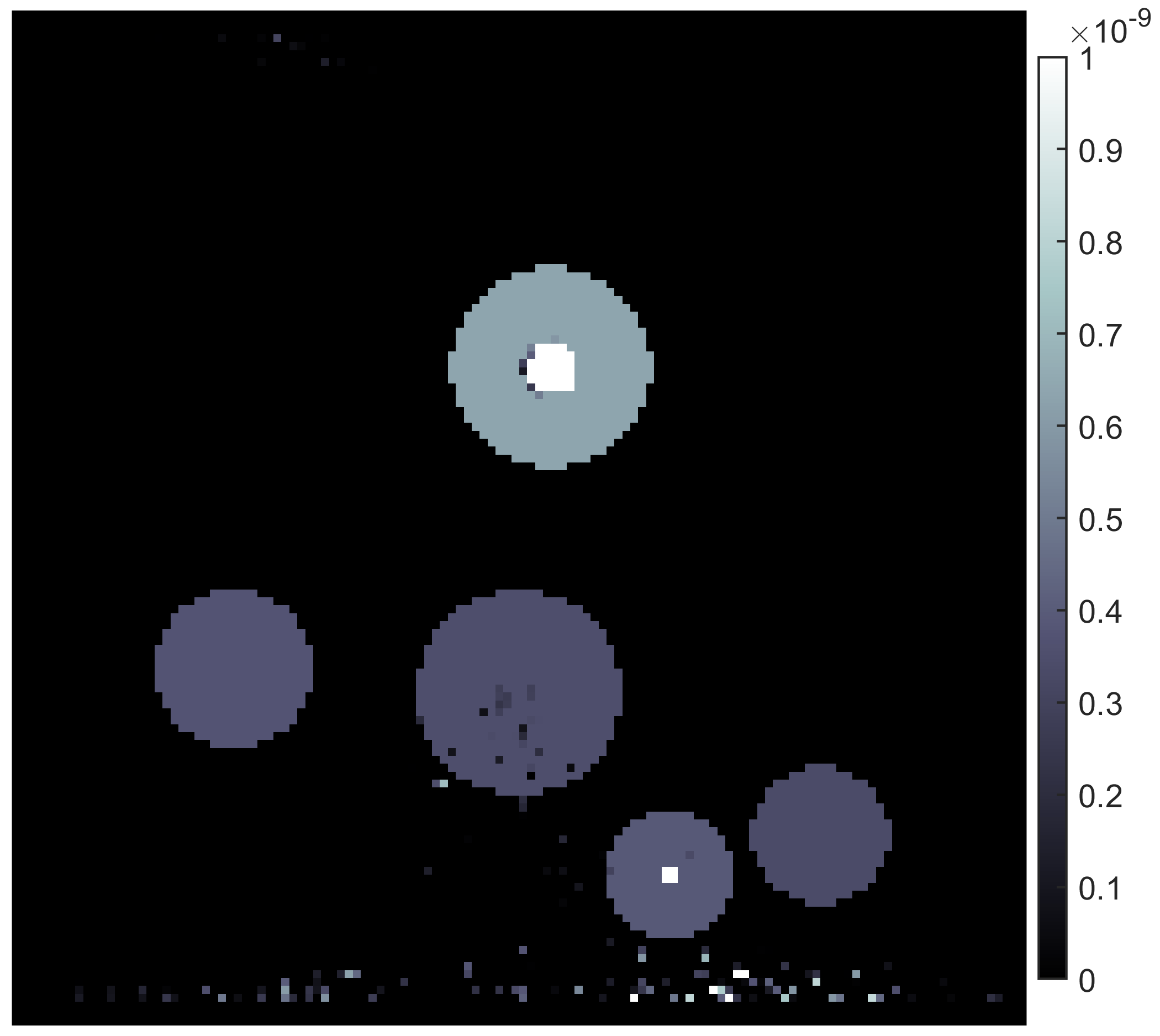} &
        \includegraphics[width=0.23\textwidth]{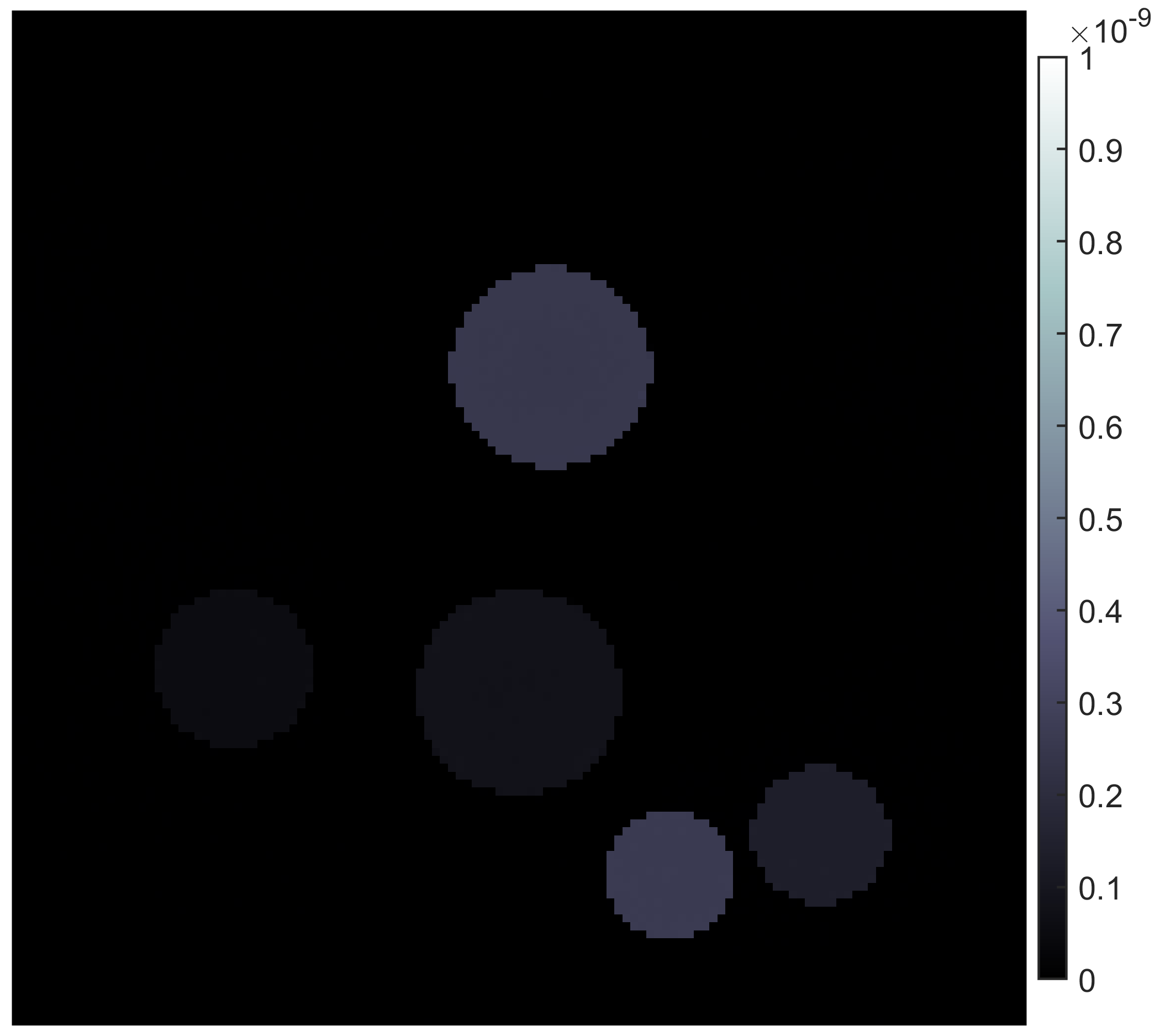} \\

       \rowlabel{$\delta/\beta=10^{4}$} &
       \includegraphics[width=0.23\textwidth]{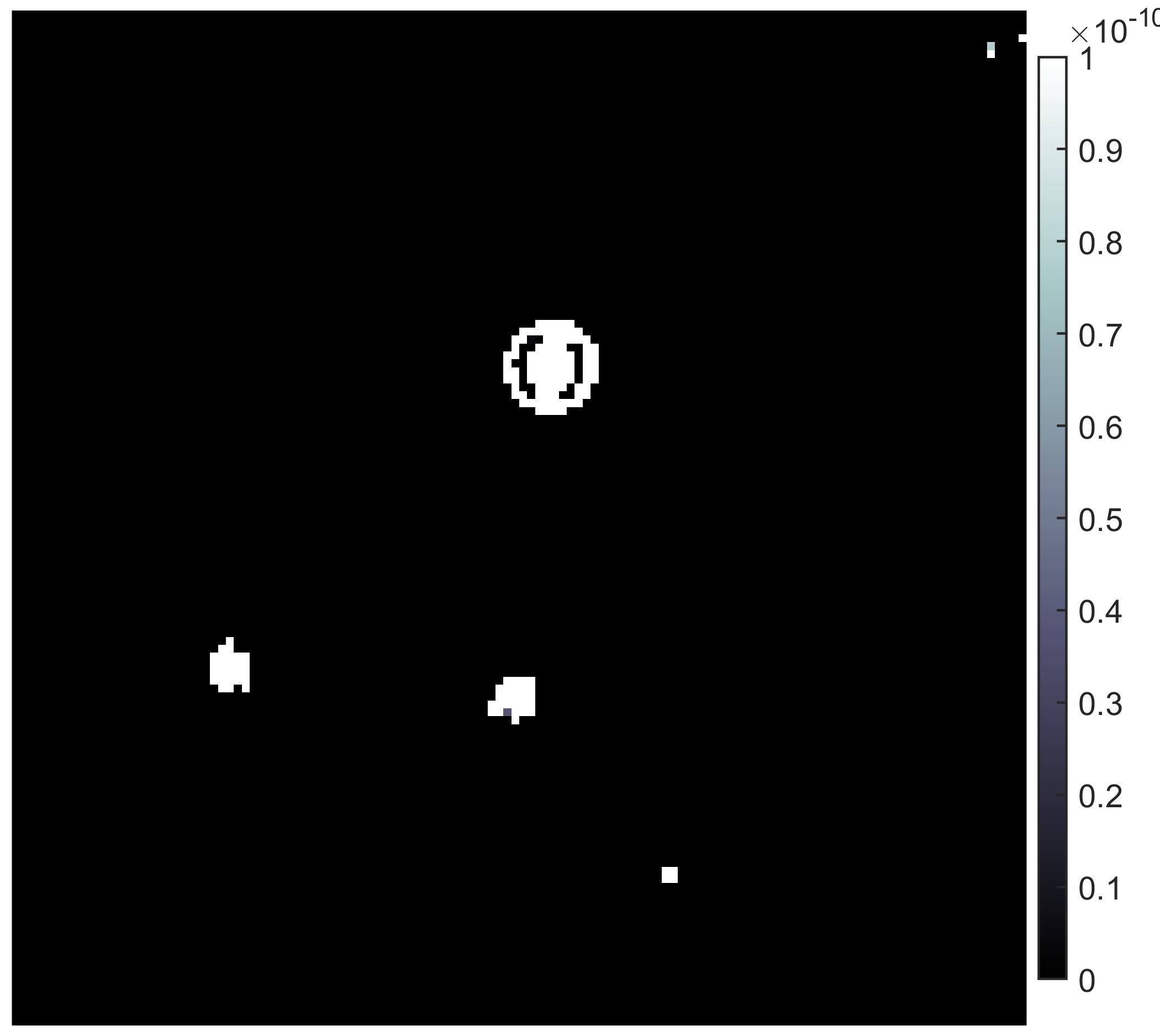} &
        \includegraphics[width=0.23\textwidth]{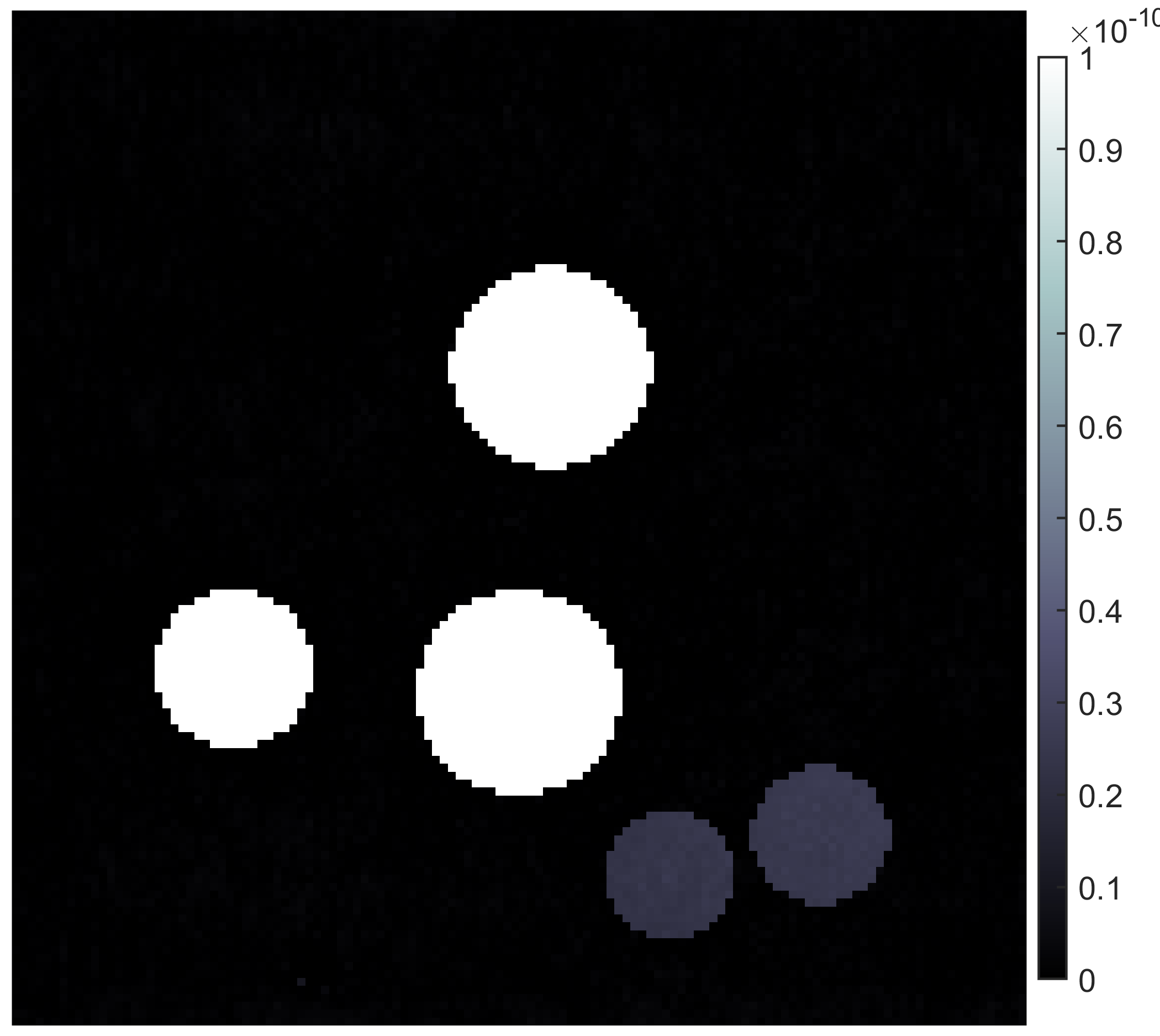} &
        \includegraphics[width=0.23\textwidth]{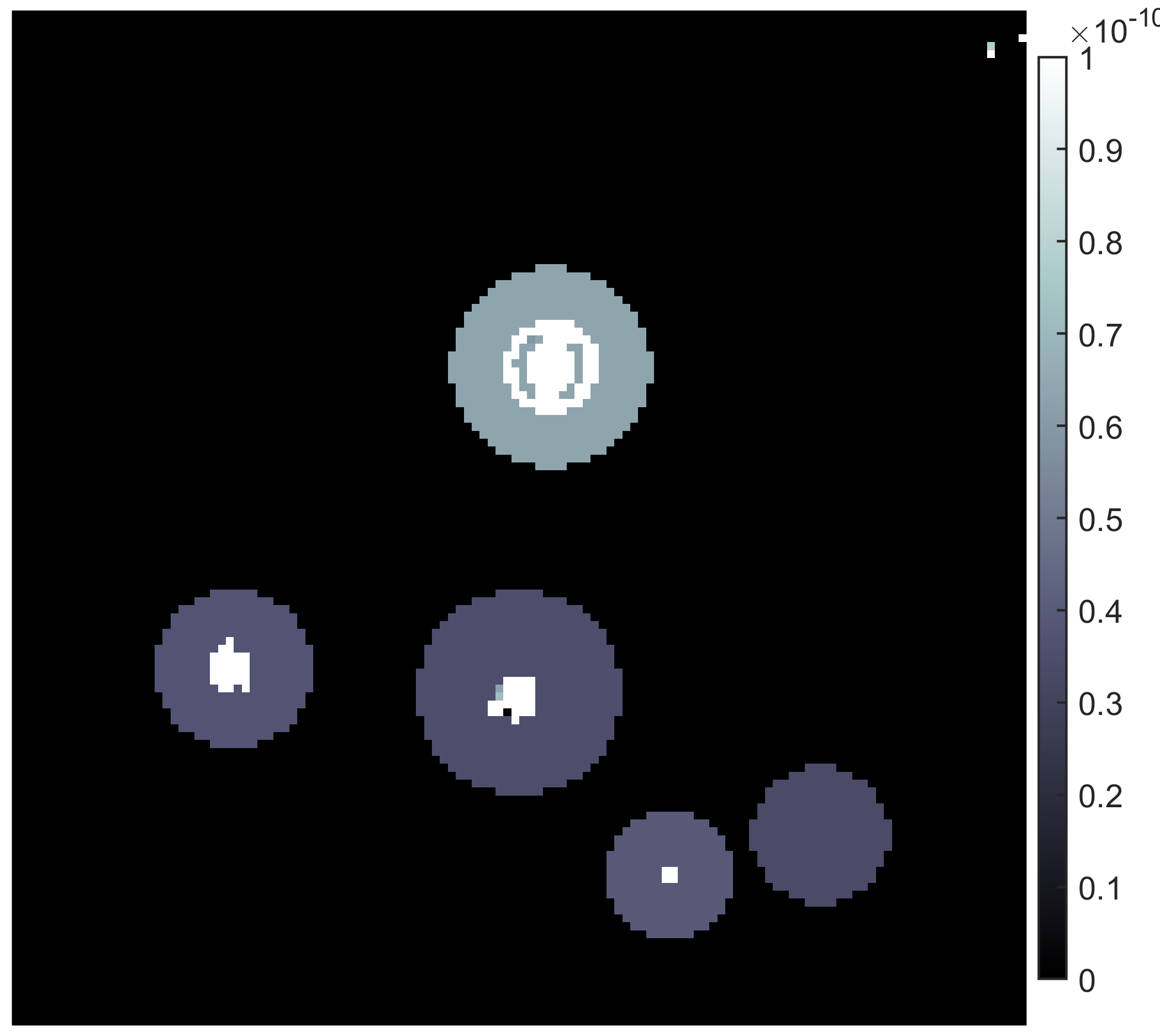} &
        \includegraphics[width=0.23\textwidth]{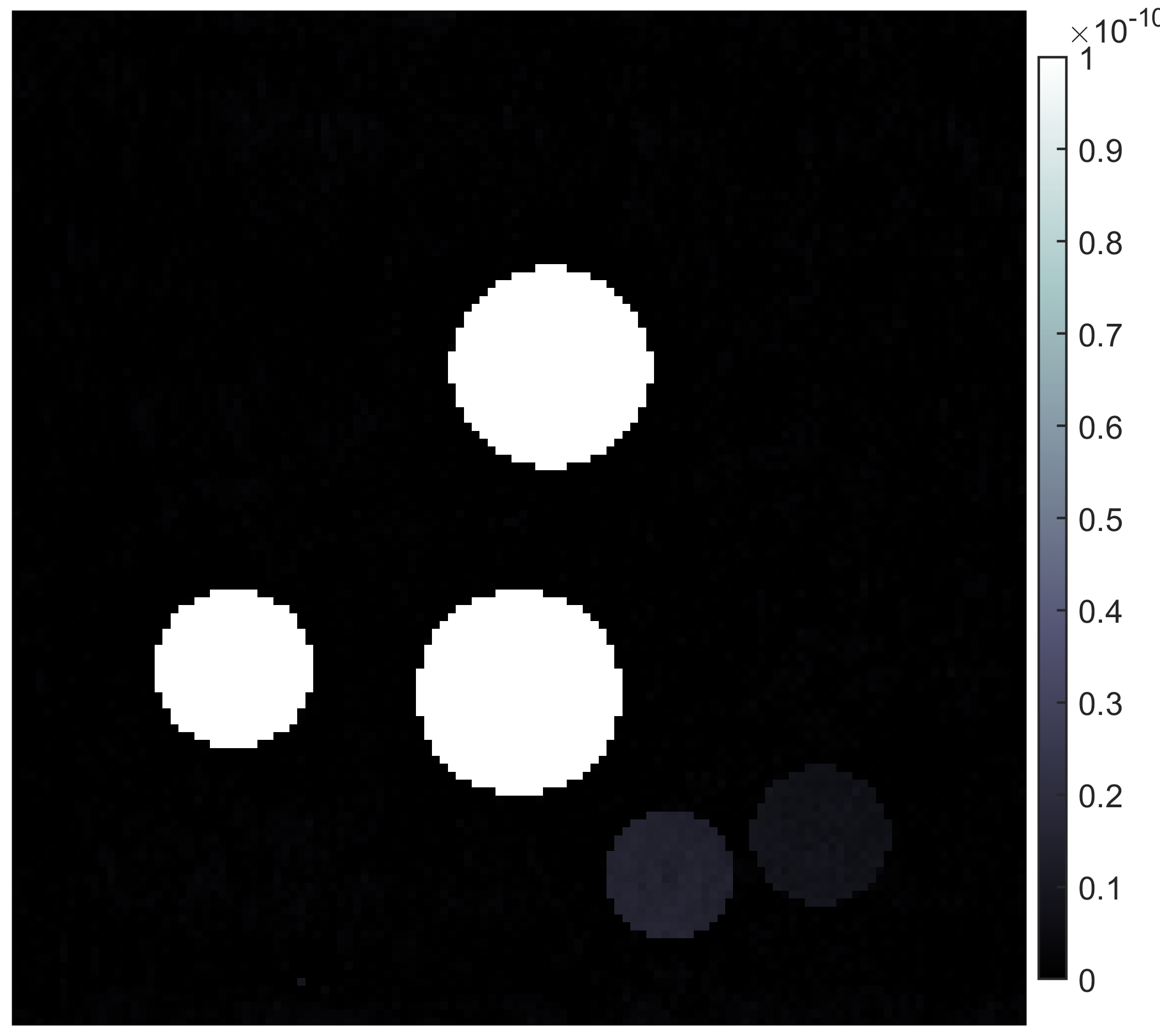} \\
        
    \end{tabular}
    }
    \caption{Reconstruction results (at slice $y=84$) under different $\delta/\beta$ ratios with $\delta=
        10^{-6}\times\,\mathrm{phantom3D}$.
    %, obtained by varying $\beta_0$ while keeping $\delta_0$ fixed. 
    % Rows correspond to different ratios.
    The first three rows show the reconstructions of $-\delta$. 
    The last three rows show the corresponding results for $\beta$.}
  \label{fig:ratio_analysis}
\end{figure}
\subsubsection{Multi-distance Reconstruction}
Multi-distance acquisition provides additional propagation diversity by recording intensity measurements at several sample-to-detector distances \cite{mokso2007nanoscale, cloetens1999holotomography, cloetens2006quantitative, nugent1996quantitative}. 
Compared with single-distance acquisition, these measurements contain complementary Fresnel diffraction information, which helps reduce reconstruction ambiguities. 
Nevertheless, despite the increased amount of measured information, many existing reconstruction methods still rely on additional material priors, often in the form of an assumed or approximately constant $\delta/\beta$ ratio, to achieve stable reconstructions.

For multi-distance reconstruction, the forward operator is evaluated at each
propagation distance $d_j$. We denote the corresponding amplitude operator by
$$
G_{d_j}(f)
=
\left|
\mathcal D_{\mathrm{Fr},d_j}
\left(
P e^{ik\widetilde{O}(f)}
\right)
\right|,
$$
where $\mathcal D_{\mathrm{Fr},d_j}$ is the Fresnel propagator associated with
the distance $d_j$. The multi-distance data-fidelity term is then defined as
$$
J(f)
=
\sum_{j=1}^{N_d}
\frac12
\left\|
G_{d_j}(f)-y_{d_j}
\right\|_2^2,
$$
where $y_{d_j}$ denotes the measured amplitude data at distance $d_j$.

In the present simulation, the same object is propagated to multiple distances using the forward model, and all corresponding intensity measurements are jointly included in the data-fidelity term. 
Physically, such measurements can be acquired by moving the sample or detector to different propagation distances. 
However, this is experimentally more demanding, since it requires accurate alignment, registration, and calibration across different distances.

Figure~\ref{fig:multidistance_intensity} shows representative forward simulated near-field
intensity measurements at a projection angle for four propagation distances.
As the propagation distance increases, the recorded
intensity patterns exhibit progressively stronger Fresnel diffraction fringes, reflecting the
transition from weak to more pronounced near-field diffraction regimes.

\begin{figure}[H]
\centering
\begin{minipage}[t]{0.23\linewidth}
\centering
\includegraphics[width=1\textwidth]{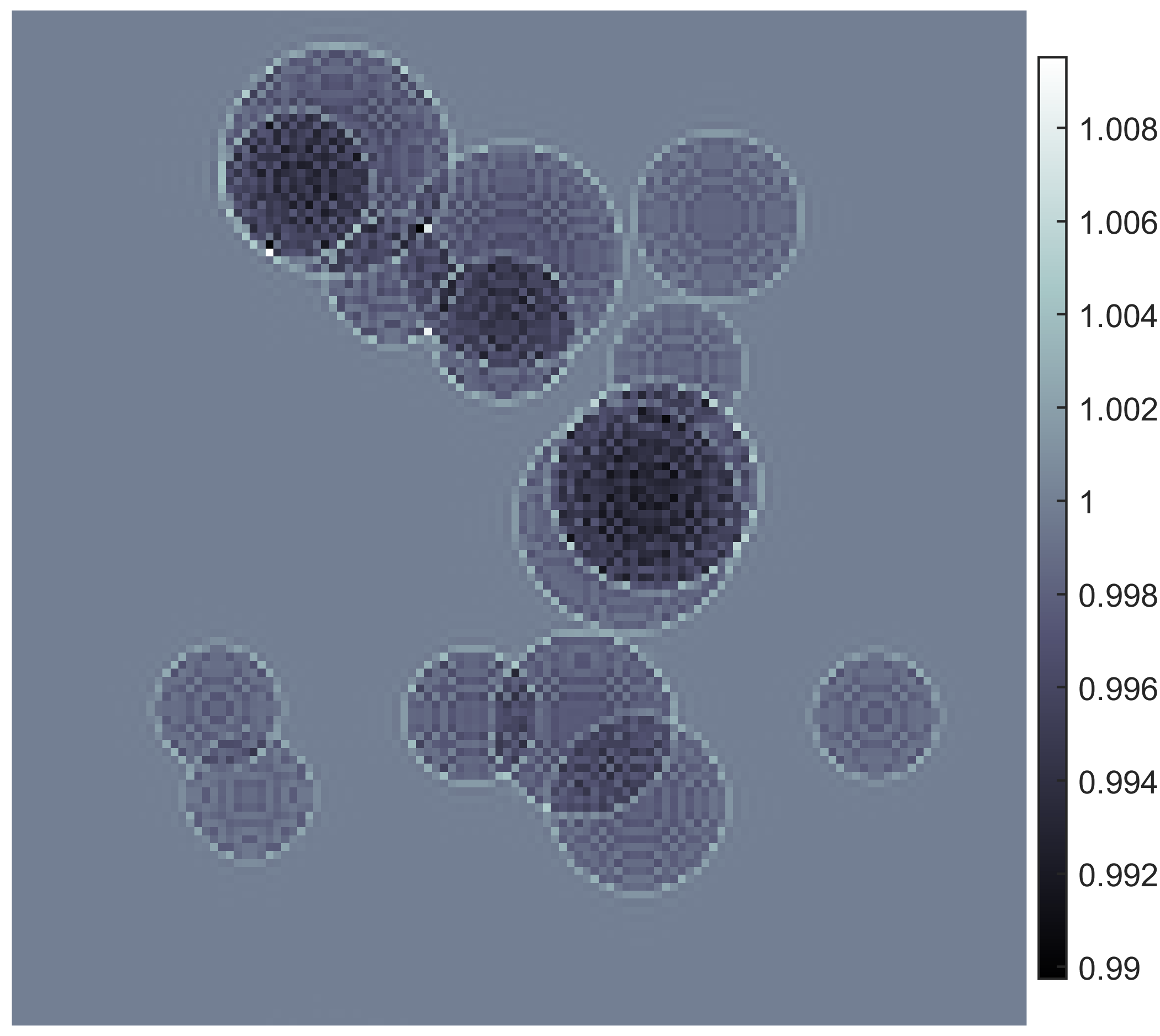}\\
% \centerline{\footnotesize\emph{gradient descent}}
\end{minipage}
\begin{minipage}[t]{0.23\linewidth}
\centering
\includegraphics[width=1\textwidth]{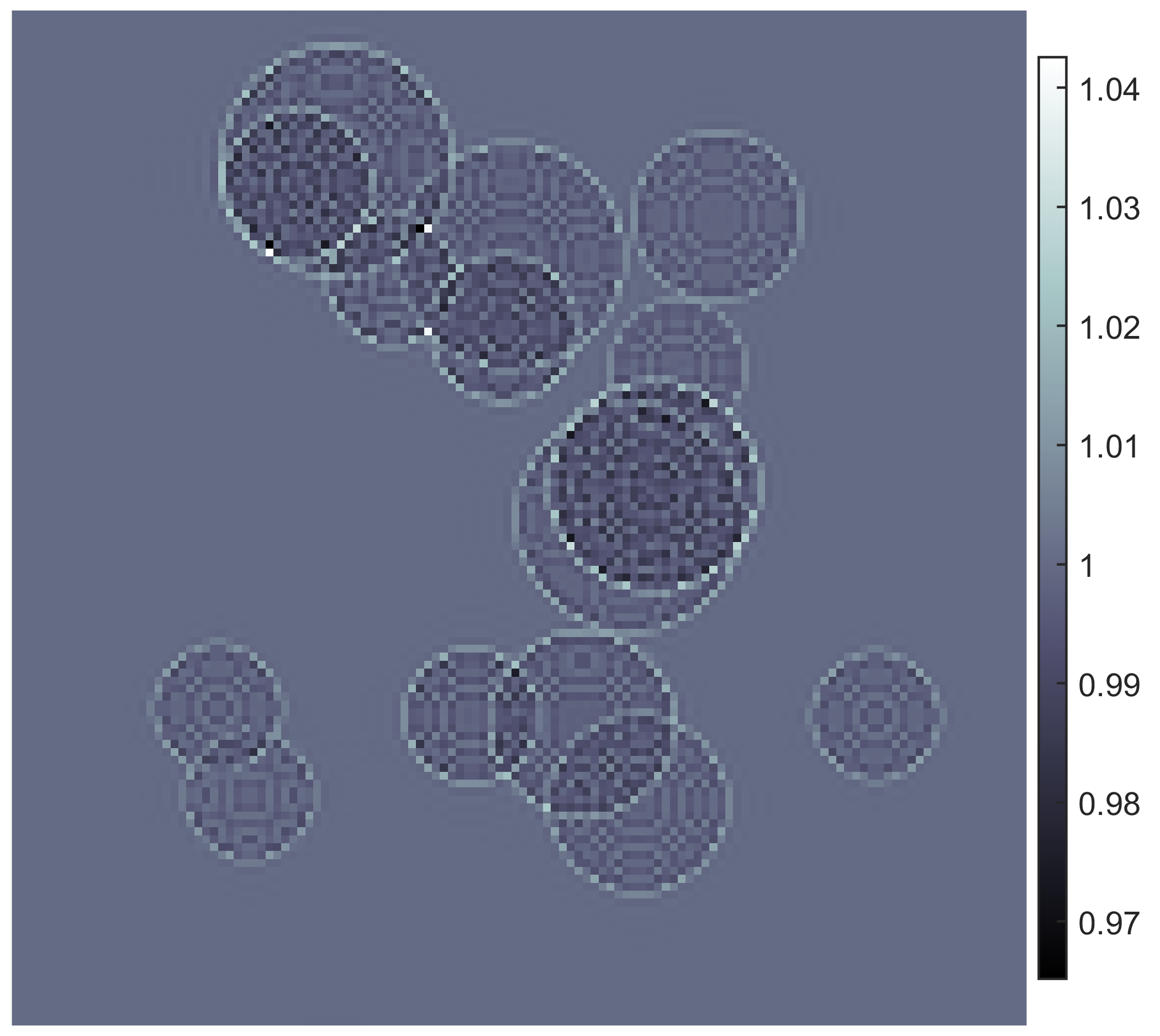}\\
% \centerline{\footnotesize\emph{gradient descent}}
\end{minipage}
\begin{minipage}[t]{0.23\linewidth}
\centering
\includegraphics[width=1\textwidth]{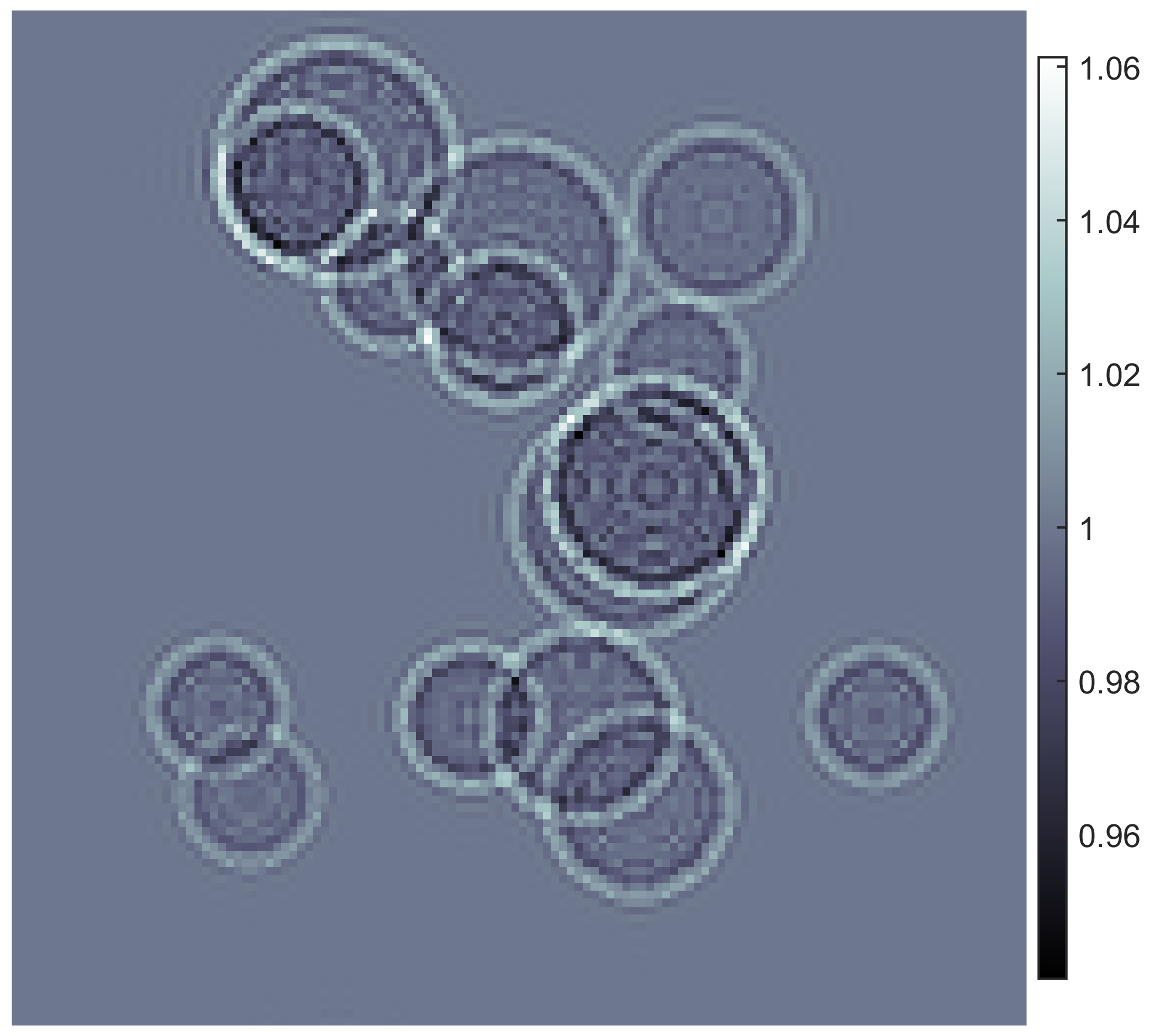}\\
% \centerline{\footnotesize\emph{Bregman TV}}
\end{minipage}
\begin{minipage}[t]{0.23\linewidth}
\centering
\includegraphics[width=1\textwidth]{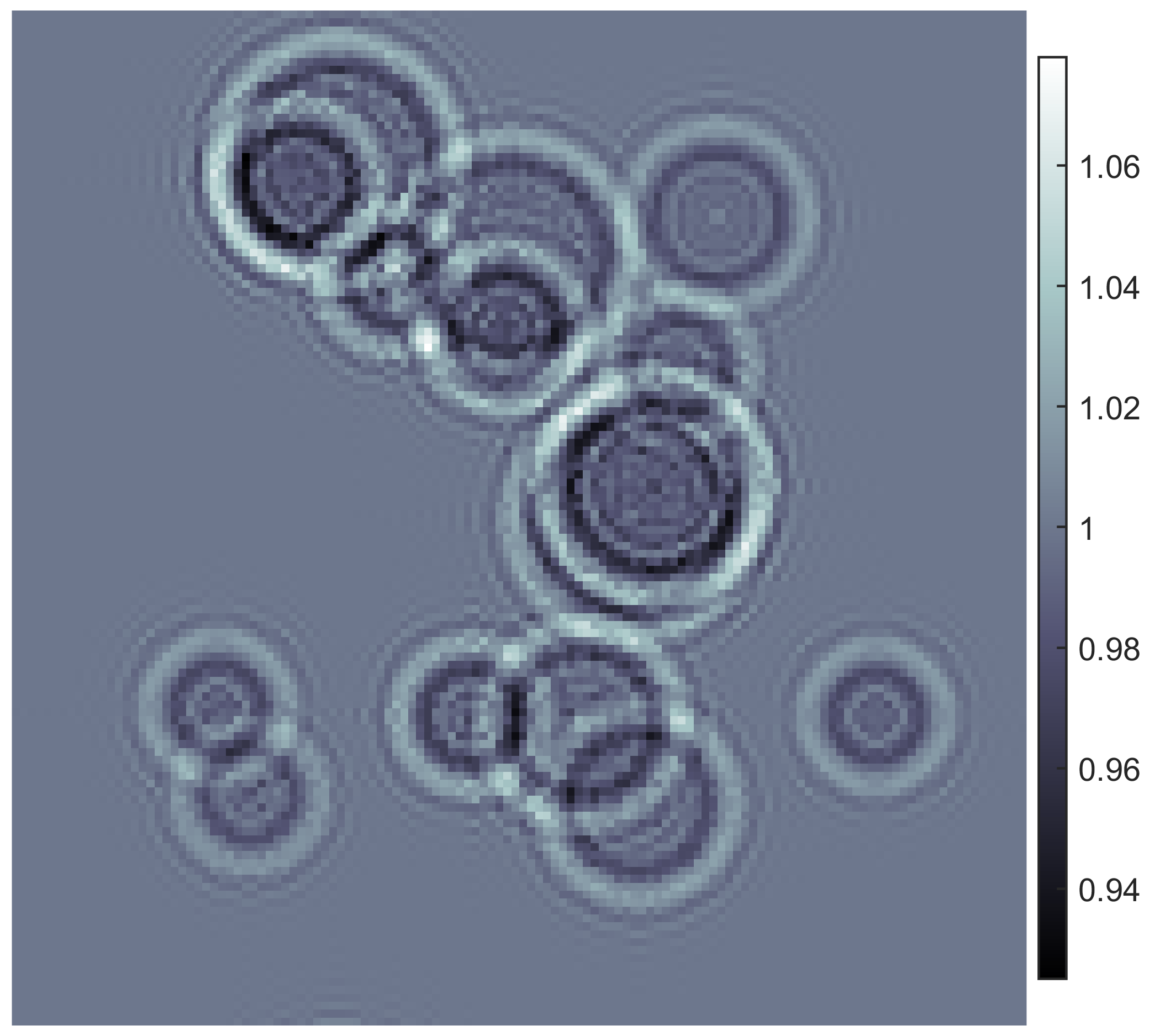}\\
% \centerline{\footnotesize\emph{Bregman TV with phase-guided}}
\end{minipage}
\caption{Forward simulated near-field intensities at a projection angle $\theta = 0$ for
multiple propagation distances.
From left to right: $d = 1.6\times10^{-4}$\,m, $8\times10^{-4}$\,m, $4\times10^{-3}$\,m, and
$1\times10^{-2}$\,m.}
 \label{fig:multidistance_intensity}
\end{figure}

Figure~\ref{fig:multi_recon} presents the reconstruction results obtained using multi-distance measurements, allowing direct comparison with the corresponding single-distance results in Figure~\ref{fig:direct3d_recon}. Table~\ref{tab:single_multi_metrics} complements this visual comparison by reporting the full-volume PSNR and global SSIM for both $\delta$ and $\beta$ under the two acquisition settings. Overall, the multi-distance reconstructions are comparable to or improved over their single-distance counterparts, with the direct 3D methods showing the clearest improvements. Minor variations in individual quantitative metrics are small and do not affect the overall trend. Importantly, the conclusions obtained in the single-distance experiments remain consistent in the multi-distance setting. Bregman TV regularization improves reconstruction stability, whereas the proposed phase-guided Bregman TV method achieves the best overall reconstruction quality.

\begin{figure}[H]
    \centering
    \setlength{\tabcolsep}{1pt}

    \begin{tabular}{c c c c c}
        & \footnotesize Two-step FBP
        & \footnotesize GD 
        & \footnotesize Bregman TV 
        & \footnotesize Phase-guided \\
        
        \rowlabel{$-\delta_{reco}$} &
        \includegraphics[width=0.23\textwidth]{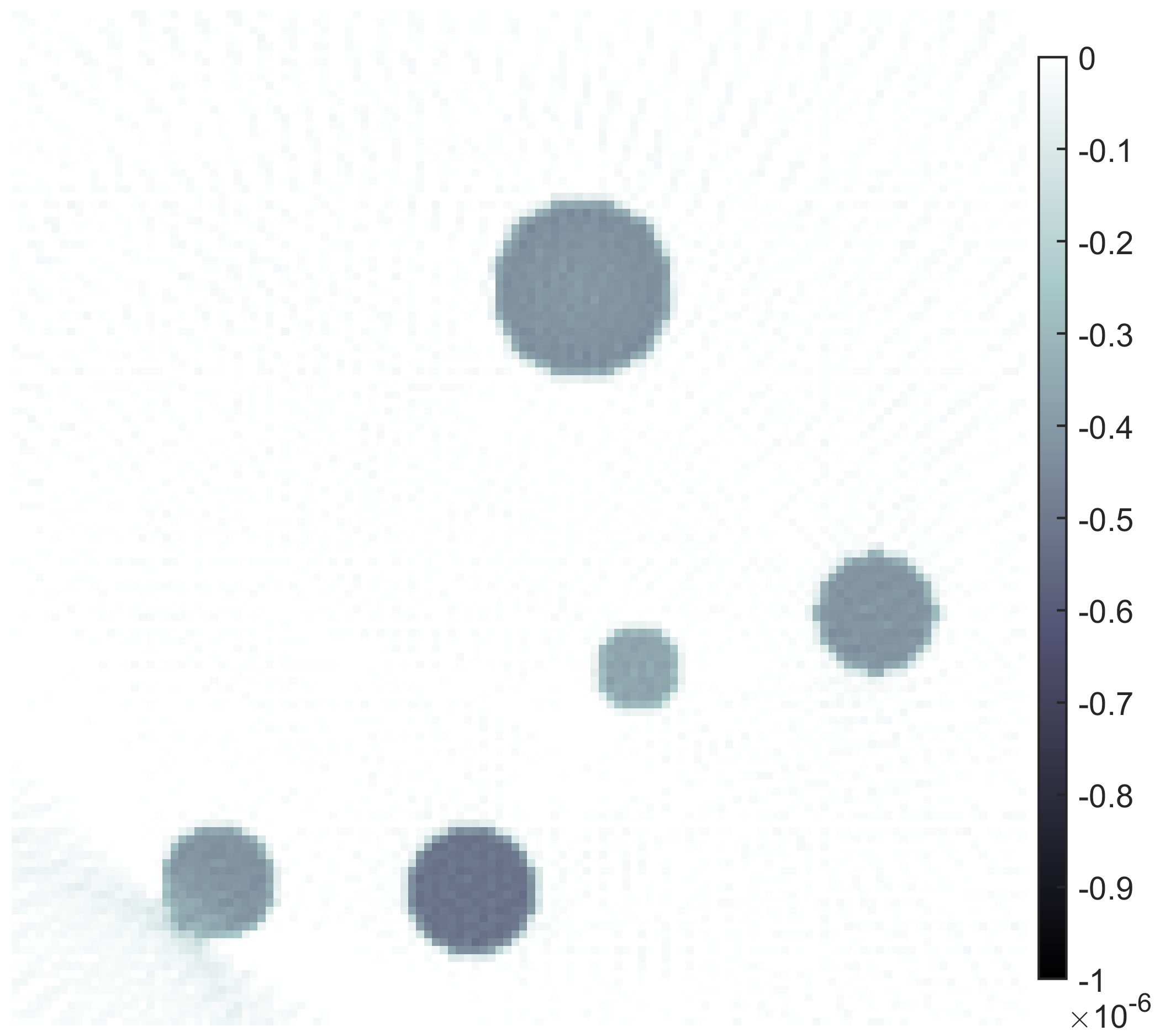} &
        \includegraphics[width=0.23\textwidth]{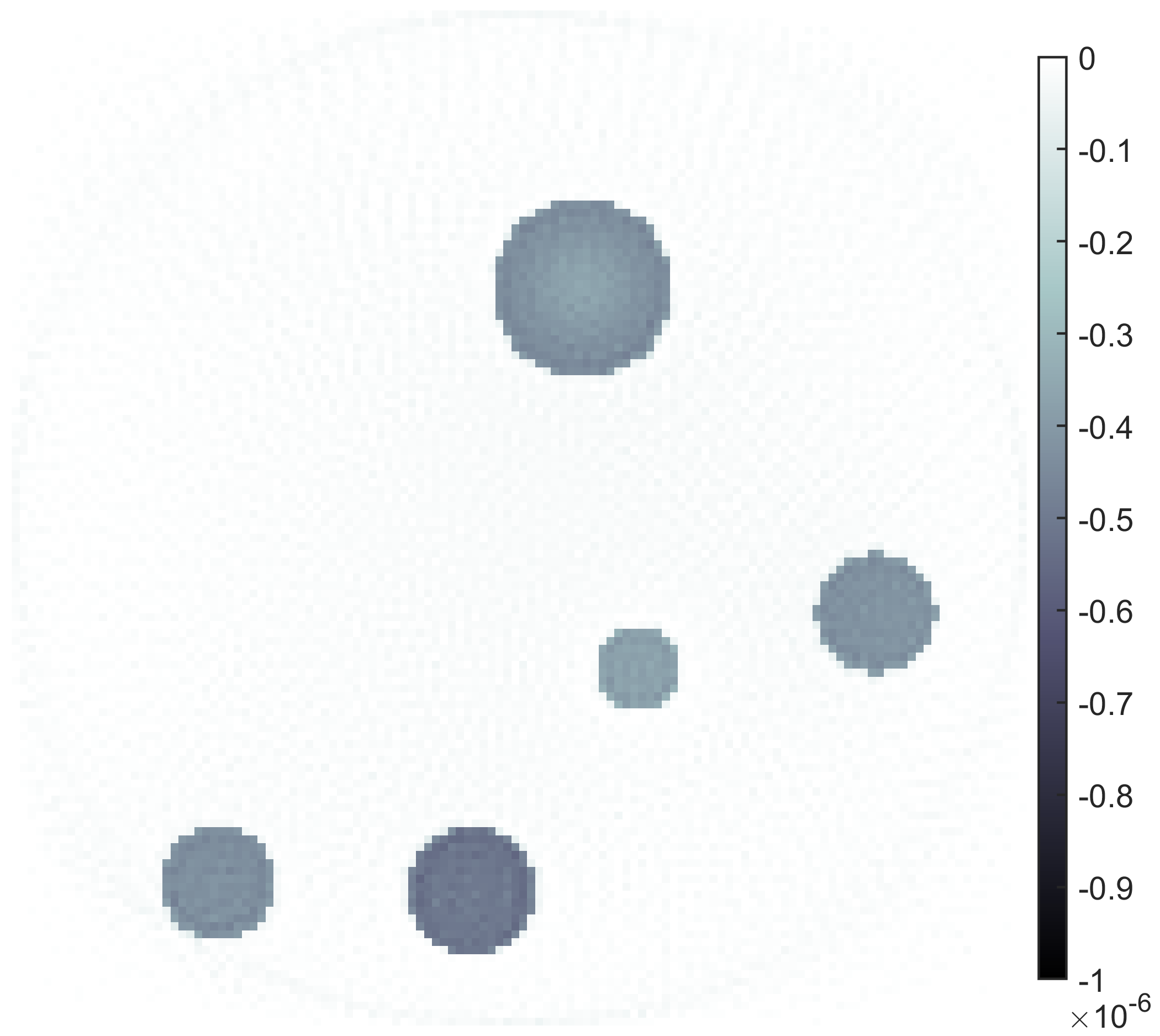} &
        \includegraphics[width=0.23\textwidth]{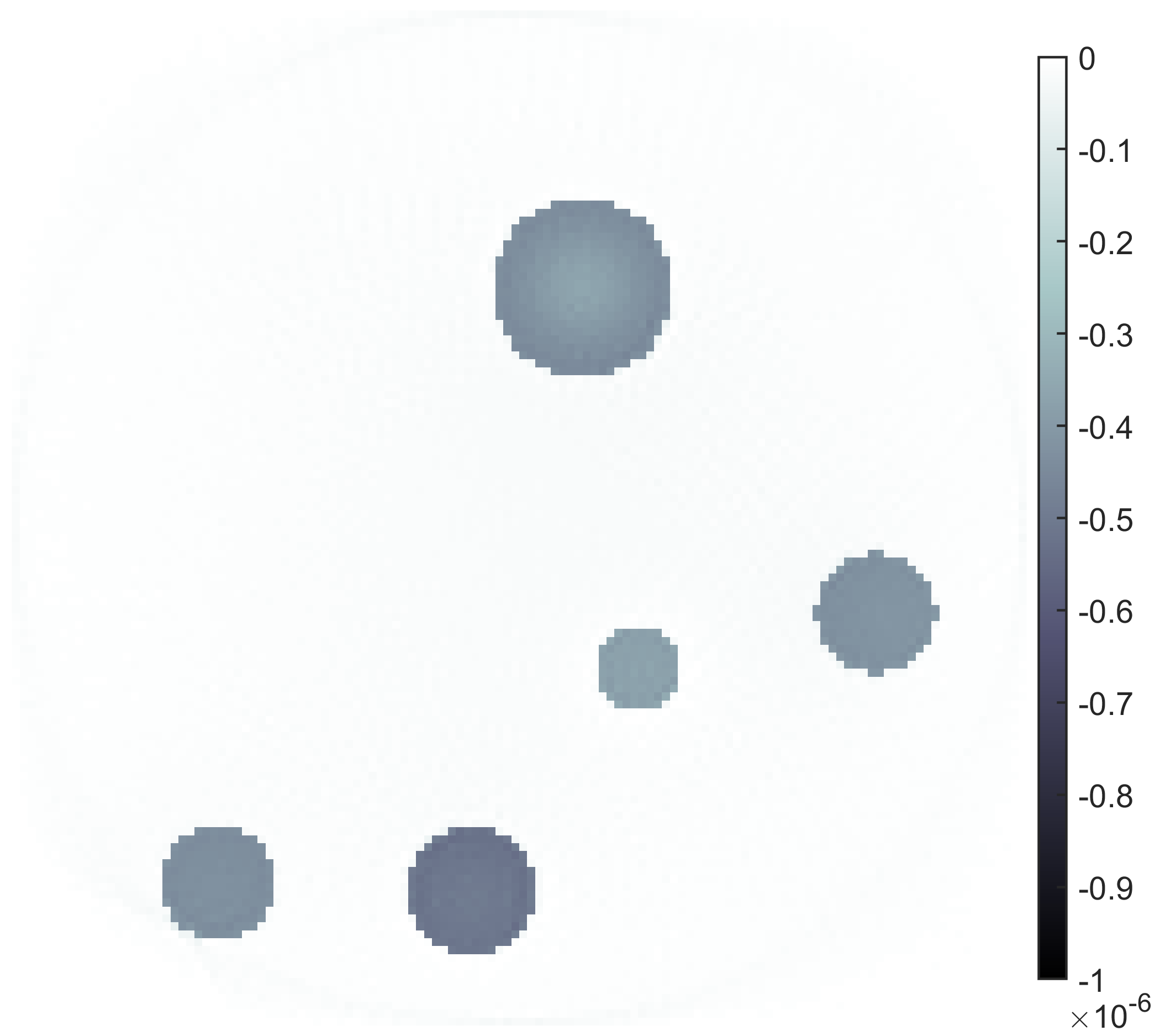} &
        \includegraphics[width=0.23\textwidth]{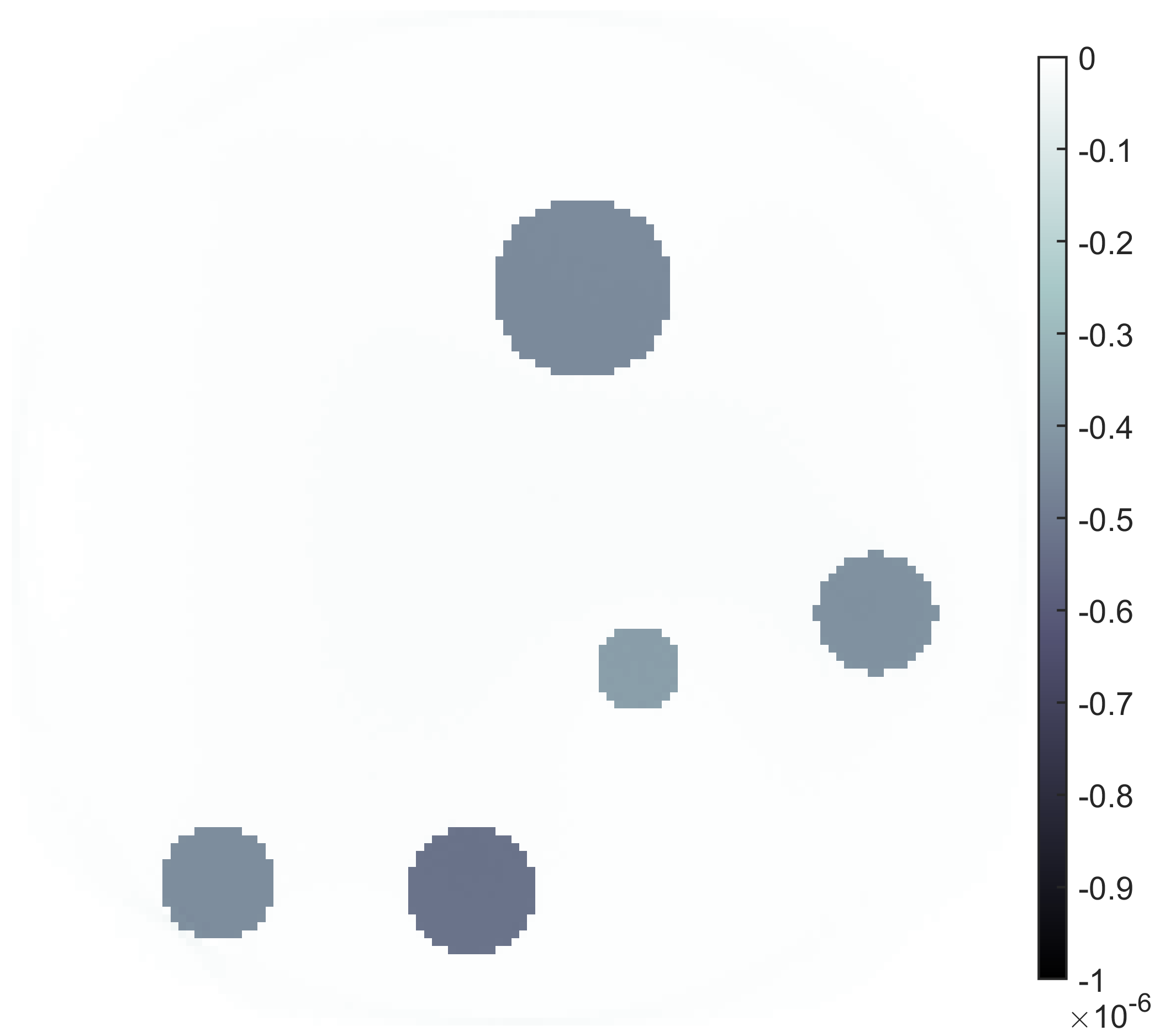} \\

        \rowlabel{diff $\delta$} &
         \includegraphics[width=0.23\textwidth]{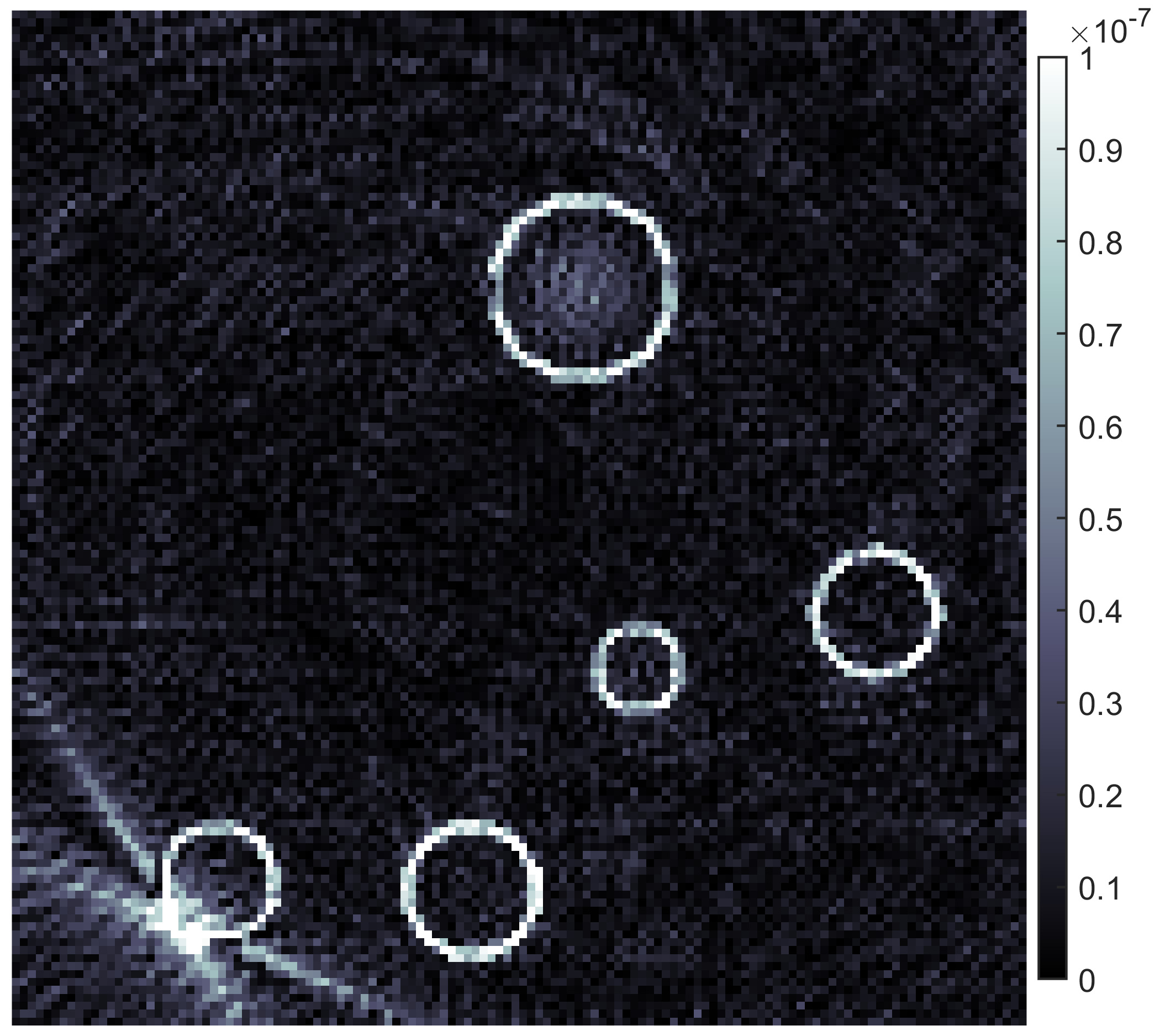} &
        \includegraphics[width=0.23\textwidth]{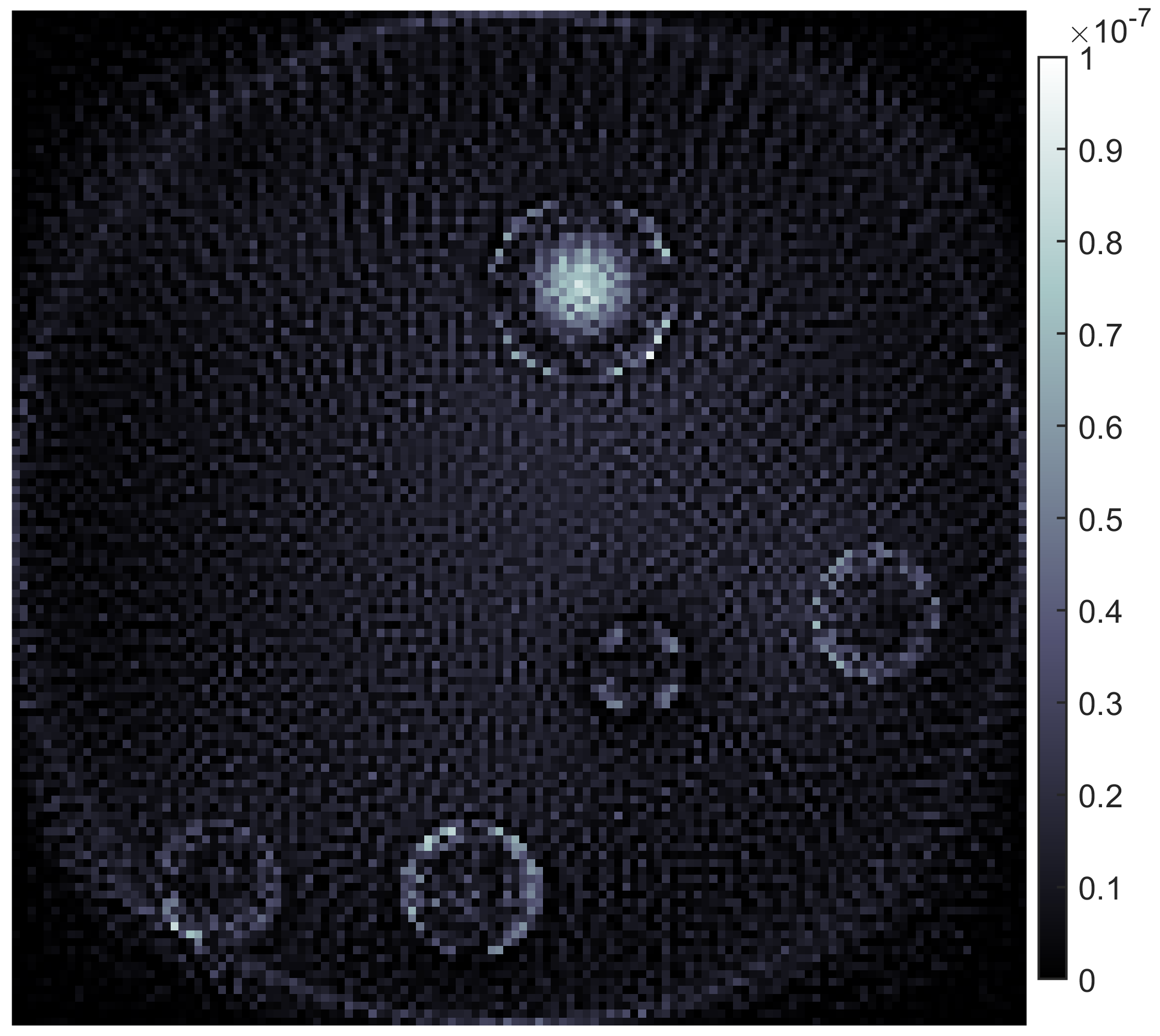} &
        \includegraphics[width=0.23\textwidth]{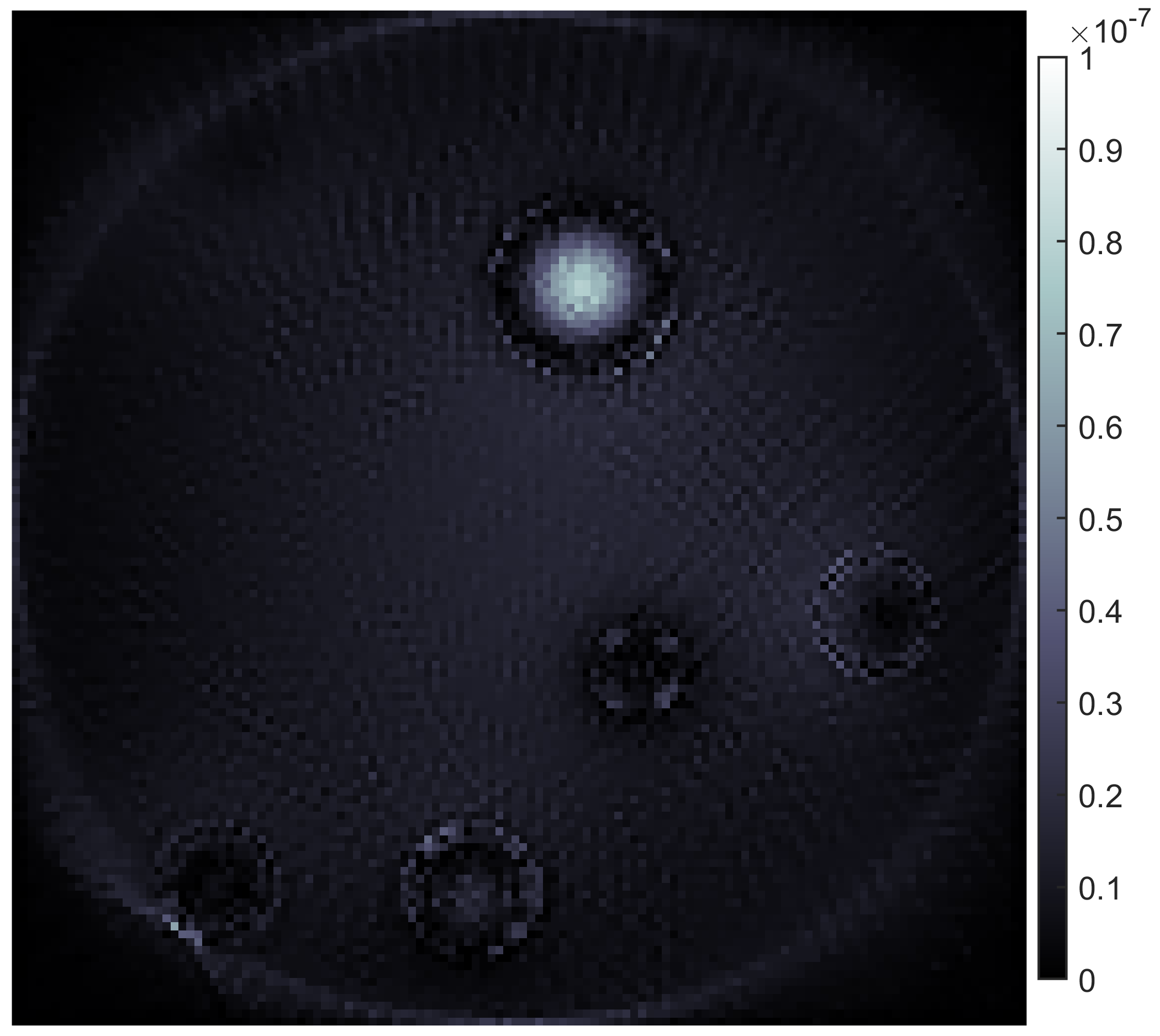} &
        \includegraphics[width=0.23\textwidth]{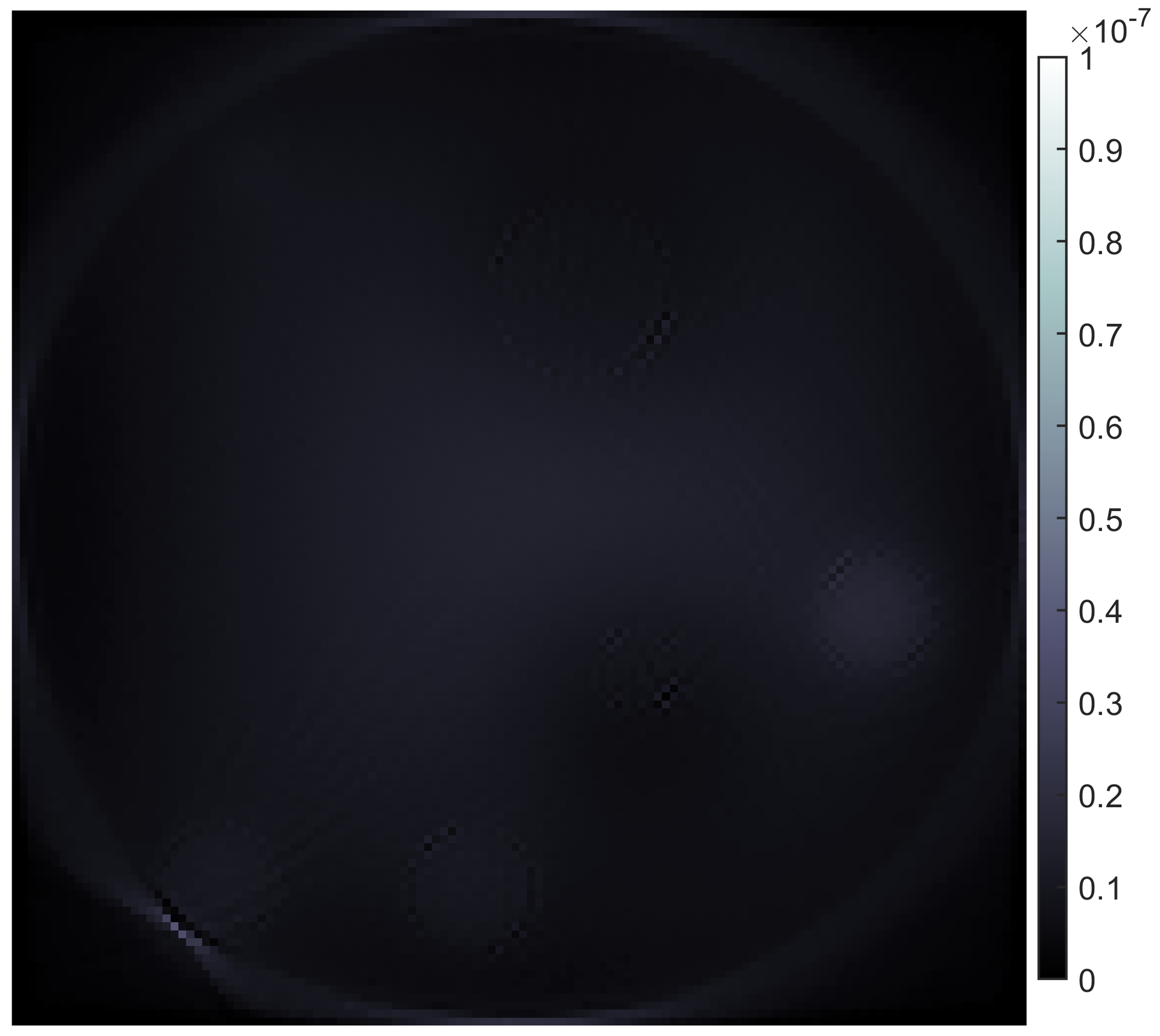} \\

        \rowlabel{$\beta_{reco}$} &
        \includegraphics[width=0.23\textwidth]{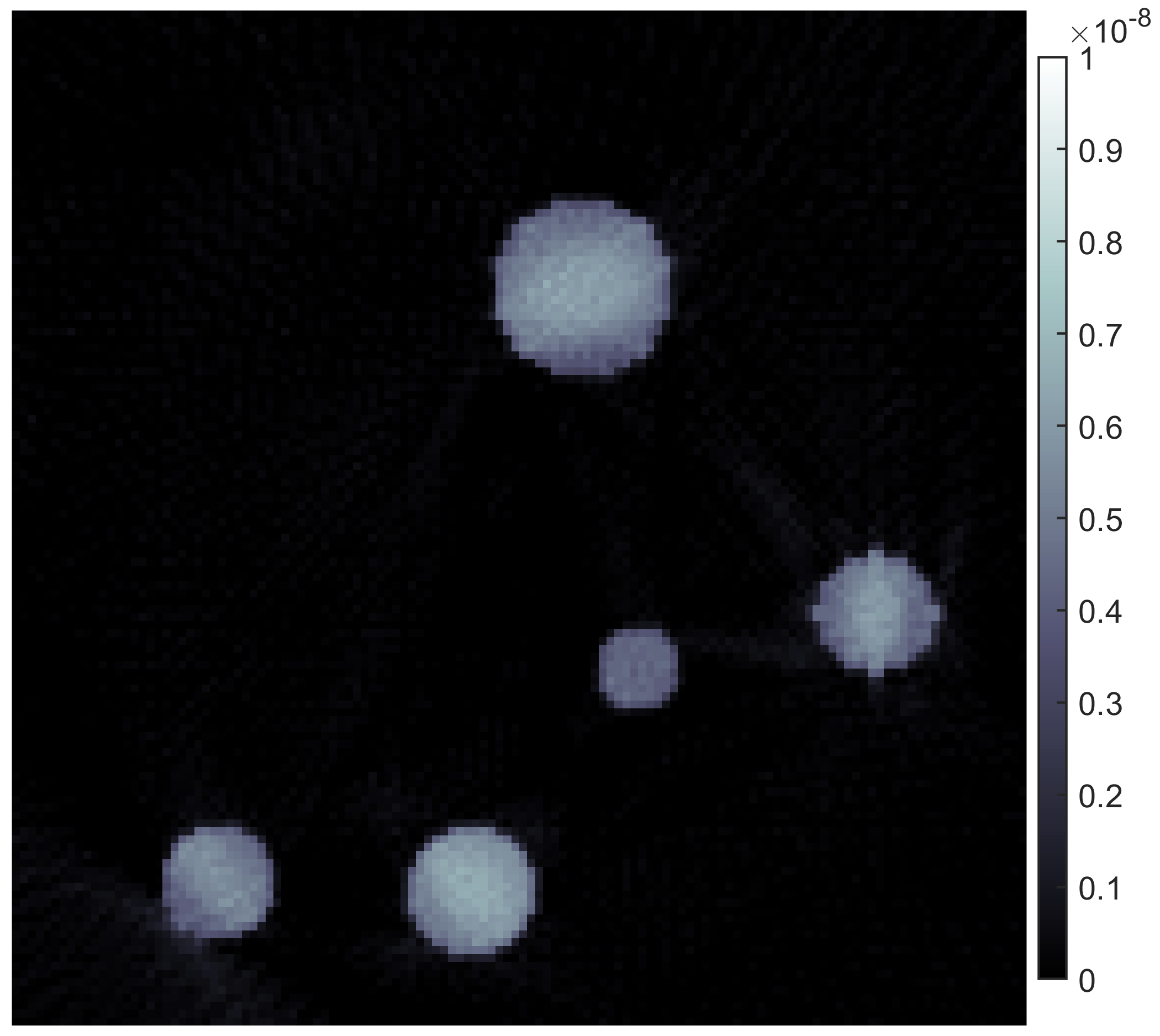} &
        \includegraphics[width=0.23\textwidth]{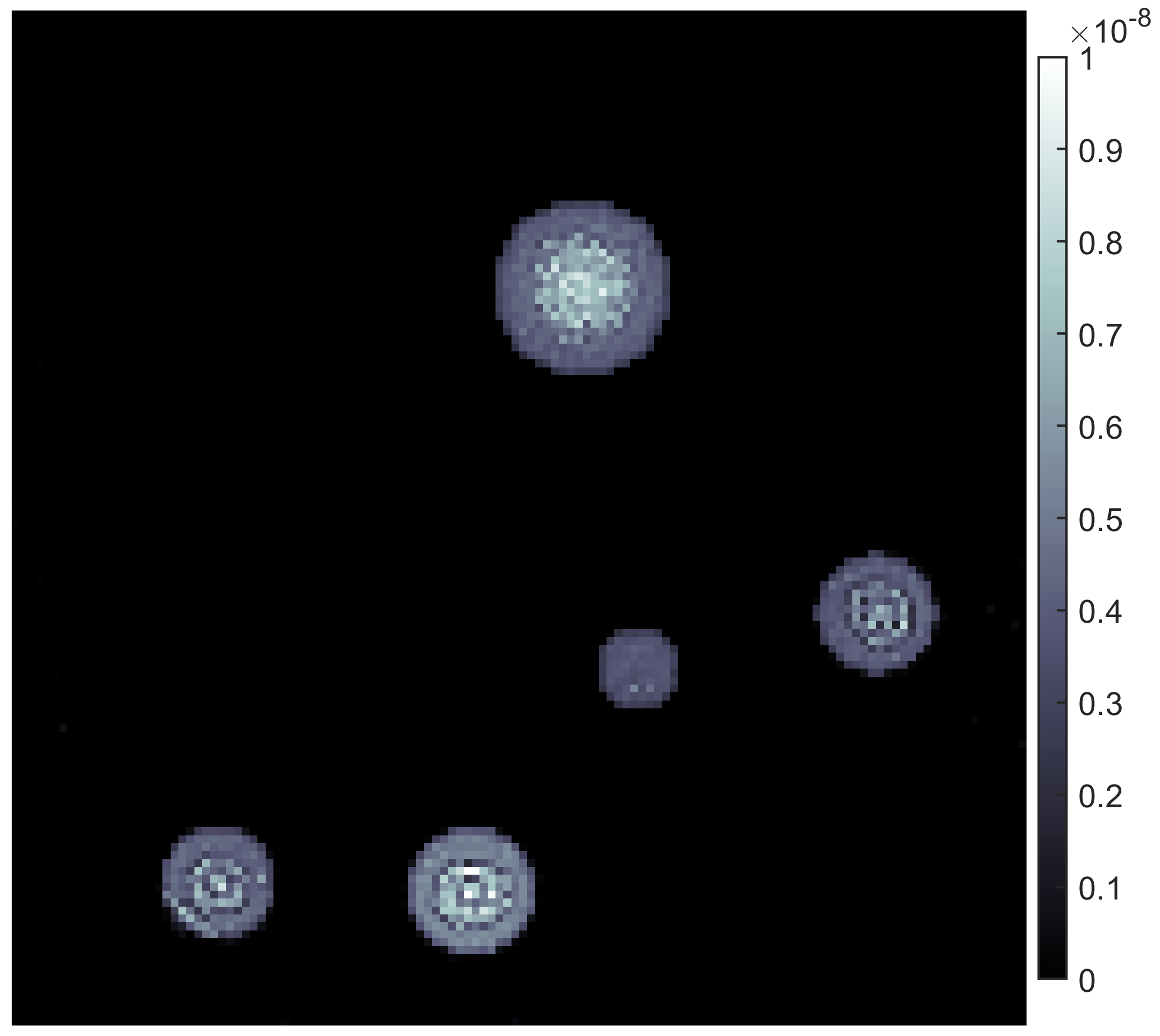} &
        \includegraphics[width=0.23\textwidth]{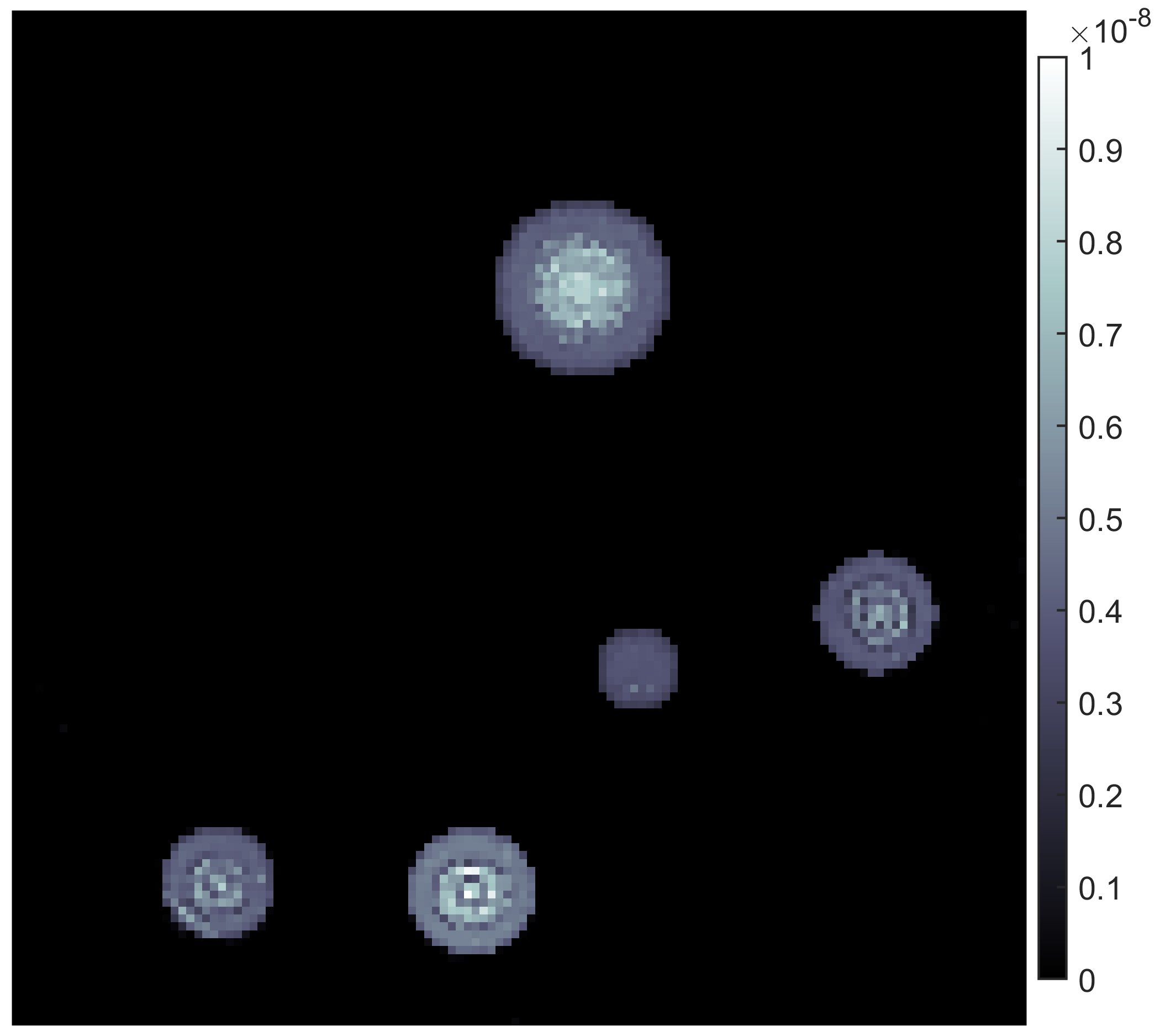} &
        \includegraphics[width=0.23\textwidth]{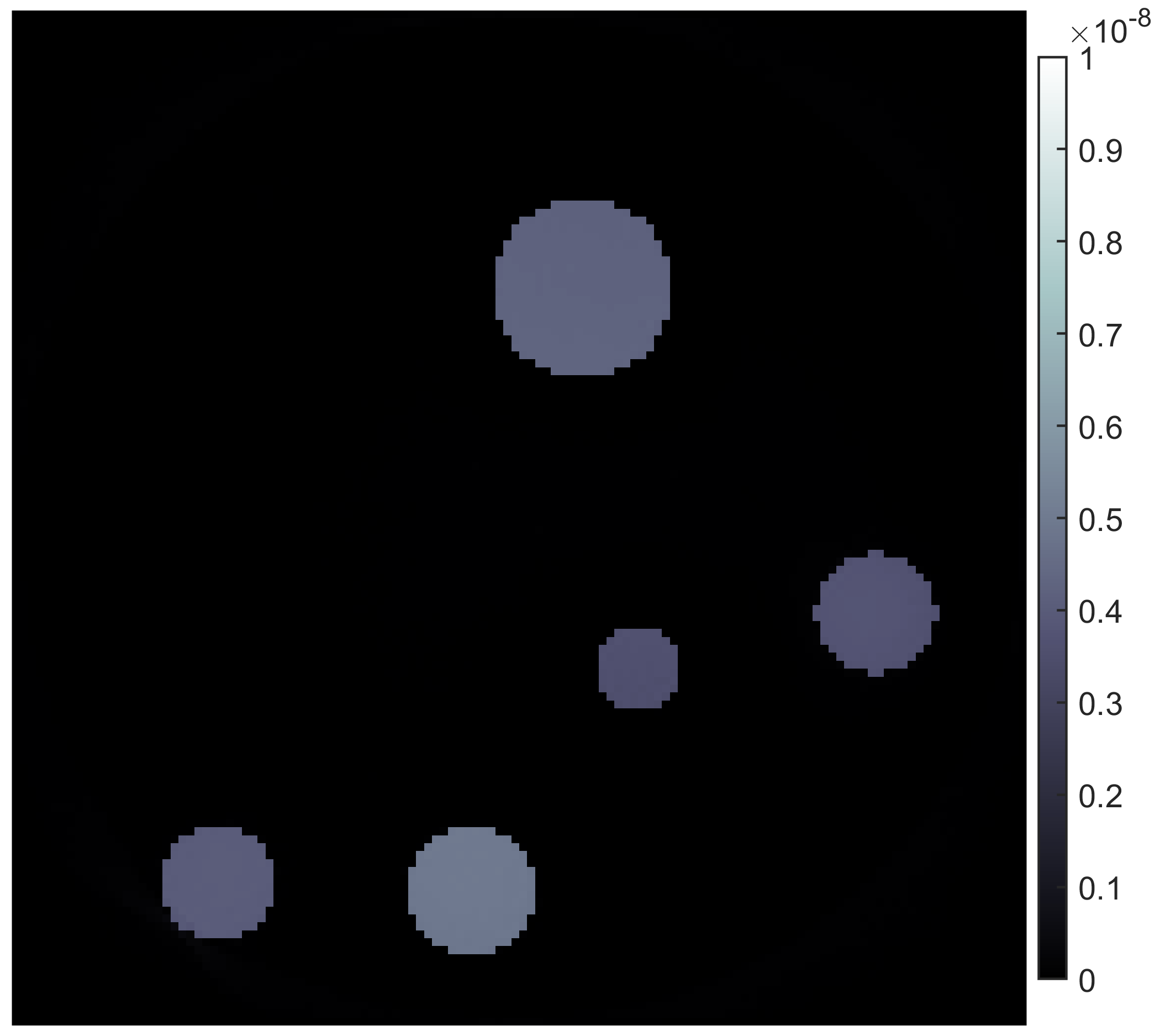} \\

        \rowlabel{diff $\beta$} &
        \includegraphics[width=0.23\textwidth]{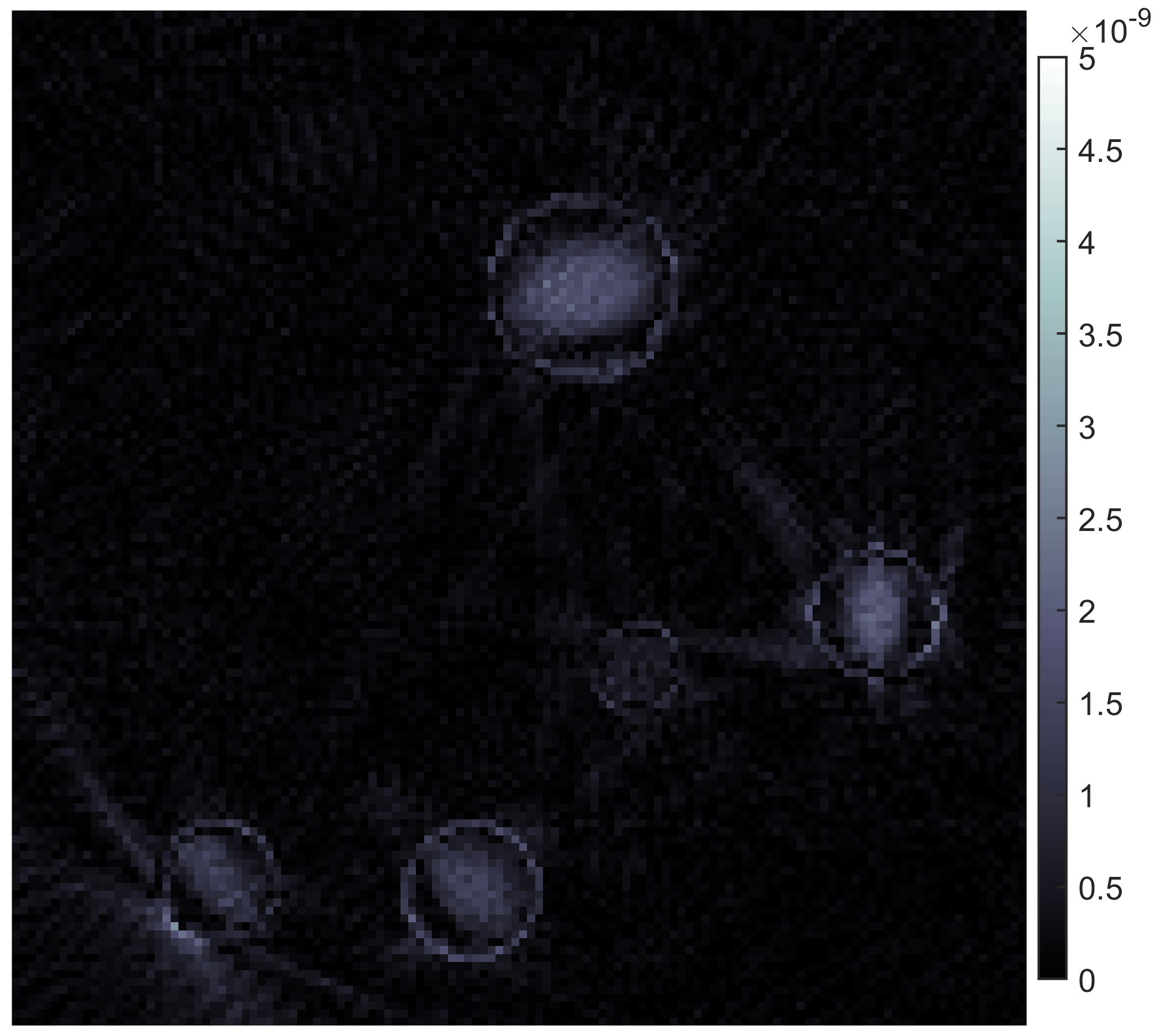} &
        \includegraphics[width=0.23\textwidth]{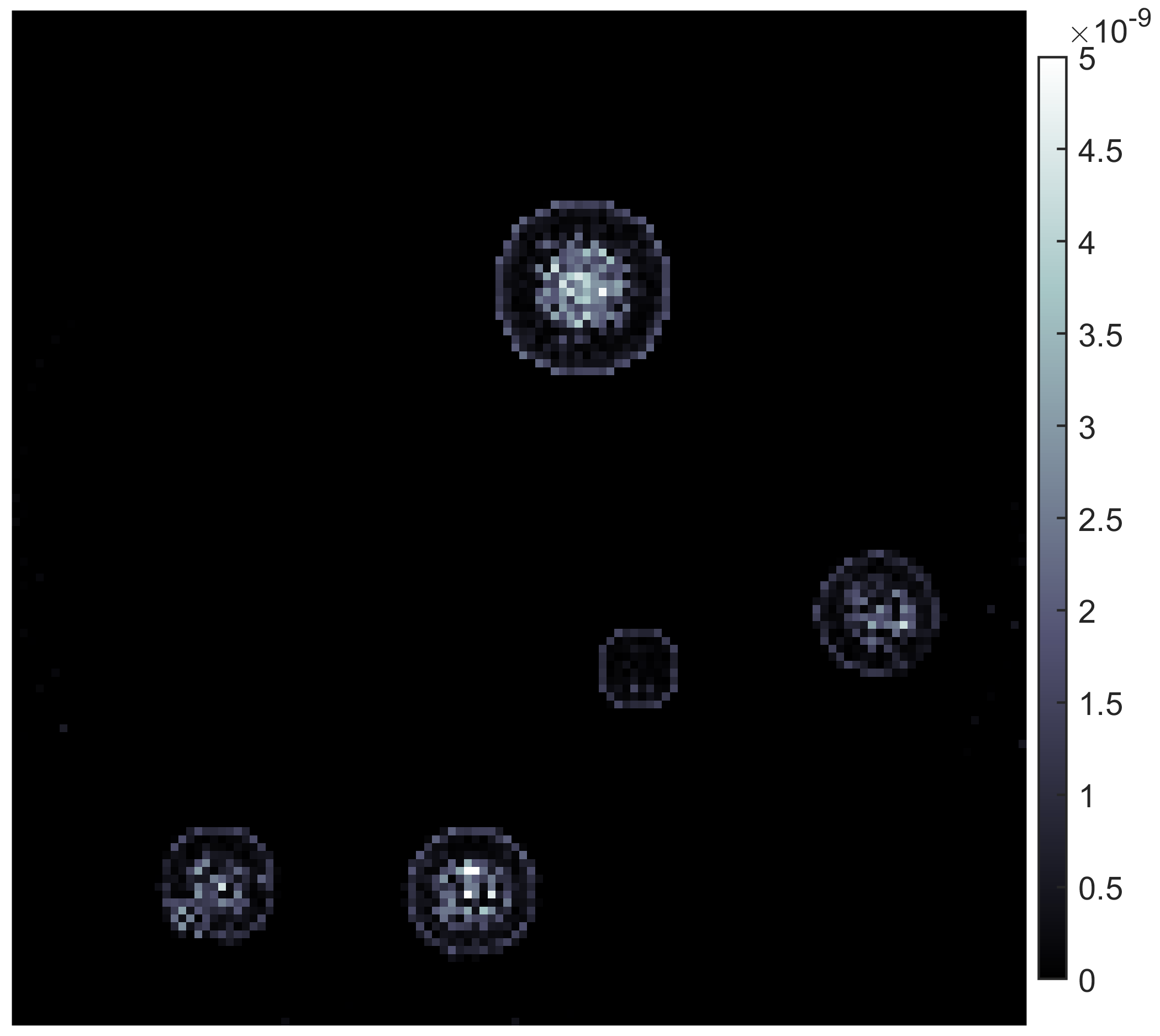} &
        \includegraphics[width=0.23\textwidth]{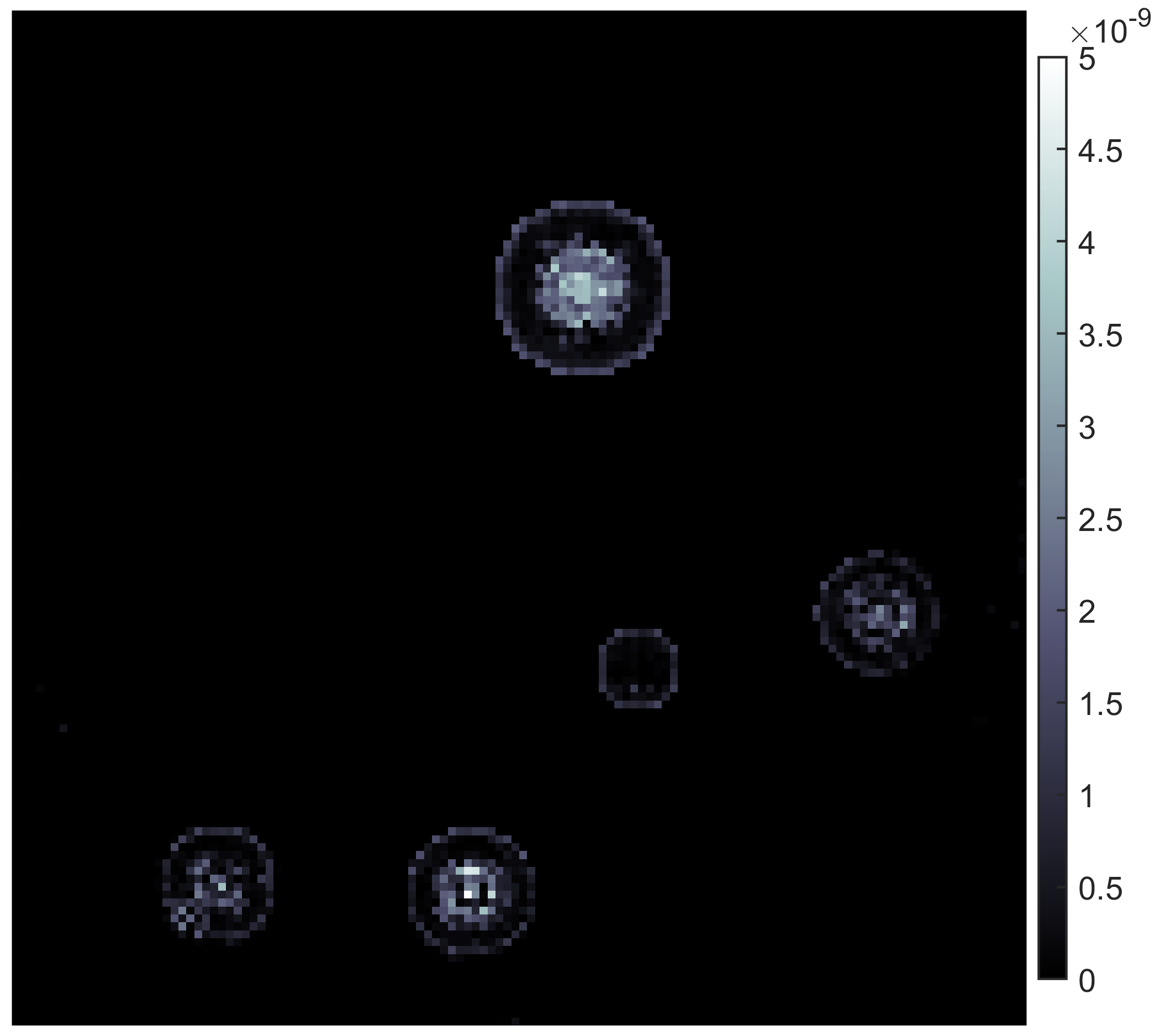} &
        \includegraphics[width=0.23\textwidth]{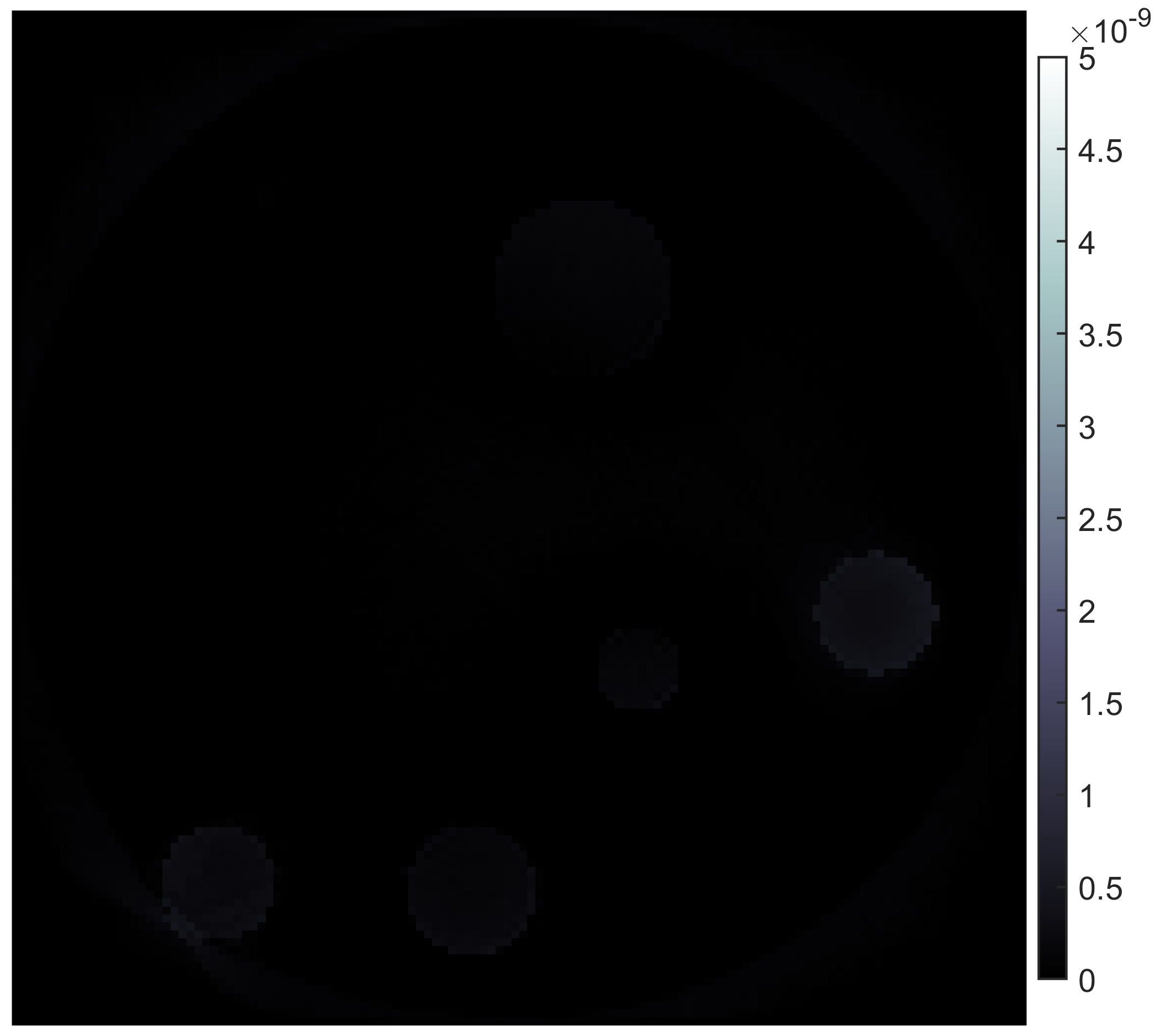} \\
    \end{tabular}
    \caption{3D reconstruction of the relative refractive index (at slice $z=38$) using measurements acquired at multiple propagation distances.}
\label{fig:multi_recon}
\end{figure}

\begin{table}[H]
\centering
\setlength{\tabcolsep}{5pt}
\renewcommand{\arraystretch}{1.15}
\begin{tabular}{llcc@{\hspace{12pt}}cc}
\hline
\multirow{2}{*}{Acquisition}
& \multirow{2}{*}{Method}
& \multicolumn{2}{c}{Phase shift ($\delta$)}
& \multicolumn{2}{c}{Absorption ($\beta$)} \\
\cline{3-6}
& & PSNR & SSIM & PSNR & SSIM \\
\hline
\multirow{4}{*}{\shortstack{Single-\\distance}}
& Two-step FBP
& 33.69 & 0.9606 & 34.68 & \textbf{0.9684} \\
& GD
& 35.69 & 0.9846 & 16.92 & 0.3752 \\
& Bregman TV
& 36.69 & 0.9889 & 17.30 & 0.3963 \\
& Phase-guided
& \textbf{39.39} & \textbf{0.9970}
& \textbf{36.21} & 0.9679 \\
\hline
\multirow{4}{*}{\shortstack{Multi-\\distance}}
& Two-step FBP
& 36.84 & 0.9826 & 32.06 & 0.9141 \\
& GD
& 36.94 & 0.9903 & 32.59 & 0.9591 \\
& Bregman TV
& 37.90 & 0.9933 & 33.43 & 0.9661 \\
& Phase-guided
& \textbf{39.82} & \textbf{0.9977}
& \textbf{45.25} & \textbf{0.9942} \\
\hline
\end{tabular}

\caption{Full-volume quantitative comparison of different reconstruction methods for single-distance and multi-distance acquisitions in terms of PSNR (dB) and global SSIM for the phase shift $\delta$ and absorption index $\beta$. Higher values indicate better reconstruction quality.}

\label{tab:single_multi_metrics}

\end{table}

\subsection{Experimental Validation on Real Data}

We validate the proposed method using experimental near-field holotomography data acquired at the P05 beamline of PETRA~III operated by Helmholtz Center Hereon (DESY, Hamburg). The data were measured with a Fresnel-zone-plate-based NFH setup~\cite{dora2024artifact} and recorded by a scintillator-coupled sCMOS detector (Hamamatsu C12849--101U, $2048\times2048$ pixels, $6.5\,\mu$m pixel size, 16-bit depth). The acquisition parameters were $E=11.0$~keV, $z_{01}=79.45\,\mathrm{mm}$, $z_{02}=19.661\,\mathrm{m}$, $\mathrm{Fr}=7.790\times10^{-5}$, and an exposure time of $t=1.0$~s per hologram. The reconstruction was performed using 90 projection angles obtained by angular subsampling of the original dataset over the available rotation range.

The sample is a spider attachment hair, which contains thin filamentary structures and fine texture. Compared with synthetic experiments, the real data contain measurement noise, residual alignment errors, detector imperfections, and model mismatch. These effects make the absorption component more ill-conditioned and provide a challenging test case. 

We first applied flat-field normalization to the raw measurements and then laterally aligned the projections~\cite{homann2015validity, xu2015three, parkinson2012automatic}. Representative corrected intensity measurements at several projection angles are shown in Figure~\ref{fig:spider_measured}. The measured diffraction patterns exhibit strong holographic contrast together with fine interference fringes generated by the filamentary structures of the sample.
    
\begin{figure}[H]
\centering
\begin{minipage}[t]{0.23\linewidth}
\centering
\includegraphics[width=1\textwidth]{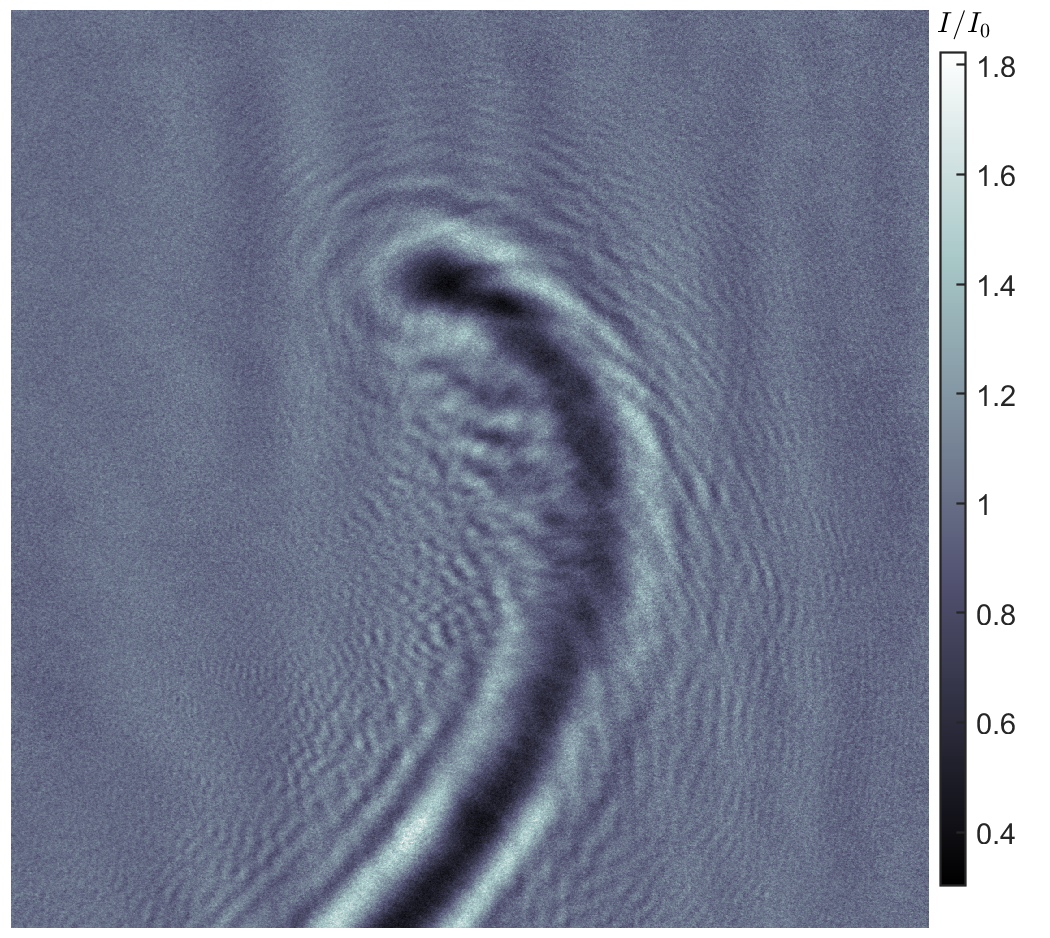}\\
% \centerline{\footnotesize\emph{gradient descent}}
\end{minipage}
\begin{minipage}[t]{0.23\linewidth}
\centering
\includegraphics[width=1\textwidth]{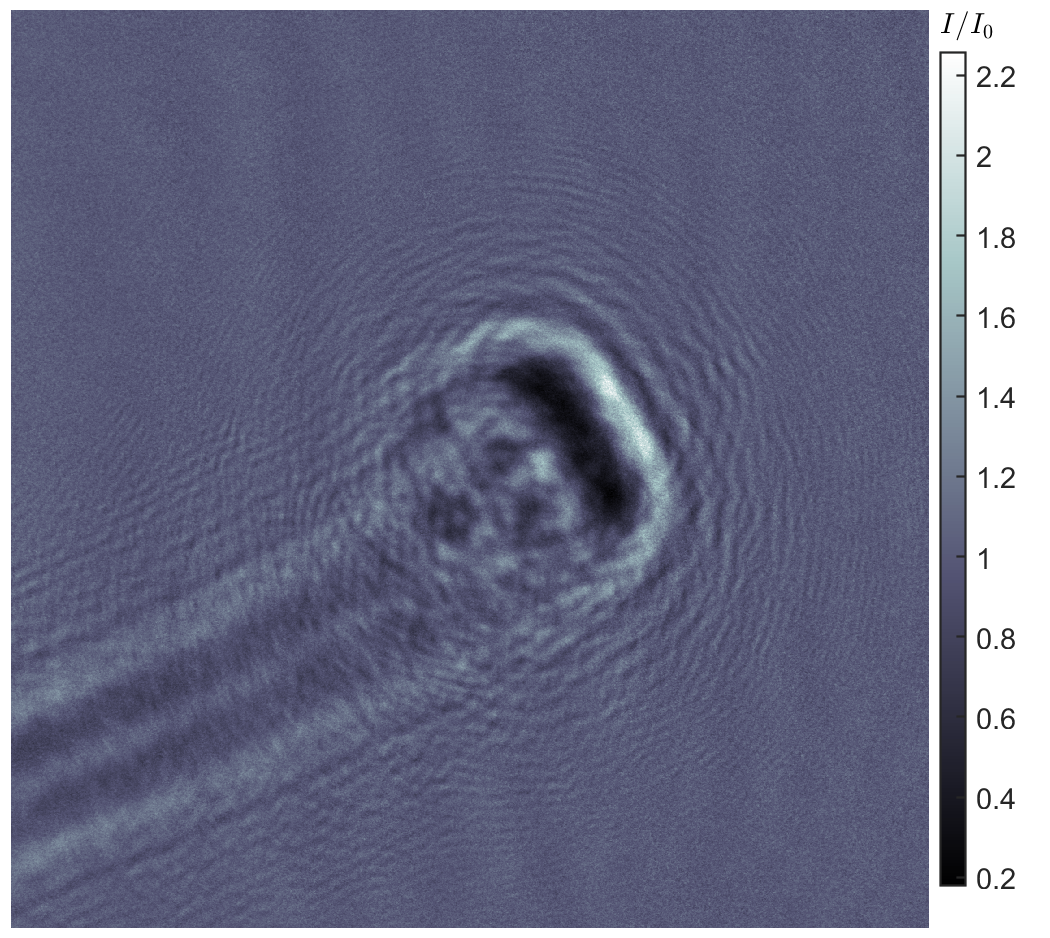}\\
% \centerline{\footnotesize\emph{gradient descent}}
\end{minipage}
\begin{minipage}[t]{0.23\linewidth}
\centering
\includegraphics[width=1\textwidth]{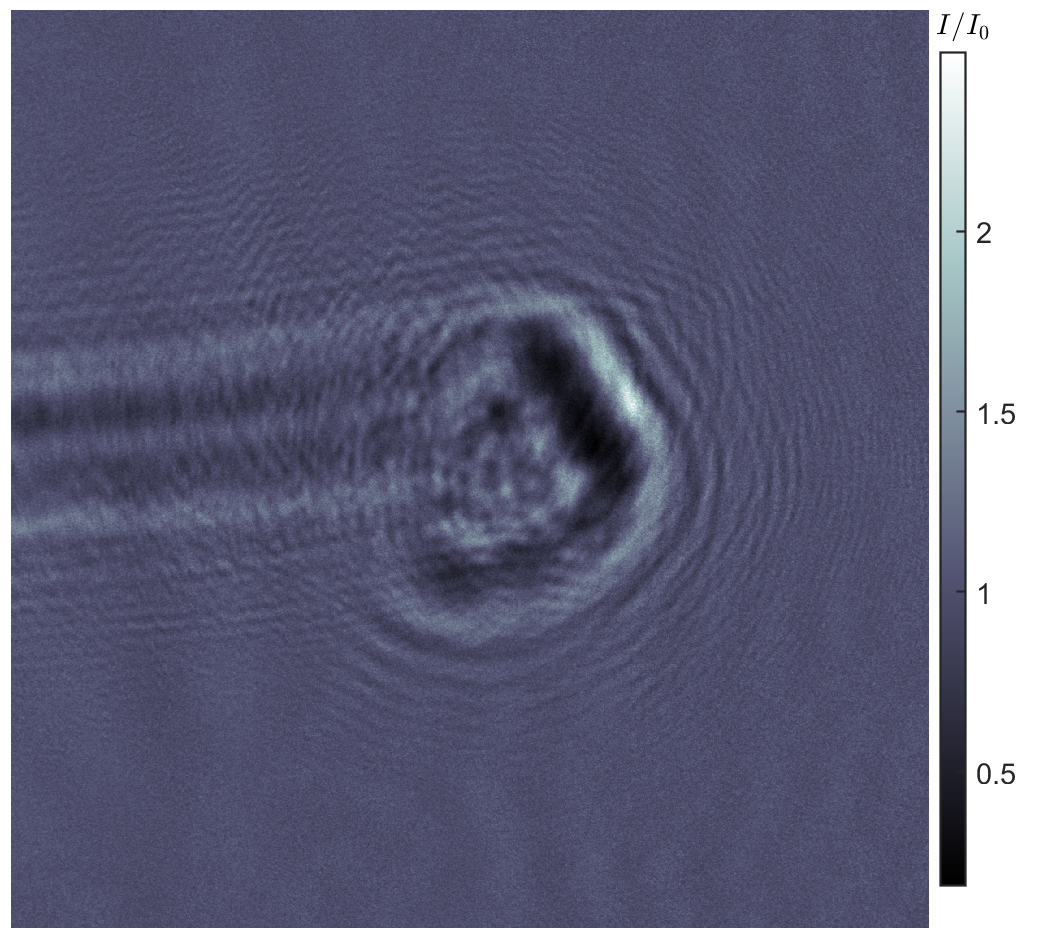}\\
% \centerline{\footnotesize\emph{Bregman TV}}
\end{minipage}
\begin{minipage}[t]{0.23\linewidth}
\centering
\includegraphics[width=1\textwidth]{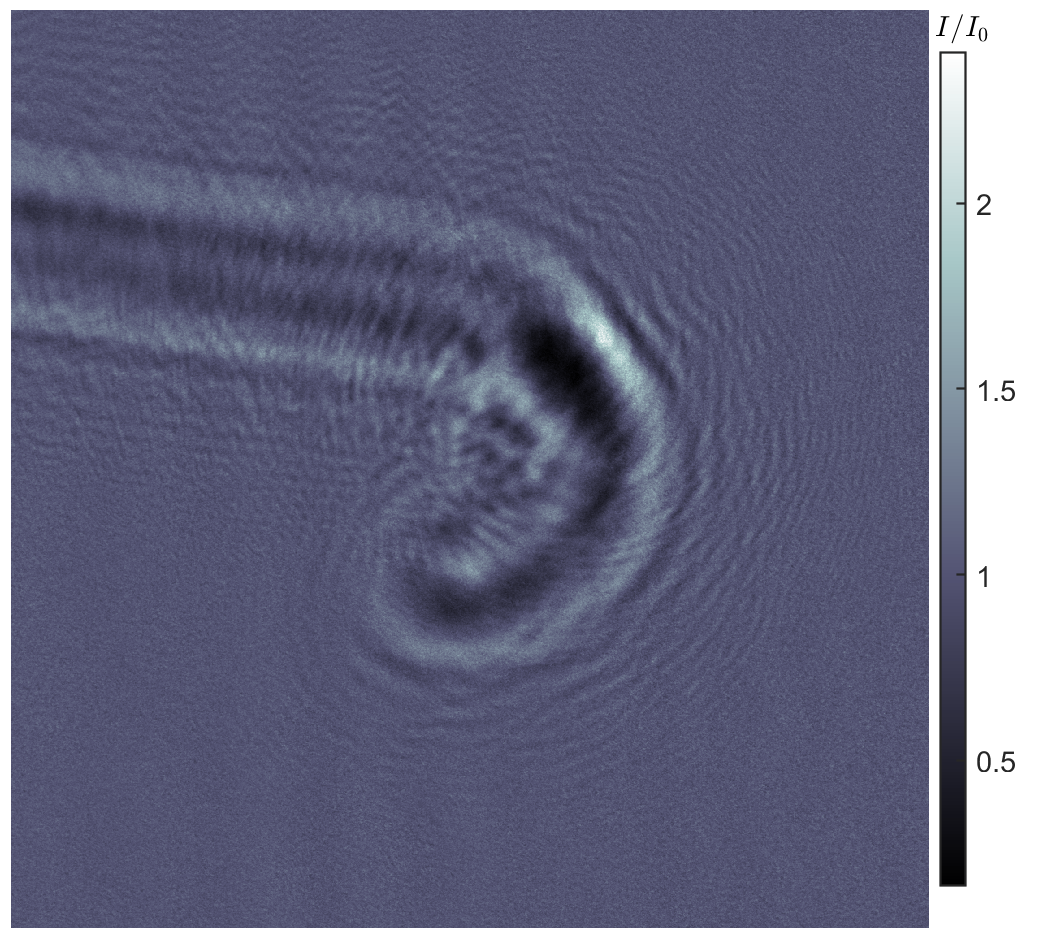}\\
% \centerline{\footnotesize\emph{Bregman TV with phase-guided}}
\end{minipage}
\caption{Representative corrected intensity measurements used as input for the reconstruction, shown at several projection angles.}
\label{fig:spider_measured}
\end{figure}

After flat-field correction, i.e., dividing by the illuminations and projection alignment, we embedded the measured intensity data into an enlarged computational detector domain before reconstruction.
This extension is necessary because Fresnel propagation spreads diffraction features beyond the measured field of view, while an insufficient computational window can introduce boundary artifacts due to the implicit periodicity of the FFT-based propagation model. 
In our implementation, we placed the original measured field of view at the center of the computational domain and extended it by mirror padding. We then applied a smooth fade-out window so that the padded region gradually decays to the flat-field background value~\cite{dora2024artifact}. 
All compared reconstruction methods used the same padded intensity data. 

We performed all reconstructions in a coarse-to-fine manner. In preliminary tests, increasing the direct reconstruction grid beyond $512 \times 512 \times 512$ did not visibly improve the recovered structures, while the memory cost and runtime increased substantially. Therefore, the final results are reported at $512 \times 512 \times 512$ .

\begin{figure}[H]
    \centering
    \setlength{\tabcolsep}{2pt}

    \begin{tabular}{c c c c}
        \footnotesize GD $\Re(\widetilde O)$
        & \footnotesize GD $\Im(\widetilde O)$
        & \footnotesize Phase-guided $\Re(\widetilde O)$
        & \footnotesize Phase-guided $\Im(\widetilde O)$ \\

        \includegraphics[width=0.23\textwidth]{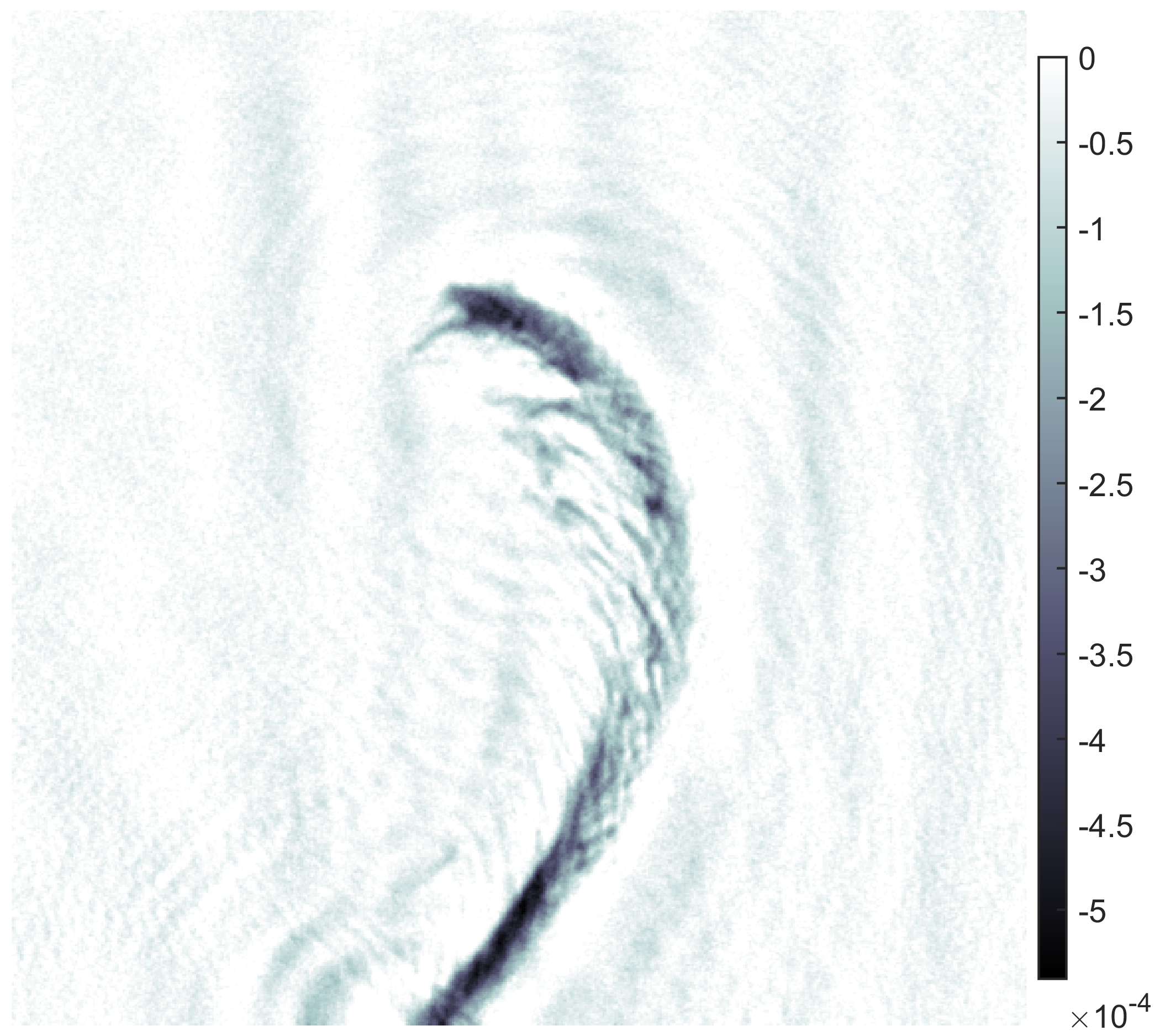} &
        \includegraphics[width=0.23\textwidth]{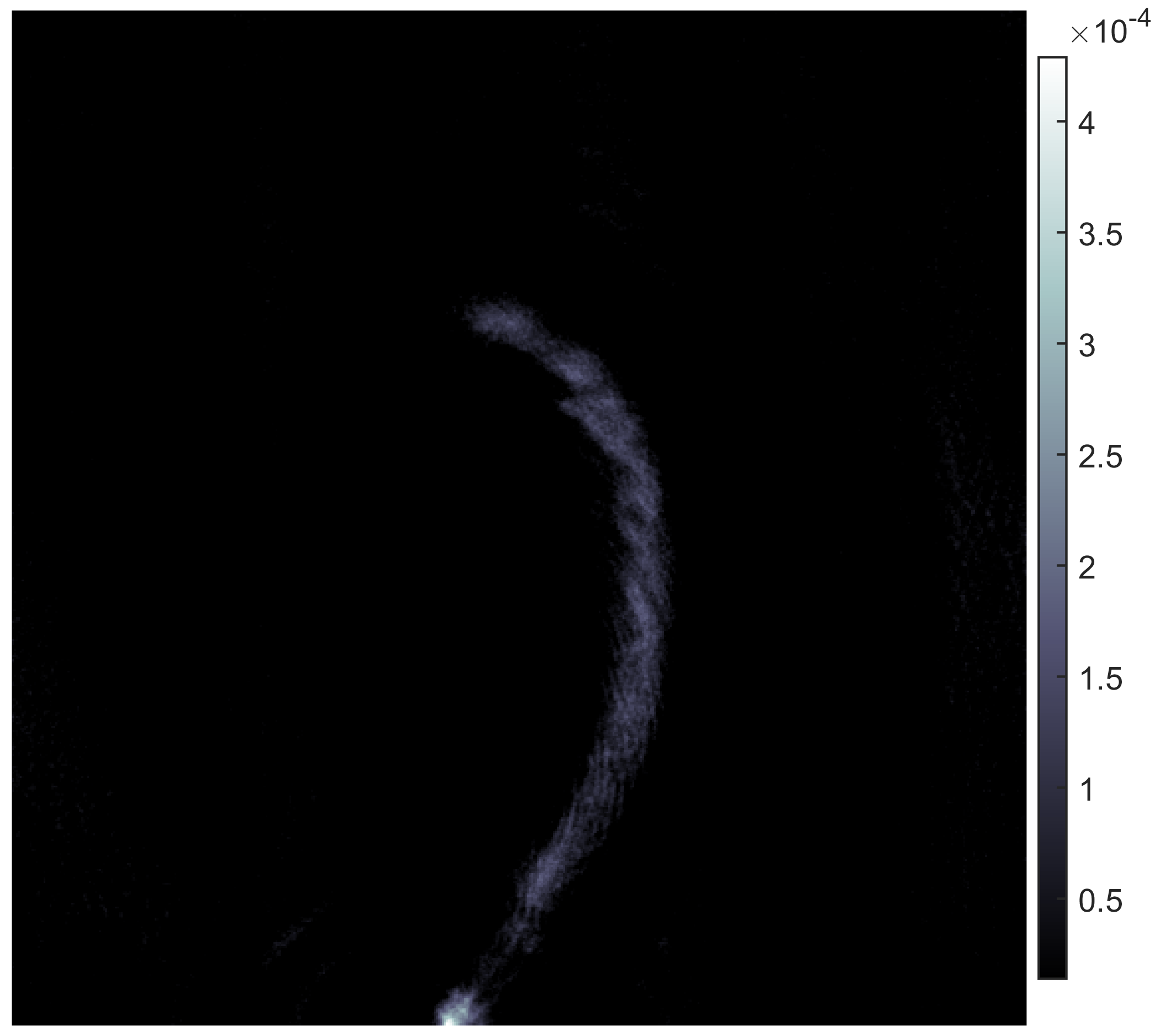} &
        \includegraphics[width=0.23\textwidth]{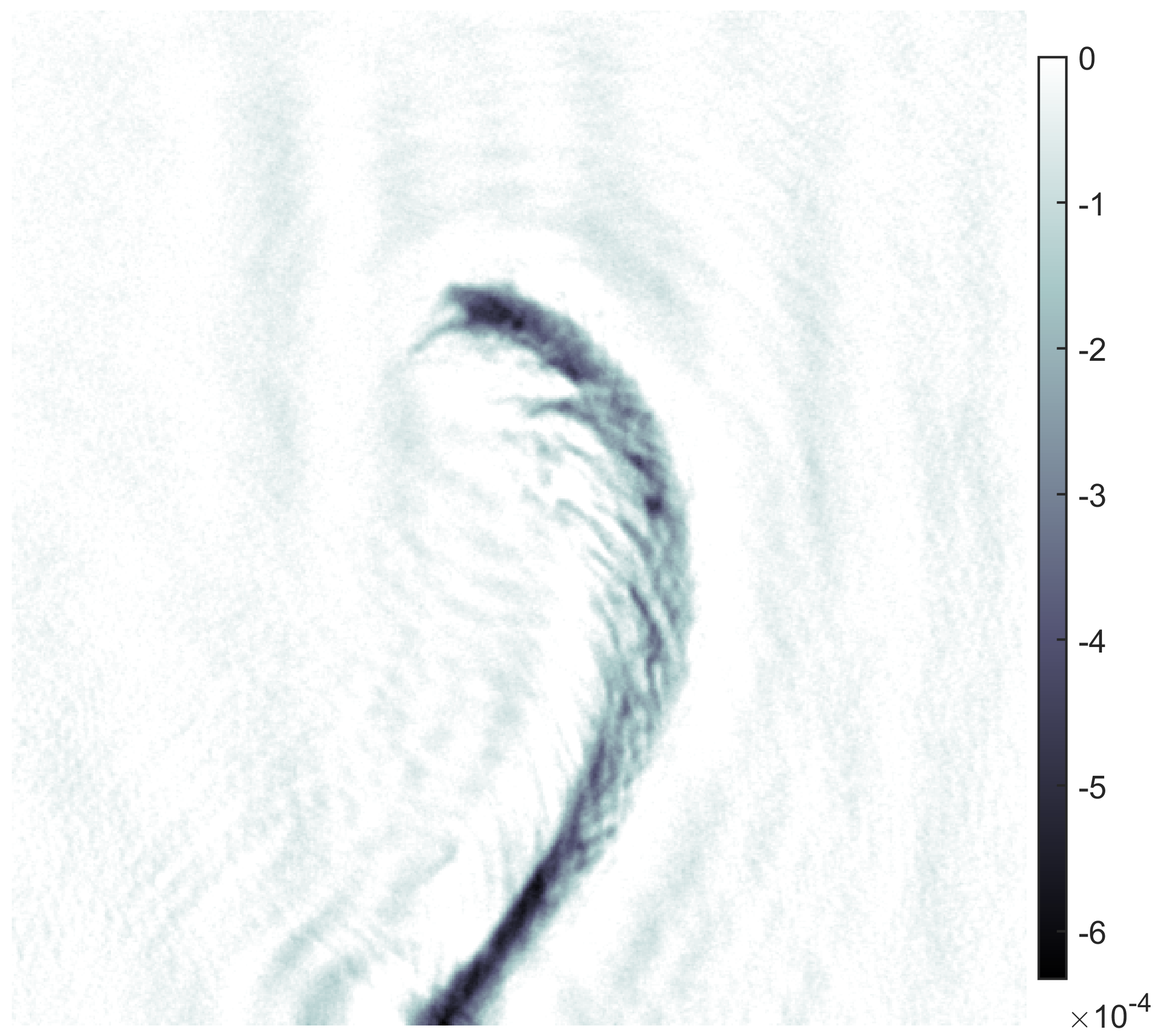} &
        \includegraphics[width=0.23\textwidth]{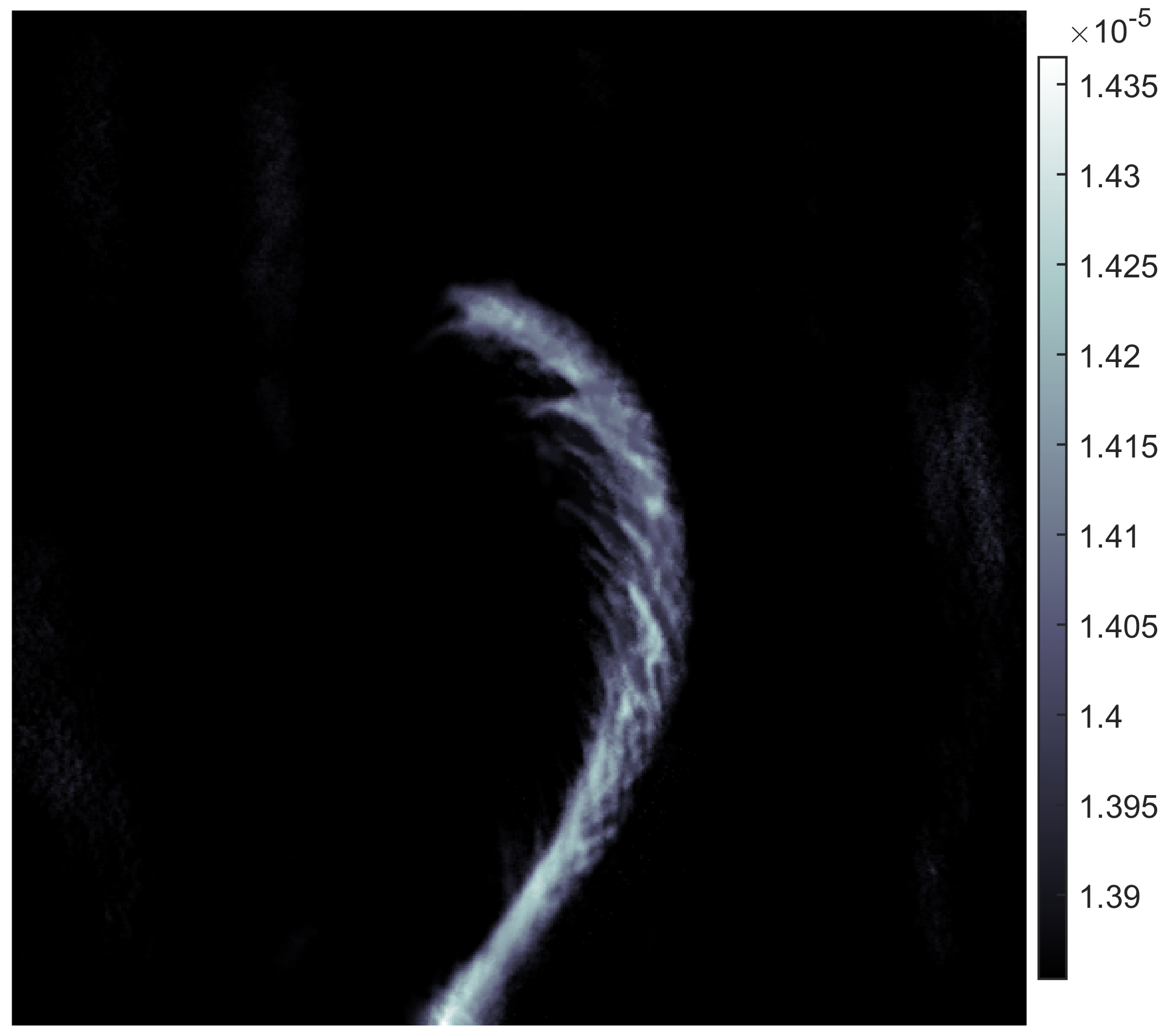}
    \end{tabular}

    \caption{Comparison of projection-domain reconstruction results for a representative projection angle.
    Each subfigure uses its own colorbar range rather than a globally unified scale.}
    \label{fig:spider_projection_reco}
\end{figure}
We first compare the projection-domain reconstruction results obtained using conventional gradient descent and the proposed phase-guided reconstruction strategy. 
Figure~\ref{fig:spider_projection_reco} shows the reconstructed projected refractive components $\mathrm{Re}(\widetilde O)$ and $\mathrm{Im}(\widetilde O)$ for a representative projection angle. Both methods recover the dominant phase structure in the real component, while the phase-guided reconstruction produces visibly cleaner and more coherent structures in the imaginary component.

The reconstructed projection-domain results were then used for tomographic reconstruction. Figure~\ref{fig:spider_3d_compare} compares reconstructed slices of the refractive index components $\delta$ and $\beta$ obtained using different reconstruction strategies. For the phase component $\delta$, all methods recover the overall morphology of the spider hair, although the direct 3D approaches produce sharper boundaries and improved structural continuity. The main differences appear in the absorption component $\beta$, where conventional direct reconstruction suffers from severe streaking and structured artifacts, while the proposed phase-guided method yields a substantially cleaner reconstruction with improved contrast and enhanced recovery of the filamentary structures.

\begin{figure}[H]
    \centering
    \setlength{\tabcolsep}{1pt}

    \begin{tabular}{c c c c c}
        & \footnotesize Two-step FBP
        & \footnotesize GD 
        & \footnotesize Bregman TV 
        & \footnotesize Phase-guided \\
        
        \rowlabel{$-\delta_{reco}$} &
        \includegraphics[width=0.23\textwidth]{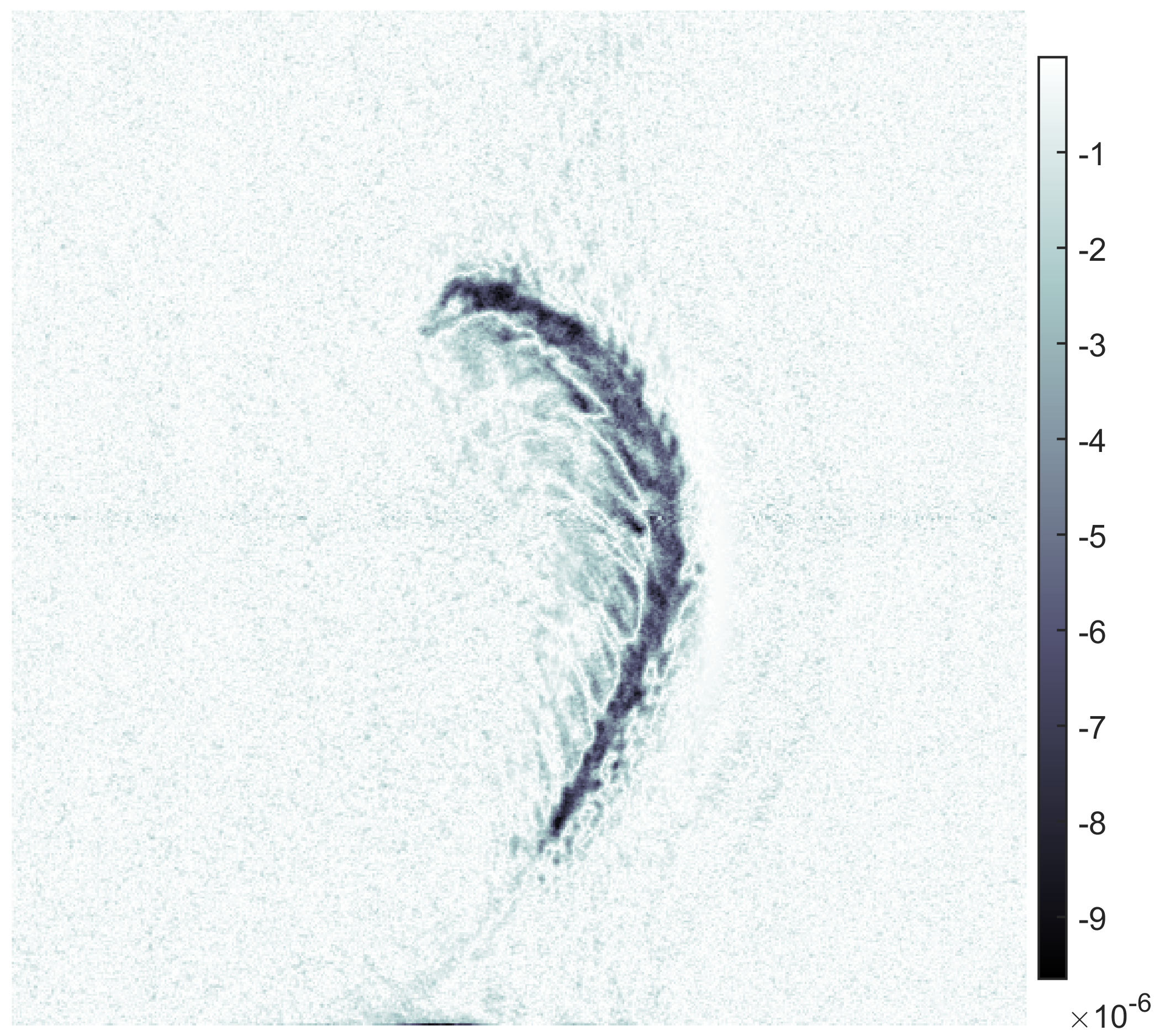} &
        \includegraphics[width=0.23\textwidth]{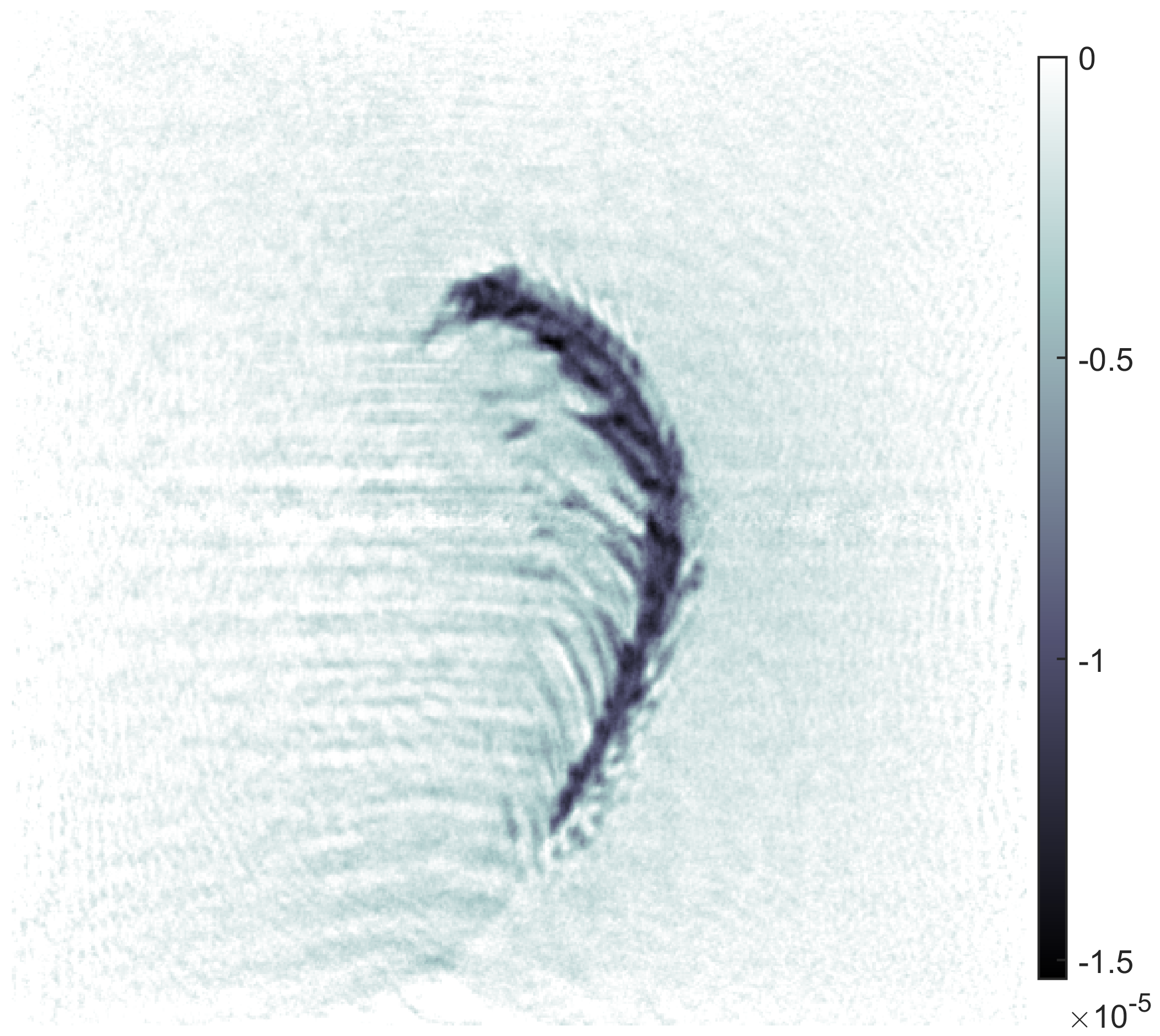} &
        \includegraphics[width=0.23\textwidth]{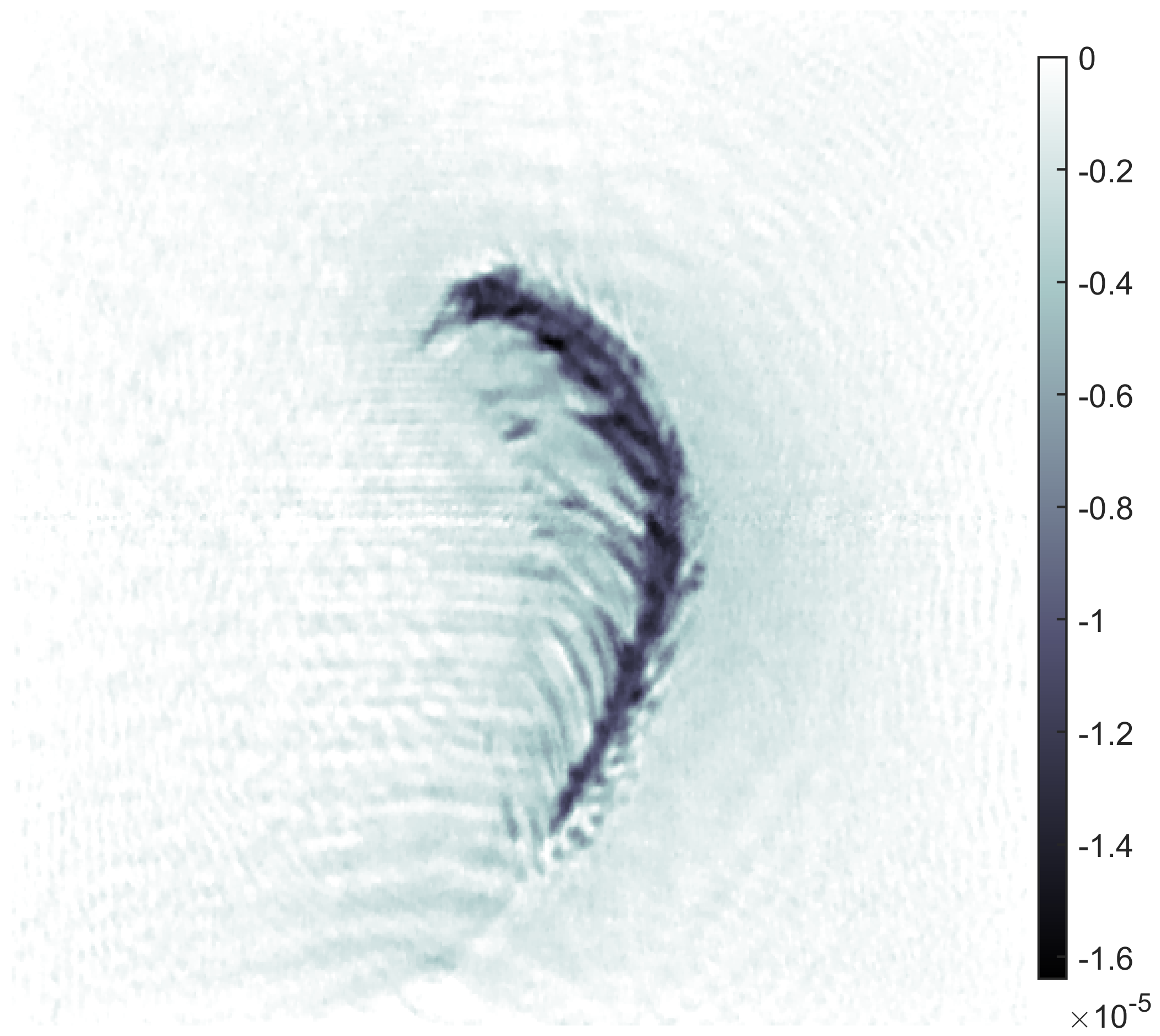} &
        \includegraphics[width=0.23\textwidth]{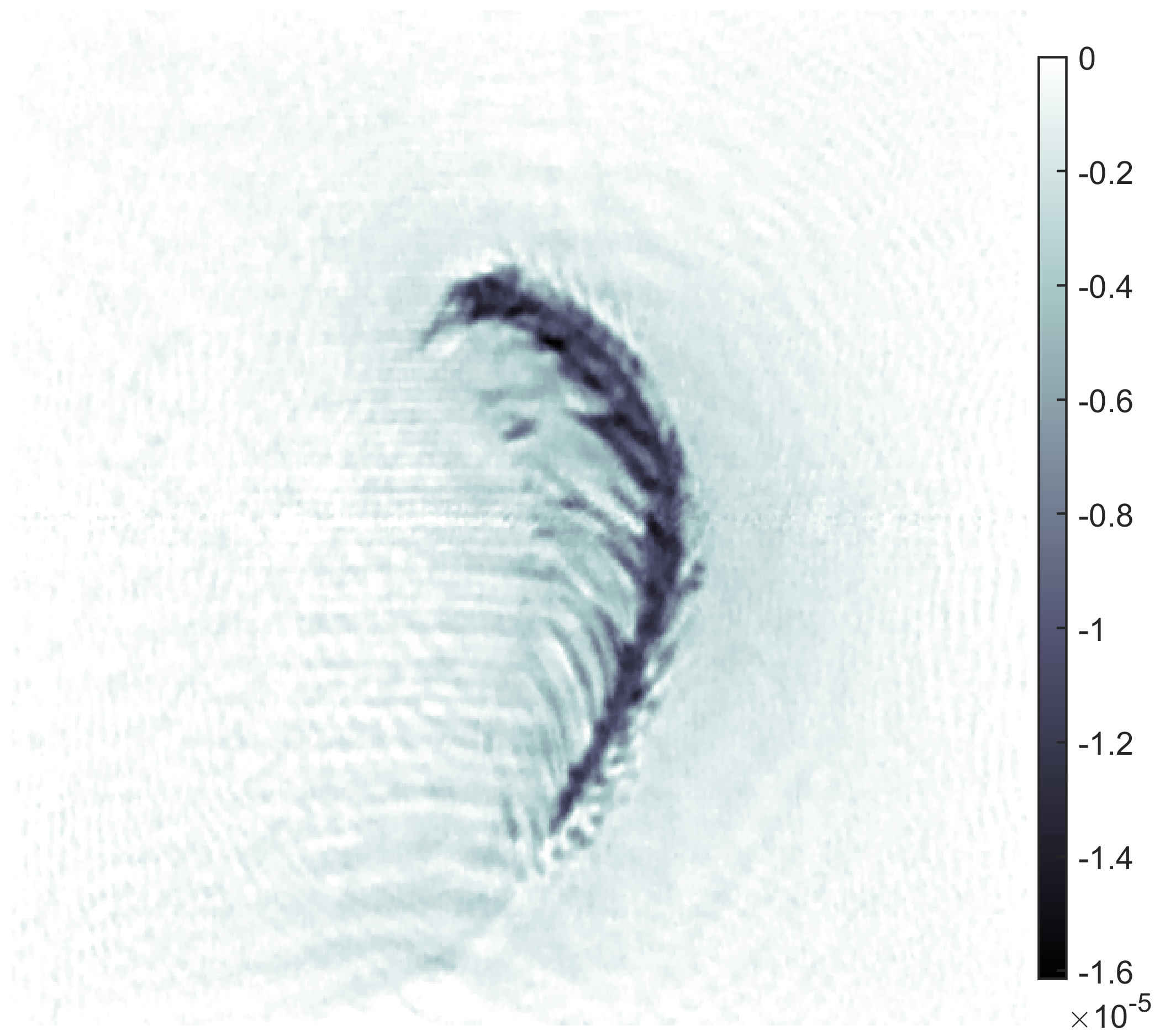} \\

        \rowlabel{$\beta_{reco}$} &
        \includegraphics[width=0.23\textwidth]{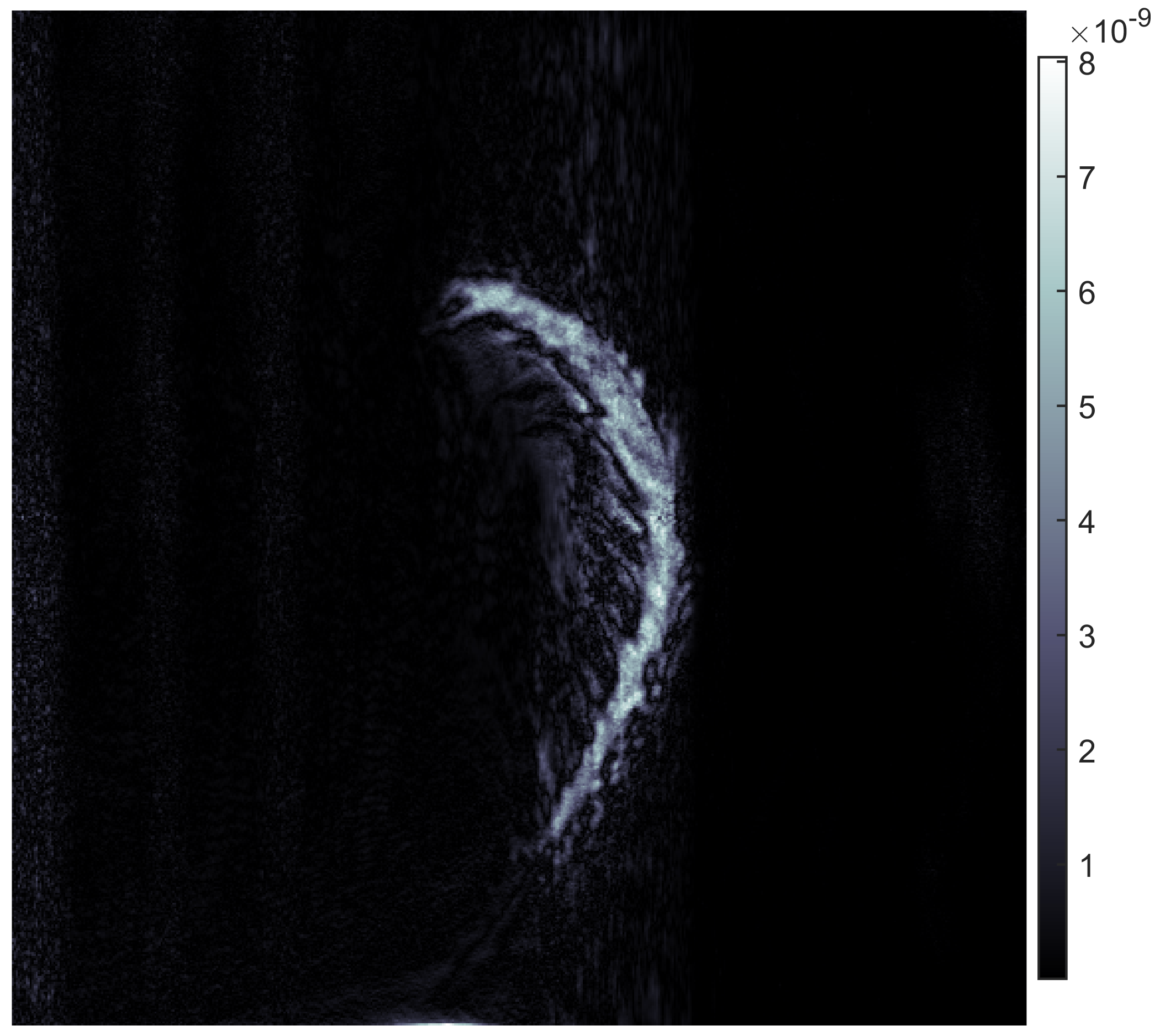} &
        \includegraphics[width=0.23\textwidth]{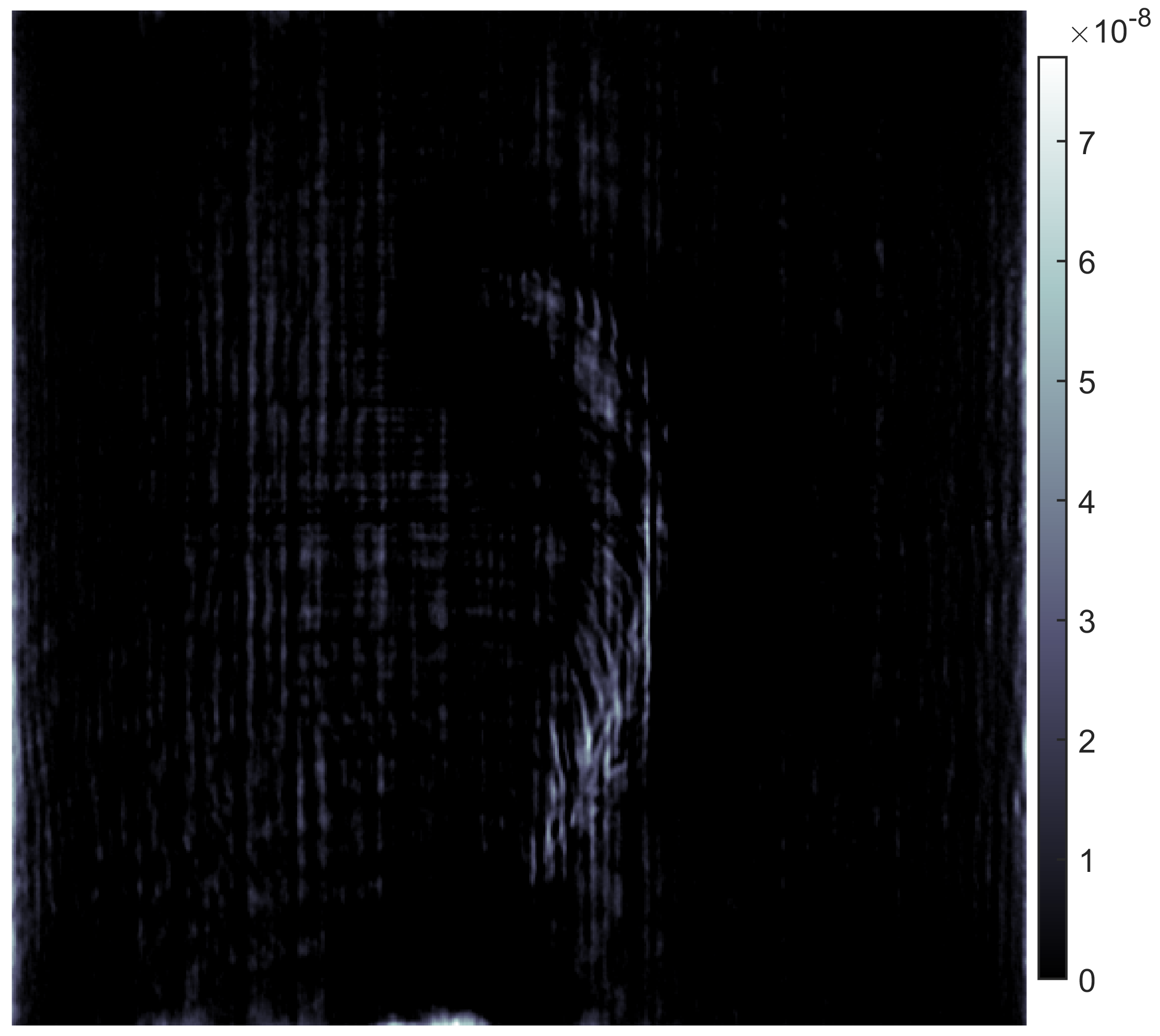} &
        \includegraphics[width=0.23\textwidth]{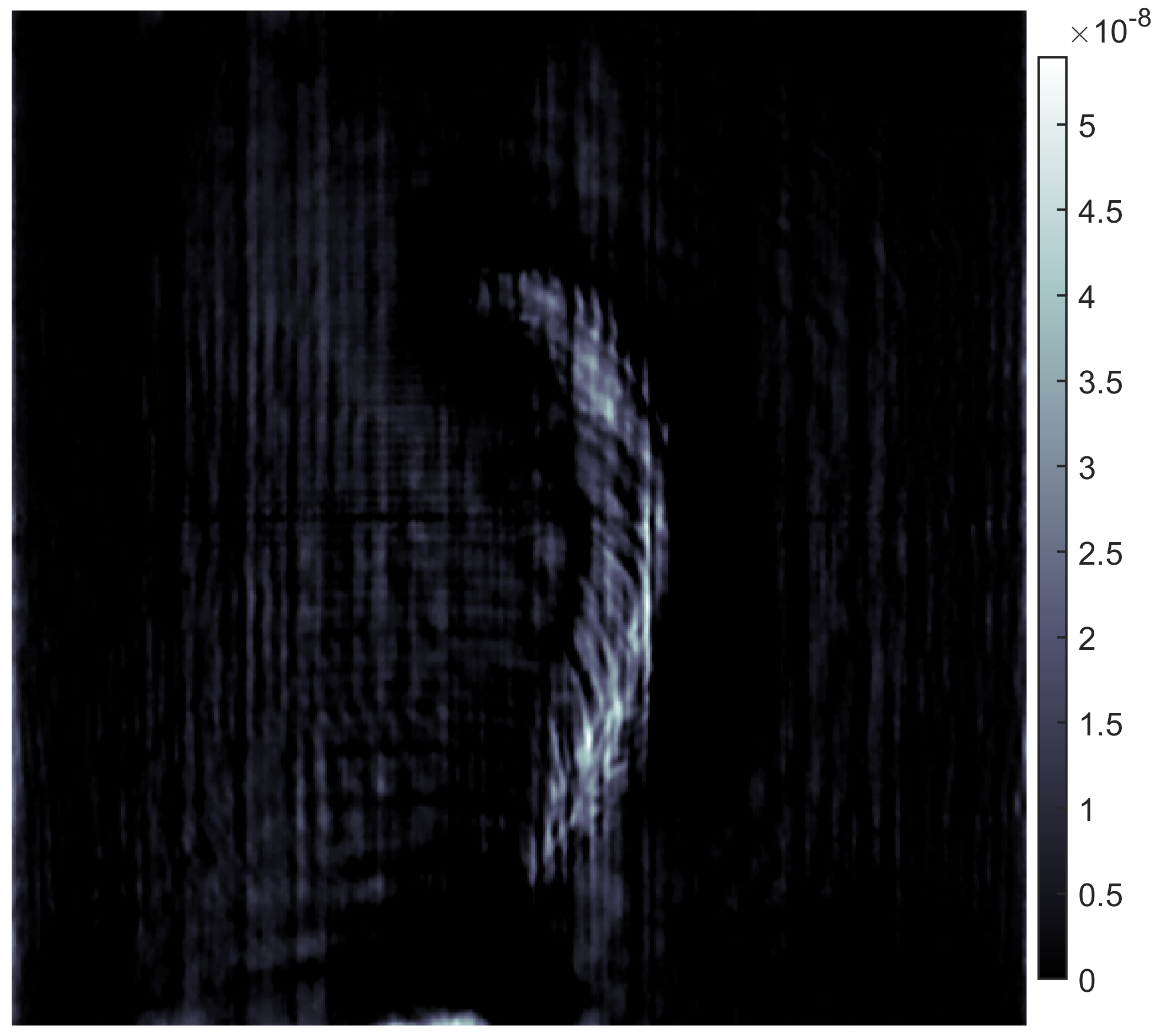} &
        \includegraphics[width=0.23\textwidth]{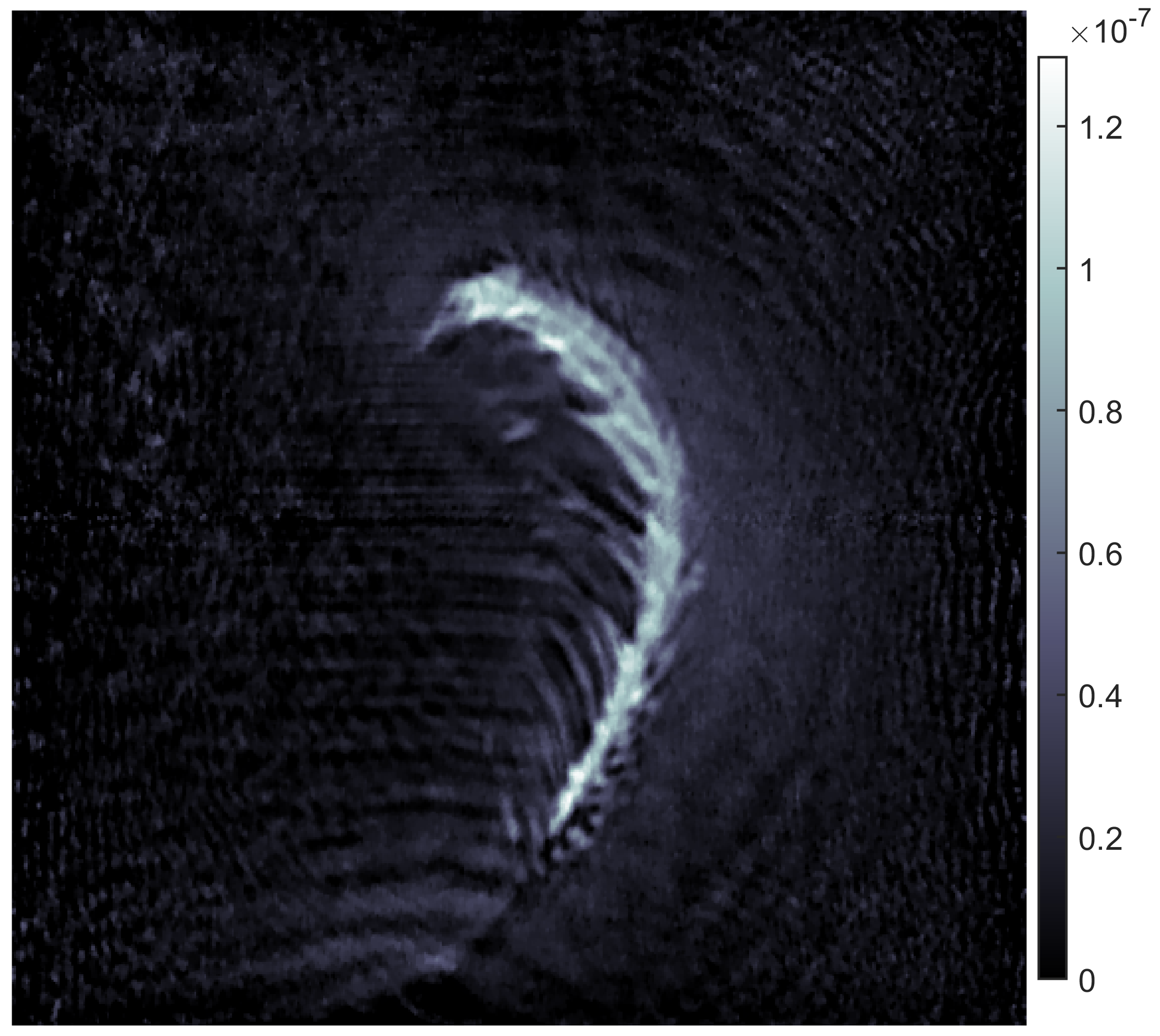} \\
    \end{tabular}
    \caption{
Central $xz$ slices of the reconstructed $\delta$ and $\beta$ volumes obtained using different reconstruction methods.
}
\label{fig:spider_3d_compare}
\end{figure}

Finally, Figure~\ref{fig:spider_render} presents volume renderings of the spider hair reconstructed using the proposed phase-guided Bregman TV framework from four viewing directions. 
The reconstruction mainly preserves the dominant structures shared by both $\delta$ and $\beta$, whereas weaker absorption features remain less pronounced. 
As a result, the reconstructed $\beta$ appears thinner, sparser, and exhibits fewer fine internal textures than $\delta$. 
These visual differences are further enhanced in the volume rendering by the use of the same intensity-to-opacity mapping for both components.

\begin{figure}[t]
    \centering
    \setlength{\tabcolsep}{1pt}

    \begin{tabular}{c c c c c}
        \rowlabel{$\delta_{reco}$} &
        \includegraphics[width=0.23\textwidth]{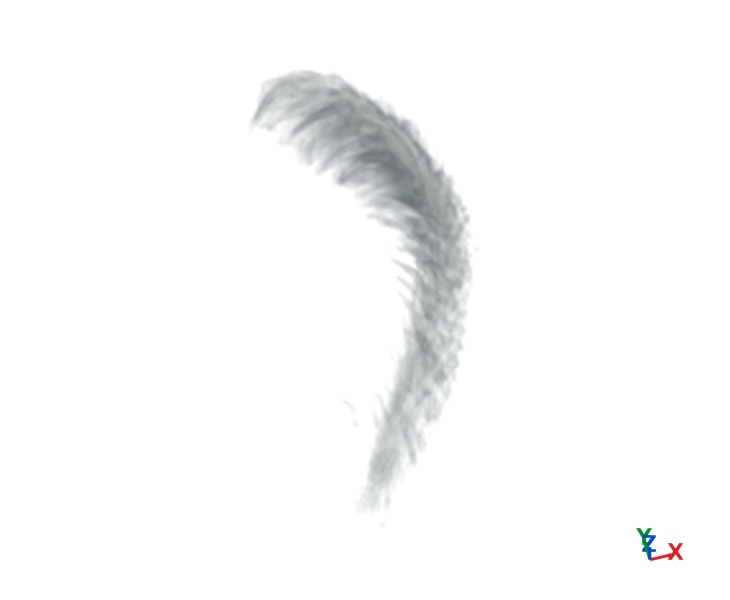} &
        \includegraphics[width=0.23\textwidth]{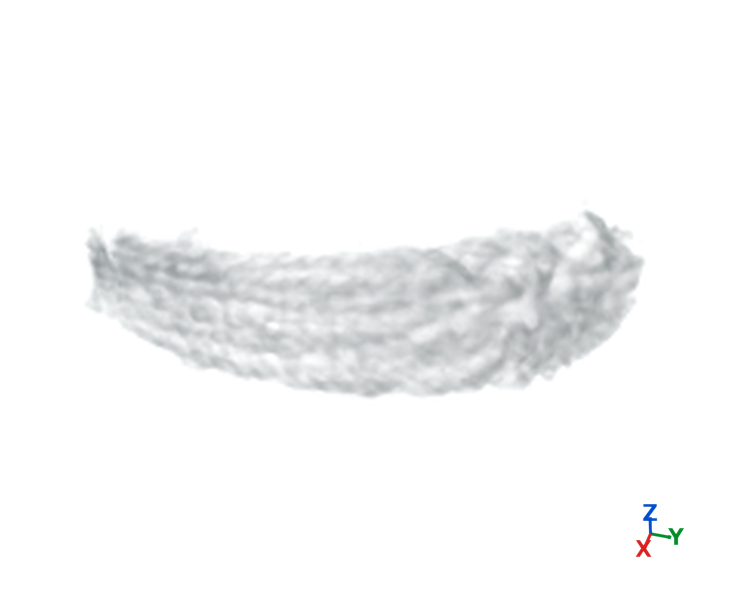} &
        \includegraphics[width=0.23\textwidth]{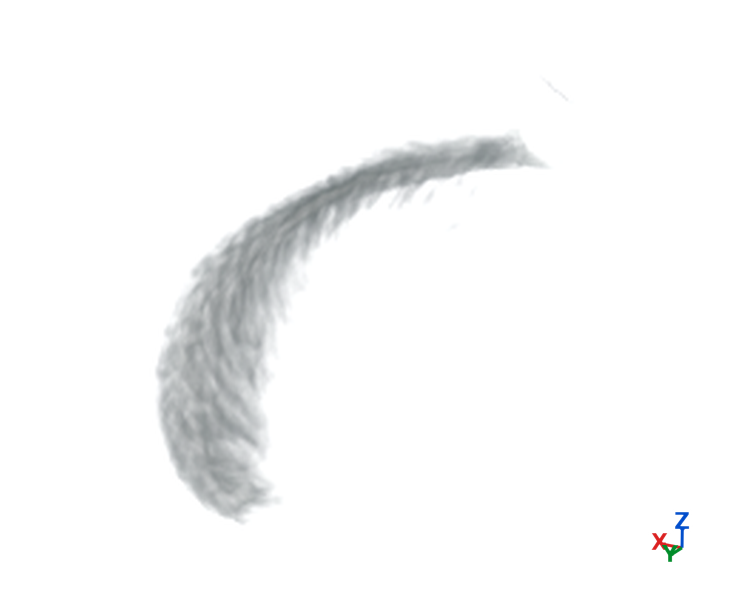} &
        \includegraphics[width=0.23\textwidth]{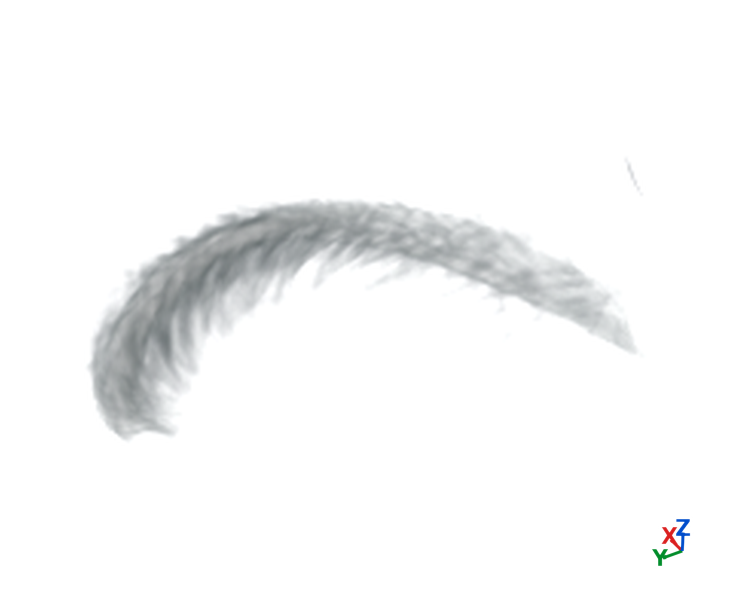} \\

        \rowlabel{$\beta_{reco}$} &
        \includegraphics[width=0.23\textwidth]{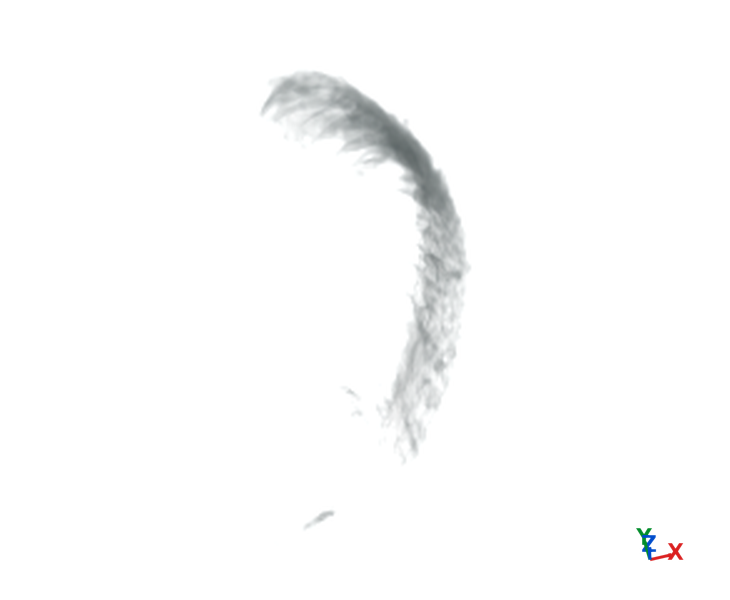} &
        \includegraphics[width=0.23\textwidth]{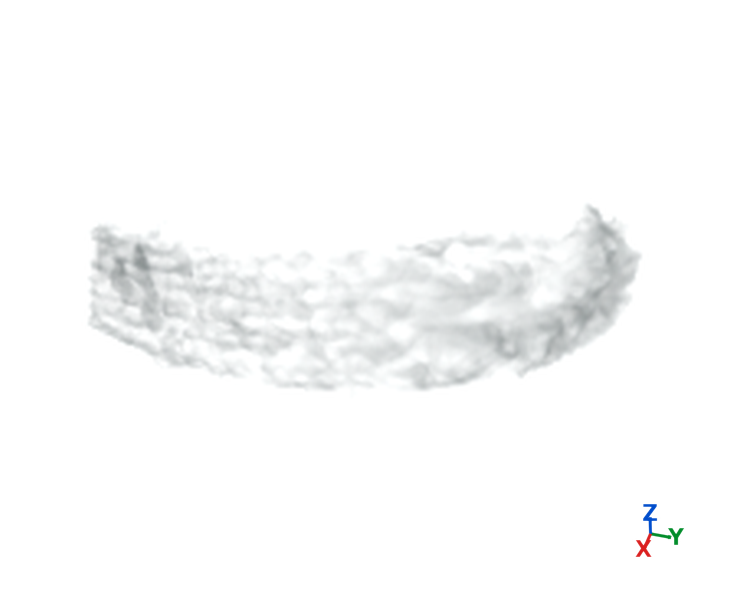} &
        \includegraphics[width=0.23\textwidth]{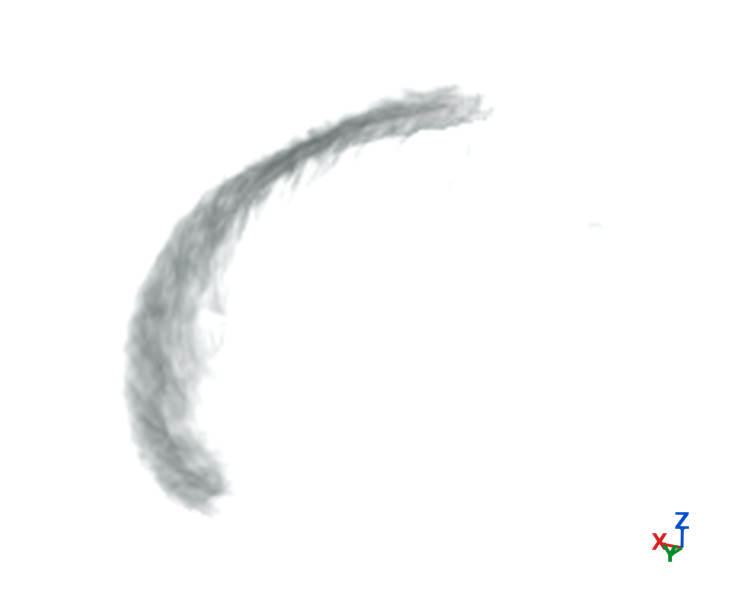} &
        \includegraphics[width=0.23\textwidth]{Figures/spider/spider512/spider3D_render03.png} \\
    \end{tabular}
    \caption{Volume-rendered views of the reconstructed spider attachment hair from different viewing directions. }
\label{fig:spider_render}
\end{figure}

\section{Conclusion}
In this work, we developed a direct variational framework for near-field holotomography with phase-guided Bregman TV regularization for stable reconstruction of both phase and weak absorption contrasts.
From an analytical perspective, the forward operator was shown to be well-posed and Fr\'echet differentiable, moreover nonlinearity conditions leading to local convergence and regularization properties have been verified under suitable assumptions.

Numerical simulations and experimental results demonstrate that the proposed phase-guided regularization substantially improves the recovery of weak absorption features and consistently enhances reconstruction quality and robustness across both two-step and direct holotomographic imaging frameworks.
Together, these results establish phase-guided regularization as an effective strategy for exploiting the complementary information contained in phase and absorption contrasts and provide a strong foundation for quantitative refractive-index imaging in near-field holotomography.

Future work may investigate computational acceleration for large-scale 3D reconstruction, uncertainty quantification, more advanced regularization strategies, and extensions to limited-angle acquisition settings.

\section*{Acknowledgements}
The authors acknowledge support from DESY (Hamburg, Germany), a member of the Helmholtz Association (HGF). This work was supported in part by the Helmholtz Innovation Platform HI-ACTS, by the computational resources of the Maxwell high-performance computing system operated at DESY, and by the CMAT project (Funding ID: 13K25009). Parts of this research were carried out at the PETRA III beamline P05. Beamtime was allocated under proposal I-20170864 (P05). The authors thank Imke Greving and Silja Flenner for fruitful discussions and for their support during the beamtime at P05.

\printbibliography

\clearpage
\appendix

\section{Proof of Lemma \ref{lem:nemytskii}}

We now provide the (lengthy) proof of Lemma \ref{lem:nemytskii}.
\begin{proof}
1. For a fixed $z\in\mathbb C$, the map $x\mapsto g(x,z)=P(x)e^{ikz}$ is measurable, since $P$ is measurable. For a fixed $x\in\Omega'_\pi$, the map $z\mapsto g(x,z)=P(x)e^{ikz}$ is continuous. Thus $g$ satisfies the standard measurability and continuity hypothesis for a Nemytskii kernel and $\mathcal N_g$ is well-defined.

If $u\in\mathcal U$, then $\Im u(x)\ge0$. Consequently,
$$
|e^{ik u(x)}|=e^{-k\Im u(x)}\le 1,\qquad\text{a.e. }x \in \Omega'_\pi,
$$
and
$$
|\mathcal N_g(u)(x)|=|P(x)|\,|e^{ik u(x)}|\le |P(x)|,
$$
from which the boundedness of the norm follows.

2. For any $u,v\in\mathcal U$ and a.e. $x \in \Omega'_\pi$, define
$
\Delta(x):=e^{ik u(x)}-e^{ik v(x)}.
$
It follows that
$$
\Delta(x)=ik\,(u(x)-v(x))\int_0^1 e^{ik\big(v(x)+t(u(x)-v(x))\big)}\,dt.
$$
Since $\Im (v(x)+t(u(x)-v(x)))\ge0$ for a.e. $t\in [0,1]$, we have
$$
|\Delta(x)| \le k\,|u(x)-v(x)|,\qquad\text{a.e. }x \in \Omega'_\pi.
$$
Multiplying $\Delta(x)$ by $P(x)$ and taking the $L^p$-norm yield
$$
\|\mathcal N_g(u)-\mathcal N_g(v)\|_{L^p(\Omega'_\pi)}
= \|P(\,e^{ik u}-e^{ik v}\,)\|_{L^p(\Omega'_\pi)}
\le k\,\|P\|_{L^\infty(\Omega'_\pi)}\,\|u-v\|_{L^p(\Omega'_\pi)},
$$
which proves the Lipschitz continuity.
%Clearly, the estimate holds for $p=\infty$. 

3. Let $u\in \mathcal{U}$ and $h\in L^r(\Omega'_\pi;\mathbb C)$ with a small $\|h\|_{L^r(\Omega'_\pi)}$ such that $u+h\in \mathcal{U}$. 
By Taylor's expansion,
$$
e^z=1+z+|z|^2\int_0^1(1-t)e^{zt}dt,\quad z\in\mathbb{C}.
$$
Then,
$$
{e}^{ik(u+h)} - {e}^{ik u} - ik{h}\,{e}^{ik u}
= {e}^{ik u}\big({e}^{ik h}-1 - ik {h}\big)
= {e}^{ik u}\,(ik)^2\;{h}^2\int_0^1(1-t)\,{e}^{ikh t}\,dt.
$$
Multiplying it by $P$ to obtain the remainder
$$
R(h) := {\mathcal N_g}(u+h) - {\mathcal N_g}(u) - {\mathcal N_g}'(u)[h]
= P {e}^{ik u} (ik)^2 h^2 \int_0^1(1-t){e}^{ik t h}\,dt.
$$
Thus
$$
|R(h)| \le c |h|^2, \quad\text{if} \quad|h| < 1,
$$
where $c$ depends on $P$ and $k$ and the bound on ${e}^{iku}$. 
To consider the case $|h| \ge 1$,
we utilize
$$|R(h)| \le |{\mathcal N_g}(u+h)| + |{\mathcal N_g}(u)| + |{\mathcal N_g}'(u)h| \le c(1+|h|),$$
which leads to
$$|R(h)| \le c|h|, \quad \text{if } |h| \ge 1.$$
Combining two estimates, we obtain
$$|R(h)| \le \begin{cases} c|h|^2, & |h| < 1 \\ c|h|, & |h| \ge 1. \end{cases}$$

Now let us define the set $\Lambda = \{x: |h(x)| \ge 1\}$ and its characteristic function $\lambda = \chi_\Lambda(x)$.
% Then,
% $$\lambda(x) = \begin{cases} 1, & \text{if } x \in \Lambda \\ 0, & \text{if } x \notin \Lambda \end{cases}$$
Then,
$$\|R(h)\|_{L^p(\Omega'_\pi)} \le c\left\|(1-\lambda)\,|h|^2\right\|_{L^p(\Omega'_\pi)} + c\left\|\lambda |h|\right\|_{L^p(\Omega'_\pi)}.$$
Notice that $p < r$, we have 
$$
\|\lambda \,|h|\|_{L^p(\Omega'_\pi)} = \left(\int_{\Omega'_\pi} \lambda \, |h|^p dx\right)^{1/p} 
\le  \left(\int_{\Omega'_\pi} |\lambda \, h|^r dx\right)^{1/p} 
= \|\lambda \, |h|\|_{L^r(\Omega'_\pi)}^{r/p}.
$$
On the other hand, 
$$
\|(1-\lambda)\,|h|^2\|_{L^p(\Omega'_\pi)} 
= \left(\int_{\Omega'_\pi} |(1-\lambda)\,|h|^{2p} dx\right)^{1/p} 
\le
\begin{cases}
    \| h\|_{L^r(\Omega'_\pi)}^{r/p},\quad &\mbox{if } r \le 2p,\\
    c\|h\|_{L^r(\Omega'_\pi)}^{2},\quad &\mbox{if } r > 2p,
\end{cases}
$$
where the last inequality follows from the embedding $L^r(\Omega'_\pi) \hookrightarrow L^{2p}(\Omega'_\pi)$.

Consequently, 
$$
\|R(h)\|_{L^p(\Omega'_\pi)}  
\le c\|h\|_{L^r(\Omega'_\pi)}^{\min(2, \,\frac{r}{p})}+c\|h\|_{L^r(\Omega'_\pi)}^{ r/p}.
$$
Since $r > p$,
$$\frac{\|R(h)\|_{L^p(\Omega'_\pi)}}{\|h\|_{L^r(\Omega'_\pi)}} \le c\|h\|_{L^r(\Omega'_\pi)}^{\min(1, \,\frac{r}{p}-1)}\to 0 \quad \text{as } \|h\|_{L^r(\Omega'_\pi)} \to 0.$$
This finishes the proof.
\end{proof}

\section{Gradient of the Functional $J$}
%\subsection{Gradient Derivation}
\label{Sec: Gradient Derivation}
Let
$\widetilde{O}_\theta$
be the parallel-beam projection at angle $\theta$, $p=\widetilde{O}_\theta f$, $q=P e^{ikp}$, $v=\mathcal D_{\mathrm{Fr}}(q)$,
and
$
s=\widetilde{O}_\theta h
$
be the corresponding projected perturbation. 
It follows from Theorem \ref{thm:forward}, the fact $G(f) = \sqrt{F(f)} =: |v|$, and the chain rule that
$$
G'(f)[h]=\frac{1}{2|v|}
F'(f)[h] =\Re\!\Big( ik\,\frac{\overline v}{|v|}\;\mathcal D_{\mathrm{Fr}}(q\cdot s)\Big),
$$
where $\overline v$ denotes the complex conjugate of $v$.

Denote the residual $r=|v|-y$ and $\langle \cdot, \cdot\rangle$ be the complex $L^2$ inner product, then
$$
J'(f)[h]=
\Big\langle G'(f)[h],\, r \Big\rangle
=\Big\langle \Re\!\Big( ik\,\frac{\overline v}{|v|}\;\mathcal D_{\mathrm{Fr}}(q\cdot s)\Big),\, r \Big\rangle
=\Re\Big\langle\mathcal D_{\mathrm{Fr}}(q\cdot s),\,\overline{ik\,\tfrac{\overline v}{|v|}}\,r\,\Big\rangle.
$$
Since $\overline{ik\,\tfrac{\overline v}{|v|}}\,r=-ik\,\tfrac{v}{|v|}\,r=:b$, we have 
$$
J'(f)[h]=\Re\langle\mathcal D_{\mathrm{Fr}}(q\cdot s), b\rangle
=\Re\langle q\cdot s,\mathcal D_{\mathrm{Fr}}^*(b)\rangle
=\Re\langle s, \overline q\,\mathcal D_{\mathrm{Fr}}^*(b)\rangle,
$$
where $\mathcal D_{\mathrm{Fr}}^*$ denotes the adjoint of the Fresnel propagator.
Using $s=\widetilde{O}_\theta h$,
$$
J'(f)[h]=\Re\langle h, \widetilde{O}_\theta^*(\overline q\,\mathcal D_{\mathrm{Fr}}^*(b))\rangle.
$$
The gradient associated with the projection angle $\theta$ becomes
$$
\nabla J_\theta
=
\widetilde{O}_\theta^*
\Big(
\overline q\,
\mathcal D_{\mathrm{Fr}}^*(b)
\Big).
$$
Integrating over all projection angles yields
$$
\nabla J
=
\int_0^\pi
\nabla J_\theta
\,d\theta.
$$
Recall that
$
f=-\delta+i\beta,
$
the gradients with respect to $\delta$ and $\beta$ read
$$
\nabla_\delta J
=
-\Re(\nabla J),
\qquad
\nabla_\beta J
=
\Im(\nabla J).
$$
Expanding the Fresnel propagators in Fourier form,
$$
\mathcal D_{\mathrm{Fr}}(u)
=
\mathcal F^{-1}\!\big[H_d\cdot\mathcal F(u)\big],
\qquad
\mathcal D_{\mathrm{Fr}}^*(w)
=
\mathcal F^{-1}\!\big[H_{-d}\cdot\mathcal F(w)\big],
$$
where
$
H_{-d}=\overline{H_d}
$
corresponds to Fresnel propagation over the distance $-d$, and using
$$
I_e
=
\left|
\mathcal F^{-1}\!\Big[
H_d\cdot
\mathcal F
\Big(
P e^{ik\widetilde{O}_\theta f}
\Big)
\Big]
\right|^2,
$$
together with
$$
r
=
|v|-y
=
\sqrt{I_e}-\sqrt{I_m},
\qquad
\frac{v}{|v|}r
=
\Bigg(
1-\frac{\sqrt{I_m}}{\sqrt{I_e}}
\Bigg)v,
$$
the gradients with respect to $\delta$ and $\beta$ admit the fully expanded representations
{\footnotesize
\begin{align*}
\frac{\partial J}{\partial \delta}
&=
\int_0^\pi
\Re\!\Bigg[
ik\,\widetilde{O}_\theta^*
\Bigg(
\overline P\,
e^{-ik\overline{\widetilde{O}_\theta f}}
\,
\mathcal F^{-1}\!\Bigg\{
H_{-d}\cdot
\mathcal F\!\Bigg[
\Bigg(
1-\frac{\sqrt{I_m}}{\sqrt{I_e}}
\Bigg)
\mathcal F^{-1}\!\Big[
H_d\cdot
\mathcal F
\big(
P e^{ik\widetilde{O}_\theta f}
\big)
\Big]
\Bigg]
\Bigg\}
\Bigg)
\Bigg]
\,d\theta,
\\[1.2ex]
\frac{\partial J}{\partial \beta}
&=
\int_0^\pi
\Re\!\Bigg[
-k\,\widetilde{O}_\theta^*
\Bigg(
\overline P\,
e^{-ik\overline{\widetilde{O}_\theta f}}
\,
\mathcal F^{-1}\!\Bigg\{
H_{-d}\cdot
\mathcal F\!\Bigg[
\Bigg(
1-\frac{\sqrt{I_m}}{\sqrt{I_e}}
\Bigg)
\mathcal F^{-1}\!\Big[
H_d\cdot
\mathcal F
\big(
P e^{ik\widetilde{O}_\theta f}
\big)
\Big]
\Bigg]
\Bigg\}
\Bigg)
\Bigg]
\,d\theta .
\end{align*}
}
Here the outer $\Re(\cdot)$ extracts the real part so that the expressions yield real-valued fields for each projection angle~$\theta$ before backprojection. The operator $\widetilde{O}_\theta^*$ denotes the adjoint of the projection operator, i.e., the standard backprojection that maps detector-domain quantities back to the 3D object domain.

%\section{Convergence of Iterative Regularization Methods and }
\section{Nonlinearity Conditions }

%\cite{deuflhard1998convergence}. We therefore state the following auxiliary result without proof.
\begin{theorem}[RIC $\Rightarrow$ TCC, \cite{deuflhard1998convergence}]
Assume $G$ is Fr\'echet differentiable on $\mathcal M$ and satisfies the range invariance condition with constant $\eta<1$. Then for any $f,\tilde f\in\mathcal M$,
$$
\|G(f)-G(\tilde f)-G'(\tilde f)(f-\tilde f)\|_Y \le \frac{\eta}{1-\eta}\,\|G(f)-G(\tilde f)\|_Y.
$$
In particular, if $\eta<\tfrac13$, then $G$ satisfies TCC with constant $\bar\eta:=\dfrac{\eta}{1-\eta}\in[0,\tfrac12)$. 
\label{thm:RIC-TCC}
\end{theorem}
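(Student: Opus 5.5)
The plan is the classical mean-value argument, with the range invariance condition used to move every derivative back to the base point $\tilde f$. Since $\mathcal M$ is a neighborhood of the true solution, I assume it is convex, or shrink it to a ball, so that the segment $\tilde f + t(f-\tilde f)$, $t\in[0,1]$, stays in $\mathcal M$. Fréchet differentiability on $\mathcal M$ then gives the Bochner-integral identity
$$
G(f)-G(\tilde f)=\int_0^1 G'\big(\tilde f+t(f-\tilde f)\big)(f-\tilde f)\,dt .
$$
By the RIC, $G'(\tilde f+t(f-\tilde f)) = R_t\,G'(\tilde f)$, where $R_t:=R(\tilde f+t(f-\tilde f),\tilde f)$. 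The right-hand side therefore equals $\bar R\,G'(\tilde f)(f-\tilde f)$, with $\bar R:=\int_0^1 R_t\,dt$.

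Subtracting $G'(\tilde f)(f-\tilde f)$ gives the linearization error as
$$
(\bar R - I)\,G'(\tilde f)(f-\tilde f)=\int_0^1 (R_t-I)\,dt\;G'(\tilde f)(f-\tilde f).
$$
Since $\|R_t-I\|\le\eta$ for every $t$, its norm is at most $\eta\,\|G'(\tilde f)(f-\tilde f)\|_Y$.

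It remains to bound $\|G'(\tilde f)(f-\tilde f)\|_Y$ by $\|G(f)-G(\tilde f)\|_Y$, which I would do with the triangle inequality:
$$
\|G'(\tilde f)(f-\tilde f)\|_Y \le \|G(f)-G(\tilde f)\|_Y + \|G(f)-G(\tilde f)-G'(\tilde f)(f-\tilde f)\|_Y .
$$
Writing $E$ for the linearization error, this gives $\|E\|\le \eta\|G(f)-G(\tilde f)\| + \eta\|E\|$. Absorbing the last term, which is allowed because $\eta<1$, yields $\|E\|\le \frac{\eta}{1-\eta}\|G(f)-G(\tilde f)\|$, as claimed.

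An alternative route is a Neumann-series argument: $\|\bar R-I\|\le\eta<1$ makes $\bar R$ invertible with $\|\bar R^{-1}\|\le 1/(1-\eta)$. Then $G'(\tilde f)(f-\tilde f)=\bar R^{-1}(G(f)-G(\tilde f))$, and $E=(I-\bar R^{-1})(G(f)-G(\tilde f))$. One checks $\|I-\bar R^{-1}\|\le \eta/(1-\eta)$ by the same series, which gives the same constant.

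For the final assertion, $\eta<\tfrac13$ is equivalent to $\bar\eta=\eta/(1-\eta)<\tfrac12$, since $t\mapsto t/(1-t)$ is increasing on $[0,1)$.

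The main obstacle is the technical justification of the integral representation. It needs continuity of $t\mapsto G'(\tilde f+t(f-\tilde f))$, or at least strong measurability, so that the Bochner integral exists and the fundamental theorem of calculus applies. If $G'$ is only known to be Fréchet differentiable without continuity of the derivative, I would instead test the identity against functionals $\ell\in Y^*$. The scalar function $t\mapsto \ell(G(\tilde f+t(f-\tilde f)))$ is differentiable, so the scalar mean value theorem applies, and taking a supremum over $\|\ell\|\le1$ recovers the bound, since only the operator-norm estimate on $R_t-I$ is used.

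A further subtlety is that the convexity of $\mathcal M$ is required. If it cannot be assumed, I would restrict the statement to a convex ball inside $\mathcal M$.
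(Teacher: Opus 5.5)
Your proof is correct and is the standard argument behind the cited result. The paper itself states this theorem without proof and defers to Deuflhard--Engl--Scherzer, whose argument is the one you give: write $G(f)-G(\tilde f)$ as the integral of $G'$ along the segment, use the RIC to factor out $G'(\tilde f)$ so that the linearization error is at most $\eta\|G'(\tilde f)(f-\tilde f)\|_Y$, and absorb via the triangle inequality. Your caveats are well placed: the segment $[\tilde f,f]$ must lie in $\mathcal M$, so convexity is an implicit hypothesis the statement omits. Your scalar mean-value fallback is also the cleaner route, because it only involves $R_t w=G'(\tilde f+t(f-\tilde f))(f-\tilde f)$ with $w=G'(\tilde f)(f-\tilde f)$. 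By contrast, the Neumann-series variant with $\bar R=\int_0^1 R_t\,dt$ needs $t\mapsto R_t$ to be measurable, which the RIC does not guarantee, since it fixes $R_t$ only on the range of $G'(\tilde f)$.
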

\end{document}